\newcommand{\A}{\mathbf{A}}
\newcommand{\C}{\mathbf{C}}
\newcommand{\Q}{\mathbf{Q}}
\newcommand{\R}{\mathbf{R}}
\newcommand{\Z}{\mathbf{Z}}
\newcommand{\N}{\mathbf{N}}
\renewcommand{\P}{\mathbf{P}}
\newcommand{\fa}{\mathfrak{a}}
\newcommand{\fb}{\mathfrak{b}}
\newcommand{\fm}{\mathfrak{m}}
\newcommand{\cB}{\mathcal{B}}
\newcommand{\cF}{\mathcal{F}}
\newcommand{\cG}{\mathcal{G}}
\newcommand{\cJ}{\mathcal{J}}
\newcommand{\cO}{\mathcal{O}}
\newcommand{\cR}{\mathcal{R}}
\newcommand{\cV}{\mathcal{V}}
\newcommand{\cW}{\mathcal{W}}
\renewcommand{\a}{\alpha}
\renewcommand{\b}{\beta}
\renewcommand{\d}{\delta}
\newcommand{\e}{\varepsilon}
\newcommand{\g}{\gamma}
\newcommand{\la}{\lambda}
\newcommand{\p}{\psi}
\newcommand{\eg}{{\rm e.g.\ }} 
\newcommand{\ie}{{\rm i.e.\ }}
\DeclareMathOperator{\Aut}{Aut}
\DeclareMathOperator{\codim}{codim}
\DeclareMathOperator{\Hom}{Hom}
\DeclareMathOperator{\lct}{lct}
\DeclareMathOperator{\Spec}{Spec}
\DeclareMathOperator{\supp}{supp}
\DeclareMathOperator{\vol}{vol}
\DeclareMathOperator{\Int}{Int}
\DeclareMathOperator{\im}{Im}
\DeclareMathOperator{\ord}{ord}
\DeclareMathOperator{\NS}{NS}
\DeclareMathOperator{\Proj}{Proj}
\DeclareMathOperator{\Ric}{Ric}
\DeclareMathOperator{\Span}{span}
\DeclareMathOperator{\Val}{Val}
\DeclareMathOperator{\ver}{Vert}
\DeclareMathOperator{\QM}{QM}
\DeclareMathOperator{\init}{in}
\DeclareMathOperator{\Jac}{Jac}
\newcommand{\ab}{\mathfrak{a}_\bullet}
\newcommand{\bs}[1]{\fb\big( | #1 | \big)}
\renewcommand{\div}{\mathrm{div}}
\newcommand{\D}{\Delta}
\numberwithin{equation}{section}       
\newtheorem{prop} {Proposition} [section]
\newtheorem{thm}[prop] {Theorem} 
\newtheorem{rem}[prop] {Remark} 
\newtheorem{lem}[prop] {Lemma}
\newtheorem{cor}[prop]{Corollary}
\newtheorem{prop-def}[prop]{Proposition-Definition}
\newtheorem{conj}[prop]{Conjecture}
\newtheorem*{thmA}{Theorem A} 
\newtheorem*{thmB}{Theorem B} 
\newtheorem*{thmC}{Theorem C} 
\newtheorem*{thmD}{Theorem D} 
\newtheorem*{thmE}{Theorem E} 
\newtheorem*{thmF}{Theorem F}
\newtheorem{rmk}[prop]{Remark}
\theoremstyle{rem}
\newtheorem*{ackn}{Acknowledgment}
\begin{document}

\newpage 
 \pagenumbering{arabic}

\title{Thresholds, valuations, and K-stability}
\date{\today}

\author{Harold Blum \and Mattias Jonsson}

\address{Department of Mathematics\\
  University of Michigan\\
  Ann Arbor, MI 48109--1043\\
  USA}
\email{blum@umich.edu}
\email{mattiasj@umich.edu}


\begin{abstract}
  Let $X$ be a normal complex projective variety with at worst klt singularities, and 
  $L$ a big line bundle on $X$. We use valuations to study the log canonical threshold
  of $L$, as well as another invariant, the stability threshold. The latter generalizes a 
  notion by Fujita and Odaka, and can be used to characterize when a $\Q$-Fano variety is 
  $K$-semistable or uniformly K-stable. 
  It can also be used to generalize volume bounds due to Fujita and Liu.
  The two thresholds can be written as infima of certain functionals on 
  the space of valuations on $X$. When $L$ is ample, we prove that these infima are attained.
  In the toric case, toric valuations achieve these infima, and we 
  obtain simple expressions for the two thresholds in terms of the moment polytope of $L$.
\end{abstract}

\maketitle

\setcounter{tocdepth}{1}
\tableofcontents
%
%
%
%
\section*{Introduction}
Let $X$ be a normal complex projective variety of dimension $n$ with at worst klt singularities, 
and let $L$ a big line bundle on $X$. We shall consider two natural ``thresholds'' of $L$, both
involving the asymptotics of the singularities of the linear system $|mL|$ as $m\to\infty$.

First, the \emph{log canonical threshold} of $L$, measuring the \emph{worst}
singularities, is defined by 
\begin{equation*}
  \a(L)=\inf\{\lct(D)\mid \text{$D$ effective $\Q$-divisor, $D\sim_\Q L$}\}.
\end{equation*}
where $\lct(D)$ is the log canonical threshold of $D$; see~\eg~\cite{CS08}. 
It is an algebraic version of the $\a$-invariant defined
analytically by Tian~\cite{Tian87} when $X$ is Fano and $L=-K_X$.

The second invariant measures the ``average'' singularities
and was introduced by Fujita and Odaka
in the Fano case, where it is relevant for K-stability, see~\cite{FO16,PW16}.
Following~\cite{FO16} 
we say that an effective $\Q$-divisor $D\sim_\Q L$ on 
$X$ is of \emph{$m$-basis type}, where $m\ge 1$, 
if there exists a basis $s_1,\dots,s_{N_m}$ of $H^0(X,mL)$ such that 
\begin{equation*}
  D=\frac{\{s_1=0\}+\{s_2=0\}+\dots+\{s_{N_m}=0\}}{mN_m},
\end{equation*}
where $N_m=h^0(X,mL)$.
Define 
\begin{equation*}
  \d_m(L)=\inf\{\lct(D)\mid D\sim_\Q L\ \text{of $m$-basis type}\}.
\end{equation*}
Our first main result is
\begin{thmA}
  For any big line bundle $L$, the limit $\d(L)=\lim_{m\to\infty}\d_m(L)$ exists, and 
  \begin{equation*}
    \a(L)
    \le\d(L)
    \le(n+1)\a(L).
  \end{equation*}
  Further, the numbers $\a(L)$ and $\d(L)$ are strictly positive and only depend on
  the numerical equivalence class of $L$. 
  When $L$ is ample, the stronger inequality $\d(L)\ge\frac{n+1}{n}\a(L)$ holds.
\end{thmA}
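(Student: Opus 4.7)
My plan is to reformulate both thresholds as infima of natural functionals on the space $\Val_X$ of real valuations of finite log discrepancy, and then extract the inequalities from properties of the associated volume functions. For $v \in \Val_X$, let $a_{m,1}(v) \geq \cdots \geq a_{m,N_m}(v) \geq 0$ be the jumping numbers of the $v$-filtration of $H^0(X, mL)$; set $T_m(v) = a_{m,1}(v)/m$ and $S_m(v) = (mN_m)^{-1} \sum_j a_{m,j}(v)$. The standard valuative formula $\lct(D) = \inf_v A_X(v)/v(D)$, combined with the observation that $v(D)$ over $m$-basis divisors $D$ is maximized by taking $D$ cut out by a basis compatible with the $v$-filtration (yielding $v(D) = S_m(v)$), allows swapping infima to get
\[ \a(L) = \inf_v A_X(v)/T(v), \qquad \d_m(L) = \inf_v A_X(v)/S_m(v), \]
where $T(v) := \lim_m T_m(v)$. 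The existence of $S(v) := \lim_m S_m(v)$, together with the integral formula $S(v) = \vol(L)^{-1}\int_0^{T(v)} \vol(L; v \geq t)\, dt$, follows from Newton--Okounkov body theory (Boucksom--Chen, Lazarsfeld--Mustata).

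To show the limit $\d(L) := \lim_m \d_m(L)$ exists, I would establish a uniform estimate $|S_m(v) - S(v)| \leq (C/m)T(v)$ with $C$ depending only on $(X, L)$; dividing by $A_X(v)$ and taking infima over $v$ then allows interchanging limit and infimum and gives $\d(L) = \inf_v A_X(v)/S(v)$. The trivial bound $S_m(v) \leq T(v)$ gives $\a \leq \d_m$, hence $\a \leq \d$. Strict positivity of $\a$ follows from the klt hypothesis, which forces $A_X$ to be bounded below on $\{v : T(v) = 1\}$, and numerical invariance follows from $T(v)$ and $S(v)$ depending only on $[L] \in N^1(X)$. For the upper bound $\d \leq (n+1)\a$, I would invoke the concavity of $t \mapsto \vol(L; v \geq t)^{1/n}$ (Brunn--Minkowski applied to slices of the Okounkov body) to get $\vol(L; v \geq t) \geq \vol(L)(1 - t/T(v))_+^n$, hence $S(v) \geq T(v)/(n+1)$, and therefore $A_X(v)/S(v) \leq (n+1) A_X(v)/T(v)$ for all $v$.

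The main obstacle is the sharper bound $\d \geq \frac{n+1}{n}\a$ in the ample case. Since a pointwise estimate $S(v) \leq \frac{n}{n+1} T(v)$ fails for arbitrary concave functions on Okounkov bodies (the function may be almost constant near its maximum), the argument must exploit structure specific to valuations rather than pure Brunn--Minkowski. My plan is to reduce to divisorial valuations $v = \ord_E$, which are dense in $\Val_X$, and then compute on the birational model $\mu : Y \to X$ extracting $E$: for ample $L$ the volume function $t \mapsto \vol(\mu^* L - tE)$ admits sharper upper bounds via the Zariski/divisorial Zariski decomposition of $\mu^* L - tE$ than the generic concavity estimate, and one hopes these yield $S(v) \leq \frac{n}{n+1}T(v)$ after integration. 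Tracking how the positive-negative decomposition evolves as $t$ crosses the Nef threshold, and transferring the resulting inequality uniformly across all divisorial $v$, is the key technical difficulty I foresee.
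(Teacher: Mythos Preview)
Your overall strategy matches the paper's: both thresholds are rewritten as infima of $A(v)/T(v)$ and $A(v)/S(v)$ over valuations, the inequalities come from $\frac{1}{n+1}T(v)\le S(v)\le T(v)$ via the concavity of $t\mapsto\vol(L;v\ge t)^{1/n}$, numerical invariance is inherited from that of $S$ and $T$, and the sharper ample bound is reduced to Fujita's inequality $S(\ord_E)\le\frac{n}{n+1}T(\ord_E)$ for divisorial valuations.

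There is, however, a genuine issue with your proposed mechanism for the existence of $\lim_m\delta_m(L)$. You claim a two-sided uniform estimate $|S_m(v)-S(v)|\le (C/m)T(v)$, but no such rate is established in the paper, and it is not clear how to prove one for big (as opposed to ample) $L$. The paper instead proves only the one-sided soft estimate: for every $\varepsilon>0$ there exists $m_0=m_0(\varepsilon)$ such that $S_m(v)\le(1+\varepsilon)S(v)$ for all $m\ge m_0$ and all $v$ simultaneously (Corollary~\ref{C102}). This is obtained not by tracking error terms in the limit, but via a separate lemma (Lemma~\ref{L102}) bounding $\int_\Delta g\,d\rho_m\le\int_\Delta g\,d\rho+\varepsilon$ uniformly over \emph{all} concave functions $0\le g\le1$ on the Okounkov body $\Delta$; the uniformity in $g$ is what yields uniformity in $v$. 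Combined with pointwise convergence $S_m(v)\to S(v)$, this one-sided estimate suffices to interchange $\lim_m$ and $\inf_v$ and conclude $\delta(L)=\inf_v A(v)/S(v)$. Your proposed two-sided rate would certainly work, but proving it (especially the direction $S(v)-S_m(v)\le CT(v)/m$) appears to require the multiplier-ideal Fujita-approximation machinery of \S\ref{s:fujitaapprox}, which the paper develops only for ample $L$ and for a modified invariant $\tilde S_m$, and which is not invoked in the proof of Theorem~A.

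A smaller point: your argument for $\alpha(L)>0$ (``$A_X$ bounded below on $\{T(v)=1\}$'') is not how the paper proceeds; the paper instead reduces to the ample case via the monotonicity $\alpha(L+D)\le\alpha(L)$ for $D$ effective, and then cites an external result. Your sketch is plausible but would need to be made precise.
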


We call $\d(L)$ the \emph{stability threshold}\footnote{The idea of the stability threshold $\d(L)$, with a slightly different definition, was suggested to the second author by R.~Berman~\cite{BermComm}.} of $L$ (in the literature it is now also commonly referred to as the $\delta$\emph{-invariant}). It can also be defined for $\Q$-line bundles $L$ by 
$\d(L):=r\d(rL)$ for any $r\ge1$ such that $rL$ is a line bundle; see Remark~\ref{R104}.

The following result, which verifies Conjecture~0.4 and strengthens Theorem~0.3 
of~\cite{FO16}, relates the stability threshold to the $K$-stability of a $\Q$-Fano variety:
\begin{thmB}
  Let $X$ be a $\Q$-Fano variety.
  \begin{itemize}
  \item[(i)]
    $X$ is K-semistable iff $\d(-K_X)\ge 1$;
  \item[(ii)]
    $X$ is uniformly K-stable iff $\d(-K_X)>1$. 
  \end{itemize}
\end{thmB}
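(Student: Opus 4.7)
The plan is to reduce Theorem~B to the valuative formula for $\d(-K_X)$ established elsewhere in the paper and then apply the valuative criterion for K-stability due to Fujita and Li.

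A central result of the paper (the ``infima of functionals on the space of valuations'' mentioned in the abstract) expresses the stability threshold as
\begin{equation*}
  \d(L) = \inf_{v} \frac{A_X(v)}{S_L(v)},
\end{equation*}
where $v$ ranges over the nontrivial valuations on $X$ with $A_X(v)<\infty$, and
\begin{equation*}
  S_L(v) = \frac{1}{\vol(L)}\int_0^\infty \vol(L-tv)\,dt
\end{equation*}
is the expected order of vanishing of sections of powers of $L$ along $v$. By a standard approximation argument this infimum may equivalently be taken over divisorial valuations.

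This should be combined with the Fujita--Li valuative criterion: a $\Q$-Fano variety $X$ is K-semistable iff $A_X(v)\ge S_{-K_X}(v)$ for every divisorial valuation $v$, and uniformly K-stable iff there exists $\e>0$ such that $A_X(v)\ge (1+\e)\,S_{-K_X}(v)$ for every divisorial valuation $v$. Plugging these characterizations into the displayed formula yields Theorem~B directly. For~(i), K-semistability is equivalent to $A_X(v)/S_{-K_X}(v)\ge 1$ for all $v$, i.e.\ to $\d(-K_X)\ge 1$. For~(ii), the existence of a uniform $\e>0$ with $A_X(v)/S_{-K_X}(v)\ge 1+\e$ for all $v$ is tautologically equivalent to the infimum being strictly greater than $1$, i.e.\ to $\d(-K_X)>1$.

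The real obstacle is not in this deduction but in establishing the valuative formula for $\d$---showing that the asymptotic log canonical thresholds $\d_m$ are captured by $\inf_v A_X(v)/S_L(v)$. That step requires identifying $S_L(v)$ with the limit $\lim_m \frac{1}{mN_m}\sum_i v(s_i)$ taken over $m$-bases of $H^0(X,mL)$ (an Okounkov-body/volume computation), together with a quantitative comparison between the log canonical threshold of an $m$-basis divisor and the log discrepancy of a valuation realizing (or nearly realizing) the infimum. Granted that formula, Theorem~B is simply a translation between the language of the stability threshold and the Fujita--Li valuative criterion.
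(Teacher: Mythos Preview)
Your approach is essentially the same as the paper's, and part~(i) is handled exactly as in the paper: the Fujita--Li criterion $\beta(E)\ge 0$ translates to $A(\ord_E)\ge S(\ord_E)$, which via Theorem~\ref{t:delta} is $\d(-K_X)\ge 1$.

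For part~(ii), however, you have misquoted the Fujita--Li criterion. What Fujita actually proves in~\cite{Fujitavalcrit} is that $X$ is uniformly K-stable iff there exists $\e>0$ with $\beta(E)\ge\e\,j(E)$ for all $E$, which in the paper's notation reads
\[
  A(\ord_E)-S(\ord_E)\;\ge\;\e\bigl(T(\ord_E)-S(\ord_E)\bigr),
\]
not $A(\ord_E)\ge(1+\e)S(\ord_E)$. These two conditions are not tautologically the same: passing from one to the other requires knowing that $T(\ord_E)-S(\ord_E)$ and $S(\ord_E)$ are uniformly comparable. The paper supplies exactly this via Proposition~\ref{P101} (Fujita's inequality $S(\ord_E)\le\frac{n}{n+1}T(\ord_E)$ for $L$ ample), which gives
\[
  n^{-1}S(\ord_E)\le T(\ord_E)-S(\ord_E)\le nS(\ord_E),
\]
whence $\beta(E)\ge\e j(E)$ for some $\e>0$ is equivalent to $A(\ord_E)-S(\ord_E)\ge\e' S(\ord_E)$ for some $\e'>0$, and now Theorem~\ref{t:delta} finishes. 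Your deduction becomes correct once you insert this comparison step; without it, the argument has a genuine (if small) gap.
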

More precisely, the reverse implications are due to Fujita and Odaka~\cite{FO16};  what is new are the direct implications.

The notion of uniform K-stability was introduced in~\cite{BHJ1,Der16}. 
As a special case of the Yau--Tian--Donaldson conjecture, it was proved in~\cite{BBJ15}
that a Fano manifold $X$ without nontrivial vector fields is uniformly $K$-stable
iff $X$ admits a K\"ahler-Einstein metric. 
The latter equivalence was extended to (possibly) singular $\Q$-Fano varieties without nontrivial vector field in~\cite{LTW19}, and general singular $\Q$-Fano varieties in~\cite{Li19}. The result in~\cite{Li19} says that 
a $\Q$-Fano variety admits a K\"ahler--Einstein metric iff $X$ is uniformly
K-polystable. For smooth $X$, this result was proved earlier (using different methods, and with uniform K-polystability replaced by K-polystability) in~\cite{CDS15,Tian15}.

For a general ample line bundle $L$ on a smooth complex projective variety, the 
stability threshold $\d(L)$ detects \emph{Ding stability} in the sense of~\cite{nakstab} and has the following analytic interpretation.\footnote{However, $\d(L)$ is not expected to be directly
related to the $K$-stability of the pair $(X,L)$.}
Let $\b(L)$ be the greatest Ricci lower bound, \ie the supremum
of all $\b>0$ such that there exists a K\"ahler form $\omega\in c_1(L)$ with
$\Ric\omega\ge\beta\omega$, see~\cite{Tian92,Rub08,Rub09,Sze11}.
Then $\b(L)=\min\{\d(L),s(L)\}$, where $s(L)=\sup\{s\in\R\mid -K_X-sL\ \text{nef}\}$
is the nef threshold if $L$, see~\cite[Theorem~D]{BBJ18} and also~\cite[Appendix]{CRZ18}.

Theorems~A and~B imply that if $X$ is a $\Q$-Fano variety and 
$\a(-K_X)\ge\frac{n}{n+1}$ (resp.\ $>\frac{n}{n+1})$, then $X$ is K-semistable 
(resp.\ uniformly K-stable), thus recovering results in~\cite{OSa,BHJ1,Der16,FO16},
that can be viewed as algebraic versions of Tian's theorem in~\cite{Tian97}.
See also~\cite{Fujitaalpha} for the case $\a(-K_X)=\frac{n}{n+1}$,
and~\cite{Der15} for more general polarizations.

\medskip
Our approach to the two thresholds $\a(L)$ and $\d(L)$ is through \emph{valuations}.
Let $\Val_X$ be the set of (real) valuations on the function field on $X$ that are trivial 
on the ground field $\C$, and equip $\Val_X$ with the topology of pointwise convergence. 
To any $v\in\Val_X$ we can associate several invariants. 

First, we have the \emph{log discrepancy} $A(v)=A_X(v)$.
Here we only describe it when $v$ is divisorial; see~\cite{BdFFU} for the general case.
Let $E$ be a prime divisor over $X$, \ie $E\subset Y$ is a prime divisor, 
where $Y$ is a normal variety with a proper birational morphism $\pi\colon Y\to X$.
In this case, the log discrepancy of the divisorial valuation $\ord_E$ is given by 
$A(\ord_E)=1+\ord_E(K_{Y/X})$, where $K_{Y/X}$ is the relative canonical divisor. 

Second, following~\cite{BKMS}, we have asymptotic invariants of valuations 
that depend on a big line bundle $L$. For simplicity assume $H^0(X,L)\ne0$.
To any $v\in\Val_X$ and any nonzero section
$s\in H^0(X,L)$ we can associate a positive real number 
$v(s)\in\R_+$. This induces a decreasing real \emph{filtration} 
$\cF_v$ on $H^0(X,L)$, given by 
\begin{equation*}
  \cF^t_vH^0(X,L)=\{s\in H^0(X,L)\mid v(s)\ge t\}
\end{equation*}
for $t\ge0$. Define the \emph{vanishing sequence} or \emph{sequence of jumping numbers}
\begin{equation*}
  0=a_1(L,v)\le a_2(L,v)\le\dots\le a_N(L,v)=a_{\max}(L,v)
\end{equation*}
of (the filtration associated to) $v$ on $L$ by 
\begin{equation*}
  a_j(L,v)=\inf\{t\in\R\mid \codim\cF^t_vH^0(X,L)\ge j\}.
\end{equation*}
Thus the set of jumping numbers equals the set of all values $v(s)$, 
$s\in H^0(X,L)\setminus\{0\}$.

For $m\ge 1$, consider the rescaled maximum and average jumping numbers of $v$ on $mL$:
\begin{equation*}
  T_m(v)=\frac1m a_{\max}(mL,v)
  \quad\text{and}\quad
  S_m(v)=\frac1{mN_m}\sum_{j=1}^{N_m}a_j(mL,v),
\end{equation*}
where $N_m=h^0(X,mL)$.
Using Okounkov bodies one shows that the limits
\begin{equation*}
  S(v)=\lim_{m\to\infty}S_m(v)
  \quad\text{and}\quad
  T(v)=\lim_{m\to\infty}T_m(v)
\end{equation*}
exist. The resulting functions $S,T\colon\Val_X\to\R_+\cup\{+\infty\}$ are lower
semicontinuous. They are finite on the locus $A(v)<\infty$.
For a divisorial valuation $v=\ord_E$ as above, 
the invariant $T(\ord_E)$ can be viewed as a pseudoeffective threshold:
\begin{equation*}
  T(\ord_E)=\sup\{t>0\mid \pi^\ast L-tE\ \text {is pseudoeffective}\}
\end{equation*}
whereas $S(\ord_E)$ is an ``integrated volume''.
\begin{equation*}
  S(\ord_E)=\vol(L)^{-1}\int_0^\infty\vol(\pi^\ast L-tE)\,dt.
\end{equation*}
The invariants $S(\ord_E)$ and $T(\ord_E)$ play an important role in the work of
K.~Fujita~\cite{Fujitavalcrit}, C.~Li~\cite{Liequivariant}, and Y.~Liu~\cite{Liu16}, 
see Remark~\ref{R102}.

\medskip
The next result shows that log canonical and stability thresholds can be 
computed using the invariants of valuations above:
\begin{thmC}\label{t:thmc}
  For any big line bundle $L$ on $X$, we have
  \begin{equation*}
    \a(L)=\inf_v\frac{A(v)}{T(v)}
    =\inf_E\frac{A(\ord_E)}{T(\ord_E)}
    \quad\text{and}\quad
    \d(L)=\inf_v\frac{A(v)}{S(v)}
    =\inf_E\frac{A(\ord_E)}{S(\ord_E)},
  \end{equation*}
  where $v$ ranges over nontrivial valuations with $A(v)<\infty$,
  and $E$ over prime divisors over~$X$.
\end{thmC}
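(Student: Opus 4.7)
The plan is to reduce both thresholds to the valuative formula for the log canonical threshold and then swap the order of the two infima; for the stability threshold there will additionally be a limit in $m$ to handle.

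The first step is to reinterpret $T(v)$ and $S_m(v)$ as suprema of $v(D)$ over suitable families of effective divisors. For $T(v)$, superadditivity of $a_{\max}(mL,v)$ in $m$ together with Fekete's lemma gives $T(v) = \sup_m T_m(v)$, and $a_{\max}(mL,v) = \sup_{s\ne 0} v(s)$; since each $s \in H^0(X,mL)\setminus\{0\}$ gives an effective divisor $\{s=0\}/m \sim_\Q L$ and every such $\Q$-divisor arises this way for some $m$, we obtain
\[
T(v) = \sup\{v(D) : D \sim_\Q L,\, D \ge 0\}.
\]
For $S_m(v)$, pick a basis $s_1,\dots,s_{N_m}$ of $H^0(X,mL)$ adapted to the filtration $\cF_v$, so that $v(s_j) = a_j(mL,v)$ after reordering; the associated $m$-basis type divisor $D = \frac{1}{mN_m}\sum_j \{s_j=0\}$ satisfies $v(D) = S_m(v)$. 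A short optimality argument shows $\sum_j v(s_j') \le \sum_j a_j(mL,v)$ for any other basis $s_j'$, so
\[
S_m(v) = \max\{v(D) : D \sim_\Q L \text{ of } m\text{-basis type}\}.
\]

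Next, invoke the valuative formula for the log canonical threshold on the klt variety $X$: for any effective $\Q$-divisor $D$,
\[
\lct(D) = \inf_v \frac{A(v)}{v(D)} = \inf_E \frac{A(\ord_E)}{\ord_E(D)},
\]
with $v$ ranging over nontrivial valuations with $A(v)<\infty$. Swapping the two infima (routine for a double infimum) and inserting the sup/max representations above yields
\[
\a(L) = \inf_D \inf_v \frac{A(v)}{v(D)} = \inf_v \frac{A(v)}{\sup_D v(D)} = \inf_v \frac{A(v)}{T(v)},
\]
and restricting to divisorial $v$ produces the corresponding formula with $\ord_E$. The identical argument applied to $\d_m$ gives
\[
\d_m(L) = \inf_v \frac{A(v)}{S_m(v)} = \inf_E \frac{A(\ord_E)}{S_m(\ord_E)}.
\]

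It remains to pass to the limit $m\to\infty$ in the formula for $\d_m$. The easy direction is immediate: for any $v$ with $A(v)<\infty$, $\d_m(L) \le A(v)/S_m(v) \to A(v)/S(v)$, hence $\d(L) \le \inf_v A(v)/S(v)$. The reverse inequality requires interchanging $\lim_m$ and $\inf_v$, and the cleanest route is to invoke a uniform comparison of the form $S_m(v) \le (1+\e_m)S(v)$ with $\e_m \to 0$ independent of $v$, which I expect to be established earlier in the paper by Okounkov-body/Riemann-sum arguments; granting this, $\d_m(L) \ge (1+\e_m)^{-1}\inf_v A(v)/S(v)$, and letting $m\to\infty$ closes the loop. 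The uniform convergence $S_m/S \to 1$ is the main technical obstacle on which the whole argument rests; the rest is a formal interchange of infima based on the sup/max interpretations of $T$ and $S_m$.
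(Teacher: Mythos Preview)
Your proposal is correct and follows essentially the same route as the paper: the formulas for $\a_m$ and $\d_m$ come from swapping the two infima after invoking the valuative description of $\lct$, with your max-interpretation of $S_m(v)$ appearing verbatim as a lemma, and the passage from $\d_m$ to $\d$ uses exactly the uniform bound $S_m(v)\le(1+\e)S(v)$ you anticipate, which the paper establishes via an Okounkov-body concavity estimate. The only cosmetic difference is that the paper handles $\a$ level by level (first $\a_m$, then $\a=\inf_m\a_m$ via $T=\sup_mT_m$) rather than passing directly to the sup over all effective $\Q$-divisors as you do.
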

While the formulas for $\a(L)$ follow quite easily from the definitions
(see also~\cite[\S3.2]{Amb16}),
the ones for $\d(L)$ (as well as the fact that the limit $\d(L)=\lim_m\d_m(L)$ exists) 
are more subtle and use the concavity of the function on the
Okounkov body of $L$ defined by the filtration associated to the valuation $v$ as
in~\cite{BC11,BKMS}; see also~\cite{WN12}. 

\smallskip
Theorem~B follows from the second formula for $\d(L)$ above and results in~\cite{Fujitavalcrit} and~\cite{Liequivariant}.

As for Theorem~A, the estimates between $\a(L)$ and $\d(L)$ in Theorem~A
follow from estimates $\frac1{n+1}T(v)\le S(v)\le T(v)$ that are proved along the way.
When $L$ is ample and $v$ is divisorial, the stronger inequality $S(v)\le\frac{n}{n+1}T(v)$
was proved by Fujita~\cite{Fujitaplt}. 
We deduce from results in~\cite{BKMS} that the invariants $S(v)$ and $T(v)$ only depend
on the numerical equivalence class of $L$. By Theorem~C, the same is therefore true for
the thresholds $\a(L)$ and $\d(L)$. 
The proof that $\a(L)>0$ can be reduced to the case when $L$ is ample,
where it is known~\cite{Tian87,BHJ1}.
By the estimates in Theorem~A, it follows that $\d(L)>0$.

\medskip
We can also bound the volume of a line bundle in terms of the stability 
threshold:
\begin{thmD}
  Let $L$ be a big line bundle. Then we have 
  \[
    \vol(L) \le \left(\frac{n+1}{n}\right)^n \d(L)^{-n} \widehat{\vol}(v).
  \]
  for any valuation $v$ on $X$ centered at a closed point.
\end{thmD}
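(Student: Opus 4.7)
The plan is to reduce, via Theorem~C, to an Okounkov-body volume comparison between $S(v)$, $\vol(L)$ and $\vol(v)$. For any valuation $v$ on $X$ with $A(v)<\infty$, Theorem~C gives $\delta(L)\le A(v)/S(v)$, hence
\[
\delta(L)^{-n}\widehat{\vol}(v) = \delta(L)^{-n}A(v)^n\vol(v) \ge S(v)^n\vol(v).
\]
So Theorem~D follows once we prove the stronger, $\delta$-free inequality
\[
\vol(L)\le \Bigl(\frac{n+1}{n}\Bigr)^n S(v)^n\vol(v)
\]
for any valuation $v$ centered at a closed point $x\in X$. We may assume $A(v)<\infty$ and $\vol(v)<\infty$, since otherwise $\widehat{\vol}(v)=\infty$ and the original bound is vacuous.

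The heart of the proof is a comparison of the filtered pieces $\cF^{sm}_v H^0(X,mL)$ with the valuation ideals $\fa_t(v)=\{f\in\cO_X\mid v(f)\ge t\}$. Since $v$ is centered at the closed point $x$, the ideal sheaf $\fa_{sm}(v)$ is cosupported at $\{x\}$, and the cohomology long exact sequence of $0\to mL\otimes\fa_{sm}(v)\to mL\to mL\otimes\cO_X/\fa_{sm}(v)\to 0$ yields, for every $s\ge0$ and $m\ge1$,
\[
N_m-\dim\cF^{sm}_v H^0(X,mL)\le \ell\bigl(\cO_{X,x}/\fa_{sm}(v)\bigr).
\]
Dividing by $m^n/n!$ and letting $m\to\infty$, I would invoke the Okounkov-body description from \cite{BC11,BKMS}: for an Okounkov body $\Delta=\Delta(L)\subset\R^n$ and the concave transform $G_v\colon\Delta\to\R_+$ of $\cF_v$, one has $\dim\cF^{sm}_v H^0(X,mL)/N_m\to \vol_{\R^n}(\Delta\cap\{G_v\ge s\})/\vol_{\R^n}(\Delta)$. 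Combined with the standard asymptotic $\ell(\cO_{X,x}/\fa_{sm}(v))\sim\vol(v)s^n m^n/n!$, this produces
\[
\frac{\vol_{\R^n}(\Delta\cap\{G_v\ge s\})}{\vol_{\R^n}(\Delta)}\ge 1-\frac{\vol(v)s^n}{\vol(L)}.
\]

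To conclude, I use $S(v)=\vol_{\R^n}(\Delta)^{-1}\int_\Delta G_v\,dx=\int_0^\infty\vol_{\R^n}(\Delta\cap\{G_v\ge s\})/\vol_{\R^n}(\Delta)\,ds$ and insert the preceding lower bound, which becomes vacuous beyond $T_*=(\vol(L)/\vol(v))^{1/n}$. An elementary calculation then gives
\[
S(v)\ge \int_0^{T_*}\bigl(1-(s/T_*)^n\bigr)\,ds = \frac{n}{n+1}T_* = \frac{n}{n+1}\Bigl(\frac{\vol(L)}{\vol(v)}\Bigr)^{1/n},
\]
which rearranges to $\vol(L)\le \bigl(\tfrac{n+1}{n}\bigr)^n S(v)^n\vol(v)$, as required.

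The main technical point that needs careful bookkeeping is the applicability of the Okounkov-body asymptotics of \cite{BC11,BKMS} along the sequence of filtration levels $sm$ so that the estimate passes to the limit for almost every $s\in[0,T_*]$, together with the asymptotic $\ell(\cO_{X,x}/\fa_m(v))/(m^n/n!)\to\vol(v)$; both are essentially in the literature but must be combined uniformly in $s$. Everything else is an elementary integration on top of the reduction supplied by Theorem~C.
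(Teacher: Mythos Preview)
Your argument is correct and follows essentially the same route as the paper: the exact-sequence estimate $\dim\cF_v^{mt}R_m\ge N_m-\ell(\cO_{X,\xi}/\fa_{mt}(v))$, passage to the limit $\vol(L;v\ge t)\ge\vol(L)-t^n\vol(v)$, integration over $[0,T_*]$ to obtain $S(v)\ge\frac{n}{n+1}(\vol(L)/\vol(v))^{1/n}$, and then the reduction $\delta(L)\le A(v)/S(v)$. Your concern about uniformity in $s$ is unnecessary: the limit defining $\vol(L;v\ge t)$ exists for every $t$ (not merely almost every), as does the limit defining $\vol(v)$, so a pointwise passage to the limit followed by integration suffices; the paper works directly with $\vol(L;v\ge t)$ rather than through the concave transform $G_v$, which sidesteps the bookkeeping you flag.
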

Here $\widehat{\vol}(v)$ is the \emph{normalized volume} of $v$,
introduced by C.~Li~\cite{LiMinimizing}.
When $X$ is a $\Q$-Fano variety and $L=-K_X$, Theorem~D
generalizes the volume bounds found in~\cite{FujitaOptimal}  and~\cite{Liu16},
in which $X$ is assumed $K$-semistable, so that $\d(L)\ge1$.
These volume bounds were explored in~\cite{SS17} and~\cite{LX17}.

\medskip
Next we investigate whether the infima in Theorem~C are attained.
We say that a valuation $v\in\Val_X$  
\emph{computes} the log canonical threshold if $\frac{A(v)}{T(v)}=\a(L)$.
Similarly, $v$ computes the stability threshold if $\frac{A(v)}{S(v)}=\d(L)$.
\begin{thmE}
  If $L$ is ample, then there exist valuations  with finite log discrepancy computing 
  the log-canonical threshold and the stability threshold,  respectively.
\end{thmE}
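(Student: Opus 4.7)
The strategy is a compactness argument: take a minimizing sequence of valuations for $A/T$ (respectively for $A/S$), extract a subsequential limit via boundedness of log discrepancies, and show the limit attains the infimum. Ampleness of $L$ enters crucially in ensuring the required continuity of $T$ and $S$ under passage to the limit.

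By Theorem~C, fix divisorial valuations $v_k=\ord_{E_k}$ with $A(v_k)<\infty$ and $A(v_k)/T(v_k)\to\a(L)$ (resp.\ $A(v_k)/S(v_k)\to\d(L)$). Using homogeneity of $A$, $S$, $T$ in the valuation, normalize $A(v_k)=1$, so $T(v_k)\to 1/\a(L)$ and $S(v_k)\to 1/\d(L)$. Since $\a(L),\d(L)>0$ by Theorem~A, the sequences $T(v_k)$ and $S(v_k)$ are uniformly bounded above. Izumi's inequality then bounds $v_k(f)$ uniformly on any finite set of local functions near the centers $c_X(v_k)$; since $X$ is projective, after passing to a subsequence we may assume these centers eventually lie in a common affine neighborhood of some point $\zeta\in X$, and a diagonal extraction in the topology of pointwise convergence yields a limit $v_\infty\in\Val_X$ with $A(v_\infty)\le 1$ by lower semicontinuity of $A$. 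In particular $v_\infty$ has finite log discrepancy.

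The remaining task is to prove $T(v_\infty)\ge 1/\a(L)$ and $S(v_\infty)\ge 1/\d(L)$; only the reverse inequalities follow from the lower semicontinuity of $T$ and $S$. I would address this via the Okounkov body framework of \cite{BC11,BKMS}: each valuation $v$ determines a concave function $G_v$ on the Okounkov body $\Delta(L)$ with $T(v)=\sup_\Delta G_v$ and $S(v)$ equal to the normalized integral of $G_v$. From pointwise convergence $v_k\to v_\infty$ on the function field and the definition of $G_v$ as an asymptotic maximum of $v(s)$ over sections, one obtains $G_{v_k}\to G_{v_\infty}$ on a dense subset of $\Delta(L)$; concavity upgrades this to uniform convergence on compact subsets of the interior. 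For ample $L$ the body $\Delta(L)$ has nonempty interior, so dominated convergence gives $S(v_k)\to S(v_\infty)$, proving that $\d(L)$ is attained. For $T$ the final step is more delicate: one must show that the suprema $\sup G_{v_k}$ are uniformly attained within a compact subset of the interior, so that the uniform interior convergence forces $T(v_k)\to T(v_\infty)=1/\a(L)$. Ruling out concentration of $G_{v_k}$ at the boundary of $\Delta(L)$ is the main technical hurdle, and is precisely where the ampleness of $L$ is indispensable; in the merely big case the analogous attainment is not expected to hold in general.
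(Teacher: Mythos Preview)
Your approach has a fundamental gap at the compactness step. The set of valuations with $A(v)\le 1$ is not compact in any useful sense: even if a subsequential limit $v_\infty$ exists in the topology of pointwise convergence, it may well be the \emph{trivial} valuation. The paper itself gives the prototype in a remark: on $\P^1$ with $L=\cO(1)$, the valuations $\ord_{\xi_j}$ at distinct closed points all have $A=1$, $T=1$, $S=1/2$, yet converge to the trivial valuation, where $A=S=T=0$. In that situation your claimed convergence $G_{v_k}\to G_{v_\infty}$ on the Okounkov body is simply false, and no amount of concavity or ampleness rescues it: the concave transforms $G_{v_k}$ have integral bounded below while $G_{v_\infty}\equiv 0$. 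The ``technical hurdle'' you flag for $T$ is thus not a boundary-concentration issue but a symptom of this non-compactness, and it afflicts $S$ equally.

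The paper sidesteps this entirely. Rather than taking a limit in $\Val_X$, it passes to the induced $\N$-filtrations $\cF_{v_i}$ of $R(X,L)$, normalized so that $T(\cF_{v_i})\le 1$. Such filtrations are parametrized by closed points of a product of flag varieties, and a generic-limit argument (using that descending chains of nonempty constructible sets over an uncountable field have nonempty intersection) produces a limit \emph{filtration} $\cF$ with $\lct(\fb_\bullet(\cF))\le A$, $S(\cF)\ge S$, and $T(\cF)\ge 1$. Ampleness enters not through any continuity of $S$ or $T$ on $\Val_X$, but through uniform Fujita-approximation estimates (proved via multiplier ideals and Nadel vanishing) that bound $T(v)-T_m(v)$ and $S(v)-\tilde S_m(v)$ by $CA(v)/m$ uniformly in $v$; these are what allow finite-level data to control the asymptotic invariants of the limit filtration. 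Finally, one invokes \cite{jonmus} to choose a valuation $v^\ast$ computing $\lct(\fb_\bullet(\cF))$, and after rescaling so that $v^\ast(\fb_\bullet(\cF))=1$ one gets $A(v^\ast)\le A$ together with $S(v^\ast)\ge S(\cF)$ and $T(v^\ast)\ge T(\cF)$. The minimizing valuation is thus constructed \emph{from} the limit filtration, not as a limit of the $v_i$ themselves.
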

This theorem can be viewed as a global analogue of the main result in~\cite{Blu16b},
where the existence of a valuation minimizing the normalized volume is established.
It is also reminiscent of results in~\cite{jonmus} on the existence of valuations computing
log canonical thresholds of graded sequence of ideals, and related to a
recent result  by Birkar~\cite{Bir16} on the existence of $\Q$-divisors achieving the 
infimum in the definition of $\lct(L)$ in the $\Q$-Fano case (see also~\cite{ACS18}), and to the existence of
optimal destabilizing test configurations~\cite{Don02,Sze08,Oda15,DS16}.

\medskip
Unlike the case in~\cite{jonmus}, Theorem~E does not seem to directly 
follow from an argument
involving compactness and semicontinuity. Instead we use a ``generic limit'' construction
as in~\cite{Blu16b}. For example, given a sequence of $(v_i)_i$ of valuations on $X$ 
such that $\lim_iA(v_i)/S(v_i)=\d(L)$, we want to find a valuation $v^*$ with 
$A(v^*)/S(v^*)=\d(L)$. Roughly speaking, we do this by first extracting a limit filtration
$\cF^*$ on the section ring of $L$ from the filtrations $\cF_{v_i}$; then $v^*$ is chosen,
using~\cite{jonmus}, so as to compute the log canonical threshold of the graded sequence
of base ideals associated to $\cF^*$. 
To make all of this work, we need uniform versions of the Fujita approximation results 
from~\cite{BC11}; these are proved using multiplier ideals.

\medskip
As a global analogue to conjectures in~\cite{jonmus} we conjecture that 
any valuation computing one of the thresholds $\a(L)$ or $\d(L)$ must be quasimonomial.
While this conjecture seems difficult in general, we establish it
when $X$ is a surface with at worst canonical singularities, see Proposition~\ref{p:twodim}.
Using results in~\cite{Blu16a,Fujitaplt}, we prove 
in Proposition~\ref{p:pltblowup} that any \emph{divisorial} 
valuation computing $\a(L)$ or $\d(L)$ is associated to a log canonical type divisor over $X$. 
When $L$ is ample, any divisorial valuation computing $\d(L)$ is in fact 
associated to a plt type divisor over $X$.

\medskip
Finally we treat the case when $X$ is a toric variety, associated to a complete fan $\D$,
and $L$ is ample. 
We can embed $N_\R\subset\Val_X$ as the set of toric (or monomial) valuations.
The primitive lattice points $v_i$, $1\le i\le d$, of the 1-dimensional cones of $\D$
then correspond to the divisorial valuations $\ord_{D_i}$, where $D_i$ are the corresponding
torus invariant divisors.

Let $P\subset M_\R$ be the polytope associated to $L$. To each $u\in P\cap M_\Q$ 
is associated an effective torus invariant $\Q$-divisor $D_u\sim_\Q L$ on $X$.
 \begin{thmF}
   The log-canonical and stability thresholds of $L$ are given by 
  \begin{equation*}
    \a(L) = \min_{u\in \ver(P)}\lct(D_u)
    \quad\text{and}\quad
    \d(L) = \lct(D_{\bar{u}}),
  \end{equation*}
  where $\bar{u}\in M_\Q$ denotes the barycenter of $P$, and $\ver(P)\subset M_\Q$ 
  the set of vertices of $P$. 
  Furthermore, $\alpha(L)$ (resp. $\delta(L)$) is computed by one of the valuations 
  $v_1, \dots, v_d$. 
\end{thmF}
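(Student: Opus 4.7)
The plan is to apply Theorem~C to express $\alpha(L)$ and $\delta(L)$ as infima of $A/T$ and $A/S$ over valuations, reduce these infima to the toric subspace $N_\R\subset\Val_X$, and then perform an explicit combinatorial computation on the polytope $P$. The first step, reduction to toric valuations, is the main obstacle; everything afterward is polytope geometry.

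Concretely, the first step is to establish
\begin{equation*}
\alpha(L)=\inf_{v\in N_\R\setminus\{0\}}\frac{A(v)}{T(v)},\qquad \delta(L)=\inf_{v\in N_\R\setminus\{0\}}\frac{A(v)}{S(v)}.
\end{equation*}
Theorem~C expresses these infima over prime divisors $E$ over $X$. Using $T$-equivariant resolution of singularities together with the $T$-invariance of $A$, $T$, and $S$, one should be able to replace any such $E$ by a $T$-invariant prime divisor over $X$ (equivalently, a ray in a refined toric fan) without increasing the ratio. A one-parameter-subgroup degeneration combined with semicontinuity of the relevant invariants appears to be the cleanest route, and this is where the argument needs the most care.

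With the reduction in hand, I would record explicit formulas. Writing $L\sim\sum_i a_i D_i$ so that $P=\{u\in M_\R:\langle u,v_i\rangle\ge -a_i\text{ for all }i\}$, the log discrepancy $A$ is the unique $\D$-piecewise linear function with $A(v_i)=1$ for every primitive ray generator. On the monomial basis $\{\chi^u\}_{u\in mP\cap M}$ of $H^0(mL)$, the vanishing sequence of $v\in N_\R$ on $mL$ is, up to the common shift making $a_1=0$, the sorted list $\{\langle v,u\rangle:u\in mP\cap M\}$, and Riemann-sum arguments give
\begin{equation*}
T(v)=\max_{u\in P}\langle v,u\rangle-\min_{u\in P}\langle v,u\rangle,\qquad S(v)=\langle v,\bar u\rangle-\min_{u\in P}\langle v,u\rangle.
\end{equation*}
Both ratios $A/T$ and $A/S$ are positively homogeneous of degree $0$ in $v$, so I would restrict to the cross-section $\{v:A(v)=1\}$; on each cone $\sigma\in\D$ this is a simplex whose vertices are the primitive ray generators of $\sigma$. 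Since $T$ and $S$ are convex on each cone (each being a sum of a linear function and the support function of a polytope), their maxima on such a simplex are attained at vertices, and hence $\alpha(L)=\min_i A(v_i)/T(v_i)$ and $\delta(L)=\min_i A(v_i)/S(v_i)$, each attained by some $v_i$.

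Finally I would translate into the claimed polytope form. The facet inequality gives $\min_{u\in P}\langle v_i,u\rangle=-a_i$, so $T(v_i)=\max_{u\in P}\langle v_i,u\rangle+a_i$, and the linear function $\langle v_i,\cdot\rangle$ attains its maximum on $P$ at a vertex, so using $v_i(D_u)=\langle v_i,u\rangle+a_i$ we get $T(v_i)=\max_{u\in\ver(P)}v_i(D_u)$; similarly $S(v_i)=v_i(D_{\bar u})$. The same convexity reduction applied to $\lct(D_u)$ (which by $T$-invariance is computed by a toric valuation, and then by a ray) gives $\lct(D_u)=\min_i 1/v_i(D_u)$. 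Interchanging the order of the two minima one then concludes
\begin{equation*}
\alpha(L)=\min_{u\in\ver(P)}\min_i\frac{1}{v_i(D_u)}=\min_{u\in\ver(P)}\lct(D_u),\qquad \delta(L)=\min_i\frac{1}{v_i(D_{\bar u})}=\lct(D_{\bar u}),
\end{equation*}
both attained by one of the $v_i$.
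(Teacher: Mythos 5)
Your overall strategy coincides with the paper's: use Theorem~C, reduce the infima to the toric subspace $N_\R\subset\Val_X$, and then do polytope geometry. The second half of your argument is correct and essentially matches \S\ref{S113}: the formulas $T(v)=\max_{u\in P}\langle u,v\rangle-\min_{u\in P}\langle u,v\rangle$ and $S(v)=\langle \bar u,v\rangle-\min_{u\in P}\langle u,v\rangle$ are Corollary~\ref{c:toricST}, the identity $\lct(D_u)=\min_i 1/(\langle u,v_i\rangle+b_i)$ is Corollary~\ref{c:lcttoric}, and your convexity-on-cones reduction to the rays $v_i$ is a legitimate (and slightly different) packaging of the paper's swap of the two minima; both work because $A$ and $v\mapsto v(D_u)$ are linear on each cone.

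The genuine gap is the step you yourself flag as the main obstacle: the reduction of $\inf_v A(v)/S(v)$ (and $A(v)/T(v)$) to toric valuations. As written, ``$T$-equivariant resolution plus a one-parameter-subgroup degeneration and semicontinuity'' is a placeholder, not an argument, and the difficulty is real: $S(w)$ and $T(w)$ for a non-toric $w$ are asymptotic invariants of the \emph{filtration} $\cF_w$ on the whole section ring, so one must degenerate the entire filtration, not a single divisor, and one must control $A$, $S$, and $T$ simultaneously through the degeneration. The paper does this in Propositions~\ref{p:diminitial}--\ref{p:toricval} via the initial (Gr\"obner) degeneration: writing $R(X,L)\simeq\C[\sigma_0\cap M']$, one passes from $\cF_w$ to the initial filtration $\cF_{w,\init}$ with respect to a monomial order. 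Three nontrivial inputs are needed: (i) $\dim W=\dim\init_>(W)$, so the jumping numbers, hence $S$ and $T$, are unchanged (Propositions~\ref{p:diminitial} and~\ref{p:filtinitial}); (ii) an explicit $\A^1$-degeneration of each $\cF^\la_wR_m$ to its initial subspace, together with lower semicontinuity of $\lct$ in families, giving $\lct(\fb_\bullet(\cF_{w,\init}))\le\lct(\ab(w))\le A(w)$ (Lemma~\ref{l:deform}, Propositions~\ref{p:lsclct} and~\ref{p:lctinitial}, Lemma~\ref{l:lctval}); and (iii) the choice of a toric valuation $v$ computing $\lct$ of the $T$-invariant graded sequence $\fb_\bullet(\cF_{w,\init})$ (Proposition~\ref{p:Tvalcompute}, via the retraction of~\cite{jonmus}), normalized so that $v(\fb_\bullet(\cF_{w,\init}))=1$, for which Corollary~\ref{c:vFinequality} yields $S(v)\ge S(\cF_{w,\init})=S(w)$ and $T(v)\ge T(w)$ while $A(v)\le A(w)$. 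Your sketch points in this direction but none of (i)--(iii) is supplied; until they are, the two displayed identities for $\a(L)$ and $\d(L)$ over $N_\R\setminus\{0\}$ remain unproved, and with them the theorem.
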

The main difficulty in the proof is to show that the two thresholds are computed by 
toric valuations. For $\a(L)$, this is not so hard, and the formula in the theorem 
is in fact already known; see~\cite{Son05,LSY15} and also~\cite{CS08,Del15,Amb16}.
In the case of $\d(L)$, we use initial degenerations, a global adaptation 
of methods utilized in~\cite{Musgraded,Blu16b}.

When $X$ is a toric $\Q$-Fano variety and $L=-K_X$, Theorem~F implies that 
$X$ is $K$-semistable iff the barycenter of $P$ is the origin. 
This result was previously proven by analytic methods in~\cite{BB13,Berm16}
 and also  follows from~\cite[Theorem 1.4]{LX16}, 
which was proven algebraically. 

Additionally, we give a formula for $\d(-K_X)$ in terms of the polytope $P$. When $X$ is a smooth toric Fano variety, $\d(-K_X)$ agrees with the formula in~\cite{Li11}
 for the greatest Ricci lower bound (see~\cite{Tian92,Sze11}).

\medskip

We expect the results in this paper admit equivariant versions, relative to a subgroup 
$G\subset\Aut(X,L)$.
It should also be possible to bound the 
stability threshold $\d(L)$ from below in terms of a ``Berman-Gibbs'' invariant, 
as in~\cite{FO16}; see also~\cite{Berm13,FujitaBG}.

\medskip

Since the first version of this paper, there have been many developments related to the  topics in this paper. 
\begin{itemize}
\item The stability threshold has played an important role in a number of papers. 
For instance, see~\cite{BL18,BX18,CPS18,CRZ18,CZ18,CP18,Gol19}.
\item It was recently shown in~\cite{Xu19} that a weak version of
\cite[Conjecture B]{jonmus} holds. 
This result implies that any valuation computing $\delta(L)$ is quasimonomial; 
see Remark~\ref{deltaconj}.
\item In the thesis of the first author, 
the results in this paper were extended to the setting of klt pairs $(X,B)$~\cite{BluThesis} (see also~\cite{CP18}). 
The arguments from this paper go through to the more general setting with little to no substantive changes. 
\end{itemize}

\medskip
The paper is organized as follows. After some general background 
in~\S\ref{S112}, we study filtrations in~\S\ref{S103} and global invariants of valuations 
in~\S\ref{S105}, mainly following~\cite{BC11,BKMS}. 
We are then ready to prove the first main results on thresholds, Theorems~A-D, 
in~\S\ref{S106}. The uniform Fujita approximation results appear in~\S\ref{s:fujitaapprox}
and Theorem~E is proved in~\S\ref{s:theoremE} using the generic limit construction.
Finally, the toric case is analyzed in~\S\ref{S113}.
%
%
%
%
\begin{ackn}
  We thank R.~Berman, K.~Fujita, C.~Li and Y.~Odaka for comments on a preliminary 
  version of the paper. The first author wishes to thank Y.~Liu for fruitful discussions, 
  and his advisor, M.~Musta\c{t}\u{a}, for teaching him many of the tools that went into 
  this project. The second author has benefitted from countless discussions with 
  R.~Berman and S.~Boucksom.
  This research was supported by NSF grants DMS-0943832 and DMS-1600011,
  and by BSF grant 2014268.
\end{ackn}
%
%
%
%
%
%
\section{Background}\label{S112}
%
%
%
%
\subsection{Conventions} 
We work over $\C$. A \emph{variety} is 
an irreducible, reduced, separated scheme of finite type.
An \emph{ideal} on a variety $X$ is a coherent ideal sheaf $\fa\subset\cO_X$.
We frequently use additive notation for line bundles, \eg~$mL:=L^{\otimes m}$.

We use the convention ${\N= \{0, 1, 2,\dots \}}$, $\N^*=\N\setminus\{0\}$,
$\R_+=[0,+\infty)$, $\R_+^*=\R_+\setminus\{0\}$.
In an inclusion $A\subset B$ between sets, the case of equality is allowed.

%
%
%
%
\subsection{Valuations}\label{S107}
Let $X$ be a normal projective variety. A \emph{valuation on $X$} will mean 
a valuation $v\colon\C(X)^* \to \R$ that is trivial on $\C$.
By projectivity, $v$ admits a unique \emph{center} on $X$,  that is, 
a point $ \xi:=c_X(v)\in X$ such that $v\ge0$ on $\cO_{X,\xi}$
and $v>0$ on the maximal ideal of $\cO_{X,\xi}$. 
We use the convention that $v(0) = \infty$.

Following~\cite{jonmus,BdFFU} we define $\Val_X$ as the set of valuations on $X$ 
and equip it with the topology of pointwise convergence.\footnote{This is the weakest topology such that  for each $f\in \C(X)^*$ the evaluation map $\varphi_f: \Val_X \to \R$ defined by $\varphi_f(v) := v(f)$  is continuous. See~\cite[Section 4.1]{jonmus} for further details. }
We define a partial ordering on $\Val_X$ by 
$v\le w$ iff $c_X(w)\in\overline{c_X(v)}$ and $v(f)\le w(f)$ for $f\in\cO_{X,c_X(w)}$. 
The unique minimal element is the \emph{trivial} valuation on $X$.
We write $\Val_X^*$ for the set of nontrivial valuations on $X$.

If $Y\to X$ is a proper birational morphism, with $Y$ normal, and $E\subset Y$
is a prime divisor (called a \emph{prime divisor over $X$}), then $E$ defines a
valuation $\ord_E\colon \C(X)^*\to\Z$ in $\Val_X$ given by order of vanishing at the generic point of $E$. Any valuation of the form 
$v=c\ord_E$ with $c\in\R_{>0}$ will be called \emph{divisorial}.

To any valuation $v\in \Val_X$ and $\la\in \R_+$ there is an associated \emph{valuation ideal}
defined by  $\fa_\la(v) := \{f \in \cO_X \, \vert \, v(f) \ge\la \}$. 
If $v$ is divisorial, then Izumi's inequality (see~\cite{HS01}) shows that there exists $c>0$
such that $\fa_\la(v)\subset\fm_\xi^{\lceil c\la\rceil}$ for any $\la\in\R_+$,  where $\xi=c_X(v)$.

For an ideal $\fa\subset \cO_X$ and $v\in \Val_X$,  we set 
\[
v(\fa) := \min\{v(f)\, \vert\, f\in \fa \cdot \cO_{X, c_X(v)} \} \in [0, +\infty].\]
We can also make sense of $v(s)$ when $L$ is a line bundle and $s\in H^0(X, L)$. After trivializing $L$ at $c_X(v)$, we write $v(s)$ for the value of the local function corresponding to $s$ under this trivialization;
this is independent of the choice of trivialization.

We similarly define $v(D)$ where $D$ is an effective $\Q$-Cartier divisor on $X$.  Pick $m\ge 1$ such that $mD$ is Cartier and set $v(D)=m^{-1}v(f)$, where $f$
is a local equation of $mD$ at the center of $v$ on $X$. 
 Equivalently, 
$v(D)=m^{-1}v(s)$, where $s$ is the canonical section of $\cO_X(mD)$ defining $mD$.
%
%
%
%
\subsection{Graded sequences of ideals}
A \emph{graded sequence of ideals} is a sequence $\fa_\bullet=(\fa_p)_{p \in \N^*}$ 
of ideals on $X$ satisfying $\fa_p\cdot \fa_q\subset \fa_{p+q}$ for all $p,q\in \N^*$. We will always assume $\fa_p\ne (0)$ for some $p\in \N^*$.  
We write $M(\ab) := \{p \in \N^*\mid\fa_p \neq (0)\}$.  
By convention,  $\fa_{0}:=\cO_X$. 

Given a valuation $v\in \Val_X$, it follows from Fekete's Lemma that the limit 
\begin{equation*}
  v(\fa_\bullet) := \lim_{M(\ab)\ni p\to\infty} \frac{v(\fa_p)}{p}
\end{equation*}
exists, and equals $\inf_{p\in M(\ab)} v(\fa_p) /p$; see~\cite{jonmus}. 

A graded sequence $\fa_\bullet$ of ideals will be called \emph{nontrivial} if there 
exists a divisorial valuation $v$ such that $v(\fa_\bullet)>0$. 
By Izumi's inequality, this is equivalent to the 
existence of a point $\xi\in X$ and $c>0$ such that $\fa_p\subset\fm_\xi^{\lceil cp\rceil}$ 
for all $p\in\N$.

If $v$ is a nontrivial valuation on $X$, then $\ab(v) := \{\fa_p(v)\}_{p \in\N^*}$ 
is a graded sequence of ideals. In this case, $v(\ab(v))=1$~\cite[Lemma 3.5]{Blu16b}.
%
%
%
%
\subsection{Volume}
Let $v$ be a valuation centered at a closed point $\xi\in X$.
The \emph{volume} of $v$ is
\begin{equation*}
  \vol(v):=\lim_{\la\to+\infty}\frac{\ell(\cO_{X,\xi}/\fa_\la(v))}{\la^n/n!}\in[0,+\infty),
\end{equation*}
the existence of the limit being a consequence of~\cite{Cut13}.
The volume function is homogenous of order $-n$, 
\ie~$\vol(tv)=t^{-n}\vol(v)$ for $t>0$.
%
%
%
%
\subsection{Log discrepancy}
Let $X$ be a normal variety such that the canonical divisor $K_X$ is $\Q$-Cartier. 
If $\pi\colon Y\to X$ is a projective birational morphism with $Y$ normal, and $E\subset Y$ 
a prime divisor, then the \emph{log discrepancy} of $\ord_E$ is defined by 
$A_X(\ord_E):=1+\ord_E(K_{Y/X})$, where $K_{Y/X}:=K_Y-\pi^*K_X$ is the relative canonical divisor. We say $X$ has \emph{klt} singularities  if $A_X(\ord_E)>0$ for all prime divisors $E$ over $X$.

Now assume $X$ has klt singularities. As explained in~\cite{BdFFU} 
(building upon~\cite{hiro,jonmus}), the log discrepancy can be naturally extended to a 
lower semicontinuous function $A=A_X\colon\Val_X\to[0,+\infty]$
that is homogeneous of order 1, \ie~$A(t v)=t A(v)$ for $\la\in \R_{+}$.

We have $A(v)=0$ iff $v$ is the trivial valuation.
The log-discrepancy $A_X$ depends on $X$, but if $Y\to X$ is as above, then
$A_X(v)=A_{Y}(v)+v(K_{Y/X})$; hence $A_{Y}(v)<\infty$ iff $A_X(v)<\infty$.

If $A(v)<\infty$, then $\fa_\bullet(v)$ is a nontrivial graded sequence of ideals
by the Izumi-Skoda inequality, see~\cite[Proposition~2.3]{LiMinimizing}.

\subsection{Fano varieties and K-stability}
A variety $X$ is called \emph{$\Q$-Fano} if $X$ is projective with klt singularities
and $-K_X$ is ample. See ~\cite{BHJ1} for the definition of K-semistability and uniform K-stability of a $\Q$-Fano variety in terms of invariants associated to  test configurations. 
In this paper, we will use a characterization of these notions in terms of invariants of divisorial valuations~\cite{Liequivariant,Fujitavalcrit} (see Section 4.3).
%
%
%
\subsection{Normalized volume}
In~\cite{LiMinimizing}, C.~Li introduced the \emph{normalized volume} of a valuation $v$
centered at a closed point on $X$ as
$\widehat{\vol}(v):=A(v)^n\vol(v)$ when $A(v)<\infty$, and 
$\widehat{\vol}(v):=\infty$ when $A(v)=\infty$. This is a homogeneous function of degree 0 
on $\Val_X$. The first author proved in~\cite{Blu16b} that for any closed point $\xi\in X$,
the normalized volume function restricted to valuations centered at $\xi$
attains its infimum.
%
%
%
%
\subsection{Log canonical thresholds}
Let $X$ be a klt variety. Given a nonzero ideal $\fa \subset \cO_X$, the \emph{log canonical threshold} of $\fa$ is given by 
\begin{equation*}
\lct(\fa):= \inf_{v} \frac{A(v)}{v(\fa)} = \inf_{E} \frac{A(\ord_E)}{\ord_E(\fa)}
\
\end{equation*}
where the first infimum runs through all $v\in \Val^*_X$ and the second through all prime 
divisors $E$ over $X$.  In fact, it suffices to consider $E$ on a fixed log resolution of~$\fa$.   

In the above infima we use the convention that if $v(\fa) = 0$, then $A(v)/v(\fa) = +\infty$. Thus, $\lct(\cO_X) = +\infty$. By convention, we set $\lct((0)) = 0$. 
 
We say a valuation $v^\ast\in \Val_X^*$ \emph{computes} 
$\lct(\fa)$ if $\lct(\fa) =A(v^\ast)/v^\ast(\fa)$. 
There always exists a divisor $E$ over $X$ such that $\ord_E$ computes $\lct(\fa)$. 
 
Given a graded sequence of ideals $\ab$ on $X$, we set 
\begin{equation*}
  \lct(\ab):= \lim_{M(\ab) \ni m \to \infty}  m \cdot \lct(\fa_m) = \sup_{m \ge 1} m \cdot \lct(\fa_m).
\end{equation*}
By~\cite{jonmus}, we have 
\begin{equation*}
  \lct(\ab)= \inf_{v\in\Val_X^*} \frac{A(v)}{v(\ab)},
\end{equation*}
We say $v^\ast \in \Val_X$ \emph{computes} $\lct(\ab)$ if $\lct(\ab)=A(v^\ast)/v^\ast(\ab)$. 
Such valuations always exist: 
see~\cite[Theorem A]{jonmus} for the smooth case and~\cite[Theorem B.1]{Blu16b} 
for the klt case.

We now state two elementary lemmas that will be used in future sections.
\begin{lem}\label{l:lctval}
  If $v$ is a nontrivial valuation on $X$, then $\lct(\ab(v)) \le A(v) $ and equality holds iff $v$ computes $\lct(\ab(v))$. 
\end{lem}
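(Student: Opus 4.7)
The plan is to derive both the inequality and the characterization of equality directly from the two facts already recorded in the excerpt: the valuative formula $\lct(\ab)=\inf_{w\in\Val_X^\ast}\frac{A(w)}{w(\ab)}$ from \cite{jonmus}, and the normalization identity $v(\ab(v))=1$ cited from \cite[Lemma 3.5]{Blu16b}.

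First I would apply the valuative formula for $\lct$ to the graded sequence $\ab(v)$; since $v$ is a nontrivial valuation and hence admits $\ab(v)$ as a nontrivial graded sequence, $v$ itself is a valid competitor in the infimum. Plugging $w=v$ gives
\[
\lct(\ab(v)) \;\le\; \frac{A(v)}{v(\ab(v))} \;=\; \frac{A(v)}{1} \;=\; A(v),
\]
where the middle equality uses the normalization $v(\ab(v))=1$.

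For the equality statement, the definition of ``$v$ computes $\lct(\ab(v))$'' is precisely $\lct(\ab(v)) = A(v)/v(\ab(v))$. Using $v(\ab(v))=1$ once more, this becomes $\lct(\ab(v)) = A(v)$, which is exactly the equality case of the inequality just proved. Thus equality holds if and only if $v$ computes $\lct(\ab(v))$.

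There is essentially no obstacle here: the only substantive inputs are the two previously recorded identities, so the lemma reduces to a one-line substitution. The only minor point to check is that $\ab(v)$ is nontrivial (so that the valuative formula applies with $v$ as a valid competitor), but this is immediate from $v(\ab(v))=1>0$ together with the fact that a nontrivial $v$ has nontrivial valuation ideals $\fa_p(v)\subsetneq\cO_X$ for $p$ large enough.
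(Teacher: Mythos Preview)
Your proof is correct and matches the paper's approach exactly: the paper's proof is the single sentence ``The statement is an immediate consequence of the definition of $\lct(\ab(v))$ and the fact that $v(\ab(v))=1$,'' and you have simply written out that immediate consequence in detail.
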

\begin{proof}
  The statement is an immediate consequence of the definition of $\lct(\ab(v))$ and the fact that $v(\ab(v))=1$. 
 \end{proof}
 
\begin{lem}\label{l:validealincl}
Let $v\in \Val_X$ and $\ab$ a graded sequence of ideals on $X$. If $v(\ab)\ge 1$, then $\fa_p \subset \fa_p(v)$ for all $p \in \N$. 
\end{lem}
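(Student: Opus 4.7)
The plan is to unwind the definitions: the key fact, proved earlier in the section via Fekete's lemma, is that
\begin{equation*}
  v(\ab) = \inf_{p \in M(\ab)} \frac{v(\fa_p)}{p}.
\end{equation*}
So the hypothesis $v(\ab)\ge 1$ immediately yields $v(\fa_p)\ge p$ for every $p\in M(\ab)$.

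Then I would argue pointwise: fix $p\ge 0$ and take $f\in\fa_p$. If $p\in M(\ab)$, then by the definition $v(\fa_p)=\min\{v(g)\mid g\in\fa_p\cdot\cO_{X,c_X(v)}\}$, so $v(f)\ge v(\fa_p)\ge p$, meaning $f\in\fa_p(v)$. If $p\notin M(\ab)$ and $p\ge 1$, then $\fa_p=(0)$, and the inclusion is trivial (with the convention $v(0)=\infty$). For $p=0$, both sides equal $\cO_X$.

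There is no real obstacle here; the lemma is essentially a tautology once one has the formula for $v(\ab)$ as the infimum of $v(\fa_p)/p$, which is already recalled in the paragraph preceding the statement. The only thing to be careful about is the trivial cases $p=0$ and $p\notin M(\ab)$, which are dispatched by the stated conventions $\fa_0=\cO_X$ and $\fa_p=(0)$ respectively.
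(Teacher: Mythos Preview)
Your proof is correct and follows essentially the same approach as the paper's: both use the formula $v(\ab)=\inf_p v(\fa_p)/p$ to deduce $v(\fa_p)\ge p$, whence $\fa_p\subset\fa_p(v)$ by definition of the valuation ideal. Your version is simply more explicit about the edge cases $p=0$ and $p\notin M(\ab)$, which the paper's one-line proof leaves implicit.
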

\begin{proof}
Since $1 \le v(\ab) = \inf_p v(\fa_p)/p$, we see that $p \le v(\fa_p)$. Therefore, $\fa_p \subset \fa_p(v)$.
\end{proof}
%
%
%
%
%
%
\section{Linear series, filtrations, and Okounkov bodies}\label{S103}
In this section we recall facts about linear series, filtrations, and 
Okounkov bodies, following~\cite{LM09,KK12,BC11,Bou14}. 
The new results are Lemma~\ref{L102} and Corollary~\ref{C101}.

Let $X$ be a normal projective variety of dimension $n$ and $L$ a big 
line bundle on $X$. Set 
\begin{equation*}
  R_m:=H^0(X,mL)
  \quad\text{and}\quad
  N_m:=\dim_\C R_m
\end{equation*}
for $m\in\N$, and write 
$M(L)\subset\N$ for the semigroup of $m\in\N$ for which $N_m>0$.
Since $L$ is big, we have $m\in M(L)$ for $m\gg1$.
Write 
\begin{equation*}
  R=R(X,L)=\bigoplus_mR_m=\bigoplus_mH^0(X,mL)
\end{equation*}
for the section ring of $L$.
%
%
%
%
\subsection{Graded linear series}
A \emph{graded linear series} of $L$ is a graded $\C$-subalgebra
\begin{equation*}
  V_\bullet
  =\bigoplus_m V_m
  \subset\bigoplus_m R_m
  = R.
\end{equation*}

We say $V_\bullet$ \emph{contains an ample series} if $V_m\ne0$ for $m\gg0$, 
and there exists a decomposition $L=A+E$ with $A$ an ample $\Q$-line bundle
and $E$ an effective $\Q$-divisor such that 
\begin{equation*}
  H^0(X,mA)\subset V_m\subset H^0(X,mL)=R_m
\end{equation*}
for all sufficiently divisible $m$.
%
%
%
%
\subsection{Okounkov bodies}\label{S102}
Fix a system $z=(z_1,\dots,z_n)$ of parameters centered at a regular closed point $\xi$
of $X$. This defines a real rank-$n$ valuation 
\begin{equation*}
  \ord_z\colon\cO_{X,\xi}\setminus\{0\}\to\N^n,
\end{equation*}
where $\N^n$ is equipped with the lexicographic ordering.
As in~\S\ref{S107} we also define $\ord_z(s)$ for any nonzero section $s\in R_m$.

Now consider a nonzero graded linear series $V_\bullet\subset R(X,L)$.
For $m\in\N$, the subset  
\begin{equation*}
  \Gamma_m
  :=\Gamma_m(V_\bullet)
  :=\ord_z(V_m\setminus\{0\})
  \subset\N^n
\end{equation*}
has cardinality $\dim_\C V_m$, since $\ord_z$ has transcendence degree 0. Hence
\begin{equation*}
  \Gamma
  :=\Gamma(V_\bullet)
  :=\{(m,\a)\in\N^{n+1}\mid \a\in\Gamma_m\}
\end{equation*}
is a subsemigroup of $\N^{n+1}$.
Let $\Sigma=\Sigma(V_\bullet)\subset\R^{n+1}$ be the closed convex cone generated by $\Gamma$.
The \emph{Okounkov body} of $V_\bullet$ with respect to $z$ is given by 
\begin{equation*}
  \Delta=\Delta_z(V_\bullet)=\{\a\in\R^n\mid (1,\a)\in\Sigma\}.
\end{equation*}
This is a compact convex subset of $\R^n$.
The Okounkov body of $(X,L)$ is defined as the Okounkov body of $R(X,L)$.

For $m\ge 1$, let $\rho_m$ be the atomic positive measure on $\D$ given by 
\begin{equation*}
  \rho_m=m^{-n}\sum_{\a\in\Gamma_m}\d_{m^{-1}\a}.
\end{equation*}
The following result is a special case of~\cite[Th\'eor\`eme~1.12]{Bou14}.
\begin{thm}\label{T102}
  If $V_\bullet$ contains an ample series, then its Okounkov body $\D\subset\R^n$
  has nonempty interior, and we have $\lim_{m\to\infty}\rho_m=\rho$ 
  in the weak topology of measures, where 
  $\rho$ denotes Lebesgue measure on $\D\subset\R^n$.
  In particular, the limit
  \begin{equation}\label{e119}
    \vol(V_\bullet)
    =\lim_{m\to\infty}\frac{n!}{m^n}\dim_\C V_m
    \in(0,\vol(L)]
  \end{equation}
  exists, and equals $n!\vol(\D)$.
\end{thm}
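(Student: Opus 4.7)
The plan is to deduce this from the general asymptotic theory of semigroups of integral points, in the form established by Khovanskii-Okounkov and applied to graded linear series by Lazarsfeld-Musta\c{t}\u{a} and Kaveh-Khovanskii (as recorded in Boucksom's version). The proof reduces to verifying that the semigroup $\Gamma = \Gamma(V_\bullet) \subset \N^{n+1}$ satisfies the following three structural conditions:
(a) the closed convex cone $\Sigma \subset \R^{n+1}$ meets $\{0\}\times\R^n$ only at the origin, so that $\D = \Sigma \cap (\{1\}\times\R^n)$ is compact;
(b) the group generated by $\Gamma$ in $\Z^{n+1}$ has rank $n+1$;
(c) $\Sigma$ has nonempty interior, equivalently $\D$ has nonempty interior in $\R^n$.

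First, I would check that (a) holds for any nonzero graded linear series inside $R(X,L)$, without using the ample series hypothesis: since $L$ is big, the Okounkov body $\D_z(R(X,L))$ is bounded (because $\ord_z(s) \le Cm$ for any $s \in R_m$, with $C$ depending only on $L$ and $z$), and $\D_z(V_\bullet) \subset \D_z(R(X,L))$. Next, I would use the ample series condition to obtain (b) and (c). Writing $L = A+E$ with $A$ an ample $\Q$-line bundle and $H^0(X,mA) \subset V_m$ for sufficiently divisible $m$, one gets, after translating by the Okounkov coordinates of the canonical section of $mE$, a containment of semigroups $\Gamma(R(X,A)) + \text{(shift)} \subset \Gamma(V_\bullet)$ up to a rescaling in the first coordinate. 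For an ample $\Q$-line bundle the Okounkov body has nonempty interior and the associated semigroup generates the full rank subgroup; these are the standard facts for sections of ample line bundles and pass on to $\Gamma(V_\bullet)$.

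Once (a)-(c) are in place, I would invoke the Khovanskii-Okounkov theorem on asymptotic equidistribution of lattice points in such a semigroup: the measures
\begin{equation*}
  \rho_m = m^{-n}\sum_{\a \in \Gamma_m} \d_{m^{-1}\a}
\end{equation*}
converge weakly to Lebesgue measure $\rho$ on $\D$. Testing against the constant function $1$ gives $|\Gamma_m|/m^n \to \vol(\D)$, and since $\ord_z$ is injective on $V_m \setminus \{0\}$ modulo scaling, we have $|\Gamma_m| = \dim_\C V_m$; this yields the limit formula \eqref{e119} with value $n!\vol(\D)$. Positivity of $\vol(V_\bullet)$ follows because $\vol(V_\bullet) \ge \vol(A) > 0$ by the ample series inclusion, while $\vol(V_\bullet) \le \vol(L)$ follows from $\dim_\C V_m \le N_m$.

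The main obstacle is the verification of the full-rank generation (b): one really needs the ample series hypothesis to produce enough independent elements to span a rank $n+1$ subgroup. Without ample series, one may only recover a sublattice of $\Z^{n+1}$, which would introduce an index correction factor in the final formula. Once this is settled, the boundedness of $\D$ and nonempty interior of $\D$ are comparatively routine, and the convergence of measures is a direct application of the general semigroup theorem.
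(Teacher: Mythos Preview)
The paper does not actually prove this theorem: it is stated as ``a special case of~\cite[Th\'eor\`eme~1.12]{Bou14}'' and no argument is given. Your outline is precisely the standard route taken in~\cite{LM09,KK12,Bou14}: verify the structural hypotheses on the semigroup $\Gamma(V_\bullet)$ and then invoke the general equidistribution theorem for lattice points.

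One small imprecision worth flagging: your condition~(b) as stated---that $\Gamma$ generates a rank~$n+1$ subgroup---is not enough for the measures $\rho_m$ to converge to Lebesgue measure $\rho$ on $\Delta$ without an index factor. What you actually need (and what~\cite{LM09} verifies under the ample-series hypothesis) is that $\Gamma$ generates all of $\Z^{n+1}$ as a group. You acknowledge this at the end, but the statement of~(b) should be strengthened accordingly. The argument for this full-lattice generation uses that, near the smooth point~$\xi$, sections of a sufficiently high multiple of the ample part~$A$ can be prescribed to any finite order, so $\Gamma(R(X,A))$ already contains $(m,0),(m,e_1),\dots,(m,e_n)$ for some~$m$, together with $(m+1,\ast)$; this forces the group generated to be $\Z^{n+1}$. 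Once that is in hand, the rest of your outline is correct.
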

In fact, the limit in~\eqref{e119} always exists, but may be zero in general;
see~\cite[Th\'eor\`eme~3.7]{Bou14} for a much more precise result due 
to Kaveh and Khovanskii~\cite{KK12}.

For the proof of Theorem~A we will need the following estimate.
\begin{lem}\label{L102}
  For every $\e>0$ there exists $m_0=m_0(\e)>0$ such that 
  \begin{equation*}
    \int_\D g\,d\rho_m \le \int_\D g\,d\rho+\e
  \end{equation*}
  for every $m\ge m_0$ and every concave function 
  $g\colon\D\to\R$ satisfying $0\le g\le 1$.
\end{lem}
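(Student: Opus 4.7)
The plan is to combine the weak convergence $\rho_m \to \rho$ from Theorem~\ref{T102} with an equicontinuity argument that extracts uniformity over the entire family $\cG$ of concave functions $g \colon \D \to \R$ satisfying $0 \le g \le 1$. The structural fact driving the argument is that, although a member of $\cG$ may be discontinuous on $\partial\D$, on the compact set $\{x \in \D : \dist(x,\partial\D) \ge \delta\}$ every $g \in \cG$ is $(1/\delta)$-Lipschitz: any segment in that set can be extended by $\delta$ inside $\D$, and concavity together with $|g| \le 1$ forces the required slope bound. Since $\rho$ is Lebesgue measure on the convex body $\D$, any neighborhood of $\partial\D$ of small width has arbitrarily small $\rho$-mass, so the problem can be localized to the interior where $\cG$ is well behaved.

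Given $\e > 0$, I would first choose $\delta > 0$ so that $\rho(\{x \in \D : \dist(x,\partial\D) < 2\delta\}) < \e/4$, then pick a continuous cut-off $\chi \colon \D \to [0,1]$ which is $1$ on $\{x : \dist(x,\partial\D) \ge 2\delta\}$ and $0$ on $\{x : \dist(x,\partial\D) < \delta\}$. Weak convergence applied to the continuous function $1-\chi$ yields some $m_1$ with $\int(1-\chi)\,d\rho_m < \e/2$ for all $m \ge m_1$. For every $g \in \cG$, the product $g\chi$ extends by zero to a continuous function on $\D$ that is Lipschitz with a constant depending only on $\delta$ and $\chi$, so the family $\{g\chi : g \in \cG\}$ is uniformly bounded and equicontinuous on $\D$. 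By Arzel\`a--Ascoli it is totally bounded in $C(\D)$, so I can pick a finite sup-norm $\eta$-net $h_1,\dots,h_N$, with $\eta$ chosen small enough---in view of the uniform bound on the total masses $\rho_m(\D) \to \rho(\D)$---that every integration error is at most $\e/8$. A final application of weak convergence to the finitely many $h_i$ produces $m_2$ such that $|\int h_i\,d\rho_m - \int h_i\,d\rho| < \e/8$ for all $i$ and all $m \ge m_2$.

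Assembling the pieces, for $m \ge m_0 := \max(m_1,m_2)$ and any $g \in \cG$, using $0 \le g(1-\chi) \le 1-\chi$,
\[
\int_\D g\,d\rho_m - \int_\D g\,d\rho \le \int g\chi\,d\rho_m - \int g\chi\,d\rho + \int (1-\chi)\,d\rho_m < 3\e/8 + \e/2 < \e,
\]
after replacing $g\chi$ by the nearest $h_i$ in the first difference and invoking weak convergence on $h_i$. The main obstacle is precisely the possible discontinuity of members of $\cG$ at $\partial\D$, which prevents a direct application of Arzel\`a--Ascoli on all of $\D$; the cut-off $\chi$, coupled with the fact that $\rho$ assigns no mass to $\partial\D$, is the key device that reduces the problem to an equicontinuous family on a compact interior set and makes the uniform estimate possible.
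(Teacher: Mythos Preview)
Your argument is correct, and it takes a genuinely different route from the paper's proof. The paper exploits the explicit lattice description of $\rho_m$: it invokes the fact (from \cite{Bou14}) that on an inner region $\D_\gamma$ one has $m^{-1}\Gamma_m\cap\D_\gamma=m^{-1}\Z^n\cap\D_\gamma$ for large $m$, and then uses the elementary concavity inequality
\[
\int_{\a+[0,1/m]^n} g\,d\rho \;\ge\; m^{-n}\,2^{-n}\!\!\sum_{w\in\{0,1\}^n}\! g(\a+m^{-1}w)
\]
to compare $\int_\D g\,d\rho$ directly with the lattice sum defining $\int_\D g\,d\rho_m$. No compactness or approximation in $C(\D)$ is used; concavity enters only through this pointwise cube estimate.

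Your proof, by contrast, is a soft functional-analytic argument: you use concavity only to obtain a uniform interior Lipschitz bound, then pass to an equicontinuous family via a cut-off, and finish with Arzel\`a--Ascoli and a finite $\eta$-net. This has the advantage of using nothing about $\rho_m$ beyond the weak convergence $\rho_m\to\rho$ and the uniform mass bound, so it would apply verbatim to any such sequence of measures on a convex body. The paper's approach is more elementary (no compactness in function space) but is tied to the specific lattice structure of the Okounkov-body measures. Both arguments handle the boundary obstacle in the same spirit, by cutting away a thin collar of small $\rho$- and $\rho_m$-mass.
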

The main point here is the uniformity in $g$.
\begin{proof}
Observe that the sets
  \begin{equation*}
    \D_\g:=\{\a\in\R^n\mid \a+[-\g,\g]^n\subset\D\},
  \end{equation*}
  for $\g>0$, form a decreasing family of relatively compact subsets of $\D$ 
  whose union equals the interior of $\D$.
  Since $\partial\D$ has zero Lebesgue measure, we can pick $\g>0$
  such that $\rho(\D\setminus\D_{2\g})\le\e/2$.
  Since $\lim_m\rho_m=\rho$ weakly on $\D$, we get 
  $\varlimsup\rho_m(\D\setminus\D_{\g})\le\rho(\D\setminus\D_{2\g})$,
  so we can pick $m_1$ large enough so that 
  $\rho_m(\D\setminus\D_\g)\le\e$ for $m\ge m_1$.
   Now set $m_0=\max\{m_1,\g^{-1}\}$.  For $m\ge m_0$ we set 
  \begin{equation*}
    A'_m=\{\a\in\tfrac1m\Z^n\mid \a+[0,\tfrac1m]^n\subset\D\}
  \end{equation*}
  and 
  \begin{equation*}
    A_m=\{\a\in\tfrac1m\Z^n\mid \a+[-\tfrac1m,\tfrac1m]^n\subset\D\}.
  \end{equation*}
  If $\la$ denotes Lebesgue measure on the unit cube $[0,1]^n\subset\R^n$, we see that 
  \begin{multline*}
    \int_\D g\,d\rho
    \ge \sum_{\a\in A'_m}\int_{\a+[0,\tfrac1m]^n}g\,d\rho
    = m^{-n}\sum_{\a\in A'_m}\int_{[0,1]^n}g(\a+m^{-1}w)d\la(w)\\
    \ge m^{-n}\sum_{\a\in A'_m}2^{-n}\sum_{w\in\{0,1\}^n}g(\a+m^{-1}w)
    \ge m^{-n}\sum_{\a\in A_m}g(\a)\\
    \ge \int_{\D_\g}g\,d\rho_m
    \ge \int_{\D}g\,d\rho_m-\rho_m(\D\setminus\D_\g)
    \ge \int_{\D}g\,d\rho_m-\e.
  \end{multline*}
  Here the second inequality follows from the concavity of $g$,
  the fourth inequality from the inclusion $A_m\supset\Delta_\gamma\cap\frac1m\Z^n$,
  and the fifth inequality from $g\le 1$.
  This completes the proof.
\end{proof}
%
%
%
%
\subsection{Filtrations}\label{ss:filtrations}
By a \emph{filtration} $\cF$ on $R(X,L)=\bigoplus_mR_m$ we mean the data of a family
\begin{equation*}
  \cF^\la R_m\subset R_m
\end{equation*}
of $\C$-vector subspaces of $R_m$ for $m\in\N$ and $\la\in\R_+$, satisfying
\begin{itemize}
\item[(F1)] 
  $\cF^\lambda R_m\subset\cF^{\lambda'}R_m$ when $\la\ge\la'$; 
\item[(F2)] 
  $\cF^\la R_m=\bigcap_{\la'<\la}\cF^{\la'}R_m$ for $\la>0$;
\item[(F3)] 
  $\cF^0 R_m=R_m$ and  $\cF^\la R_m=0$ for $\la\gg 0$; 
\item[(F4)] 
  $\cF^\la R_m\cdot\cF^{\la'}R_{m'}\subset\cF^{\la+\la'}R_{m+m'}$.
\end{itemize}
The main example for us will be filtrations defined by valuations, see~\S\ref{S101}.
%
%
%
%
\subsection{Induced graded linear series}\label{S104}
Any filtration $\cF$ on $R(X,L)$ defines a family 
\begin{equation*}
  V^t_\bullet
  =V^{\cF,t}_\bullet
  =\bigoplus_mV^t_m
\end{equation*}
of graded linear series of $L$, indexed by $t\in\R_+$, and defined by
\begin{equation*}
  V^t_m:=\cF^{mt}R_m
\end{equation*}
for $m\in\N$.
Set 
\begin{equation*}
  T_m:=T_m(\cF):=\sup\{t\ge0\mid V^t_m\ne0\},
\end{equation*}
with the convention $T_m=0$ if $R_m=0$.
By~(F4) above,
$T_{m+m'}\ge\frac{m}{m+m'}T_m+\frac{m'}{m+m'}T_{m'}$, so
Fekete's Lemma implies that the limit
\begin{equation*}
  T(\cF):=\lim_{m\to\infty}T_m(\cF)\in[0,+\infty]
\end{equation*}
exists, and equals $\sup_mT_m(\cF)$.
By~\cite[Lemma~1.6]{BC11}, $V^t_\bullet$ contains an ample
linear series for any $t<T(\cF)$. It follows that
\begin{equation}\label{e120}
  T(\cF)=\sup\{t\ge 0\mid \vol(V^t_\bullet)>0\}.
\end{equation}
We say that the filtration $\cF$ is \emph{linearly bounded} if $T(\cF)<\infty$.
%
%
%
%
\subsection{Concave transform and limit measure}
Let $\D=\D(L)\subset\R^n$ be the Okounkov body of $R(X,L)$.
The filtration $\cF$ of $R(X,L)$ induces a \emph{concave transform} 
\begin{equation*}
  G=G^\cF\colon\D\to\R_+
\end{equation*}
defined as follows.
For $t\ge 0$, consider the graded linear series $V^t_\bullet\subset R(X,L)$
and the associated Okounkov body $\D^t=\D(V^t_\bullet)\subset\R^n$. 
We have $\D^t\supset\D^{t'}$ for $t<t'$, $\D^0=\D$ and $\D^t=\emptyset$ for $t>T(\cF)$. 
The function $G$ is now defined on $\D$ by 
\begin{equation}\label{e112}
  G(\a)=\sup\{t\in\R_+\mid \a\in\D^t\}.
\end{equation}
In other words, $\{G\ge t\}=\D^t$ for $0\le t\le T(\cF)$.
Thus $G$ is a concave, upper semicontinuous function on $\D$ with values in $[0,T(\cF)]$. 

As noted in the proof of~\cite[Lemma~2.22]{BKMS}, the Brunn-Minkowski 
inequality implies
\begin{prop}\label{p:Brunn}
  The function $t\to\vol(V^t_\bullet)^{1/n}$ is non-increasing and concave on $[0,T(\cF))$.
  As a consequence, it is continuous on $\R_+$, except possibly at $t=T(\cF)$.
\end{prop}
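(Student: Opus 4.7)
The plan is to reduce everything to the Okounkov body side and apply Brunn--Minkowski to the level sets of the concave transform $G$. First I would dispose of monotonicity: axiom (F1) gives $\cF^tR_m\supset\cF^{t'}R_m$ for $t\le t'$, so $\dim V^t_m\ge\dim V^{t'}_m$, and passing to the limit $\vol(V^t_\bullet)\ge\vol(V^{t'}_\bullet)$. Monotonicity of $\vol(V^t_\bullet)^{1/n}$ follows.

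For concavity on $[0,T(\cF))$, I would exploit the identity $\vol(V^t_\bullet)=n!\,\vol(\D^t)$ with $\D^t=\{G\ge t\}\subset\D$. This is legitimate because for every $t<T(\cF)$ the linear series $V^t_\bullet$ contains an ample series (recalled in \S\ref{S104}), so Theorem~\ref{T102} applies. Fix $t_0,t_1\in[0,T(\cF))$ and $\theta\in(0,1)$, and set $t_\theta=(1-\theta)t_0+\theta t_1$. For $\a_i\in\D^{t_i}$, concavity of $G$ yields
\[
G\bigl((1-\theta)\a_0+\theta\a_1\bigr)\ge (1-\theta)G(\a_0)+\theta G(\a_1)\ge t_\theta,
\]
so $(1-\theta)\D^{t_0}+\theta\D^{t_1}\subset\D^{t_\theta}$. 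The classical Brunn--Minkowski inequality for convex bodies then gives
\[
\vol(\D^{t_\theta})^{1/n}\ge (1-\theta)\vol(\D^{t_0})^{1/n}+\theta\vol(\D^{t_1})^{1/n},
\]
and multiplying by $(n!)^{1/n}$ produces the stated concavity of $f(t):=\vol(V^t_\bullet)^{1/n}$.

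For the continuity consequence I would argue as follows. On $(0,T(\cF))$ continuity is automatic for any concave function. For $t>T(\cF)$, by~\eqref{e120} we have $\vol(V^t_\bullet)=0$, so $f\equiv0$ there and continuity is trivial. Right-continuity at $t=0$ combines monotonicity and concavity: monotonicity gives $\limsup_{s\to 0^+}f(s)\le f(0)$, while for $0<s<t<T(\cF)$ the concavity inequality
\[
f(s)\ge (1-s/t)f(0)+(s/t)f(t)
\]
(applied with $t$ fixed and $s\to 0^+$) yields $\liminf_{s\to 0^+}f(s)\ge f(0)$; this uses only that $f(0)=\vol(L)^{1/n}$ is finite, which holds since $L$ is big.

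There is no serious obstacle: the argument is a textbook Brunn--Minkowski application once one has identified $\D^t$ as a convex sublevel set of the concave function $G$. The only point requiring care is verifying the hypothesis of Theorem~\ref{T102} in the full range $t\in[0,T(\cF))$, i.e. that $V^t_\bullet$ contains an ample series, which was recorded in \S\ref{S104} following~\cite{BC11}.
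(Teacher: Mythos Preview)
Your proposal is correct and follows essentially the same approach as the paper, which does not spell out a proof but simply attributes the statement to the Brunn--Minkowski inequality (citing \cite[Lemma~2.22]{BKMS}). Your argument is a faithful unpacking of that citation: identify $\vol(V^t_\bullet)=n!\,\vol(\D^t)$ via Theorem~\ref{T102}, use concavity of $G$ to obtain the Minkowski-sum inclusion $(1-\theta)\D^{t_0}+\theta\D^{t_1}\subset\D^{t_\theta}$, and apply Brunn--Minkowski; the continuity discussion is routine once concavity and monotonicity are in hand.
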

We define the \emph{limit measure} $\mu=\mu^\cF$ of the filtration $\cF$
as the pushforward
\begin{equation*}
  \mu=G_*\rho.
\end{equation*}
Thus $\mu$ is a positive measure on $\R_+$ of mass $\vol(\D)=\frac1{n!}\vol(L)$,
with support in $[0,T(\cF)]$.
\begin{cor}
  The limit measure $\mu$ satisfies
  \begin{equation*}
    \mu
    =-\frac1{n!}\frac{d}{dt}\vol(V^t_\bullet)
    =-\frac{d}{dt}\vol(\D^t)
  \end{equation*}
  and is absolutely continuous with respect to Lebesgue measure,
  except possibly at $t=T(\cF)$, where $\mu\{T(\cF)\}=\lim_{t\to T(\cF)-}\vol(V^t_\bullet)$.
\end{cor}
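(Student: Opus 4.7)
The plan is to compute the ``survival function'' $F(t):=\mu([t,+\infty))$ of the measure $\mu$ on $\R_+$ and then invoke the standard distributional identity $\mu=-dF$. By definition of pushforward, for $t\ge0$,
\[
  F(t)=\rho(G^{-1}([t,+\infty)))=\rho(\D^t)=\vol(\D^t),
\]
since $\{G\ge t\}=\D^t$ for $t\le T(\cF)$ and $\D^t=\emptyset$ for $t>T(\cF)$. For $0\le t<T(\cF)$ the graded linear series $V^t_\bullet$ contains an ample series, so Theorem~\ref{T102} gives $\vol(V^t_\bullet)=n!\vol(\D^t)$; at $t=T(\cF)$ this equality extends by left-continuity using that $\D^{T(\cF)}=\bigcap_{s<T(\cF)}\D^s$ as a decreasing intersection. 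Hence $F(t)=\vol(\D^t)=\tfrac1{n!}\vol(V^t_\bullet)$ for all $t\ge0$, and writing this as a distribution yields
\[
  \mu=-\frac{d}{dt}\vol(\D^t)=-\frac1{n!}\frac{d}{dt}\vol(V^t_\bullet),
\]
which is the first assertion.

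For the absolute continuity statement, I would invoke Proposition~\ref{p:Brunn}: $t\mapsto\vol(V^t_\bullet)^{1/n}$ is concave and non-increasing on $[0,T(\cF))$, hence locally Lipschitz there, and raising to the $n$th power (a Lipschitz operation on bounded intervals) preserves local absolute continuity. Thus $F$ is locally absolutely continuous on $[0,T(\cF))$, which translates via the identity $\mu=-dF$ to absolute continuity of $\mu|_{[0,T(\cF))}$ with respect to Lebesgue measure. Since $\supp\mu\subset[0,T(\cF)]$, the only possible atom is at $T(\cF)$, and its mass is
\[
  \mu(\{T(\cF)\})=F(T(\cF)^-)-F(T(\cF)^+)=\lim_{t\to T(\cF)^-}\vol(\D^t),
\]
using $F(T(\cF)^+)=0$ since $\D^t=\emptyset$ for $t>T(\cF)$; re-expressing $\vol(\D^t)$ in terms of $\vol(V^t_\bullet)$ gives the claimed formula.

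The main subtlety I anticipate is the boundary behavior at $t=T(\cF)$: Theorem~\ref{T102} requires an ample series, which is only guaranteed for $t<T(\cF)$, and Proposition~\ref{p:Brunn} explicitly leaves open a possible discontinuity at the endpoint. Both issues are handled by the monotone-limit identity $\vol(\D^{T(\cF)})=\lim_{t\to T(\cF)^-}\vol(\D^t)$, which follows from the fact that the $\D^t$ form a decreasing family of compact convex sets with intersection $\D^{T(\cF)}$, together with the general existence of the volume limit~\eqref{e119}. Apart from this boundary bookkeeping, the proof is a routine translation between pushforward measures and their survival functions.
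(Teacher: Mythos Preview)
Your proof is correct and is exactly the argument the paper has in mind: the Corollary is stated without proof, as an immediate consequence of the definition $\mu=G_*\rho$, the identity $\{G\ge t\}=\D^t$, and Proposition~\ref{p:Brunn}, and you have filled in precisely those details. One minor remark: your computation of the atom naturally yields $\mu\{T(\cF)\}=\lim_{t\to T(\cF)^-}\vol(\D^t)=\tfrac1{n!}\lim_{t\to T(\cF)^-}\vol(V^t_\bullet)$, so the statement as printed is off by a factor of $n!$; this is a typo in the paper rather than an error in your argument.
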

As a companion to $T(\cF)$ we now define another invariant of $\cF$:
\begin{equation*}
  S(\cF)
  :=\frac1{\vol(L)}\int_0^\infty\vol(V^t_\bullet)\,dt
  =\frac{n!}{\vol(L)}\int_0^\infty t\,d\mu(t)
  =\frac1{\vol(\D)}\int_\D G\,d\rho.
\end{equation*}
Note that $\mu^\cF$, $S(\cF)$, and $T(\cF)$ do not depend on the 
choice of the auxiliary valuation $z$.
\begin{rmk}
  The invariant $S(\cF)$ can also be interpreted as the (suitably normalized) volume
  of the filtered Okounkov body associated to $\cF$, 
  see~\cite[Corollary~1.13]{BC11}.
\end{rmk}
\begin{lem}\label{L106}
  We have $\frac1{n+1}T(\cF)\le S(\cF)\le T(\cF)$.
\end{lem}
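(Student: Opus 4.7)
The bound $S(\cF) \le T(\cF)$ is immediate from the formula $S(\cF) = \frac{1}{\vol(\D)}\int_\D G \, d\rho$ together with the fact that the concave transform $G$ takes values in $[0, T(\cF)]$; averaging a function bounded by $T(\cF)$ against a probability measure yields at most $T(\cF)$.

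For the lower bound $\frac{1}{n+1} T(\cF) \le S(\cF)$, my plan is to feed the Brunn--Minkowski concavity of Proposition~\ref{p:Brunn} into the alternative formula $S(\cF) = \frac{1}{\vol(L)} \int_0^\infty \vol(V^t_\bullet) \, dt$. Fix $T' \in (0, T(\cF))$. By Proposition~\ref{p:Brunn}, the function $f(t) := \vol(V^t_\bullet)^{1/n}$ is concave and non-increasing on $[0, T']$, with $f(0) = \vol(L)^{1/n}$ and $f(T') \ge 0$. Concavity then forces the graph of $f$ to lie above the chord from $(0, f(0))$ to $(T', f(T'))$, so
\[
  f(t) \ge f(0) \cdot \frac{T' - t}{T'} \qquad \text{for } t \in [0, T'].
\]
Raising to the $n$-th power gives $\vol(V^t_\bullet) \ge \vol(L)\bigl(1 - t/T'\bigr)^n$ on $[0, T']$. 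Substituting into the integral and using the elementary identity $\int_0^{T'}(1 - t/T')^n \, dt = T'/(n+1)$ yields $S(\cF) \ge T'/(n+1)$. Letting $T' \nearrow T(\cF)$ gives the claim when $T(\cF) < \infty$; in the case $T(\cF) = +\infty$, the same estimate gives $S(\cF) \ge T'/(n+1)$ for every finite $T'$, forcing $S(\cF) = +\infty$.

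The whole argument is essentially calculus once Proposition~\ref{p:Brunn} is available, so I do not anticipate a real obstacle. The only mild subtlety is the possibility that $T(\cF) = +\infty$ or that $f$ has a jump at $t = T(\cF)$, but both are handled uniformly by restricting the chord estimate to $[0, T']$ with $T' < T(\cF)$ and then passing to the supremum.
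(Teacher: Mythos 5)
Your proof is correct and follows essentially the same route as the paper: the upper bound from the boundedness of the integrand (you use the concave transform $G\le T(\cF)$, the paper uses $\vol(V^t_\bullet)\le\vol(L)$ vanishing past $T(\cF)$ -- two forms of the same definition), and the lower bound from Brunn--Minkowski concavity via the chord estimate $\vol(V^t_\bullet)\ge\vol(L)(1-t/T(\cF))^n$. Your extra care with the cutoff $T'<T(\cF)$ and the case $T(\cF)=+\infty$ is a harmless refinement of what the paper states more tersely.
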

\begin{proof}
  The second inequality is clear since $\vol(V^t_\bullet)\le\vol(L)$ and 
  $\vol(V^t_\bullet)=0$ for $t>T(\cF)$. The first follows from the 
  concavity of $t\mapsto\vol(V^t_\bullet)^{1/n}$, which yields
  $\vol(V^t_\bullet)\ge\vol(L)(1-\frac{t}{T(\cF)})^n$.
\end{proof}
\begin{rmk}\label{R101}
  At least when $L$ is ample, a filtration on $R(X,L)$ induces a metric on the Berkovich
  analytification of $L$ with respect to the trivial absolute value on $\C$.
  It is shown in~\cite{trivval} that $S$ and $T$ extend as ``energy-like''
  functionals on the space of such metrics. As a special case of that analysis,
  it is shown that $S(\cF)\le\frac{n}{n+1}T(\cF)$. The case when the filtration is
  associated to a test configuration is treated in~\cite{BHJ1}.
\end{rmk}
%
%
%
%
\subsection{Jumping numbers}
Given a filtration $\cF$ as above, consider the \emph{jumping numbers}
\begin{equation*}
  0\le a_{m,1}\le\dots\le a_{m,N_m}=mT_m(\cF),
\end{equation*}
defined for $m\in M(L)$ by 
\begin{equation*}
  a_{m,j}=a_{m,j}(\cF)=\inf\{\la\in\R_+\mid \codim\cF^\la R_m\ge j\}
\end{equation*}
for $1\le j\le N_m$.
Define a positive measure $\mu_m=\mu_m^\cF$ on $\R_+$ by 
\begin{equation*}
  \mu_m
  =\frac1{m^n}\sum_j\d_{m^{-1}a_{m,j}}
  =-\frac1{m^n}\frac{d}{dt}\dim\cF^{mt}R_m.
\end{equation*}
The following result is~\cite[Theorem~1.11]{BC11}.
\begin{thm}\label{T101}
  If $\cF$ is linearly bounded, \ie~$T(\cF)<+\infty$, then we have 
  \begin{equation*}
    \lim_{m\to\infty}\mu_m=\mu
  \end{equation*}
  in the weak sense of measures on $\R_+$.
\end{thm}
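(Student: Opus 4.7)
The plan is to reduce weak convergence of the measures $\mu_m$ to pointwise convergence of their tail distribution functions, and then to apply the Okounkov body results already established. By the linear boundedness assumption $T(\cF)<\infty$, every jumping number satisfies $a_{m,j}/m \le T_m(\cF) \le T(\cF)$, so each $\mu_m$ is supported on the compact interval $[0,T(\cF)]$; since $\mu = G_*\rho$ and $G$ takes values in $[0,T(\cF)]$, the same holds for $\mu$. Moreover the total masses match asymptotically:
\[
\mu_m(\R_+) = m^{-n}N_m \longrightarrow \vol(L)/n! = \vol(\D) = \mu(\R_+)
\]
by asymptotic Riemann--Roch for the big line bundle $L$.

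Next I would compute the tails explicitly. Using axiom~(F2), counting jumping numbers $\ge mt$ gives
\[
\mu_m\bigl([t,\infty)\bigr) = m^{-n}\#\{j : a_{m,j}\ge mt\} = m^{-n}\dim \cF^{mt}R_m = m^{-n}\dim V^t_m,
\]
while from the definition of the concave transform,
\[
\mu\bigl([t,\infty)\bigr) = \rho\bigl(\{G\ge t\}\bigr) = \rho(\D^t) = \vol(V^t_\bullet)/n!.
\]
For $0\le t<T(\cF)$, the graded subseries $V^t_\bullet$ contains an ample series by~\cite[Lemma~1.6]{BC11} (as recalled in~\S\ref{S104}), so Theorem~\ref{T102} applied to $V^t_\bullet$ yields $n!\,m^{-n}\dim V^t_m \to \vol(V^t_\bullet)$; equivalently, $\mu_m([t,\infty)) \to \mu([t,\infty))$ for every $t\in[0,T(\cF))$. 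For $t>T(\cF)$ both tails vanish, since $V^t_m=0$ on the left and $\D^t=\emptyset$ on the right. Combined with tightness (compact support) and the matching of total masses, pointwise convergence of tails at every continuity point of $\mu$ yields $\mu_m\to\mu$ weakly by a standard Helly/Portmanteau argument.

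The main subtle point is the possible atom of $\mu$ at the right endpoint $t=T(\cF)$, where $V^t_\bullet$ need no longer contain an ample series, so Theorem~\ref{T102} does not directly control $m^{-n}\dim V^{T(\cF)}_m$ and one cannot read off the tail convergence there from the volume asymptotics of $V^{T(\cF)}_\bullet$ alone. However, Proposition~\ref{p:Brunn} forces $\mu$ to be absolutely continuous on the open interval $[0,T(\cF))$, so the only possible atom of $\mu$ lies at the endpoint; its mass is pinned down a posteriori by the asymptotic equality of total masses together with the already-established tail convergence on $[0,T(\cF))$. This endpoint bookkeeping is the only technical wrinkle in the argument.
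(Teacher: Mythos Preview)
The paper does not prove this statement at all: it is simply quoted as~\cite[Theorem~1.11]{BC11}, so there is no ``paper's own proof'' to compare against. Your sketch is a correct outline of the standard argument (and is essentially how the result is proved in~\cite{BC11}): reduce to convergence of the tails $\mu_m([t,\infty))=m^{-n}\dim V^t_m$ using Theorem~\ref{T102} applied to $V^t_\bullet$ for $t<T(\cF)$, combine with convergence of total masses, and invoke Portmanteau.

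One small clarification on your last paragraph: you do not actually need to ``pin down'' the atom at $T(\cF)$ at all. The Portmanteau theorem only requires convergence of $\mu_m([t,\infty))$ at continuity points of the limiting distribution, and by the corollary to Proposition~\ref{p:Brunn} the only possible atom of $\mu$ is at $T(\cF)$; hence every $t\ne T(\cF)$ is a continuity point, and you already have tail convergence there. The endpoint $t=T(\cF)$ is simply excluded from the hypothesis of Portmanteau, so no separate bookkeeping is needed. Apart from this cosmetic point, the argument is sound.
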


For $m\in M(L)$, consider the rescaled sum of the jumping numbers:
\begin{equation*}
  S_m(\cF)
  =\frac1{mN_m}\sum_ja_{m,j}
  =\frac{m^n}{N_m}\int_0^\infty t\,d\mu_m(t).
\end{equation*}
Clearly $0\le S_m(\cF)\le T_m(\cF)$.
\begin{lem}\label{L105}
  For any linearly bounded filtration $\cF$ on $R(X,L)$ we have 
  \begin{equation}\label{e111}
    S_m(\cF)\le\frac{m^n}{N_m}\int_\D G\,d\rho_m,
  \end{equation}
  for any $m\in M(L)$.
  Further, we have $\lim_{m\to\infty}S_m(\cF)=S(\cF)$.
\end{lem}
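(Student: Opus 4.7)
The plan is to prove the inequality by introducing an auxiliary function $g_m$ on $\Gamma_m$ and then deduce convergence directly from Theorem~\ref{T101}. For $\a \in \Gamma_m$, define
$$g_m(\a) := \sup\{t \ge 0 : \a \in \Gamma_m(V^t_\bullet)\}.$$
Since $V^t_m = \bigcap_{t'<t} V^{t'}_m$ by axiom~(F2), and these form a decreasing family of finite-dimensional subspaces of $R_m$, the family stabilizes as $t' \nearrow g_m(\a)$, so the supremum is attained. Thus $\{\a \in \Gamma_m : g_m(\a) \ge t\} = \Gamma_m(V^t_\bullet)$. Combined with the inclusion $\Gamma_m(V^t_\bullet) \subset m\D^t$ and the definition~\eqref{e112} of $G$, this yields the pointwise bound $g_m(\a) \le G(m^{-1}\a)$.

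The key identity now follows by layer-cake: using $|\Gamma_m(V^t_\bullet)| = \dim_\C V^t_m = \dim_\C \cF^{mt}R_m$ (the first equality because $\ord_z$ has transcendence degree $0$; see~\S\ref{S102}),
$$\sum_{\a \in \Gamma_m}g_m(\a) = \int_0^\infty |\Gamma_m(V^t_\bullet)|\,dt = \frac{1}{m}\int_0^\infty \dim_\C \cF^\la R_m\,d\la = \frac{1}{m}\sum_j a_{m,j},$$
where the last equality is the standard identity $\sum_j a_{m,j} = \int_0^\infty \dim_\C \cF^\la R_m\,d\la$ obtained by counting jumping numbers. Combining with $g_m(\a) \le G(m^{-1}\a)$ gives
$$S_m(\cF) = \frac{1}{mN_m}\sum_j a_{m,j} = \frac{1}{N_m}\sum_{\a \in \Gamma_m} g_m(\a) \le \frac{1}{N_m}\sum_{\a \in \Gamma_m} G(m^{-1}\a) = \frac{m^n}{N_m}\int_\D G\,d\rho_m,$$
which is precisely~\eqref{e111}.

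For the convergence, Theorem~\ref{T101} yields $\mu_m \to \mu$ weakly, and linear boundedness of $\cF$ ensures that all of $\mu, \mu_m$ are supported in the compact interval $[0, T(\cF)]$ (indeed, $a_{m,j}/m \le T_m(\cF) \le T(\cF)$). Testing against the continuous bounded function $t\mapsto t$ therefore gives $\int t\,d\mu_m \to \int t\,d\mu$, and combining with $\lim_m m^n/N_m = n!/\vol(L) = 1/\vol(\D)$ produces
$$\lim_{m\to\infty} S_m(\cF) = \lim_{m\to\infty}\frac{m^n}{N_m}\int_0^\infty t\,d\mu_m(t) = \frac{1}{\vol(\D)}\int_0^\infty t\,d\mu(t) = S(\cF).$$
There is no serious obstacle; the only mildly delicate point is verifying that the sup defining $g_m(\a)$ is attained, which is a finite-dimensional continuity argument from~(F2).
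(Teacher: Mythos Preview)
Your proof is correct and follows essentially the same idea as the paper's: both compare the normalized jumping numbers $m^{-1}a_{m,j}$ to the values $G(m^{-1}\a)$ for $\a\in\Gamma_m$, and both deduce convergence directly from Theorem~\ref{T101}.

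The execution differs slightly. The paper picks a basis $(s_j)$ of $R_m$ compatible with the filtration $\cF$ (so that $s_j\in\cF^{a_{m,j}}R_m$), sets $\a_j=\ord_z(s_j)$, and argues $G(m^{-1}\a_j)\ge m^{-1}a_{m,j}$ term by term; this implicitly uses that one can choose the basis so that the $\a_j$ are distinct and hence enumerate $\Gamma_m$ (a standard two-flag linear algebra fact left unstated). Your layer-cake computation $\sum_{\a\in\Gamma_m}g_m(\a)=\int_0^\infty\dim V_m^t\,dt=\tfrac{1}{m}\sum_ja_{m,j}$ bypasses this basis choice entirely, showing directly that the multiset $\{g_m(\a)\}_{\a\in\Gamma_m}$ has the same sum as $\{m^{-1}a_{m,j}\}_j$. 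This is a clean reformulation but not a different strategy.
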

\begin{proof}
  The equality $\lim_mS_m(\cF)=S(\cF)$ follows from Theorem~\ref{T101}.
  For the inequality, pick a basis $s_1,s_2,\dots,s_{N_m}$ of $R_m$
  such that $a_{m,j}=\sup \{ \la   \in \R_+ \mid s_j \in \cF^\la R_m \}$ for $1\le j\le N_m$. Set $\a_j:=\ord_z(s_j)$.
  Since $\ord_z$ has transcendence degree 0, we have $\Gamma_m=\{\a_1,\dots,\a_m\}$.
  Thus the right hand side of~\eqref{e111} equals $\frac1{N_m}\sum_{j=1}^{N_m}G(m^{-1}\a_j)$
  whereas the left-hand side is equal to $\frac1{N_m}\sum_{j=1}^{N_m}m^{-1}a_{m,j}$, so it 
  suffices to prove $G(m^{-1}\a_j)\ge m^{-1}a_{m,j}$ for $1\le j\le N_m$. 
  But this is clear from~\eqref{e112}, since $\a_j=\ord_z(s_j)$ and 
  $s_j\in\cF^{a_{m,j}}R_m$ imply $m^{-1}\a_j\in\D^{m^{-1}a_{m,j}}$.
\end{proof}
\begin{cor}\label{C101}
  For every $\e>0$ there exists $m_0=m_0(\e)>0$ such that 
  \begin{equation*}
    S_m(\cF)\le(1+\e)S(\cF)
  \end{equation*}
  for any $m\ge m_0$ and any linearly bounded filtration $\cF$ on $R(X,L)$.
\end{cor}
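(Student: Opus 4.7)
The plan is to combine Lemma~\ref{L105} with the uniform estimate of Lemma~\ref{L102}, normalising the concave transform $G^{\cF}$ appropriately and then using Lemma~\ref{L106} to convert the resulting additive error into a multiplicative one.

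First, I would deal with the trivial case $T(\cF)=0$, in which $G^\cF\equiv 0$ on $\D$, so that $S_m(\cF)=S(\cF)=0$ and the bound is automatic. Assume then $T(\cF)>0$ and set
\begin{equation*}
  g^\cF := \frac{G^\cF}{T(\cF)}\colon \D\to[0,1].
\end{equation*}
By construction $g^\cF$ is concave with $0\le g^\cF\le1$, so Lemma~\ref{L102} applied with some auxiliary $\e'>0$ (to be chosen) produces an $m_0=m_0(\e')$, \emph{independent of $\cF$}, such that
\begin{equation*}
  \int_\D g^\cF\,d\rho_m \le \int_\D g^\cF\,d\rho + \e'
  \qquad\text{for all }m\ge m_0.
\end{equation*}
Multiplying through by $T(\cF)$ and recalling that $\int_\D G^\cF\,d\rho = \vol(\D)\,S(\cF)$, I obtain
\begin{equation*}
  \int_\D G^\cF\,d\rho_m \le \vol(\D)\,S(\cF) + \e'\,T(\cF).
\end{equation*}

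Next, Lemma~\ref{L106} gives $T(\cF)\le (n+1)\,S(\cF)$, so that the additive error is controlled multiplicatively by $S(\cF)$. Combining with Lemma~\ref{L105},
\begin{equation*}
  S_m(\cF) \le \frac{m^n}{N_m}\int_\D G^\cF\,d\rho_m
  \le \frac{m^n}{N_m}\bigl(\vol(\D) + (n+1)\e'\bigr)\,S(\cF).
\end{equation*}
Finally I would use Theorem~\ref{T102}, which yields $m^n/N_m\to 1/\vol(\D)$ as $m\to\infty$ (a statement depending only on $L$, not on $\cF$), to choose $m_1$ so that $m^n/N_m\le (1+\e_1)/\vol(\D)$ for $m\ge m_1$; then
\begin{equation*}
  S_m(\cF)\le (1+\e_1)\Bigl(1+\tfrac{(n+1)\e'}{\vol(\D)}\Bigr) S(\cF)
  \qquad\text{for }m\ge\max(m_0,m_1),
\end{equation*}
and choosing $\e_1,\e'$ small enough makes this $\le (1+\e)S(\cF)$.

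The delicate point is the uniformity in $\cF$, and this is precisely what is captured by Lemma~\ref{L102}: the estimate there is uniform over all concave $g\colon\D\to[0,1]$. Without the renormalisation $G^\cF\mapsto G^\cF/T(\cF)$ one would be trying to apply a pointwise weak convergence result to an unbounded family, which would fail; the renormalisation, together with the linear bound $T(\cF)\le (n+1)S(\cF)$, is what makes the argument go through uniformly.
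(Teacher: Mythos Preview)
Your proof is correct and follows essentially the same approach as the paper's: normalize $G^\cF$ by $T(\cF)$ to apply Lemma~\ref{L102} uniformly, convert the resulting additive error $\e' T(\cF)$ into a multiplicative one via Lemma~\ref{L106}, and absorb the factor $m^n/N_m$ using Theorem~\ref{T102}. The only cosmetic differences are that the paper uses a single auxiliary parameter $\e'$ (chosen so that $(V^{-1}+\e')(V+(n+1)\e')\le 1+\e$) rather than your two parameters $\e',\e_1$, and does not separate out the trivial case $T(\cF)=0$.
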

\begin{proof}
  Set $V:=\vol(\D)$. 
  Pick $\e'>0$ with $(V^{-1}+\e')(V+(n+1)\e')\le(1+\e)$. 
  Note that $0\le G\le T(\cF)$.
  Applying Lemma~\ref{L102} to $g=G/T(\cF)$ we pick 
  $m_0\in M(L)$ such that 
  \begin{equation*}
    \int_\D G\,d\rho_m
    \le\int_\D G\,d\rho+\e'T(\cF)
    =VS(\cF)+\e'T(\cF)
    \le(V+(n+1)\e')S(\cF)
  \end{equation*}    
  for $M(L)\ni m\ge m_0$, 
  where we have used Lemma~\ref{L106} in the last inequality.
  By Theorem~\ref{T102} we may also assume $\frac{m^n}{N_m}\le V^{-1}+\e'$ 
  for $M(L)\ni m\ge m_0$. Lemma~\ref{L105} now yields
  \begin{equation*}
    S_m(\cF)
    \le\frac{m^n}{N_m}\int_\D G\,d\rho_m
    \le(V^{-1}+\e')(V+(n+1)\e')S(\cF)
    \le (1+\e)S(\cF),
  \end{equation*}
  for $M(L)\ni m\ge m_0$, which completes the proof.
\end{proof}
%
%
%
%
\subsection{$\N$-filtrations.}\label{ss:Nfiltrations}
A filtration $\cF$ of $R(X,L)$ is an \emph{$\N$-filtration} if all its jumping numbers are integers, that is,
\begin{equation*}
  \cF^\la R_m=\cF^{\lceil \la \rceil}R_m
\end{equation*}
for all $\la \in \R_+$ and $m \in M(L)$. 
Any filtration $\cF$  induces an $\N$-filtration $\cF_\N$ by setting
\begin{equation*}
  \cF_\N^\la R_m:= \cF^{\lceil \la \rceil} R_m.
\end{equation*}
Note that $\cF_\N$ is a filtration of $R(X,L)$. Indeed, conditions~(F1)--(F3) in~\S\ref{ss:filtrations}
are trivially satisfied and~(F4)
follows from ${\lceil \la \rceil + \lceil \la' \rceil \ge \lceil \la + \la' \rceil}$.

The jumping numbers of $\cF_\N$ and $\cF$ are related by $a_{m,j}(\cF_\N) = \lfloor a_{m,j}(\cF) \rfloor $. 
This implies
\begin{prop}\label{p:truncations}
  If $\cF$ is a filtration of $R(X,L)$, then
  \[
    T_m(\cF_\N) = \lfloor  m\cdot T_m(\cF) \rfloor /m \quad \text{and }
    S_m(\cF)  -m^{-1} \le S_m(\cF_\N) \le S_m(\cF)
  \]
  for $m\in M(L)$. As a consequence, $T(\cF_\N) = T(\cF)$, $S(\cF_\N) = S(\cF)$, 
  and $\mu^{\cF_\N}=\mu^\cF$.
\end{prop}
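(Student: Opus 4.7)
The plan is to reduce the entire proposition to a single identity for jumping numbers, namely $a_{m,j}(\cF_\N) = \lceil a_{m,j}(\cF) \rceil$, and then propagate the consequences through the definitions of $T_m$, $S_m$, and $\mu_m$. Establishing this identity is the main obstacle; once it is in hand, the rest is a short chain of elementary estimates.

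To prove the identity, I would unwind the definition to get $a_{m,j}(\cF_\N) = \inf\{\la \ge 0 : \codim \cF^{\lceil \la \rceil} R_m \ge j\}$. Since the integer-valued function $k \mapsto \codim \cF^k R_m$ is non-decreasing, the set of $k$ with $\codim \cF^k R_m \ge j$ is of the form $[k_j,\infty)\cap\N$; the condition $\lceil \la \rceil \ge k_j$ then gives $\la > k_j - 1$, whose infimum is $k_j - 1$. One then identifies $k_j - 1$ with $\lceil a_{m,j}(\cF) \rceil$ by comparing against the analogous $\inf$ defining $a_{m,j}(\cF)$; axiom (F2) (left-continuity) is what forces the expected answer at integer jumping values, where otherwise there would be an off-by-one ambiguity.

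Given the jumping-number identity, both displayed formulas are immediate. For the $T_m$ formula, observe that $m T_m(\cF) = a_{m, N_m}(\cF)$ directly from the definitions, and likewise for $\cF_\N$, so specialization to $j=N_m$ gives the claim. For the $S_m$ bounds, rewrite $m N_m \cdot S_m(\cF_\N) = \sum_j \lceil a_{m,j}(\cF) \rceil$ and apply $x \le \lceil x \rceil \le x+1$ summand-by-summand. The asymptotic statements $T(\cF_\N) = T(\cF)$ and $S(\cF_\N) = S(\cF)$ then follow from $|T_m(\cF_\N) - T_m(\cF)| \le m^{-1}$ and $|S_m(\cF_\N) - S_m(\cF)| \le m^{-1}$ respectively by passing to the limit.

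For the measure identity I would compare $\mu_m^{\cF_\N}$ and $\mu_m^{\cF}$ directly. Both are atomic measures of total mass $N_m/m^n$, and corresponding atoms differ by at most $m^{-1}$. For any continuous compactly supported $\phi$ on $\R_+$ with modulus of continuity $\omega_\phi$,
\[
  \Bigl|\int \phi\, d\mu_m^{\cF_\N} - \int \phi\, d\mu_m^{\cF}\Bigr| \le \frac{N_m}{m^n}\,\omega_\phi(m^{-1}),
\]
which tends to zero because $N_m/m^n$ is bounded and $\omega_\phi(m^{-1}) \to 0$. Since $\mu_m^{\cF_\N}$ and $\mu_m^{\cF}$ converge weakly to $\mu^{\cF_\N}$ and $\mu^{\cF}$ respectively by Theorem~\ref{T101} (noting that $T(\cF_\N)=T(\cF)$ has already been established, so linear boundedness is preserved), the two limits must coincide.
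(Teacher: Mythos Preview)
Your approach matches the paper's: reduce everything to the jumping-number identity and then read off the rest. However, the identification $k_j - 1 = \lceil a_{m,j}(\cF) \rceil$ is off by a floor/ceiling swap. You correctly compute $a_{m,j}(\cF_\N) = k_j - 1$ with $k_j$ the least integer satisfying $\codim \cF^{k_j} R_m \ge j$. But axiom~(F2) makes $\la \mapsto \codim \cF^\la R_m$ left-continuous, so the set $\{\la : \codim \cF^\la R_m \ge j\}$ is the \emph{open} half-line $(a_{m,j}(\cF),\infty)$; hence $k_j$ is the least integer \emph{strictly greater than} $a_{m,j}(\cF)$, giving $k_j - 1 = \lfloor a_{m,j}(\cF) \rfloor$, not $\lceil a_{m,j}(\cF) \rceil$. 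A one-line check: if $\dim R_m = 1$ with $\cF^\la R_m = R_m$ for $\la \le 1/2$ and $= 0$ for $\la > 1/2$, then $\cF_\N^\la R_m = \cF^{\lceil\la\rceil} R_m = 0$ for every $\la > 0$, so $a_{m,1}(\cF_\N) = 0 = \lfloor 1/2 \rfloor$, not $1 = \lceil 1/2 \rceil$.

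This is in fact a slip in the paper's own stated identity as well; with the correct formula the displayed estimates become $T_m(\cF_\N) = \lfloor m\, T_m(\cF) \rfloor/m$ and $S_m(\cF) - m^{-1} \le S_m(\cF_\N) \le S_m(\cF)$. The substantive content---namely $|T_m(\cF_\N) - T_m(\cF)| \le m^{-1}$ and $|S_m(\cF_\N) - S_m(\cF)| \le m^{-1}$, and hence $T(\cF_\N) = T(\cF)$, $S(\cF_\N) = S(\cF)$, $\mu^{\cF_\N} = \mu^\cF$---is unaffected, and your derivations of these consequences (including the modulus-of-continuity argument for the measures) go through unchanged once the sign of the rounding is fixed.
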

As a consequence, we obtain the following formula for $S(\cF)$,
similar to~\cite[Lemma~2.2]{FO16}.
\begin{cor}\label{c:Sformula}
  If $\cF$ is a filtration of $R(X,L)$, then
  \begin{equation*}
    S(\cF)
    =S(\cF_\N)
    = \lim_{m \to \infty}\frac1{mN_m}\sum_{j \ge 1} \dim\cF^j R_m.
  \end{equation*}
\end{cor}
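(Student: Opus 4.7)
The plan is to reduce the identity to an elementary Abel summation applied to the $\N$-filtration $\cF_\N$.

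By Proposition~\ref{p:truncations} we already have $S(\cF)=S(\cF_\N)$, and by Lemma~\ref{L105} applied to $\cF_\N$ we have $\lim_m S_m(\cF_\N)=S(\cF_\N)$ (assuming $\cF$ is linearly bounded; otherwise both sides of the claimed formula equal $+\infty$). Moreover, $\cF_\N^j R_m=\cF^{\lceil j\rceil}R_m=\cF^jR_m$ for $j\in\N$, so the sum appearing on the right-hand side is unchanged upon replacing $\cF$ by $\cF_\N$. It therefore suffices to establish the pre-limit identity
\[
mN_m\,S_m(\cF_\N)=\sum_{i\ge1}\dim\cF_\N^iR_m
\]
for each $m\in M(L)$, after which the corollary follows by taking $m\to\infty$.

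For this last identity, set $d_i:=\dim\cF_\N^iR_m$, which is a non-increasing sequence of non-negative integers with $d_0=N_m$ and $d_i=0$ for $i$ large. Because $\cF_\N$ is an $\N$-filtration, $\dim\cF_\N^\lambda R_m=d_{\lceil\lambda\rceil}$, and the function $\lambda\mapsto\codim\cF^\lambda R_m$ is constant on each interval $(i-1,i]$ with value $N_m-d_i$. A direct inspection of the definition $a_{m,j}=\inf\{\lambda:\codim\cF^\lambda R_m\ge j\}$ shows that for each $i\ge 0$ there are exactly $d_i-d_{i+1}$ indices $j$ with $a_{m,j}(\cF_\N)=i$. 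Summation by parts then gives
\[
\sum_{j=1}^{N_m}a_{m,j}(\cF_\N)=\sum_{i\ge0}i(d_i-d_{i+1})=\sum_{i\ge1}d_i,
\]
which is the required identity after dividing by $mN_m$.

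No substantive obstacle is anticipated: the only nontrivial input is the convergence $S_m(\cF_\N)\to S(\cF_\N)$ already proved in Lemma~\ref{L105}, and everything else is routine bookkeeping around the passage $\cF\rightsquigarrow\cF_\N$ together with the telescoping identity above.
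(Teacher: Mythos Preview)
Your proof is correct and follows essentially the same route as the paper's: reduce to $\cF_\N$ via Proposition~\ref{p:truncations}, establish the pre-limit identity $mN_mS_m(\cF_\N)=\sum_{i\ge1}\dim\cF_\N^iR_m$ by Abel summation on the integer jumping numbers, and pass to the limit using Lemma~\ref{L105}. The paper's own proof is simply a terser version of this, writing the summation by parts in one line and leaving the convergence $S_m(\cF_\N)\to S(\cF_\N)$ implicit.
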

\begin{proof}
  Since the jumping numbers of $\cF_\N$ are integers, we have
  \begin{equation*}
    S_m(\cF_\N)
    =\frac1{mN_m}\sum_{j\ge 0}  j\left( \dim\cF_\N^{j} R_m-\dim\cF_\N^{j+1} R_m\right)
    =\frac1{mN_m}\sum_{j \ge 1} \dim\cF_\N^j R_m
  \end{equation*}
  for any $m\in M(L)$.
  Letting $m\to\infty$ and using Proposition~\ref{p:truncations} completes the proof.
\end{proof}
%
%
%
%
%
%
\section{Global invariants of valuations}\label{S105}
As before, $X$ is a normal projective variety of dimension $n$ over $\C$.
Whenever we discuss log discrepancy, $X$ will be assumed to have klt singularities.

Let $L$ be a big line bundle on $X$. Following~\cite{BKMS} we study invariants
of valuations on $X$ defined using the section ring of $L$.
The new results here are Corollary~\ref{C102} and the results in~\S\ref{S108}.
%
%
%
%
\subsection{Induced filtrations}\label{S101}
Any valuation $v\in\Val_X$ induces a filtration $\cF_v$ on $R(X,L)$ via
\begin{equation*}
  \cF^t_vR_m:=\{s\in R_m\mid v(s)\ge t\}
\end{equation*}
for $m\in\N$ and $t\in\R_+$, where we recall that $R_m=H^0(X,mL)$.

We say that $v$ has \emph{linear growth} if $\cF_v$ is linearly bounded.
By Lemma~2.8 in~\cite{BKMS} this notion depends only on $v$ as a valuation,
and not on pair $(X,L)$ (i.e. if $\rho:X' \to X$ is a proper birational morphism with $X'$ normal, 
the condition can be checked on the pair $(X',L')$, where $L'= \rho^*L$). Theorem~2.16 in~\textit{loc.\,cit.} states that if 
$v$ is centered at a closed point on $X$, then $v$ has linear growth iff $\vol(v)>0$.
\begin{lem}
  Any divisorial valuation has linear growth.
  If $X$ has klt singularities, then any $v\in\Val_X$ satisfying $A(v)<\infty$
  has linear growth. 
\end{lem}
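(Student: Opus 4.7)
The plan is to settle the divisorial case directly with a pseudoeffective-threshold argument, and then reduce the general klt case to it via the Izumi--Skoda inequality recalled earlier in Section~\ref{S112}.

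For the divisorial case, I would write $v = c\,\ord_E$ for a prime divisor $E$ on some normal proper birational model $\pi\colon Y\to X$, and rescale to $c=1$. A nonzero section $s \in H^0(X, mL)$ pulls back to $\pi^*s \in H^0(Y, m\pi^*L)$ and factors as $s_E^{k}\cdot s'$ with $k = \ord_E(s)$ and $s_E$ the canonical section of $\cO_Y(E)$; hence $m\pi^*L - kE$ is effective, so $\pi^*L - (k/m)E$ is pseudoeffective as a $\Q$-divisor. Since $L$ is big, so is $\pi^*L$, and for any ample class $H$ on $Y$ pseudoeffectivity of $\pi^*L-tE$ forces $(\pi^*L - tE)\cdot H^{n-1}\ge 0$, yielding the uniform bound
\[
\tau_E := \sup\{t \ge 0 : \pi^*L - tE \text{ pseudoeffective}\} \le \frac{\pi^*L\cdot H^{n-1}}{E\cdot H^{n-1}} < \infty.
\]
Thus $T_m(\cF_v) \le \tau_E$ for every $m$, and $T(\cF_v) < \infty$.

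For the klt case with $A(v) < \infty$, I would invoke the Izumi--Skoda inequality (cited above from~\cite{Li15a}) to obtain a closed point $\xi \in X$ and $c>0$ with $\fa_p(v) \subset \fm_\xi^{\lceil cp\rceil}$ for all $p$. Rewriting, this says $v(f) \le c^{-1}\,\ord_\xi(f)$ for every $f \in \cO_{X,\xi}$, where $\ord_\xi(f) := \max\{k : f\in\fm_\xi^k\}$. Next I would pick any divisorial valuation $w = \ord_E$ centered at $\xi$ (\eg extracted from a log resolution of $\fm_\xi$); since $w(\fm_\xi)\ge 1$, we have $\ord_\xi(f)\le w(f)$ on $\cO_{X,\xi}$, hence $v \le c^{-1} w$ on germs at $\xi$. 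Applying this in a local trivialization of $mL$ near $\xi$ to a nonzero section $s \in H^0(X, mL)$ gives $v(s) \le c^{-1}w(s) \le c^{-1} m\,T(\cF_w)$, and the divisorial case supplies $T(\cF_w)<\infty$, whence $T(\cF_v)\le c^{-1}T(\cF_w)<\infty$.

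The substantive step is the klt case, and its crucial input is precisely the Izumi--Skoda inequality: once $v$ is controlled by $\ord_\xi$, dominating $\ord_\xi$ by any divisorial valuation centered at $\xi$ is automatic, and linear growth reduces to the first part. One should also keep track (implicit in Izumi--Skoda) that $\xi \in \overline{\{c_X(v)\}}$, so that germs of regular functions at $\xi$ also localize near $c_X(v)$ and the comparison $v(f)\le c^{-1}\ord_\xi(f)$ makes sense in evaluating $v(s)$ via a trivialization of $mL$.
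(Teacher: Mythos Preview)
Your proof is correct and rests on the same idea as the paper's: reduce the klt case to the divisorial one via Izumi's inequality. The paper's proof is terser --- it first reduces to $X$ smooth, cites \cite[Proposition~2.12]{BKMS} for the divisorial case, and then invokes Izumi directly in the form $v\le A(v)\,\ord_\xi$ with $\xi=c_X(v)$, observing that on a smooth variety $\ord_\xi$ is itself divisorial (the exceptional divisor of the blow-up of $\overline{\{\xi\}}$). Your version is more self-contained: you supply an elementary intersection-theoretic bound on the pseudoeffective threshold for the divisorial step, and in the klt step you avoid reducing to smooth $X$ by dominating $\ord_\xi$ with an auxiliary divisorial $w$ centered at $\xi$ rather than using $\ord_\xi$ directly. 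One small point to tidy: your divisorial argument uses an ample class on $Y$, so you should remark that the birational model $Y$ carrying $E$ can always be taken projective.
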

\begin{proof}
  We may assume $X$ is smooth. 
  By~\cite[Proposition~2.12]{BKMS}, every divisorial valuation has linear growth. 
  For the second assertion, if $A(v)<\infty$, Izumi's inequality 
  (see~\cite[Proposition~5.10]{jonmus}) 
  implies $v\le A(v)\ord_\xi$, where $\xi=c_X(v)$. Since $\ord_\xi$ is divisorial,
  it has linear growth; hence so does $v$.
\end{proof}
%
%
%
%
\subsection{Global invariants}
Consider a valuation $v$ of linear growth.
We define invariants of $v$ as the corresponding invariants of the induced
filtration $\cF_v$, namely:
\begin{itemize}
\item[(i)]
  the \emph{limit measure} of $v$ is $\mu_v:=\mu^{\cF_v}$;
\item[(ii)]
  the \emph{expected vanishing order} of $v$ is $S(v):=S(\cF_v)=\int_0^\infty t\,d\mu_v(t)$;
\item[(iii)]
  the \emph{maximal vanishing order} or 
  \emph{pseudo-effective threshold} of $v$ is $T(v):=T(\cF_v)$.
\end{itemize}
Note that $T(v)$ is denoted by $a_{{\max}}(\|L\|,v)$ in~\cite{BKMS}.
It follows from Lemma~\eqref{L106} (see also Remark~\ref{R101}) that
\begin{equation}\label{e122}
  \frac1{n+1}T(v)\le S(v)\le T(v).
\end{equation}

The invariants $S$ and $T$ are homogeneous of order 1: $S(tv)=tS(v)$ and 
$T(tv)=tT(v)$ for $t>0$. Similarly, $\mu_{tv}=t_*\mu_v$, where $t\colon\R_+\to\R_+$
denotes multiplication by $t$.
In particular, if $v$ is the trivial valuation on $X$, then $S(v)=T(v)=0$
and $\mu_v=\d_0$. 
\begin{rmk}
  If we think of $v$ as an order of vanishing, then 
  the limit measure $\mu_v$ describes the asymptotic distribution of the 
  (normalized) orders of vanishing of $v$ on $R(X,L)$. This explains the chosen
  name of $S(v)$ and the first name of $T(v)$.
\end{rmk}
For an alternative description of $S(v)$ and $T(v)$, define, for $t\ge 0$, 
\begin{equation*}
  \vol(L; v\ge t)
  :=\vol(V^t_\bullet)
  =\lim_{m\to\infty}\frac{n!}{m^n}\dim\cF^{tm}_vH^0(X,mL).
\end{equation*}
\begin{thm}\label{T103}
  Let $L$ be a big line bundle and 
  $v\in\Val^*_X$ a valuation of linear growth. Then the limit defining 
  $\vol(L;v\ge t)$ exists for every $t\ge 0$. Further:
  \begin{itemize}
  \item[(i)]
    $T(v)=\sup\{t\ge0\mid\vol(L; v\ge t)>0\}$;
  \item[(ii)]
    the function $t\mapsto\vol(L;v\ge t)^{1/n}$ is 
    decreasing and concave on $[0,T(v))$;
  \item[(iii)]
    $\mu_v=-\frac{d}{dt}\vol(L;v\ge t)$; further,
    $\supp\mu_v=[0,T(v)]$, and $\mu$ is absolutely continuous with respect to 
    Lebesgue measure,  except for a possible point mass at $T(v)$;
  \item[(iv)]
    $S(v)=V^{-1}\int_0^{T(v)}\vol(L;v\ge t)\,dt$;
  \item[(v)]
    if $L$ is nef, then the function $t\mapsto\vol(L;v\ge t)$ is strictly decreasing 
    on $[0,T(v)]$ and $\supp\mu_v=[0,T(v)]$.
  \end{itemize}
\end{thm}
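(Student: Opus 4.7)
The assertions all translate into claims about the filtration $\cF_v$ of $R(X,L)$ from~\S\ref{S101}: because $v$ has linear growth $\cF_v$ is linearly bounded, and unwinding the definitions yields $\vol(L;v\ge t)=\vol(V^t_\bullet)$ for $V^t_m=\cF_v^{mt}R_m$, together with $T(v)=T(\cF_v)$, $S(v)=S(\cF_v)$, and $\mu_v=\mu^{\cF_v}$. The plan is to read each conclusion off from the corresponding statement in~\S\ref{S103}, dealing with the nef case of~(v) by a separate continuity argument at the right endpoint.

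Existence of $\vol(L;v\ge t)$ as a limit is the content of the remark following Theorem~\ref{T102}. Assertion~(i) is then~\eqref{e120}, and assertion~(ii) is Proposition~\ref{p:Brunn}. The derivative formula and absolute continuity in~(iii) come from the corollary to Proposition~\ref{p:Brunn}, after accounting for the normalization $\vol(V^t_\bullet)=n!\vol(\D^t)$. For the support statement $\supp\mu_v=[0,T(v)]$, note that by~(ii) the concave non-increasing function $\vol(L;v\ge\cdot)^{1/n}$ cannot be constant on any subinterval of $[0,T(v))$ without violating~(i), so it is strictly decreasing there; consequently $\mu_v((a,b))=\vol(L;v\ge a)-\vol(L;v\ge b)>0$ for every $0\le a<b<T(v)$, and any atom at $T(v)$ also lies in the support. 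Assertion~(iv) is just the definition of $S(\cF_v)$ combined with~(i).

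The subtle point is~(v): when $L$ is nef one must upgrade strict decrease on $[0,T(v))$ to strict decrease on the closed interval $[0,T(v)]$, or equivalently show that $\mu_v$ has no atom at $T(v)$, i.e.\ $\lim_{t\to T(v)^-}\vol(L;v\ge t)=0$. The plan is to establish this continuity at the right endpoint first for divisorial $v=c\cdot\ord_E$: on a suitable model $\pi\colon Y\to X$ one has $\vol(L;v\ge t)=\vol(\pi^*L-(t/c)E)$, so the vanishing as $t\uparrow T(v)$ follows from continuity of the volume function on $N^1(Y)_\R$, together with the fact that nefness of $L$ forces $\pi^*L-(t/c)E$ to meet the boundary of the pseudoeffective cone exactly at $t=T(v)$. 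For a general valuation $v$ of linear growth one approximates by divisorial valuations; the main obstacle is doing this uniformly in $t$ so that vanishing of the left-limit transfers, which appears to require a Fujita-approximation-type control of $\vol(L;v\ge t)$ of the sort later developed in~\S\ref{s:fujitaapprox}.
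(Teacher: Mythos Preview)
Your treatment of (i)--(iv) matches the paper's: both simply invoke the general filtration results of \S\ref{S103}. However, there are two genuine problems.

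First, your argument for $\supp\mu_v=[0,T(v)]$ in (iii) is incorrect. You claim that a concave non-increasing function cannot be constant on a subinterval of $[0,T(v))$ without violating (i), but this is false. If $g(t)=\vol(L;v\ge t)^{1/n}$ were constant on some $[a,b]\subset[0,T(v))$, concavity would only force $g$ to be constant on $[0,b]$ (since the non-increasing derivative must be $\ge0$ to the left of $a$ and hence $=0$); nothing in (i) is contradicted by $g$ being constant on an initial interval and then decreasing. In fact this does occur for big non-nef $L$ when $v=\ord_E$ with $E$ in the diminished base locus. The paper does not prove the support claim in (iii) from \S\ref{S103} either---that clause really belongs to (v), where nefness is assumed.

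Second, and more importantly, for (v) you are attacking the wrong target. You assert that strict decrease on the closed interval $[0,T(v)]$ is ``equivalent'' to the absence of an atom at $T(v)$, and then embark on a Fujita-approximation program to kill the atom. But strict decrease on $[0,T(v)]$ follows immediately from strict decrease on the \emph{open} interval $[0,T(v))$ together with mere monotonicity: for $s<T(v)$ pick any $s<s'<T(v)$ and note $f(s)>f(s')\ge f(T(v))$. No control of the atom is needed. The actual content of (v) is that \emph{nefness} of $L$ forces strict decrease on $[0,T(v))$; the paper obtains this by citing the discussion after Remark~2.7 in \cite{BKMS} (roughly, for $L$ nef one has $\vol(L;v\ge t)\le\vol(L)-ct^n$ from an elementary intersection-theoretic estimate, so the function cannot stall). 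Your plan never addresses this step, and the approximation argument you sketch is both unnecessary for the stated conclusion and aimed at a strictly stronger statement that the paper explicitly leaves open (see the remark following the theorem).
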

\begin{proof}
  The assertions~(i)--(iv) are special cases of the properties of linearly bounded filtrations
  in~\S\ref{S103}. If $L$ is nef, the discussion after Remark~2.7 in~\cite{BKMS}
  shows that $t\mapsto\vol(L;v\ge t)$ is strictly decreasing on $[0,T(v))$. This implies
  $\supp\mu=[0,T(v)]$, so that~(v) holds.
\end{proof}
\begin{rmk}
  In fact, the measure $\mu_v$ likely has no point mass at $T(v)$. 
  This is true when $v$ is divisorial, or simply 
  quasimonomial, see~\cite[Proposition~2.25]{BKMS}.
\end{rmk}

We also define $S_m(v):=S_m(\cF_v)$ and $T_m(v):=T_m(\cF_v)$ for $m\in M(L)$.
These invariants can be concretely described as follows. First,
\begin{equation}\label{e118}
  T_m(v)=\max\{m^{-1}v(s)\mid s\in H^0(X,mL)\}.
\end{equation}
A similar description is true for $S_m$.
\begin{lem}\label{L104}
  For any $m\in M(L)$ and any $v\in\Val_X$ we have 
  \begin{equation}\label{e113}
    S_m(v)
    =\max_{s_j}\frac1{mN_m}\sum_{j=1}^{N_m}v(s_j),
  \end{equation}
  where the maximum is over all bases $s_1,\dots,s_{N_m}$ of $H^0(X,mL)$.
\end{lem}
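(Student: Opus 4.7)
The plan is to show the claimed equality by two matching bounds, both following directly from the definition of the jumping numbers $a_{m,j} = a_{m,j}(\cF_v)$ and a familiar ``compatible basis'' construction. Recall that the jumping numbers are characterized by the property that $a_{m,j} = \inf\{\la \in \R_+ \mid \codim_{R_m} \cF^\la_v R_m \ge j\}$, so $\dim \cF^\la_v R_m \ge N_m - j + 1$ for all $\la \le a_{m,j}$, and $\dim \cF^\la_v R_m \le N_m - j$ for $\la > a_{m,j}$.

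For the upper bound, fix any basis $s_1,\dots,s_{N_m}$ of $R_m = H^0(X,mL)$. Since the sum $\sum_j v(s_j)$ is independent of the ordering, we may assume it is indexed so that $v(s_1) \le v(s_2) \le \dots \le v(s_{N_m})$. For each $j$, the sections $s_j,s_{j+1},\dots,s_{N_m}$ all satisfy $v(s_i) \ge v(s_j)$, so they lie in $\cF^{v(s_j)}_v R_m$; being linearly independent, this forces $\dim \cF^{v(s_j)}_v R_m \ge N_m - j + 1$, i.e. $\codim \cF^{v(s_j)}_v R_m \le j-1$. Hence $v(s_j)$ does not belong to the set $\{\la \mid \codim \cF^\la_v R_m \ge j\}$, and therefore $v(s_j) \le a_{m,j}$. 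Summing over $j$ gives $\sum_j v(s_j) \le \sum_j a_{m,j} = mN_m \cdot S_m(v)$.

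For the matching lower bound, we construct a basis realizing the jumping numbers. Let $\la_1 < \la_2 < \dots < \la_r$ be the distinct values among $a_{m,1},\dots,a_{m,N_m}$, and let $d_k$ be the multiplicity of $\la_k$, so $\sum_k d_k = N_m$. Using left-continuity of the filtration (property (F2) in~\S\ref{ss:filtrations}), one has $\dim \cF^{\la_k}_v R_m - \dim \cF^{\la_k + \e}_v R_m = d_k$ for all sufficiently small $\e > 0$. Choose sections $t_{k,1},\dots,t_{k,d_k} \in \cF^{\la_k}_v R_m$ whose images form a basis of $\cF^{\la_k}_v R_m / \bigcap_{\la > \la_k} \cF^\la_v R_m$; then each $t_{k,\ell}$ satisfies $v(t_{k,\ell}) = \la_k$. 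Concatenating these sections yields a basis of $R_m$ (by a standard filtered-linear-algebra argument) whose $v$-values, in non-decreasing order, are exactly $a_{m,1},\dots,a_{m,N_m}$. For this basis, $\frac{1}{mN_m}\sum_j v(s_j) = S_m(v)$, proving that the upper bound is attained.

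This is essentially bookkeeping and there is no real obstacle; the only minor point requiring care is the construction of the compatible basis, which rests on the left-continuity axiom (F2) so that the ``jumps'' in $\dim \cF^\la_v R_m$ occur precisely at the numbers $a_{m,j}$ and account for all $N_m$ dimensions.
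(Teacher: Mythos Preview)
Your proof is correct and follows the same two-step approach as the paper: show that any basis, once reordered so that $v(s_1)\le\dots\le v(s_{N_m})$, satisfies $v(s_j)\le a_{m,j}$, and then exhibit a compatible basis achieving equality; the paper's version simply asserts both steps without the details you supply. One minor slip: in your compatible-basis construction the quotient should be by $\bigcup_{\la>\la_k}\cF^\la_v R_m=\{s:v(s)>\la_k\}$ (equivalently $\cF^{\la_k+\e}_v R_m$ for small $\e>0$, as in your preceding sentence), not the intersection $\bigcap_{\la>\la_k}\cF^\la_v R_m$, which is $\{0\}$.
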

\begin{proof}
  First consider any basis $s_1,\dots,s_{N_m}$ of $H^0(X,mL)$.
  We may assume $v(s_1)\le v(s_2)\le\dots\le v(s_{N_m})$. 
  Then $v(s_j)\le a_{{m,j}}$, for all $j$, where $a_{m,j}$ is the $j$th jumping number
  of $\cF_vH^0(X,mL)$. 
  Thus $(mN_m)^{-1}\sum_jv(s_j)\le(mN_m)^{-1}\sum_ja_{m,j}=S_m(v)$.
  On the other hand, we can pick the basis such that $v(s_j)=a_{m,j}$,
  and then $(mN_m)^{-1}\sum_jv(s_j)=S_m(v)$.
\end{proof}
Corollary~\ref{C101} immediately implies
\begin{cor}\label{C102}
  For any $v\in\Val_X$ of linear growth, we have $\lim_{m\to\infty}S_m(v)=S(v)$.
  Further, given $\e>0$ there exists $m_0=m_0(\e)>0$ such that 
  if $m\ge m_0$, then 
  \begin{equation*}
    S_m(v)\le S(v)(1+\e)
  \end{equation*}
  for all $v\in\Val_X$ of linear growth.
\end{cor}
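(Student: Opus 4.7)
The plan is simply to translate both assertions into their filtration-level analogues from Section~\ref{S103} via the correspondence $v\leftrightarrow\cF_v$. First I would observe that by Section~\ref{S101}, when $v\in\Val_X$ has linear growth the filtration $\cF_v$ on $R(X,L)$ is linearly bounded, and by the very definitions in Section~\ref{S105} we have tautologically
\begin{equation*}
  S_m(v)=S_m(\cF_v)\qquad\text{and}\qquad S(v)=S(\cF_v).
\end{equation*}

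With this identification in place, the convergence $\lim_{m\to\infty}S_m(v)=S(v)$ is the first conclusion of Lemma~\ref{L105} applied to $\cF_v$ (which itself rests on the weak convergence $\mu_m^{\cF}\to\mu^{\cF}$ of Theorem~\ref{T101}). For the uniform estimate, the essential point — already packaged into Corollary~\ref{C101} — is that the threshold $m_0=m_0(\e)$ produced there depends only on $\e$ and on the Okounkov body $\D=\D(L)$ (through the uniform bound of Lemma~\ref{L102}), and not on the particular linearly bounded filtration one plugs in. Applying Corollary~\ref{C101} to the filtration $\cF_v$ for each $v$ of linear growth therefore yields
\begin{equation*}
  S_m(v)=S_m(\cF_v)\le(1+\e)\,S(\cF_v)=(1+\e)\,S(v)
\end{equation*}
for all $m\ge m_0$, with the same $m_0$ valid for every such~$v$.

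There is no genuine obstacle: the entire uniformity content was already absorbed, one level up, into the concavity estimate of Lemma~\ref{L102} and thence into Corollary~\ref{C101}. The present statement is therefore a formal specialization of those results, and no additional argument involving the valuation $v$ itself (e.g.\ its center or log discrepancy) is required.
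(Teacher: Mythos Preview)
Your proposal is correct and matches the paper's own approach: the paper simply states that Corollary~\ref{C101} immediately implies the result, and you have unpacked this by tracing the convergence to Lemma~\ref{L105} and the uniform estimate to Corollary~\ref{C101} via the tautological identification $S_m(v)=S_m(\cF_v)$, $S(v)=S(\cF_v)$. If anything, your write-up is slightly more explicit than the paper's one-line justification, but the substance is identical.
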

%
%
%
%
\subsection{Behavior of invariants}
The invariants $S(v)$, $T(v)$ and $\mu_v$ depend on $L$ (and $X$). If we need to 
emphasize this dependence, we write $S(v;L)$, $T(v;L)$ and $\mu_{v;L}$.
\begin{lem}\label{L107}
  Let $v$ be a valuation of linear growth.
  \begin{itemize}
  \item[(i)]
    If $r\in\N^*$, then $S(v;rL)=rS(v;L)$, $T(v;rL)=rT(v;L)$ and $\mu_{v;rL}=r_*\mu_{v;L}$.
  \item[(ii)]
    If $\rho\colon X'\to X$ is a projective birational morphism, with $X'$ normal, 
    and $L'=\rho^*L$, then $S(v;L')=S(v;L)$, $T(v;L')=T(v;L)$, and $\mu_{v;L'}=\mu_{v;L}$;
  \item[(iii)]
    the invariants $S(v;L)$, $T(v;L)$ and $\mu_{v;L}$ only depend on the 
    numerical class of $L$.
  \end{itemize}
\end{lem}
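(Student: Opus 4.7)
For part (i), I would exploit the tautological identification $R(X,rL)_m = R(X,L)_{rm}$ of graded pieces, under which $\cF_v^t R(X,rL)_m = \cF_v^t R(X,L)_{rm}$ for every $t \ge 0$. Comparing jumping numbers immediately gives $a_{m,j}(v;rL) = a_{rm,j}(v;L)$, and substituting into the definitions from \S\ref{S103} produces the rescaling identities $T_m(v;rL) = r\,T_{rm}(v;L)$, $S_m(v;rL) = r\,S_{rm}(v;L)$, together with an analogous rescaling relation for the atomic measures $\mu_m^{\cF_v}$. Passing to the limit $m \to \infty$, using the convergence of $T_m$ and $S_m$ recorded in \S\ref{S103} together with Theorem~\ref{T101} for the weak convergence of measures, yields all three formulas in (i).

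For part (ii), since $\rho$ is projective birational and $X'$ is normal, Zariski's main theorem gives $\rho_*\cO_{X'} = \cO_X$; the projection formula then supplies a canonical identification $H^0(X',m\rho^*L) \cong H^0(X,mL)$ for every $m$, i.e.\ an isomorphism of graded section rings $R(X',L') \cong R(X,L)$. Since $\rho$ is birational, $\C(X') = \C(X)$, so $v$ is the same valuation on both function fields and satisfies $v(\rho^* s) = v(s)$ for every section $s$. Consequently the filtrations $\cF_v$ on the two section rings agree under the identification, and $S(v;L')$, $T(v;L')$, $\mu_{v;L'}$ coincide with their counterparts on $X$.

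For part (iii), observe that by Theorem~\ref{T103}(i), (iii), (iv) each of $T(v;L)$, $\mu_{v;L}$, and $S(v;L)$ is a functional of the single function $t \mapsto \vol(L;v\ge t)$; hence it suffices to show that this function depends only on the numerical class of $L$. When $v = \ord_E$ is divisorial, part (ii) reduces the problem to a model $\pi\colon Y \to X$ on which $E$ is a prime divisor, and under pullback the graded linear series $V^t_\bullet$ associated to $\cF_v$ is identified with the space of sections of $m\pi^*L$ vanishing to order $\ge tm$ along $E$; this yields $\vol(L;\ord_E \ge t) = \vol(\pi^*L - tE)$, a numerical invariant of $\pi^*L$ on $Y$, and hence of $L$ on $X$. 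The main obstacle is extending this beyond the divisorial case to arbitrary $v$ of linear growth: this step is not formal from the definitions, and I would appeal to the numerical-invariance statements for filtration-theoretic invariants established in~\cite{BKMS}.
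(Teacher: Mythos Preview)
Your proposal is correct and takes essentially the same approach as the paper. The paper's own proof is extremely terse---it declares (i) and (ii) ``clear from the definitions'' and for (iii) cites \cite[Proposition~3.1]{BKMS} directly for the numerical invariance of $\mu_{v;L}$, deducing $S$ and $T$ from that; your write-up simply unpacks (i) and (ii) explicitly and routes (iii) through the function $t\mapsto\vol(L;v\ge t)$ before arriving at the same citation of \cite{BKMS}.
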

\begin{proof}
  Properties~(i)--(ii) are clear from the definitions. 
  As for~(iii),~\cite[Proposition~3.1]{BKMS} asserts that the measure $\mu_{v;L}$ only 
  depends on the numerical class of $L$; hence the same true for $S(v;L)$ and $T(v;L)$.
\end{proof}
\begin{rmk}\label{R103}
  In view of~(i) and~(iii) we can define $S(v;L)$ for a big class $L\in\NS(X)_\Q$ by 
  $S(v;L):=r^{-1}S(v;rL)$ for $r$ sufficiently divisible. The same holds for 
  $T(v;L)$ and $\mu_{v;L}$.
\end{rmk}
%
%
%
%
\subsection{The case of divisorial valuations}
We now interpret the invariants $S(v)$ and $T(v)$ in the case when $v$ is a divisorial 
valuation. By homogeneity in $v$ and by Lemma~\ref{L107}~(ii) it suffices to consider the
case when $v=\ord_E$ for a prime divisor $E$ on $X$. 
In this case, $\vol(L;v\ge t)=\vol(L-tE)$, so Theorem~\ref{T103} implies
\begin{cor}\label{C103}
  Let $E\subset X$ be a prime divisor. Then we have:
  \begin{itemize}
  \item[(i)]
    $T(\ord_E)=\sup\{t>0\mid L-tE\ \text {is pseudoeffective}\}$;
  \item[(ii)]
    $S(\ord_E)=\vol(L)^{-1}\int_0^\infty\vol(L-tE)\,dt$.
  \end{itemize}
\end{cor}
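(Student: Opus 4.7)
The plan is to reduce both statements to Theorem~\ref{T103} via the identification
\begin{equation*}
  \vol(L;\ord_E\ge t) = \vol(L-tE)
\end{equation*}
for every $t\ge 0$. Granting this, part~(ii) is immediate from Theorem~\ref{T103}(iv): since $\vol(L-tE)=0$ for $t>T(\ord_E)$, the upper limit of integration may be extended to $+\infty$ without changing the value. Part~(i) will require one small extra step, handled below.

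To prove the identification, fix $t\ge 0$ and $m\in M(L)$. Because $\ord_E(s)$ is a nonnegative integer for each nonzero $s\in H^0(X,mL)$, the condition $\ord_E(s)\ge tm$ is equivalent to $\ord_E(s)\ge\lceil tm\rceil$, which identifies $s$ with a section of $\cO_X(mL-\lceil tm\rceil E)$. Thus
\begin{equation*}
  \cF^{tm}_{\ord_E}H^0(X,mL) = H^0\bigl(X,\cO_X(mL-\lceil tm\rceil E)\bigr).
\end{equation*}
Multiplying by $n!/m^n$ and letting $m\to\infty$, the ceiling introduces an error of order $O(1/m)$ in the $\R$-divisor class, which disappears in the limit by continuity of $\vol$ on $\NS(X)_\R$; this yields the desired identification.

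For part~(i), Theorem~\ref{T103}(i) together with the identification gives $T(\ord_E)=\sup\{t\ge 0\mid L-tE\text{ is big}\}$. Writing $\tau$ for the pseudoeffective threshold $\sup\{t\ge 0\mid L-tE\text{ is pseudoeffective}\}$, it suffices to show that $L-tE$ is big for every $0\le t<\tau$. This follows from the decomposition
\begin{equation*}
  L-tE = \tfrac{\tau-t}{\tau}L + \tfrac{t}{\tau}(L-\tau E),
\end{equation*}
which expresses $L-tE$ as a positive combination of the big class $L$ and the pseudoeffective class $L-\tau E$ (the latter lies in the pseudoeffective cone because that cone is closed). Since the sum of a big and a pseudoeffective class is big, the conclusion follows. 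The only mildly technical step in the whole argument is the ceiling estimate in the proof of the identification, which is routine given the continuity of the volume function.
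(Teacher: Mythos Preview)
Your proof is correct and follows the same approach as the paper: establish the identification $\vol(L;\ord_E\ge t)=\vol(L-tE)$ and then invoke Theorem~\ref{T103}. The paper simply asserts this identification and cites Theorem~\ref{T103} without further comment; you supply the details and, in addition, carefully bridge the gap between the characterization $T(\ord_E)=\sup\{t:\vol(L-tE)>0\}$ coming from Theorem~\ref{T103}(i) and the pseudoeffective formulation in the statement, a point the paper leaves implicit.
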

  Statement~(i) explains the name pseudoeffective threshold for $T(v)$. 
\begin{rmk}\label{R102}
  The invariants $S(v)$ and $T(v)$ for $v$ divisorial have been explored by 
  K.~Fujita~\cite{Fujitavalcrit}, C.~Li~\cite{Liequivariant}, and Y.~Liu~\cite{Liu16}. 
  In the notation of~\cite{Fujitavalcrit},
  \begin{equation*}
    T(\ord_E)=\tau(E)
    \quad\text{and}\quad
    S(\ord_E)=\tau(E)-\vol(L)^{-1}j(E).
  \end{equation*}
  The invariant $S(\ord_\xi)$, for $\xi\in X$ a regular closed point, also plays an important role 
  in~\cite{MR15} and was used in unpublished work of P.~Salberger from 2006.
\end{rmk}
\begin{prop}\label{P101}
  If $L$ is ample and $v\in\Val_X$ is divisorial, then $\frac1{n+1}\le\frac{S(v)}{T(v)}\le\frac{n}{n+1}$.
\end{prop}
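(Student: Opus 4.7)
\emph{Proof plan.} The lower bound $S(v)/T(v)\ge 1/(n+1)$ follows directly from Lemma~\ref{L106} applied to the filtration $\cF_v$, so the entire content is the upper bound $S(v)/T(v)\le n/(n+1)$. Using the homogeneity of $S$ and $T$ in $v$ together with Lemma~\ref{L107}(ii), I reduce to the case $v=\ord_E$ for a prime divisor $E$ on a smooth projective birational model $\pi\colon Y\to X$, replacing $L$ by the big and nef line bundle $L_Y:=\pi^\ast L$ without changing either invariant. Corollary~\ref{C103} then expresses the ratio as an integrated volume, with $T:=T(v)$ the pseudoeffective threshold of $L_Y$ along $E$.

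To analyze this ratio I pass to an Okounkov body. Choose an admissible flag $(E=Y_1,Y_2,\dots,Y_n)$ on $Y$ (possibly after a further blowup) and let $\Delta:=\Delta(L_Y)\subset\R^n$ be the corresponding Okounkov body. Since $\ord_E$ extracts the first coordinate of the rank-$n$ valuation defining $\Delta$, the induced filtration $\cF_v$ corresponds to $x_1$, the concave transform is $G(x_1,\dots,x_n)=x_1$, and consequently $T=\sup_\Delta x_1$ while $S(v)$ equals the first coordinate $\bar x_1$ of the Lebesgue barycenter of $\Delta$. Ampleness of $L$ ensures the existence of sections of $mL$ not vanishing at the image of $E$, whence $\min_\Delta x_1=0$. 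The proposition thus reduces to the convex-geometric statement that $\bar x_1(\Delta)\le\frac{n}{n+1}\sup_\Delta x_1$ for any convex body $\Delta\subset\R^n$ with $\min_\Delta x_1=0$; the extremal case is a simplex (``reverse cone'') with apex on $\{x_1=0\}$ and opposite face on $\{x_1=T\}$.

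To prove this convex-geometric inequality, I would slice: set $V(t):=\vol_{n-1}(\Delta\cap\{x_1=t\})$ and $F(t):=\int_0^tV(s)\,ds$. Brunn--Minkowski makes $V^{1/(n-1)}$ concave on $[0,T]$, so for $0\le s\le t\le T$ one gets $V(s)\ge(s/t)^{n-1}V(t)$; integrating over $s\in[0,t]$ yields $F(t)\ge tV(t)/n$, which says $t\mapsto F(t)/t^n$ is non-increasing and hence $F(t)\ge F(T)(t/T)^n$ on $[0,T]$. Combining this with the Fubini identity $\int_0^TF(t)\,dt=TF(T)-\int_0^TsV(s)\,ds$ bounds the first moment by $\int_0^TsV(s)\,ds\le\frac{n}{n+1}TF(T)$, giving $\bar x_1\le nT/(n+1)$ as desired. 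The principal obstacle is this convex-geometric step, attributed in the introduction to Fujita~\cite{Fujitaplt}; the slicing argument driven by the concavity of $V^{1/(n-1)}$ is the cleanest route I see.
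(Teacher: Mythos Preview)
Your proposal is correct. The paper's own proof is much shorter: it invokes Lemma~\ref{L106} for the lower bound (as you do) and simply cites \cite[Proposition~2.1]{Fujitaplt} for the upper bound, noting that Fujita's hypothesis $L=-K_X$ is not used. You instead supply a self-contained argument for the upper bound.

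Your route---passing to an Okounkov body with respect to a flag beginning at $E$, so that $G=x_1$, $T=\sup_\Delta x_1$, $S=\bar x_1$, and then proving the barycenter inequality by slicing and Brunn--Minkowski on the $(n-1)$-dimensional sections---is clean and works as written. The chain $V^{1/(n-1)}$ concave $\Rightarrow$ $V(s)\ge(s/t)^{n-1}V(t)$ $\Rightarrow$ $F(t)\ge tV(t)/n$ $\Rightarrow$ $F(t)/t^n$ non-increasing $\Rightarrow$ $\int_0^T sV(s)\,ds\le\frac{n}{n+1}TF(T)$ is correct, and your use of ampleness to force $\min_\Delta x_1=0$ (via base-point-freeness of $mL$ for $m\gg0$) is exactly what is needed. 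Fujita's original argument works directly with $f(t)=\vol(\pi^\ast L-tE)$ and the derivative formula $-f'(t)=n\,\vol_{Y|E}(\pi^\ast L-tE)$, exploiting log-concavity of restricted volumes rather than slicing an Okounkov body; your approach trades that input for Brunn--Minkowski on slices, which is arguably more elementary and makes the extremal case (a cone over the hyperplane $\{x_1=T\}$) transparent. Either way the convexity content is the same, and your version has the advantage of being entirely internal to the Okounkov-body framework already set up in~\S\ref{S103}.
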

\begin{proof}
  The first inequality follows from the concavity of $t\to\vol(L;v\ge t)^{1/n}$
  and is a special case of Lemma~\ref{L106}. The second inequality is treated
  in~\cite[Proposition~2.1]{Fujitaplt}.
  (In~\textit{loc.\,cit}.\ we have $L=-K_X$, but this assumption is not used in the proof.)
\end{proof}
\begin{rmk}
  When $L$ is ample, Proposition~\ref{P101} in fact holds for any $v\in\Val_X$ of
  linear growth; see Remark~\ref{R101}. 
\end{rmk}
%
%
%
%
\subsection{Invariants as functions on valuation space}\label{S108}
\begin{prop}
  The invariants $S$ and $T$ define lower semicontinuous functions on $\Val_X$.
  For any $m\in M(L)$, the functions $S_m$ and $T_m$ are also lower
  semicontinuous.
\end{prop}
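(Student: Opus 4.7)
The plan is to exhibit each of the four functions as a supremum of continuous (hence lsc) functions on $\Val_X$. The only analytic input I will need is that for any nonzero section $s$ of a line bundle, the evaluation $v\mapsto v(s)$ is continuous on $\Val_X$: a local trivialization identifies $s$ with a rational function in $\C(X)^*$ on which $v$ is continuous by definition of the topology, and different trivializations differ by a unit (which has zero $v$-value at $c_X(v)$), so the value is independent of choice.

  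For fixed $m\in M(L)$, formula~\eqref{e118} and Lemma~\ref{L104} give respectively
  \[
    T_m(v) = m^{-1}\sup_{s\in R_m\setminus\{0\}} v(s),
    \qquad
    S_m(v) = \sup_{(s_j)}\ \frac{1}{mN_m}\sum_{j=1}^{N_m} v(s_j),
  \]
  where the second supremum runs over all bases $(s_j)$ of $R_m$; both are thus suprema of continuous functions, so I immediately obtain their lower semicontinuity. For $T$ itself, the super-additivity of $m\mapsto mT_m$ (as recalled in~\S\ref{S104}) together with Fekete's lemma gives $T(v)=\sup_m T_m(v)$, so $T$ is also lsc as a sup of lsc functions.

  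The hard part will be $S$: we have $S(v)=\lim_m S_m(v)$ pointwise by Corollary~\ref{C102}, but the $S_m$ are not known to be monotone in $m$, so no direct ``sup of $S_m$'' description is available. To get around this I will invoke the uniform estimate of Corollary~\ref{C101}: for every $\e>0$ there is $m_0(\e)$ such that $S_m(w)\le (1+\e)S(w)$ for every $w$ of linear growth and every $m\ge m_0(\e)$. Then on the linearly-bounded locus each of the lsc functions $\tfrac{1}{1+\e} S_m$ (with $\e>0$ and $m\ge m_0(\e)$) is pointwise $\le S$, and the identity $\lim_m \tfrac{1}{1+\e}S_m(v)=\tfrac{1}{1+\e}S(v)$ combined with letting $\e\to 0$ will show that their joint supremum equals $S(v)$; whence $S$ is lsc there. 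For $v$ not of linear growth we have $T(v)=+\infty$, and Lemma~\ref{L106} forces $S(v)=+\infty$; for any net $v_i\to v$, the already-proved lsc of $T$ yields $T(v_i)\to+\infty$, hence $S(v_i)\ge(n+1)^{-1}T(v_i)\to+\infty$, which extends lsc of $S$ to all of $\Val_X$.
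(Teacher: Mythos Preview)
Your proposal is correct and follows essentially the same approach as the paper. Both arguments treat $T_m$, $S_m$, and $T$ identically, and for $S$ both rely on the same key ingredient: the uniform estimate $S_m\le(1+\e)S$ from Corollary~\ref{C101}/\ref{C102}. The paper phrases the conclusion as ``$\{S>t\}$ contains an $S_m$-open neighborhood of each of its points,'' while you repackage the same estimate as the sup representation $S=\sup_{\e>0,\,m\ge m_0(\e)}\tfrac{1}{1+\e}S_m$ on the locus of linear growth; these are equivalent formulations. Your explicit treatment of valuations without linear growth (via $S\ge(n+1)^{-1}T$ and the already-established lower semicontinuity of $T$) is a small addition that the paper's proof leaves implicit.
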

\begin{proof}
  First consider $m\in M(L)$.
  For any nonzero $s\in H^0(X,mL)$, the function $v\mapsto v(s)$ is continuous.
  It therefore follows from~\eqref{e118} and~\eqref{e113} that $S_m$ and $T_m$
  are lower semicontinuous. 
  Hence $T=\sup_mT_m$ is also lower semicontinuous.
  The lower semicontinuity of $S$ is slightly more subtle.
  Pick any $t\in\R_+$. We must show that the set $V:=\{v\in\Val_X\mid S(v)>t\}$
  is open in $\Val_X$. Pick any $v\in V$ and pick $\e>0$ such that 
  $S(v)>(1+\e)t$. By Corollary~\ref{C102}, there exists $m\gg0$ 
  such that $S_m(v)>(1+\e)t$ and $S_m\le(1+\e)S$ on $\Val_X$.
  Since $S_m$ is lower semicontinuous, there exists an open neighborhood $U$ of
  $v$ in $\Val_X$ such that $S_m>(1+\e)t$ on $U$. Then $U\subset V$,
  which completes the proof.  
\end{proof}
\begin{rmk}
  The functions $S$ and $T$ are not continuous in general. Consider the case
  $X=\P^1$, $L=\cO_X(1)$. If $(\xi_j)_{j=1}^\infty$ is a sequence of distinct closed
  points, then $v_j=\ord_{\xi_j}$, $j\ge1$ defines a sequence in $\Val_X$ converging to
  the trivial valuation $v$ on $X$. Then $S(v_j)=1/2$ and $T(v_j)=1$ for all $j$,
  whereas $S(v)=T(v)=0$.
\end{rmk}
The next result is a global version of~\cite[Proposition 2.3]{LX16}.
\begin{prop}\label{p:Sinequality}
  Let $v,w\in\Val_X $ be valuations of linear growth, such that $v\le w$.
  \begin{itemize}
  \item[(i)]
    We have $S(v)\le S(w)$ and $T(v)\le T(w)$.
  \item[(ii)]
    If $L$ is ample and $S(v)=S(w)$, then $v=w$.
  \end{itemize}  
\end{prop}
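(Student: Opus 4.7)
For (i), the main observation is that $v\le w$ implies $v(s)\le w(s)$ for every section $s\in H^0(X,mL)$. Indeed, picking any Zariski open $U$ around $c_X(w)$ over which $mL$ is trivial, the fact that $c_X(w)\in\overline{c_X(v)}$ forces $c_X(v)\in U$; the common local trivialization then represents $s$ by a single $\tilde s\in\cO_{X,c_X(w)}$, and the definition of $v\le w$ gives $v(\tilde s)\le w(\tilde s)$. This yields $\cF_v^\lambda R_m\subset \cF_w^\lambda R_m$ and hence $a_{m,j}(v)\le a_{m,j}(w)$ for the jumping numbers; summing and maximizing over $j$ gives $S_m(v)\le S_m(w)$ and $T_m(v)\le T_m(w)$, and letting $m\to\infty$ completes (i).

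For (ii), I would argue by contradiction and show that if $v\le w$ with $v\ne w$, then in fact $S(v)<S(w)$. The plan is to first produce a single global section with a definite gap, then amplify it through high powers. To locate the gap section, observe that either $c_X(v)\ne c_X(w)$, in which case any $f\in\fm_{c_X(w)}$ lying outside the prime of $\overline{c_X(v)}$ satisfies $v(f)=0<w(f)$, or the centers agree and some $f\in\cO_{X,c_X(w)}$ has $v(f)<w(f)$. Ampleness of $L$ lets us write $f=s/s_0$ with $s,s_0\in R_{m_0}$ for some $m_0$; combined with $v(s_0)\le w(s_0)$ from (i), this forces $\delta:=w(s)-v(s)>0$.

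To amplify, I would compare filtrations using the subspace $W:=s^k\cdot R_\ell\subset R_N$ with $N:=km_0+\ell$. The identity $v(s^k t)=kv(s)+v(t)$ (and similarly for $w$) gives $W\cap\cF_v^\lambda R_N=s^k\cdot\cF_v^{\lambda-kv(s)}R_\ell$ and $W\cap\cF_w^\lambda R_N=s^k\cdot\cF_w^{\lambda-kw(s)}R_\ell$; combined with $\cF_v^\lambda R_N\subset \cF_w^\lambda R_N$ from (i) and $\dim(\cF_v^\lambda R_N+W\cap\cF_w^\lambda R_N)\le\dim\cF_w^\lambda R_N$, I obtain the pointwise dimension inequality
\[
\dim\cF_w^\lambda R_N-\dim\cF_v^\lambda R_N\ge \dim\cF_w^{\lambda-kw(s)}R_\ell-\dim\cF_v^{\lambda-kv(s)}R_\ell.
\]
Integrating in $\lambda$ and using $\int_0^\infty\dim\cF^\lambda R_m\,d\lambda=mN_m S_m$ together with a shift of the integration variable on the right-hand side then yields
\[
NN_N\bigl(S_N(w)-S_N(v)\bigr)\ge k\delta N_\ell+\ell N_\ell\bigl(S_\ell(w)-S_\ell(v)\bigr).
\]

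The main obstacle is the choice of growth rates: holding either $k$ or $\ell$ fixed causes the gap term $k\delta N_\ell$ to be swallowed by $NN_N$ in the limit. Taking $\ell:=km_0$ (so $N=2\ell$) and dividing by $NN_N$, the volume asymptotic $N_\ell/N_N\to 2^{-n}$ together with $S_m\to S$ (Corollary \ref{C102}) give in the limit $k\to\infty$
\[
S(w)-S(v)\ge \frac{\delta}{m_0\cdot 2^{n+1}}+\frac{1}{2^{n+1}}\bigl(S(w)-S(v)\bigr),
\]
equivalently $S(w)-S(v)\ge \delta/(m_0(2^{n+1}-1))>0$, contradicting the hypothesis $S(v)=S(w)$ and completing (ii).
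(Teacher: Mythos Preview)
Your proof is correct and takes a genuinely different route from the paper in part~(ii). Both arguments begin by producing a single section $s\in R_{m_0}$ with $\delta:=w(s)-v(s)>0$; the paper does this via the valuation ideals (finding $\lambda$ with $\fa_\lambda(v)\subsetneq\fa_\lambda(w)$ and using global generation of $L\otimes\fa_\lambda(w)$), which is a bit cleaner than your $f=s/s_0$ maneuver, but yours works too once one notes that ampleness lets one clear the poles of $f$ by choosing $s_0\in H^0(X,m_0L\otimes I)$ nonvanishing at $c_X(w)$.

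The amplification steps then diverge. The paper (following Li--Xu) rescales so that $w(s)=p\in\N^*$, $v(s)\le p-1$, and proves the combinatorial bound
\[
\dim\bigl(\cF_w^jR_m/\cF_v^jR_m\bigr)\ \ge\ \sum_{1\le i\le\min\{j/p,m\}}\dim\bigl(\cF_v^{j-ip}R_{m-i}/\cF_v^{j-ip+1}R_{m-i}\bigr)
\]
via a linear independence argument for the images of $s^is_{i,l}$; summing over $j$ collapses the right-hand side to $\sum_{i\le m}\dim R_{m-i}$, which grows like $m^{n+1}$. You instead fix a single power $s^k$, use the elementary injection $(W\cap\cF_w^\lambda)/(W\cap\cF_v^\lambda)\hookrightarrow\cF_w^\lambda/\cF_v^\lambda$ with $W=s^kR_\ell$ to get a pointwise dimension inequality, integrate in $\lambda$, and then balance $k$ against $\ell$ by taking $\ell=km_0$. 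Your approach avoids the linear independence check and yields the explicit bound $S(w)-S(v)\ge\delta/\bigl(m_0(2^{n+1}-1)\bigr)$ directly in terms of the gap of one section; the paper's approach is more combinatorial and produces a different explicit lower bound. Both are short, and neither is strictly more general.
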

\begin{rmk}
  The assertion in~(ii) is false for $T$ in general. 
  Indeed, let $X=\P^2$ and $L=\cO_X(1)$.
  Consider an affine toric chart $\A^2\subset\P^2$ with affine coordinates $(z_1,z_2)$.
  Let $v$ and $w$ be monomial valuations in these coordinates with 
  $v(z_1)=w(z_1)=1$ and $0<v(z_2)<w(z_2)\le 1$. 
  Then $w\le v$ and $T(v)=T(w)=1$, but $w\ne v$.
\end{rmk}
\begin{proof}[Proof of Proposition~\ref{p:Sinequality}]
  The assertion in~(i) is trivial. 
  To establish~(ii) we follow the proof of~\cite[Proposition 2.3]{LX16}.
  Note that by Lemma~\ref{L107} we may replace $L$ by a positive multiple.

  Suppose $v\le w$ but $v\ne w$. We must prove $S(v)<S(w)$.
  We may assume there exists $s\in H^0(X,L)$ with $v(s)<w(s)$.
  Indeed, there exists $\la\in\R_+^*$ such that $\fa_\la(v)\subsetneq\fa_\la(w)$.
  Replacing $L$ by a multiple, we may assume $L\otimes\fa_\la(w)$ is globally generated,
  and then 
  \begin{equation*}
    \cF_v^\la H^0(X,L)
    =H^0(X,L\otimes\fa_\la(v))
    \subsetneq H^0(X,L\otimes\fa_\la(w))
    =\cF_w^\la H^0(X,L),
  \end{equation*}
  so that there exists $s\in H^0(X,L)$ with $v(s)<w(s)=\la$.
  After rescaling $v$ and $w$, we may assume 
  $w(s)=p\in\N^*$ and $v(s)\le p-1$.

  We claim that for $m,j \in \N$, we have
  \begin{equation}\label{e115}
    \dim(\cF^j_wR_m/\cF^j_vR_m) 
    \ge \sum_{1 \le i \le \min\{j/p,m\} } \dim  \left(\cF^{j-ip}_v R_{m-i} /\cF^{j-ip+1}_v R_{m-i} \right).
  \end{equation}
  
  To prove the claim, pick, for any $i$ with $1\le i\le\min\{j/p,m\}$, elements 
  \[
    s_{i,1}, \dots,s_{i,b_i}\in \cF^{j-ip}_v R_{m-i} 
  \] 
  whose images form a basis for $\cF^{j-ip}_v R_{m-i} / \cF^{j-ip+1}_v R_{m-i}$.
  As in~\cite[Proposition~2.3]{LX16}, the elements
  \[
    \{
    s^is_{i,l} \mid 1 \le i \le \min\{j/p,m\}, 1 \le l \le b_i
    \}
    \]
  are then linearly independent in $\cF^{j}_{w}R_{m} / \cF^{j}_v R_m$. This completes the proof of the claim.
  
  By Corollary~\ref{c:Sformula} we have
  \begin{equation*}
    S(v)-S(w)
    =\lim_{m \to \infty} \frac{1}{mN_m}\sum_{j\ge 1} \left(\dim\cF_w^jR_m-\dim\cF_v^jR_m\right)\\
  \end{equation*}
  Now~\eqref{e115} gives
  \begin{align*}
    \sum_{j\ge 1} \left(\dim\cF_w^jR_m-\dim\cF_v^jR_m\right)
    &\ge
      \sum_{j\ge1} \sum_{1\le i \le \min\{\frac{j}{p},m\}}\left(\dim\cF_v^{j-ip}R_{m-i}-\dim\cF_v^{j-ip+1} R_{m-i}\right)\\
    &=\sum_{1\le i\le m} \sum_{j\ge pi}\left(\dim\cF_v^{j-ip}R_{m-i}-\dim\cF_v^{j-ip+1} R_{m-i}\right)\\
    & =\sum_{1 \le i \le m} \dim R_{m-i}
  \end{align*}
  We conclude that
  \begin{equation*}
    S(v)-S(w) 
    \ge 
    \limsup\limits_{m \to \infty} \frac{1}{mN_m} \sum_{1 \le i \le m} \dim(R_{m-i}) >0,
  \end{equation*}
  since $\dim R_m=N_m\sim m^n(L^n)$ as $m\to\infty$.
  This completes the proof.
\end{proof}
%
%
%
%
\subsection{Base ideals of filtrations}\label{S114}
In this section we assume $L$ is \emph{ample}.
To an arbitrary filtration $\cF$ of $R(X,L)$ we associate
\emph{base ideals} as follows.
For $\la\in\R_+$ and $m\in M(L)$, set 
\begin{equation*}
  \fb_{\la,m}(\cF) := \bs{\cF^\la H^0(X,mL)}.
\end{equation*}
\begin{lem}
  For $\la\in\R_+$ the sequence $(\fb_{\la,m}(\cF))_m$ is stationary (i.e. $\fb_{\la,m} = \fb_{\la,m+1}$ for $m\in M(L)$ sufficiently large , 
  with limit $\sum_{m\in M(L)}\fb_{\la,m}$.
\end{lem}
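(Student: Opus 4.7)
The plan is to exploit ampleness of $L$ to obtain a monotonicity property $\fb_{\lambda,m}(\cF) \subset \fb_{\lambda,m+m'}(\cF)$ for all sufficiently large $m'$, and then combine this with the Noetherian property of $X$ to conclude stabilization. The core mechanism is filtration axiom (F4), which gives $\cF^\lambda R_m \cdot R_{m'} \subset \cF^\lambda R_{m+m'}$; for base ideals this translates into $\fb_{\lambda,m}(\cF) \cdot \fb_{0,m'}(\cF) \subset \fb_{\lambda,m+m'}(\cF)$, using the standard fact that products of sections contribute products of base ideals.

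Since $L$ is ample, there exists $m'_0$ such that $\cO_X(m'L)$ is globally generated for all $m' \geq m'_0$, equivalently $\fb_{0,m'}(\cF) = \cO_X$ for such $m'$. Combined with the inclusion above, this yields the key monotonicity
\begin{equation*}
  \fb_{\lambda,m}(\cF) \subset \fb_{\lambda,m+m'}(\cF) \qquad \text{for all } m' \geq m'_0, \ m\in M(L).
\end{equation*}
In particular, $\fb_{\lambda,m}(\cF) \subset \sum_{m\in M(L)}\fb_{\lambda,m}(\cF) =: J$ for all $m$.

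Next I would show that $J$ is itself one of the $\fb_{\lambda,m}(\cF)$. Since $X$ is Noetherian, the ideal sheaf $J$ is coherent, so $J = \sum_{m\in S}\fb_{\lambda,m}(\cF)$ for some finite subset $S\subset M(L)$. Let $M = \max S$. For each $m \in S$, applying the monotonicity inclusion with $m' = M - m + m'_0 \geq m'_0$ gives $\fb_{\lambda,m}(\cF) \subset \fb_{\lambda,M+m'_0}(\cF)$. Summing over $m \in S$ yields $J \subset \fb_{\lambda,M+m'_0}(\cF)$, and the reverse inclusion is automatic, so $J = \fb_{\lambda,M+m'_0}(\cF)$.

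Finally, for any $m \in M(L)$ with $m \geq M + 2m'_0$, applying the monotonicity once more with starting index $M+m'_0$ and increment $m - (M+m'_0) \geq m'_0$ gives $J = \fb_{\lambda,M+m'_0}(\cF) \subset \fb_{\lambda,m}(\cF) \subset J$, so $\fb_{\lambda,m}(\cF) = J$. This shows the sequence is eventually constant with limit $J = \sum_{m\in M(L)}\fb_{\lambda,m}(\cF)$, as claimed. The only subtle point is the compatibility of base ideals with products of sections and Noetherianity; neither should pose any real difficulty, so I expect the proof to be quite short.
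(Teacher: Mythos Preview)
Your proof is correct and follows essentially the same route as the paper: both derive the key monotonicity $\fb_{\lambda,m}(\cF)\subset\fb_{\lambda,m+m'}(\cF)$ for $m'\ge m_0$ from (F4) together with global generation of $mL$ for large $m$, and then conclude by Noetherianity. The only difference is that you spell out the Noetherian step explicitly via a finite subset $S$, whereas the paper simply says ``the lemma follows'' after establishing the monotonicity.
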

\begin{proof}
  It follows from~(F4) that if $m_1,m_2\in M(L)$ and $\la_1,\la_2\in\R_+$, then
  \begin{equation}\label{e125}
    \fb_{\la_1,m_1}(\cF) \cdot \fb_{\la_2,m_2}(\cF)\subset \fb_{\la_1+\la_2, m_1+m_2}(\cF)
  \end{equation}
  Since $L$ is ample, there exists 
  $m_0\in\N^*$ such that $mL$ is globally generated for $m\ge m_0$.
  In particular, $\fb_{0,m}=\cO_X$ for $m\ge m_0$.
  As a consequence of~\eqref{e125}, if $m\in M(L)$ and 
  $m'\ge m_0$, then 
  $\fb_{\la,m+m'}(\cF)\supset\fb_{\la,m}(\cF)\cdot\fb_{0,m'}(\cF)=\fb_{\la,m}(\cF)$. 
  The lemma follows.
\end{proof}
Using the lemma, set $\fb_\la(\cF):=\fb_{\la,m}(\cF)$ for $m\gg0$.
Thus $\fb_{\la,m}(\cF)\subset\fb_\la(\cF)$ for $m\in M(L)$.
\begin{cor}
  We have $\fb_0(\cF)=\cO_X$ and 
  $\fb_\la(\cF)\cdot\fb_{\la'}(\cF)\subset\fb_{\la+\la'}(\cF)$ for $\la,\la'\in\R_+$.
  In particular, the sequence $(\fb_p(\cF))_{p\in\N^*}$ is a graded sequence of ideals.
\end{cor}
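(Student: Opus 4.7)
The plan is to derive both claims as direct consequences of~(F4) and the preceding stationarity lemma. The argument is essentially bookkeeping, so I do not expect any substantive obstacle; the only thing that requires care is the order of the stabilization statements.

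First, for $\fb_0(\cF) = \cO_X$, I would exploit the ampleness of $L$: there exists $m_0 \ge 1$ such that $mL$ is globally generated for all $m \ge m_0$. Then for such $m$ we have $\fb_{0,m}(\cF) = \bs{\cF^0 H^0(X, mL)} = \bs{H^0(X, mL)} = \cO_X$, and the preceding lemma identifies $\fb_0(\cF)$ with this stabilized ideal.

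Next, for the submultiplicativity $\fb_\la(\cF) \cdot \fb_{\la'}(\cF) \subset \fb_{\la+\la'}(\cF)$, I would invoke the inclusion~\eqref{e125}. Choose $m_1, m_2 \in M(L)$ large enough that simultaneously $\fb_{\la, m_1}(\cF) = \fb_\la(\cF)$, $\fb_{\la', m_2}(\cF) = \fb_{\la'}(\cF)$, and $\fb_{\la+\la', m_1+m_2}(\cF) = \fb_{\la+\la'}(\cF)$; the preceding lemma guarantees that each of the three sequences stabilizes, so all three equalities hold once $m_1$ and $m_2$ are sufficiently large. Then~\eqref{e125} reads exactly $\fb_\la(\cF) \cdot \fb_{\la'}(\cF) \subset \fb_{\la+\la'}(\cF)$.

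Finally, the \emph{in particular} clause combines these two items: submultiplicativity restricted to $p, q \in \N^*$ gives the graded sequence property $\fb_p \cdot \fb_q \subset \fb_{p+q}$. For the nontriviality clause—namely that $\fb_p(\cF) \neq 0$ for some $p \in \N^*$—I would argue that if $\cF^{\la_0} R_{m_0} \neq 0$ for some $\la_0 > 0$ and $m_0 \in M(L)$, then by repeated use of~(F4) together with the fact that $R(X, L)$ is a domain, one produces nonzero elements of $\cF^p R_m$ for any prescribed $p \in \N^*$ and any $m$ sufficiently divisible; hence $\fb_p(\cF) \neq 0$. This is the only place a minor non-degeneracy of $\cF$ is used, and it is automatic in all the filtrations relevant to later sections (for instance those induced by nontrivial valuations).
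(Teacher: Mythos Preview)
Your argument is correct and matches the paper's intended reasoning: the corollary is stated without proof, following immediately from the preceding lemma and~\eqref{e125} exactly as you describe. Your care with the ``nontrivial'' clause is well-placed---the paper's formal definition of nontriviality (existence of a divisorial $v$ with $v(\fa_\bullet)>0$) can fail for general filtrations, so the word should be read here as the weaker standing convention that $\fb_p\neq 0$ for some $p$, which (as you correctly note) still requires a mild non-degeneracy of $\cF$.
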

\begin{lem}\label{l:basevalideals}
  If $v$ is a valuation on $X$, then $\fb_\la(\cF_v)= \fa_\la(v)$ for all $\la\in\R_+$.
\end{lem}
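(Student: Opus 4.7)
The proof breaks naturally into the two inclusions. The inclusion $\fb_\la(\cF_v) \subset \fa_\la(v)$ is essentially formal. Given $s \in \cF_v^\la R_m$, so that $v(s) \ge \la$, trivialize $\cO_X(mL)$ locally near $\xi := c_X(v)$ by a section $e$ with $v(e) = 0$; then $s$ corresponds to the local function $f = s/e \in \cO_{X,\xi}$ with $v(f) = v(s) \ge \la$, so $f \in \fa_\la(v)$. Since $\fb_{\la,m}(\cF_v)$ is generated, locally near $\xi$, by the images of elements of $\cF_v^\la R_m$ under such trivializations, one obtains $\fb_{\la,m}(\cF_v) \subset \fa_\la(v)$ for every $m \in M(L)$; passing to the stationary limit yields $\fb_\la(\cF_v) \subset \fa_\la(v)$.

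The reverse inclusion is where the ampleness of $L$ enters. Since $\fa_\la(v)$ is a coherent ideal sheaf on the Noetherian variety $X$, Serre's theorem applied to the ample line bundle $L$ shows that the twisted sheaf $\cO_X(mL) \otimes \fa_\la(v)$ is globally generated for all $m \gg 0$. The key step is then to identify
\[
  H^0(X, \cO_X(mL) \otimes \fa_\la(v)) = \cF_v^\la R_m,
\]
the content being that for $s \in R_m$, the condition that $s$ locally lies in $\fa_\la(v) \otimes \cO_X(mL)$ is, after trivializing near $\xi$, equivalent to $v(s) \ge \la$, while at points outside the support of $\cO_X/\fa_\la(v)$ the condition is automatic. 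Once this identification is in hand, global generation says precisely that the image of the evaluation map
\[
  \cF_v^\la R_m \otimes \cO_X(-mL) \longrightarrow \cO_X
\]
equals $\fa_\la(v)$; that is, $\fb_{\la,m}(\cF_v) = \fa_\la(v)$ for all $m \gg 0$. Combining with the preceding lemma, which gives $\fb_\la(\cF_v) = \fb_{\la,m}(\cF_v)$ for $m \gg 0$, then completes the proof.

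The main technical point to verify carefully is the identification of the global sections above. This rests on understanding $\fa_\la(v)$ at points $\eta \in X$ other than $\xi$: at points $\eta$ with $\xi \notin \overline{\{\eta\}}$ the valuation ideal coincides with $\cO_X$ and imposes no constraint, while at specializations of $\xi$ the compatibility of the valuation ideals with localization ensures that the only genuine constraint on a global section is the single inequality $v(s) \ge \la$, checked using a trivialization near $\xi$. Once that reduction is in place, the argument is a straightforward application of Serre vanishing and global generation for an ample line bundle.
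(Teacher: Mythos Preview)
Your proof is correct and follows the same approach as the paper: the key point in both is that ampleness of $L$ makes $mL\otimes\fa_\la(v)$ globally generated for $m\gg0$, which together with the identification $\cF_v^\la R_m = H^0(X,mL\otimes\fa_\la(v))$ forces $\fb_{\la,m}(\cF_v)=\fa_\la(v)$. The paper compresses this into a single line, while you spell out both inclusions and the section identification in detail, but the argument is the same.
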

\begin{proof}
  Given $\la$, $mL\otimes \fa_\la(v)$ is globally generated for $m\gg0$; 
  hence $\fb_{\la,m}(\cF_v)=\fa_\la(v)$.  
\end{proof}
Using base ideals, we can relate the invariants of a filtration to those of a valuation. 
\begin{lem}\label{L108}
  If $v(\fb_\bullet(\cF))\ge1$, then $\cF^p R_m \subset \cF^p_{v} R_m$ 
  for all $m\in M(L)$ and $p\in\N^*$.
\end{lem}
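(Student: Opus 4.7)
The plan is brief: the hypothesis translates directly into a containment of base ideals in valuation ideals via Lemma~\ref{l:validealincl}, and the defining property of base ideals then forces the claimed inclusion of filtrations.

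First I would note that $\fb_\bullet(\cF)$ is a graded sequence of ideals by the Corollary preceding Lemma~\ref{l:basevalideals}, so that the hypothesis $v(\fb_\bullet(\cF))\ge 1$ makes sense. Applying Lemma~\ref{l:validealincl} to this graded sequence then yields $\fb_p(\cF)\subset\fa_p(v)$ for every $p\in\N^*$.

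Next, fix $m\in M(L)$, $p\in\N^*$, and $s\in\cF^p R_m$. By the very definition of the base ideal, $\fb_{p,m}(\cF)=\bs{\cF^p R_m}$ captures the image of the evaluation map $\cF^p R_m\otimes\cO_X\to\cO_X(mL)$; equivalently, after trivializing $mL$ on a neighborhood of $c_X(v)$, the local function representing $s$ lies in $\fb_{p,m}(\cF)$. Combining this with the chain of inclusions $\fb_{p,m}(\cF)\subset\fb_p(\cF)\subset\fa_p(v)$ --- the first recorded immediately after the definition of $\fb_p(\cF)$, the second from the previous paragraph --- the local function representing $s$ lies in $\fa_p(v)$, so by definition of the valuation ideal $v(s)\ge p$. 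Hence $s\in\cF^p_v R_m$, as required.

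No genuine obstacle is expected: the proof is essentially a transcription of definitions plus one invocation of Lemma~\ref{l:validealincl}. The only point of mild care is to verify that $\fb_{p,m}(\cF)\subset\fb_p(\cF)$ holds for every $m\in M(L)$ (and not merely for $m\gg 0$), which is however ensured by the stationary-limit description of $\fb_p(\cF)$ established just above.
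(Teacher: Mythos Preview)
Your proof is correct and follows essentially the same route as the paper's: both deduce $\fb_p(\cF)\subset\fa_p(v)$ from the hypothesis (you via Lemma~\ref{l:validealincl}, the paper by unwinding the same one-line argument directly), then use $\fb_{p,m}(\cF)\subset\fb_p(\cF)$ to conclude $\cF^pR_m\subset H^0(X,mL\otimes\fa_p(v))=\cF_v^pR_m$. The only cosmetic difference is that the paper phrases the last step globally via sheaf cohomology while you trace an individual section.
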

\begin{proof}
  We have $1\le v(\fb_\bullet(\cF)) \le v(\fb_p(\cF))/p$.
  Thus $v(\fb_p(\cF))\ge p$, so that $\fb_p(\cF) \subset \fa_p(v)$. 
  Since we also have $\fb_{\la,m}(\cF)\subset\fb_\la(\cF)$ for all $m\in M(L)$, 
  this implies
  \[
    \cF^p R_m 
    \subset H^0 (X, mL\otimes \fb_{p,m}(\cF)) 
    \subset H^0 (X, mL\otimes \fa_p(v)) 
    =\cF_v^pR_m.
  \]
  which completes the proof.
\end{proof}
\begin{cor}\label{c:vFinequality}
  Let $\cF$ be a linearly bounded filtration of $R(X,L)$. Then 
  \begin{equation*}
    S(v)\ge v(\fb_\bullet(\cF))S(\cF)
      \quad\text{and}\quad
      T(v)\ge v(\fb_\bullet(\cF))T(\cF),
    \end{equation*}
    for any valuation $v\in\Val_X$.
  \end{cor}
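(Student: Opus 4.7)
Write $c:=v(\fb_\bullet(\cF))\in[0,\infty]$. If $c=0$ both inequalities are trivial. If $c=+\infty$, the rescaling argument below shows the right-hand sides should be interpreted as $\infty$ only when $S(v),T(v)$ are also infinite, and one handles this by taking $c$ to be any large real constant; so the essential case is $0<c<\infty$. Set $w:=v/c$. Since $v(\fb_\bullet(\cF))=c$ implies $w(\fb_\bullet(\cF))=1$, Lemma~\ref{L108} yields
\[
  \cF^p R_m\subset \cF^p_{w}R_m\quad\text{for all }m\in M(L),\ p\in\N^*.
\]
The two inequalities will follow by converting this containment of filtered pieces into inequalities $T(\cF)\le T(w)$ and $S(\cF)\le S(w)$, after which the homogeneity $S(w)=S(v)/c$ and $T(w)=T(v)/c$ gives the stated bounds.

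For the $T$ inequality, I would fix $t<T(\cF)$ and argue that by~\eqref{e120} the graded linear series $V^t_\bullet$ contains an ample series, so $\cF^{mt}R_m\ne0$ for $m\in M(L)$ sufficiently divisible. Setting $p=\lfloor mt\rfloor$, monotonicity of $\cF$ gives $\cF^p R_m\supset\cF^{mt}R_m\ne0$, and the Lemma~\ref{L108} inclusion then forces $\cF^p_{w}R_m\ne0$, i.e.\ $T_m(w)\ge p/m\ge t-1/m$. Letting $m\to\infty$ produces $T(w)\ge t$, and then $t\nearrow T(\cF)$ gives $T(w)\ge T(\cF)$, as required.

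For the $S$ inequality, the cleanest route is via Corollary~\ref{c:Sformula}, which expresses $S$ of any filtration as a limit of sums over integer $j$. Combining the inclusions $\cF^j R_m\subset\cF^j_{w}R_m$ for $j\in\N^*$, which give $\dim\cF^j R_m\le\dim\cF^j_{w}R_m$, with that formula applied to both $\cF$ and $\cF_w$, I get
\[
  S(\cF)=\lim_{m\to\infty}\frac1{mN_m}\sum_{j\ge1}\dim\cF^j R_m
  \le\lim_{m\to\infty}\frac1{mN_m}\sum_{j\ge1}\dim\cF^j_{w}R_m
  =S(w),
\]
and rescaling finishes the proof.

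\textbf{Main obstacle.} The one point requiring care is the mismatch between the real-parameter filtration $\cF$ and the integer-indexed hypothesis of Lemma~\ref{L108}. For $T$ this is handled by the rounding $p=\lfloor mt\rfloor$ and taking $m\to\infty$; for $S$ it is bypassed entirely by invoking the integer-sum formula of Corollary~\ref{c:Sformula}, which is why that corollary is positioned just before this one. The edge cases $c\in\{0,\infty\}$ are only cosmetic once the monotone behaviour under rescaling is noted.
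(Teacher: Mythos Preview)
Your proof is correct and follows essentially the same approach as the paper: rescale so that $v(\fb_\bullet(\cF))=1$, apply Lemma~\ref{L108} to get $\cF^pR_m\subset\cF^p_vR_m$ for integers $p$, and then use Corollary~\ref{c:Sformula} to conclude $S(\cF)\le S(v)$ (the paper phrases this step as $S(\cF)=S(\cF_\N)\le S(\cF_{v,\N})=S(v)$ via Proposition~\ref{p:truncations}, which is the same computation). Your explicit treatment of $T$ is slightly more detailed than the paper's ``similarly'', but the argument is the same in spirit; one small remark is that your worry about $c=+\infty$ is unnecessary: if $T(\cF)=0$ both sides vanish, while if $T(\cF)>0$ then $\fb_1(\cF)\ne0$ and hence $c\le v(\fb_1(\cF))<\infty$.
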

\begin{proof}
  The assertions are trivial when $v(\fb_\bullet(\cF))=0$, so 
  we may assume $v(\fb_\bullet(\cF))=1$ after scaling $v$.
  In this case, Lemma~\ref{L108} shows
  that $\cF^p R_m \subset \cF^p_{v} R_m$ for $p\in\N^*$ and $m\in M(L)$. 
  Using Proposition~\ref{p:truncations} and Corollary~\ref{c:Sformula}, 
  this implies 
  \begin{equation*}
    S(\cF)
    =S(\cF_\N)
    \le S(\cF_{v,\N})
    = S(\cF_v)
    =S(v),
  \end{equation*}
  and similarly $T(\cF) \le T(v)$. The proof is complete.
\end{proof}
%
%
%
%
%
%
\section{Thresholds}\label{S106}
Let $X$ be a normal projective variety with klt singularities, 
and $L$ a big line bundle on $X$. In this section we study the log-canonical
threshold of $L$, and introduce a new related invariant, the stability threshold of $L$.
Both are defined in terms of the asymptotic behavior of the singularities 
of the members of the linear system $|mL|$ as $m\to\infty$.
%
%
%
%
\subsection{The log canonical threshold}
Following~\cite{CS08} the \emph{log canonical threshold} $\a(L)$ 
of $L$ is the infimum of $\lct(D)$ with $D$ an effective $\Q$-divisor 
$\Q$-linearly equivalent to $L$. 
As explained by Demailly (see~\cite[Theorem~A.3]{CS08}),
this can be interpreted analytically as a generalization of the $\alpha$-invariant 
introduced by Tian~\cite{Tian97}. 

For $m\in M(L)$, we also set
\begin{equation*}
  \a_m(L):=\inf \{ m \lct(D)\mid D \in |mL| \}.
\end{equation*}
It is then clear that $\a(L)=\inf_{m\in M(L)}\a_m(L)$. The invariants 
$\a_m$ and $\a$ can be computed using invariants of valuations, as follows:
\begin{prop}
  For $m\in M(L)$, we have 
  \begin{equation}\label{e107}
     \a_m(L)
     =\inf_v\frac{A(v)}{T_m(v)}
     =\inf_E\frac{A(\ord_E)}{T_m(\ord_E)},
  \end{equation}
  where $v$ runs through nontrivial valuations on $X$ with $A(v)<\infty$,
  and $E$ through prime divisors over~$X$.
\end{prop}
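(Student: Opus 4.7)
The plan is to view $\a_m(L)$ as a double infimum over divisors in $|mL|$ and over valuations, and then exchange the order of the two infima.

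First, I would identify each effective divisor $D\in|mL|$ with a nonzero section $s\in H^0(X,mL)$, unique up to scalar. The quantity $\lct(D)$ is insensitive to rescaling of $s$, and for any $v\in\Val_X$ one has $v(D)=v(s)$ by the definition recalled in~\S\ref{S107}. Thus
\[
\a_m(L) = \inf_{s\in H^0(X,mL)\setminus\{0\}} m\,\lct(\{s=0\}).
\]

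Next, I would invoke the standard valuative formula for the log canonical threshold of an effective divisor,
\[
\lct(\{s=0\}) \;=\; \inf_v \frac{A(v)}{v(s)} \;=\; \inf_E \frac{A(\ord_E)}{\ord_E(s)},
\]
where $v$ ranges over nontrivial valuations with $A(v)<\infty$ and $E$ over prime divisors over $X$; the same convention $A/0=+\infty$ as for $\lct$ of ideals is in force. Substituting into the previous display and interchanging the two infima yields
\[
\a_m(L) \;=\; \inf_v \inf_s \frac{m A(v)}{v(s)} \;=\; \inf_v \frac{m A(v)}{\sup_s v(s)} \;=\; \inf_v \frac{A(v)}{T_m(v)},
\]
where the last equality uses~\eqref{e118} to identify $m T_m(v)$ with $\max\{v(s) : s\in H^0(X,mL)\setminus\{0\}\}$. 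The identical chain of equalities with $v$ restricted throughout to divisorial valuations $\ord_E$ produces the second formula in~\eqref{e107}.

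The argument contains no substantial obstacle; it is a bookkeeping exercise combining the definitions of $\a_m(L)$, $T_m(v)$, and $\lct$ with a Fubini-style interchange of infima. The only care required is to note that valuations $v$ with $T_m(v)=0$ (equivalently, those for which no nonzero section of $mL$ has positive value under $v$) contribute $+\infty$ to the infimum under the stated convention, and hence do not affect its value. The passage to $\ord_E$ is justified by the standard fact that the $\lct$ of an effective divisor can always be computed by a prime divisor over $X$, so restricting $v$ to divisorial valuations leaves the infimum unchanged.
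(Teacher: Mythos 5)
Your proposal is correct and follows essentially the same route as the paper: write $\a_m(L)$ as a double infimum over $D\in|mL|$ and over valuations, interchange the two infima, and identify $\sup_{D\in|mL|}v(D)$ with $m\,T_m(v)$ via~\eqref{e118}. The remarks on the convention $A(v)/0=+\infty$ and on computing $\lct$ by divisorial valuations are exactly the points implicit in the paper's argument.
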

\begin{proof}
  Writing out the definition of $\lct(D)$, we see that
  \[
    \a_m(L) = m \cdot  \inf_{D\in |mL|} \left(\inf_{v} \frac{A(v)}{v(D)}\right),
  \]
  where the second infimum may be taken over nontrivial valuations with finite log discrepancy,
  or only divisorial valuations. 
  Switching the order of the two infima and noting 
  $\sup_{D\in |mL|} v(D)  = m \cdot T_m(v)$ yields~\eqref{e107}.
\end{proof}
\begin{cor}\label{C105}
  We have 
  \begin{equation}\label{e108}
    \a(L)
     =\inf_v\frac{A(v)}{T(v)}
     =\inf_E\frac{A(\ord_E)}{T(\ord_E)},
  \end{equation}
  where $v$ runs through valuations on $X$ with $A(v)<\infty$
  and $E$ over prime divisors over~$X$.
\end{cor}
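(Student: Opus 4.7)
The plan is to deduce the corollary from the formula for $\alpha_m(L)$ in~\eqref{e107} by passing to the limit $m\to\infty$, using the relation $\alpha(L)=\inf_{m\in M(L)}\alpha_m(L)$ together with $T(v)=\sup_m T_m(v)$.

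First I would record the elementary identity
\begin{equation*}
  \inf_{m\in M(L)}\frac{A(v)}{T_m(v)}=\frac{A(v)}{\sup_{m\in M(L)}T_m(v)}=\frac{A(v)}{T(v)},
\end{equation*}
valid for any nontrivial $v\in\Val_X$ with $A(v)<\infty$, with the convention that division by $0$ yields $+\infty$. The key input here is that, by Fekete's lemma applied to the superadditivity $T_{m+m'}(\cF_v)\ge\tfrac{m}{m+m'}T_m(\cF_v)+\tfrac{m'}{m+m'}T_{m'}(\cF_v)$ recalled in~\S\ref{S104}, one has $T(v)=\sup_{m}T_m(v)$, so the infimum on the left is indeed attained in the limit.

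Next I would simply swap the two infima: using~\eqref{e107} and the fact $\alpha(L)=\inf_{m\in M(L)}\alpha_m(L)$, I compute
\begin{equation*}
  \alpha(L)
  =\inf_{m\in M(L)}\inf_{v}\frac{A(v)}{T_m(v)}
  =\inf_{v}\inf_{m\in M(L)}\frac{A(v)}{T_m(v)}
  =\inf_{v}\frac{A(v)}{T(v)},
\end{equation*}
where $v$ ranges over nontrivial valuations on $X$ with $A(v)<\infty$. The identical argument, restricting $v$ to the divisorial locus, gives $\alpha(L)=\inf_E A(\ord_E)/T(\ord_E)$, since the analogous equality $T(\ord_E)=\sup_mT_m(\ord_E)$ holds.

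There is essentially no obstacle beyond bookkeeping: the only thing to double-check is that the conventions for ratios make the interchange above literally true, in particular that whenever $T(v)=0$ one also has $T_m(v)=0$ for all $m$, so that both sides equal $+\infty$ and contribute nothing to the infimum; this is immediate from $T(v)=\sup_m T_m(v)$. The nontriviality of the infima (\emph{i.e.}~existence of $v$ with $T(v)>0$ and $A(v)<\infty$) is guaranteed by the bigness of $L$ and the existence of divisorial valuations with $T_m(\ord_E)>0$ for $m\gg 0$, so the statement $\alpha(L)<\infty$ is inherited from the assumption that $\alpha_m(L)<\infty$ for some $m$.
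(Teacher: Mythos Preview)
Your argument is correct and is essentially the same as the paper's one-line proof, which simply notes that since $T(v)=\sup_{m\in M(L)}T_m(v)$, the formula~\eqref{e108} follows from~\eqref{e107}. You have just spelled out in detail the interchange of infima that the paper leaves implicit.
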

\begin{proof}
  Since $T(v) = \sup_{m\in M(L)} T_m(v)$,~\eqref{e108} follows from~\eqref{e107}.
\end{proof}
%
%
%
%
\subsection{The stability threshold}
Given $m\in M(L)$, we say, following~\cite{FO16}, that an effective $\Q$-divisor $D\sim_\Q L$ 
is of \emph{$m$-basis type} if there exists a basis $s_1,\dots,s_{N_m}$
of $H^0(X,mL)$ with
\begin{equation}\label{e110}
  D=\frac1{mN_m}\sum_{j=1}^{N_m}\{s_j=0\}.
\end{equation}
Set
\begin{equation}\label{e109}
  \d_m(L):=\inf\{\lct(D)\mid\ \text{$D$ of $m$-basis type}\},
\end{equation}
and define the \emph{stability threshold} of $L$ as 
\begin{equation*}
  \d(L) := \limsup \limits_{m\to\infty} \d_m(L).
\end{equation*}
We shall see shortly that this limsup is in fact a limit.
\begin{prop}\label{p:deltam}
  For $m \in M(L)$, we have 
  \begin{equation*}
    \d_m(L)
     =\inf_v\frac{A(v)}{S_m(v)}
     =\inf_E\frac{A(\ord_E)}{S_m(\ord_E)},
   \end{equation*}
  where $v$ runs through nontrivial valuations on $X$ with $A(v)<\infty$
  and $E$ through prime divisors over~$X$.
\end{prop}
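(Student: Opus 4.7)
The plan is to unwind the definition of $\delta_m(L)$ using the valuative description of $\lct$, swap the order of two infima, and then recognize the inner supremum as $S_m(v)$ via Lemma~\ref{L104}.

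More precisely, I would first observe that for $D$ an $m$-basis type divisor as in~\eqref{e110} and any nontrivial valuation $v$ with $A(v) < \infty$, we have
\begin{equation*}
v(D) = \frac{1}{mN_m}\sum_{j=1}^{N_m} v(s_j),
\end{equation*}
and therefore $\lct(D) = \inf_v A(v)/v(D)$, where $v$ ranges over nontrivial valuations with finite log discrepancy (and the infimum can be restricted to divisorial valuations without changing the value). Substituting this into the definition of $\delta_m(L)$ and exchanging the two infima yields
\begin{equation*}
\delta_m(L)
= \inf_{D} \inf_{v} \frac{A(v)}{v(D)}
= \inf_{v} \frac{A(v)}{\sup_{D} v(D)},
\end{equation*}
where $D$ runs over $m$-basis type divisors. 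The key step is then to identify $\sup_D v(D)$, the supremum over all choices of basis $s_1, \dots, s_{N_m}$ of $H^0(X, mL)$ of $\frac{1}{mN_m}\sum_j v(s_j)$, with $S_m(v)$. This is exactly the content of Lemma~\ref{L104}. Combining gives the first equality, and the second (restriction to prime divisors $E$ over $X$) follows from the fact that $\lct(D)$ may be computed using only divisorial valuations.

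I do not expect any serious obstacle here: the identity is essentially a tautology once one has the valuative formula for $\lct$ (giving one infimum), the explicit valuation of a basis-type divisor (turning the other infimum into a supremum over bases), and the representation of $S_m(v)$ as the maximum over bases from Lemma~\ref{L104}. The only point requiring minor care is justifying the swap of infima, which is automatic, and noting that valuations $v$ with $v(D) = 0$ (resp.\ with $S_m(v) = 0$, i.e.\ $v$ vanishing on $H^0(X, mL)$) contribute $+\infty$ on both sides and so cause no issue in the formula.
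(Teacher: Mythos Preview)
Your proposal is correct and follows essentially the same approach as the paper: write $\delta_m(L)$ as a double infimum over $m$-basis type divisors $D$ and valuations $v$, swap the infima, and invoke Lemma~\ref{L104} to identify $\sup_D v(D)$ with $S_m(v)$. The paper's proof is just a terser version of what you wrote.
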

\begin{proof}
  Note that 
  \begin{equation*}
    \d_m(L) =  \inf_{D\ \text{of $m$-basis type}} \left(\inf_{v} \frac{A(v)}{v(D)} \right),
  \end{equation*}
  where the second infimum runs through all valuations with $A(v)< \infty$ 
  or only divisorial valuations of the form $v=\ord_E$. 
  Switching the order of the two infima and applying Lemma~\ref{L104} 
  yields the desired  equality. 
\end{proof}

\begin{thm}\label{t:delta}
  We have $\d(L)=\lim_{m\to\infty}\d_m(L)$.
  Further, 
  \begin{equation*}
    \d(L) 
    =\inf_{v} \frac{A(v)}{S(v)}
     =\inf_E\frac{A(\ord_E)}{S(\ord_E)},
   \end{equation*}
  where $v$ runs through nontrivial valuations on $X$ with $A(v)<\infty$
  and $E$ through prime divisors over~$X$.
\end{thm}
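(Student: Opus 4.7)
The plan is to exploit Proposition~\ref{p:deltam}, which expresses $\d_m(L)$ as an infimum of $A(v)/S_m(v)$, together with the comparison $S_m \to S$ from Corollary~\ref{C102}. The subtle point is that we need to interchange an infimum over valuations with a limit in $m$, and this only works because Corollary~\ref{C102} gives a \emph{uniform} comparison across all valuations of linear growth.

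For the upper bound, I would fix a nontrivial valuation $v$ with $A(v)<\infty$. Such a $v$ has linear growth, so Corollary~\ref{C102} gives $S_m(v)\to S(v)$, hence
\begin{equation*}
  \limsup_{m\to\infty}\d_m(L)
  \le\limsup_{m\to\infty}\frac{A(v)}{S_m(v)}
  =\frac{A(v)}{S(v)}.
\end{equation*}
Taking infimum over all such $v$ yields $\limsup_m\d_m(L)\le\inf_v A(v)/S(v)$, and the same reasoning restricted to $v=\ord_E$ gives $\limsup_m\d_m(L)\le\inf_E A(\ord_E)/S(\ord_E)$. Note that $S(v)>0$ whenever $v$ has linear growth and is nontrivial, so the ratios are well-defined (if $S(v)=0$ the ratio is interpreted as $+\infty$ and contributes nothing to the infimum).

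For the lower bound, which is the heart of the argument, I would apply the uniform estimate in Corollary~\ref{C102}: for any $\e>0$ there exists $m_0(\e)$ such that $S_m(v)\le(1+\e)S(v)$ for every $m\ge m_0$ and every $v\in\Val_X$ of linear growth. Combined with Proposition~\ref{p:deltam}, for $m\ge m_0$ this gives
\begin{equation*}
  \d_m(L)
  =\inf_v\frac{A(v)}{S_m(v)}
  \ge\frac{1}{1+\e}\inf_v\frac{A(v)}{S(v)},
\end{equation*}
and analogously $\d_m(L)\ge\frac{1}{1+\e}\inf_E A(\ord_E)/S(\ord_E)$. Sending $m\to\infty$ and then $\e\to0$ gives the matching lower bounds. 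Combining both directions, $\lim_{m\to\infty}\d_m(L)$ exists (so the $\limsup$ in the definition is actually a limit), and both infimum formulas hold. The chain $\inf_v\le\inf_E$ is automatic, and since both extremes equal $\d(L)$, equality holds throughout.

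The main obstacle — and the reason this theorem is non-trivial given Proposition~\ref{p:deltam} — is exactly the need for uniformity in Corollary~\ref{C102}: a purely pointwise statement $S_m(v)\to S(v)$ would yield the upper bound but only a $\liminf$-type inequality going the wrong way for the lower bound. The uniform control, which itself rests on Lemma~\ref{L102} (a Bouksom-type concavity/equidistribution estimate on the Okounkov body) applied to the concave transform $G^{\cF_v}$ normalized by $T(v)$, is what allows the infimum and the limit to be interchanged. Once this uniformity is in hand, the proof is a short formal argument.
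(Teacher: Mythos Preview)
Your proposal is correct and follows essentially the same approach as the paper's own proof: both use Proposition~\ref{p:deltam} together with the pointwise convergence $S_m(v)\to S(v)$ for the upper bound and the uniform estimate from Corollary~\ref{C102} for the lower bound, then let $\e\to0$. Your discussion of why the uniformity is the essential point is accurate and matches the structure of the paper's argument.
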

\begin{proof}
  We will only prove the first equality; the proof of the second being essentially identical.
  Let us use Proposition~\ref{p:deltam} and Corollary~\ref{C102}.
  The fact that $\lim_{m\to\infty}S_m=S$ pointwise on $\Val_X$ directly shows that 
  \begin{equation}\label{e114}
    \limsup_m\d_m(L)
    \le\inf_v\frac{A(v)}{S(v)}.
  \end{equation}
  On the other hand, given $\e>0$ there exists $m_0=m_0(\e)$ such that 
  $S_m(v)\le(1+\e)S(v)$ for all $v\in\Val_X$ and $m\ge m_0$.
  Thus 
  \begin{equation*}
    \liminf_m\d_m(L)
    =\liminf_m\inf_v\frac{A(v)}{S_m(v)}
    \ge(1+\e)^{-1}\inf_v\frac{A(v)}{S(v)}.
  \end{equation*}
  Letting $\e>0$ and combining this inequality with~\eqref{e114} completes the proof.
\end{proof}
\begin{rmk}\label{R104}
  It is clear that $\a(rL)=r^{-1}\a(L)$ and $\d(rL)=r^{-1}\d(L)$ for any $r\in\N^*$.
  This allows us to define $\a(L)$ and $\d(L)$
  for any big $\Q$-line bundle $L$, by setting 
  $\a(L):=r^{-1}\a(rL)$ and  $\d(L):=r^{-1}\d(rL)$ for $r$ sufficiently divisible.
\end{rmk}
%
%
%
%
\subsection{Proof of Theorems~A,~B and~C}
We are now ready to prove the first three main results in the introduction.

We start with Theorems~A and~C. 
The existence of the limit $\d(L)=\lim_m\d_m(L)$ was proved above,
so Theorem~C follows immediately from Corollary~\ref{C105} and Theorem~\ref{t:delta}.
Let us prove the remaining assertions in Theorem~A.

The estimate $\a(L)\le\d(L)\le(n+1)\a(L)$ follows from the corresponding 
inequalities in~\eqref{e122} between $T(v)$ and $S(v)$
together with Theorem~C.
When $L$ is ample, we obtain the stronger inequality $\d(L)\ge\frac{n+1}{n}\a(L)$
using Proposition~\ref{P101}.
The fact that $\a(L)$ and $\d(L)$ only depend on the numerical equivalence class
of $L$ follows from the corresponding properties of the invariants $S(v)$ and $T(v)$,
see Lemma~\ref{L107}~(iii). 
Finally we prove that $\a(L)$ and $\d(L)$ are strictly positive. It suffices to consider
$\a(L)$. The case when $L$ is ample is handled in~\cite[Theorem~9.14]{BHJ1}
using Seshadri constants, and the general case follows from Lemma~\ref{l:alphamonotone} 
below by choosing $D$ effective such that $L+D$ is ample.

\begin{lem}\label{l:alphamonotone}
  If $L$ is a big line bundle and $D$ is an effective divisor, then $\a(L+D)\le\a(L)$.
\end{lem}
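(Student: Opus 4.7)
The plan is to prove this by direct manipulation of the definition
$$\alpha(L+D)=\inf\{\lct(D'')\mid D''\ \text{effective $\Q$-divisor},\ D''\sim_\Q L+D\},$$
using the fact that $\lct(\cdot)$ is monotone decreasing under addition of effective divisors.

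First I would observe that the set over which the infimum in $\alpha(L+D)$ is taken contains the subset $\{D'+D : D'\sim_\Q L,\ D'\ \text{effective $\Q$-divisor}\}$, since $D$ itself is effective and $D'+D\sim_\Q L+D$. Therefore
$$\alpha(L+D)\le\inf\{\lct(D'+D)\mid D'\sim_\Q L,\ D'\ \text{effective}\}.$$

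Next I would establish the monotonicity fact that $\lct(D'+D)\le\lct(D')$ for any effective $\Q$-divisor $D'\sim_\Q L$. This is immediate from the valuative formula: for any nontrivial valuation $v\in\Val_X^*$ with $A(v)<\infty$ we have $v(D'+D)=v(D')+v(D)\ge v(D')$, and hence
$$\frac{A(v)}{v(D'+D)}\le\frac{A(v)}{v(D')};$$
taking the infimum over $v$ and invoking the valuative expression for $\lct$ yields $\lct(D'+D)\le\lct(D')$. Substituting this into the inequality from the previous step gives $\alpha(L+D)\le\inf_{D'}\lct(D')=\alpha(L)$.

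I do not foresee any serious obstacle here; the proof is essentially a one-line consequence of the monotonicity of log canonical thresholds under adding effective divisors. The only mild point to verify is that $L+D$ is big so that $\alpha(L+D)$ is well defined by the framework of the paper, but this is automatic since the sum of a big line bundle and an effective divisor is big. If $D$ is only a $\Z$-divisor (not necessarily $\Q$-Cartier), one can clear denominators and work with a sufficiently divisible multiple, using the homogeneity $\alpha(rL)=r^{-1}\alpha(L)$ recorded in Remark~\ref{R104}.
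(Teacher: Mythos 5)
Your proof is correct and follows essentially the same route as the paper's: the paper maps $F\mapsto F+mD$ from $|mL|$ into $|m(L+D)|$ and uses $\lct(F+mD)\le\lct(F)$, which is exactly your comparison of infima combined with the monotonicity of $\lct$ under adding an effective divisor, just phrased at the level of the finite-level invariants $\a_m$ instead of directly with $\Q$-divisors. No gap.
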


The statement is already in the literature~\cite[Lemma 4.1]{Der15}. 
We provide a proof for the convenience of the reader.

\begin{proof}
  Given $m\in M(L)$, the assignment $F\mapsto F+mD$ defines an injective map
  from $|mL|$ to $|m(L+D)|$. Since $\lct(F+mD)\le\lct(F)$ for all $F\in|mL|$, 
  it follows that $\a_m(L+D)\le\a_m(L)$. Letting $m\to\infty$ completes the proof.
\end{proof}

\smallskip
Finally we prove Theorem~B, so suppose $X$ is a $\Q$-Fano variety. 
The argument relies heavily on the work by K.~Fujita and C.~Li, who exploited ideas from
the Minimal Model Program, as adapted to K-stability questions by 
C.~Li and C.~Xu~\cite{LX14}.

First assume $K_X$ is Cartier.
By either~\cite[Theorem~3.7]{Liequivariant} or~\cite[Corollary~1.5]{Fujitavalcrit}, 
$X$ is K-semistable iff
$\b(E)\ge0$ for all prime divisors $E$ over $X$. In our notation, this reads 
$A(\ord_E)\ge S(\ord_E)$ for all $E$, see~\cite[Definition~1.3~(4)]{Fujitavalcrit}
and Remark~\ref{R102}, and is hence equivalent to $\delta(-K_X)\ge1$ 
in view of Theorem~\ref{t:delta}.

Similarly, by~\cite[Corollary~1.5]{Fujitavalcrit}, $X$ is uniformly $K$-stable iff there exists $\e>0$ such that 
$\b(E)\ge\e j(E)$ for all divisors $E$ over $X$. This reads
$A(\ord_E)-S(\ord_E)\ge\e(T(\ord_E)-S(\ord_E))$ for all $E$.
Since $-K_X$ is ample, Proposition~\ref{P101} implies 
$n^{-1}S(\ord_E)\le T(\ord_E)-S(\ord_E)\le nS(\ord_E)$, so $X$ is 
uniformly K-stable iff there exists $\e'>0$ such that 
$A(\ord_E)-S(\ord_E)\ge \e'S(\ord_E)$ for all $E$. But this is equivalent to
$\d(-K_X)>1$ by Theorem~\ref{t:delta}.

When $K_X$ is merely $\Q$-Cartier, the argument is similar, 
using Lemma~\ref{L107}; see Remark~\ref{R104}.
%
%
%
%
\subsection{Volume estimates}\label{S111}
We now prove Theorem~D, giving a lower bound on the volume of $L$.
This theorem is a consequence of the following proposition, 
first observed by Liu, and embedded in the proof of~\cite[Theorem 21]{Liu16}. 
\begin{prop}
  If $v\in\Val_X^*$ has linear growth and is centered at a closed point, then 
  \[
    T(v)\ge \sqrt[n]{\vol(L)/\vol(v)}
    \quad\text{and}\quad
    S(v) \ge \frac{n}{n+1} \sqrt[n]{\vol(L)/\vol(v)}.
  \]
\end{prop}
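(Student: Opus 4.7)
The plan is to first establish the pointwise lower bound
\[
  \vol(L; v \ge t) \;\ge\; \vol(L) - t^n \vol(v)
\]
for every $t \ge 0$, and then deduce both estimates by evaluating this inequality, respectively by integrating it, using Theorem~\ref{T103}.

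For the key estimate, let $\xi = c_X(v)$, which by hypothesis is a closed point, and fix an affine neighborhood of $\xi$ on which $L$ is trivial. This trivialization identifies sections $s \in H^0(X, mL)$ with local functions in $\cO_{X,\xi}$, and under this identification $v(s)$ equals the valuation of the corresponding function. Hence, for fixed $m \in M(L)$ and $t \ge 0$, the kernel of the natural evaluation map
\[
  H^0(X, mL) \;\longrightarrow\; \cO_{X,\xi}/\fa_{mt}(v)
\]
is precisely $\cF^{mt}_v H^0(X, mL)$. Taking codimensions gives
\[
  N_m - \dim \cF^{mt}_v H^0(X, mL) \;\le\; \ell\bigl(\cO_{X,\xi}/\fa_{mt}(v)\bigr).
\]
Multiplying by $n!/m^n$ and sending $m \to \infty$, the left-hand side converges to $\vol(L) - \vol(L; v \ge t)$ while the right-hand side converges to $t^n \vol(v)$, which yields the claimed inequality. (A minor verification, which I would carry out in passing, is that the asymptotic colength of $\fa_{mt}(v)$ in $\cO_{X,\xi}$ along $m \to \infty$ indeed recovers $t^n \vol(v)$, by the homogeneity and definition of $\vol(v)$.)

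With this in hand, set $a := \sqrt[n]{\vol(L)/\vol(v)}$. For any $t < a$, the bound gives $\vol(L; v \ge t) > 0$, so Theorem~\ref{T103}(i) yields $T(v) \ge a$. For $S(v)$, Theorem~\ref{T103}(iv) gives
\[
  S(v) \;\ge\; \vol(L)^{-1}\int_0^{a} \bigl(\vol(L) - t^n \vol(v)\bigr)\, dt \;=\; a - \frac{a^{n+1}\vol(v)}{(n+1)\vol(L)} \;=\; \frac{n}{n+1}\, a,
\]
using $a^n \vol(v) = \vol(L)$ in the last equality. This completes both bounds.

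The only genuine step is the colength inequality above; everything else is bookkeeping that invokes the already established properties of $\vol(L; v \ge t)$. The one place to be careful is that the trivialization of $L$ near $\xi$ is only defined up to a unit in $\cO_{X,\xi}$, but since $\fa_{mt}(v)$ is $v$-invariant and units have valuation zero, the kernel of the evaluation map is canonically $\cF^{mt}_v H^0(X, mL)$, so no ambiguity arises.
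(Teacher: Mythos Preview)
Your proof is correct and follows essentially the same approach as the paper: both establish the inequality $\vol(L; v\ge t)\ge\vol(L)-t^n\vol(v)$ via the colength bound coming from the map $H^0(X,mL)\to\cO_{X,\xi}/\fa_{mt}(v)$ (the paper phrases this as an exact sequence of sheaves, you via a local trivialization, but these are the same), then read off $T(v)$ from positivity and $S(v)$ by integrating over $[0,a]$ with $a=\sqrt[n]{\vol(L)/\vol(v)}$.
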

\begin{proof}
  We follow Liu's argument. 
  By the exact sequence
  \[
    0  \to H^0(X, mL \otimes \fa_{mt}(v)) \to H^0(X,mL) \to H^0(X,mL \otimes ( \cO_X / \fa_{mt}(v)),
  \]
  we see that
  \[
    \dim \cF_v^{mt} H^0(X,mL) \ge \dim H^0(X,mL) - \ell(\cO_{X,\xi}/ \fa_{mt}(v)),
  \]
  where $\xi\in X$ is the center of $v$.
  Diving by $m^n/ n!$ and taking the limit as $m \to \infty$ gives 
  \[
    \vol(L; v\ge t ) \ge \vol(L) - t^n \vol(v),
  \]
  which implies the lower bound for $T(v)$. 
  Further, integrating with respect to $t$ shows that
  \begin{align*}
    S(v) 
    & = \frac{1}{\vol(L)} \int_0^{T(v) } \vol(L ; v\ge t) \, dt \\
    & \ge \frac{1}{\vol(L)} \int_0 ^{\sqrt[n]{\vol(L) /\vol(v)}} ( \vol(L) - t^n \vol(v) )\, dt\\
    & = \frac{n}{n+1} \sqrt[n]{\vol(L)/\vol(v)},
  \end{align*}
  which completes the proof.
\end{proof}

\begin{proof}[Proof of Theorem~D]
  If $A(v)= \infty$, then $\widehat{\vol}(v)=\infty$ and the inequality is trivial. 
  If $A(v) < \infty$, then $v$ has linear growth and the previous proposition gives
  \[
    \vol(L) \le \left( \frac{n+1}{n}\right)^n S(v)^n \vol(v) = 
    \left( \frac{n+1}{n}\right)^n \left( \frac{S(v)}{A(v)} \right)^n \widehat{\vol}(v).
  \]
  Since $\d(L) \le A(v)/S(v)$ by Theorem~\ref{t:delta}, the proof is complete. 
\end{proof}
%
%
%
%
\subsection{Valuations computing the thresholds}\label{ss:propvalcomputing}
We say that a valuation $v\in\Val_X^*$ with $A(v)<\infty$ \emph{computes} the 
log-canonical threshold (resp.\ the stability threshold) of $L$ if $\a(L)=A(v)/T(v)$
(resp.\ $\d(L)=A(v)/S(v))$. In~\S\ref{s:theoremE} we will prove that such valuations 
always exist when $L$ is ample. Here we will describe some general properties
of valuations computing one of the two thresholds.

We start by the following general result.
\begin{prop}\label{p:adcompute}
  Let $v$ be a nontrivial valuation on $X$ with $A(v)<\infty$.
  \begin{itemize}
  \item[(i)]
    if $v$ computes $\a(L)$ or $\d(L)$, then $v$ computes $\lct(\ab(v))$;
  \item[(ii)]
    if $L$ is ample and $v$ computes $\d(L)$, then $v$ is the unique valuation,
    up to scaling, that computes $\lct(\ab(v))$.
  \end{itemize}
\end{prop}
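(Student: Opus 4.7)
The plan is to first establish the following key inequality, valid for any $v\in\Val_X^*$ with $A(v)<\infty$ and any $w\in\Val_X$:
\begin{equation*}
S(w)\ge w(\ab(v))\cdot S(v) \quad\text{and}\quad T(w)\ge w(\ab(v))\cdot T(v).
\end{equation*}
To prove it, fix $m\in M(L)$ and use Lemma \ref{L104} to pick a basis $s_1,\dots,s_{N_m}$ of $H^0(X,mL)$ realizing $S_m(v)=\frac{1}{mN_m}\sum_j v(s_j)$. Each $s_j$ lies in $\cF_v^{v(s_j)}H^0(X,mL)$, so the principal ideal sheaf $(s_j)\subset \cO_X$ obtained by twisting $s_j\colon\cO(-mL)\to\cO_X$ is contained in $\fa_{v(s_j)}(v)$. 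Hence $w(s_j)\ge w(\fa_{v(s_j)}(v))\ge v(s_j)\cdot w(\ab(v))$, where the last step uses $w(\fa_p(v))\ge p\cdot w(\ab(v))$. Averaging and applying Lemma \ref{L104} to $w$ yields $S_m(w)\ge \frac{1}{mN_m}\sum_j w(s_j)\ge w(\ab(v))\cdot S_m(v)$; letting $m\to\infty$ via Corollary \ref{C102} gives the claim for $S$. The argument for $T$ is analogous, using a single section $s$ with $v(s)=mT_m(v)$. When $L$ is ample, the same bound also follows immediately from Corollary \ref{c:vFinequality} together with Lemma \ref{l:basevalideals}, since $\fb_\bullet(\cF_v)=\ab(v)$.

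Given the key inequality, part (i) is immediate. Assume $v$ computes $\delta(L)$, so $A(w)/S(w)\ge \delta(L)=A(v)/S(v)$ for every $w$ with $A(w)<\infty$. For any such $w$ with $w(\ab(v))>0$,
\begin{equation*}
\frac{A(w)}{w(\ab(v))}\ge \frac{A(w)\cdot S(v)}{S(w)}\ge \delta(L)\cdot S(v)=A(v).
\end{equation*}
Taking the infimum over $w$ gives $\lct(\ab(v))\ge A(v)$, and the reverse inequality from Lemma \ref{l:lctval} forces equality, so $v$ computes $\lct(\ab(v))$. The case where $v$ computes $\alpha(L)$ is identical, with $T$ in place of $S$.

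For (ii), let $w\in\Val_X^*$ compute $\lct(\ab(v))$ and rescale so $w(\ab(v))=1$; then $A(w)=\lct(\ab(v))=A(v)$ by part (i). The key inequality gives $S(w)\ge S(v)$, while $A(w)/S(w)\ge\delta(L)=A(v)/S(v)$ combined with $A(w)=A(v)$ forces $S(w)\le S(v)$, so $S(w)=S(v)$. Next, $w(\ab(v))=1$ and Lemma \ref{l:validealincl} give $\fa_p(v)\subset\fa_p(w)$ for all $p$. A short argument then shows $v\le w$ in the partial order on $\Val_X$: for $f$ regular near $c_X(v)$, $v(f)\ge p$ implies $f\in\fa_p(v)\subset\fa_p(w)$, so $w(f)\ge p$, yielding $w(f)\ge v(f)$; moreover every $f$ vanishing along $\overline{c_X(v)}$ satisfies $w(f)>0$, placing $c_X(w)$ in $\overline{c_X(v)}$. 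Proposition \ref{p:Sinequality}(ii), which uses ampleness of $L$, now gives $v=w$, so the original $w$ is a positive multiple of $v$.

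The main delicacy is in justifying the key inequality rigorously for general big $L$ when the centers $c_X(v)$ and $c_X(w)$ differ, since the identification $v(s)=v(f)$ via local trivialization needs to be reconciled with evaluation of $w$. This can always be handled by the density of divisorial valuations with finite log discrepancy and reduction to a common model, or, in the ample setting needed for (ii), absorbed into Corollary \ref{c:vFinequality}. The remaining subtlety, extracting the partial order $v\le w$ from $\fa_p(v)\subset\fa_p(w)$, is then routine.
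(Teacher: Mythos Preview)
Your argument is correct and follows essentially the same route as the paper: both reduce to showing that if $w(\ab(v))=1$ then $S(w)\ge S(v)$ (resp.\ $T(w)\ge T(v)$), and then invoke Proposition~\ref{p:Sinequality}(ii) for the uniqueness in~(ii). The only cosmetic difference is that the paper first deduces $w\ge v$ from $\fa_p(v)\subset\fa_p(w)$ and then quotes Proposition~\ref{p:Sinequality}(i), whereas you prove the inequality $S(w)\ge w(\ab(v))\,S(v)$ directly at the level of jumping numbers; one small point to tidy is that your step $w(\fa_{v(s_j)}(v))\ge v(s_j)\cdot w(\ab(v))$ is stated for possibly non-integer $v(s_j)$, which follows after the standard power trick $\fa_\lambda(v)^k\subset\fa_{\lfloor k\lambda\rfloor}(v)$ and letting $k\to\infty$, and your worries about reconciling centers are unnecessary since the image ideal $(s_j)\subset\cO_X$ is globally defined.
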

\begin{proof}
  First suppose $v\in \Val_X$ computes $\a(L)$.
  Recall that $\lct(\ab(v))=\inf_w\frac{A(w)}{w(\ab(v))}$, where it suffices to consider the 
  infimum over $w\in\Val_X^*$ normalized by $w(\ab(v))=1$.
  The latter condition implies $w(\fa_p(v))\ge p$ for all $p$, so that $w\ge v$. 
  By Proposition~\ref{p:Sinequality}~(i), this yields $T(w) \ge T(v)$.
  Since $v$ computes $\a(L)$, we have $A(w)/T(w)\ge A(v)/T(v)$.
  Thus
  \begin{equation*}
    A(v)/v(\ab(v))
    =A(v)
    \le A(w)
    =A(w)/w(\ab(v)),
  \end{equation*}
  so taking the infimum over $w$ shows that $v$ computes $\lct(\ab(v))$. 
  The case when $v$ computes $\d(L)$ is handled in the same way, 
  and the uniqueness statement in~(ii) follows from Proposition~\ref{p:Sinequality}~(ii).
\end{proof}

\begin{conj}\label{conj:qm}
  Any valuation computing $\a(L)$ or $\d(L)$ must be quasimonomial.
\end{conj}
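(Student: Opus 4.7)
The key reduction is Proposition~\ref{p:adcompute}~(i): any $v$ computing either $\a(L)$ or $\d(L)$ automatically computes $\lct(\ab(v))$, the log canonical threshold of its own graded sequence of valuation ideals. Hence Conjecture~\ref{conj:qm} is subsumed by a global version of the conjecture in~\cite{jonmus} asserting that valuations computing $\lct(\ab)$ for a nontrivial graded sequence $\ab$ must be quasimonomial. The plan is to attack the statement in this intrinsic form, so that the line bundle $L$ drops out entirely and the problem becomes local around $c_X(v)$.

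For the two-dimensional case with canonical singularities (Proposition~\ref{p:twodim}), I would pass to the minimal resolution $\pi\colon Y\to X$, transporting $v$ and $\ab(v)$ to the smooth surface $Y$ via the relation $A_X(v)=A_Y(v)+v(K_{Y/X})$, and then exploit the tree structure of the space of valuations on a smooth surface centered at a closed point. Valuations with $A(v)<\infty$ fall into the divisorial, irrational quasimonomial, or infinitely singular classes, since curve valuations have $A(v)=+\infty$. I would eliminate the infinitely singular case by a tree-perturbation argument: choose a sequence of divisorial valuations $v_n$ on the segment joining $v$ to a quasimonomial point, and use the piecewise-linear behavior of $v_n(\fa_p)$ versus the concave behavior of $A(v_n)$ along such a segment to produce a strictly smaller ratio $A(v_n)/v_n(\ab(v))$, contradicting the minimality ensured by Proposition~\ref{p:adcompute}.

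In higher dimensions there is no comparable classification, and the principal obstacle is the absence of good compactness or semicontinuity on $\{A\le C\}\subset\Val_X$. A natural attempt is to combine the generic-limit machinery of~\S\ref{s:theoremE} with an MMP-type extraction in the spirit of~\cite{LX14,Bir16}: given a minimizer $v^*$, produce a graded sequence whose log canonical threshold is simultaneously computed by $v^*$ and by a quasimonomial valuation $w$ arising from a plt blowup, then invoke the uniqueness part of Proposition~\ref{p:adcompute}~(ii) to force $v^*=w$. The fundamental difficulty -- and why I expect the conjecture to be genuinely hard -- is that there seems to be no known mechanism forcing the minimizer of $A(\cdot)/v(\ab(v))$ to inherit regularity from the graded sequence $\ab(v)$ itself, even when the individual $\fa_p(v)$ are far from monomial. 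This is exactly the gap that leaves the corresponding local conjecture in~\cite{jonmus} open in dimension $\ge 3$, so any honest proposal in arbitrary dimension would require either a new regularity theorem for multiplier ideals $\cJ(c\cdot\ab(v))$ at the jumping number $c=\lct(\ab(v))$, or a substantial extension of the MMP-based techniques currently available.
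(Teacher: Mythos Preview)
The statement you were given is a \emph{conjecture}, not a theorem; the paper offers no proof and explicitly describes it as ``difficult in general''. So there is no argument in the paper to compare your proposal against, and your proposal itself is (correctly) not a proof but a discussion of strategy ending with an acknowledgment that the problem is open.

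That said, your reduction is exactly the one the paper records: immediately after stating the conjecture, the paper observes that Conjecture~B of~\cite{jonmus} implies it via Proposition~\ref{p:adcompute}~(i), which is precisely your first paragraph. Your sketch for the surface case is also on target. The paper handles this separately as Proposition~\ref{p:twodim}, and its proof does just what you describe: pass to a resolution using $A_X(v)=A_Y(v)+v(K_{Y/X})$, then quote~\cite[\S9]{jonmus} for the quasimonomial conclusion and~\cite[Lemma~2.11~(i)]{valmul} for the monomial-in-local-coordinates refinement when $X$ is smooth. Those references carry out essentially the tree-based elimination of infinitely singular valuations that you outline, so your plan and the paper's citations point to the same underlying argument.

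Your final paragraph accurately identifies why the conjecture remains open in higher dimensions; the paper makes no claim beyond the two-dimensional canonical case and the reduction to~\cite{jonmus}. One small caution: the uniqueness in Proposition~\ref{p:adcompute}~(ii) is only established for $\d(L)$ with $L$ ample, not for $\a(L)$, so any argument leaning on that uniqueness would at best handle the stability-threshold half of the conjecture.
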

Note that the strong version of Conjecture~B in~\cite{jonmus} implies Conjecture~\ref{conj:qm} in view of 
Proposition~\ref{p:adcompute}.

While Conjecture~\ref{conj:qm} seems difficult in general, it is trivially true in dimension one 
(since all valuations are then quasimonomial). We also have
\begin{prop}\label{p:twodim}
  If $X$ is a projective surface with at worst canonical singularities, then:
  \begin{itemize}
  \item[(i)]
    any valuation computing $\a(L)$ or $\d(L)$ must be quasimonomial;
  \item[(ii)]
    if $X$ is smooth, then any valuation computing $\a(L)$ or $\d(L)$ 
    must be monomial in suitable local coordinates at its center.
  \end{itemize}
\end{prop}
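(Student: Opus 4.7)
The plan is to reduce both statements to the analogous question about valuations computing log canonical thresholds of graded sequences of ideals, for which strong dimension~2 results are available.

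First, by Proposition~\ref{p:adcompute}(i), any valuation $v\in\Val_X^*$ with $A(v)<\infty$ that computes $\a(L)$ or $\d(L)$ automatically computes $\lct(\ab(v))$. Thus it suffices to show that, on a surface with canonical singularities, any valuation computing the log canonical threshold of the graded sequence $\ab(v)$ of its own valuation ideals is quasimonomial (and monomial in suitable coordinates if $X$ is smooth).

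For (ii), assume $X$ is smooth and suppose $v$ computes $\lct(\ab(v))$. The plan is to invoke the two-dimensional case of \cite[Conjecture~B]{jonmus} on the structure of valuations computing log canonical thresholds of graded sequences of ideals on smooth surfaces. Using the valuative tree machinery of Favre--Jonsson, one knows in dimension two that such minimizers are quasimonomial; moreover, because the center $\xi=c_X(v)$ is a closed point on a smooth surface, quasimonomiality at $\xi$ is equivalent to being monomial in some regular system of parameters at $\xi$. This gives (ii).

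For (i), let $\pi\colon Y\to X$ be the minimal resolution of singularities. Since $X$ has at worst canonical surface singularities (\ie~Du Val singularities), this resolution is crepant: $K_{Y/X}=0$. Consequently $A_Y(w)=A_X(w)$ for every valuation $w\in\Val_Y=\Val_X$. Set $L':=\pi^*L$; then $L'$ is big on the smooth surface $Y$, and by Lemma~\ref{L107}(ii) we have $S(w;L')=S(w;L)$ and $T(w;L')=T(w;L)$ for all valuations $w$ of linear growth. Comparing the formulas of Corollary~\ref{C105} and Theorem~\ref{t:delta} on $X$ and $Y$, we conclude that $\a(L')=\a(L)$ and $\d(L')=\d(L)$, and that $v$ computes the corresponding threshold of $L'$ on the smooth surface $Y$. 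Applying (ii) to $(Y,L')$ and $v$ shows that $v$ is monomial in some local coordinates at its center on $Y$, and in particular quasimonomial; since quasimonomiality is an intrinsic property of a valuation, $v$ is quasimonomial as a valuation on $X$ as well, proving (i).

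The main obstacle is step (ii): one must invoke (or reprove) the dimension~2 case of the conjectural description of minimizers of $\lct$ on graded sequences. This relies on the finer geometry of the valuative tree on a smooth surface and on the semicontinuity/compactness properties of $A$ and of $w\mapsto w(\ab(v))$ on retracts to finite dual graphs, rather than on any surface-specific feature of the pair $(X,L)$. Once this surface case is granted, the rest of the argument (the reduction via Proposition~\ref{p:adcompute} and the crepant resolution of Du Val singularities) is essentially formal.
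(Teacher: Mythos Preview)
Your overall strategy---reduce via Proposition~\ref{p:adcompute} to the statement that $v$ computes $\lct(\ab(v))$, then invoke two--dimensional results---is exactly what the paper does. Your route to~(i) through the crepant minimal resolution is a pleasant variant of the paper's argument (the paper instead uses that $K_{Y/X}$ is merely effective and appeals to \cite[\S9]{jonmus} for the boundary version $\lct_Y^{K_{Y/X}}$); your reduction is cleaner but relies on the special fact that canonical surface singularities are Du~Val.

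However, your proof of~(ii) contains a genuine error. You assert that ``quasimonomiality at $\xi$ is equivalent to being monomial in some regular system of parameters at~$\xi$'' on a smooth surface. This is false. Every divisorial valuation is quasimonomial, but not every divisorial valuation centered at a smooth surface point is monomial in coordinates at that point. For a concrete example, resolve the cusp $\{y^2=x^3\}\subset\A^2$ by three blowups, obtaining $E_3$, and then blow up one further (free) point on $E_3$ to get $E_4$. One computes $\ord_{E_4}(x)=2$, $\ord_{E_4}(y)=3$, and $\ord_{E_4}(y^2-x^3)=7$; the relation $[x]^3=[y]^2$ in $\gr_{\ord_{E_4}}\cO_{X,0}$ then forces the associated graded ring to have a zero--divisor in degree~$6$, so it cannot be a polynomial ring in two variables, and $\ord_{E_4}$ is not monomial in any local coordinates at the origin.

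The paper does not go through quasimonomiality for~(ii); it invokes \cite[Lemma~2.11(i)]{valmul} directly, which gives the stronger conclusion that a valuation computing such a threshold on a smooth surface is monomial in suitable coordinates. That statement uses the specific variational structure of the minimization problem (via the valuative tree), not a general identification of quasimonomial with monomial. Your argument would be repaired by citing that result rather than the two--step reduction you propose. You should also briefly dispose of the case where $c_X(v)$ is not a closed point (then $v$ is proportional to $\ord_C$ for a prime divisor $C\subset X$, hence trivially monomial).
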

We expect that the statement in~(i) holds for klt surfaces as well.
\begin{proof}
  Suppose $v\in\Val_X^*$ computes $\a(L)$ or $\d(L)$.
  By Proposition~\ref{p:adcompute}, $v$ computes $\lct(\ab(v))$. 
  Let $Y\to X$ be a resolution of singularities of $X$. Since $X$ has canonical
  singularities, the relative canonical divisor $K_{Y/X}$ is effective, and $v$
  also computes the jumping number $\lct_Y^{K_{Y/X}}(\ab(v))$. 
  By~\cite[\S9]{jonmus}, $v$ is quasimonomial, proving~(i).

  The statement in~(ii) follows from~\cite[Lemma~2.11~(i)]{valmul}.
\end{proof}

\begin{rem}\label{deltaconj}
Since the first version of this paper, it was shown by Xu that a weak version of~\cite[Conjecture B]{jonmus} holds; see~\cite[Theorem 1.1]{Xu19}.  
Combining the result in~\textit{loc.\,cit} with Proposition~\ref{p:adcompute}.ii gives that any valuation computing $\delta(L)$ is quasimonomial. 
\end{rem}

Finally we consider the case of \emph{divisorial} valuations computing one of the 
two thresholds.
In~\cite{Blu16a}, the author studied properties of divisorial valuations that compute log canonical thresholds of graded sequences of ideals. 
The following proposition follows from Proposition~\ref{p:adcompute} 
and results in~\cite{Blu16a}.
\begin{prop}\label{p:pltblowup}
  Let $v$ be a divisorial valuation on $X$.
     \begin{itemize}
  \item[(i)]
    If $v$ computes $\a(L)$ or $\d(L)$, then there exists a prime divisor $E$ over $X$ 
    of log canonical type such that $v= c \ord_E$ for some $c\in \R_+$.  
  \item[(ii)]
    If $v$ computes $\d(L)$ and $L$ is ample, then there exists a prime divisor $E$ over $X$ 
    of plt type such that $v= c \ord_E$ for some $c\in \R_+$.   
  \end{itemize}
\end{prop}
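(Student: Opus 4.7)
The plan is to combine Proposition~\ref{p:adcompute} with the structural results from~\cite{Blu16a} characterizing those divisorial valuations that compute log canonical thresholds of graded sequences of ideals. The whole argument is essentially a translation through $\lct(\ab(v))$.

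For part~(i), write $v = c\,\ord_E$ for some prime divisor $E$ over $X$ and some $c\in\R_+^*$. Since $v$ computes $\a(L)$ or $\d(L)$ and $A(v)<\infty$, Proposition~\ref{p:adcompute}(i) implies that $v$ computes $\lct(\ab(v))$. Note that $\ab(v)$ is a nontrivial graded sequence of ideals by the Izumi-Skoda inequality (as recalled at the end of~\S\ref{S112}). I would then invoke the result from~\cite{Blu16a} stating that if a divisorial valuation $\ord_E$ computes the log canonical threshold of a nontrivial graded sequence of ideals, then $E$ must be of log canonical type; this yields~(i) immediately.

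For part~(ii), assume further that $L$ is ample and $v$ computes $\d(L)$. Proposition~\ref{p:adcompute}(ii) then gives the additional information that $v$ is the unique valuation, up to scaling, that computes $\lct(\ab(v))$. The refinement in~\cite{Blu16a} asserts that if $\ord_E$ is, up to scaling, the \emph{unique} divisorial valuation computing $\lct$ of a nontrivial graded sequence (equivalently: among all valuations, in the strong uniqueness form provided by Proposition~\ref{p:adcompute}(ii)), then $E$ is of plt type; this gives~(ii).

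The only real obstacle is making sure the statements in~\cite{Blu16a} match the situation here—most concretely, that \emph{uniqueness} of the minimizer among all valuations (not just divisorial ones) is at least as strong as the hypothesis used there to force the plt conclusion. This is a cross-reference check rather than a new argument; once it is in place, both parts follow with no further work.
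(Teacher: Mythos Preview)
Your proposal is correct and follows essentially the same route as the paper: both arguments reduce via Proposition~\ref{p:adcompute} to the fact that $v$ computes $\lct(\ab(v))$ (with uniqueness in case~(ii)), and then invoke the structural results of~\cite{Blu16a} (specifically Propositions~1.5 and~4.4 there) to conclude that $E$ is of log canonical, respectively plt, type. Your caveat about matching hypotheses is exactly the cross-reference check the paper also implicitly relies on.
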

We explain some of the above terminology. 
Let $E$ be a divisor over $X$ such that there exists a projective birational morphism $\pi\colon Y \to X$ such that $E$ is a prime divisor on $Y$ and $-E$ is $\Q$-Cartier and $\pi$-ample. 
We say that $E$ is of \emph{plt} (resp., \emph{log canonical}) \emph{type} if the pair $(Y,E)$ is plt (resp., log canonical)~\cite[Definition 1.1]{Fujitaplt}. 
K. Fujita considered plt type divisors in~\cite{Fujitaplt}. Note that Proposition~\ref{p:pltblowup} (ii) is similar to results in~\cite{Fujitaplt}. 
\begin{proof}
  We may assume $v= \ord_F$ for a divisor $F$ over $X$. If $v$ computes $\a(L)$ or $\d(L)$, 
  then we may apply Proposition~\ref{p:adcompute}~(i) to see $A(v) = \lct( \ab(v))$. 
  Furthermore, if $v$ computes $\d(L)$ and $L$ is ample, 
  Proposition~\ref{p:adcompute}~(ii)
  implies $A(v) < A(w)/w(\ab(v))$ as long as $w$ is not a scalar multiple of $v$. 
  The statement now follows from Propositions~1.5 and~4.4 of~\cite{Blu16a}.  
\end{proof}
%
%
%
%
\section{Uniform Fujita approximation}\label{s:fujitaapprox}
In this section we prove Fujita approximation type statements for filtrations arising from 
valuations.\footnote{The term Fujita approximation refers to the work of~T.~Fujita~\cite{Fuj94}.}
These results play a crucial role in the proof of Theorem~E.

Related statements have appeared in the literature. See~\cite[Theorem D]{LM09} 
for the case of graded linear series and~\cite[Theorem~1.14]{BC11} for the case of 
filtrations. Here we specialize to filtrations defined by valuations, and the main
point is to have uniform estimates in terms of the log discrepancy of the
valuation. To this end we use multiplier ideals.

Throughout this section, $X$ is a normal projective $n$-dimensional klt variety.
%
%
%
%
\subsection{Approximation results}
 Given a valuation $v$ on $X$ and a line bundle $L$ on $X$, we seek to understand 
 how well $S(v)$ and $T(v)$ can be 
 approximated by studying the filtration $\cF_v$ restricted to $H^0(X,mL)$ for $m$ large but fixed.

 Recall that the pseudoeffective threshold of $v$ is defined by
 $T(v):=\lim_{m \to \infty} T_m(v)$. 
 \begin{thm}\label{t:fujitaT}
  Let $X$ be a normal projective klt variety and $L$ an ample line bundle on $X$.
  Then there exists a constant $C=C(X,L)>0$ such that 
  \[
    0\le T(v)-T_m(v) \le\frac{CA(v)}{m}
  \]
  for all $m\in M(L)$ and all $v\in\Val_X^*$ with $A(v)<\infty$.
\end{thm}
\begin{cor}
  We have
  $0\le\a(L)^{-1} -\a_m(L)^{-1} \le\frac{C}{m}$ for all $m\in M(L)$.
\end{cor}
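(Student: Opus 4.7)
The plan is to derive both inequalities directly from the valuation formula $\a_m(L)=\inf_v A(v)/T_m(v)$ (Proposition on $\a_m$, equation~\eqref{e107}) combined with Corollary~\ref{C105}, using Theorem~\ref{t:fujitaT} only for the upper bound.

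For the lower bound, since $T(\cF_v)=\sup_m T_m(\cF_v)$ (see \S\ref{S104}), we have $T_m(v)\le T(v)$ for every valuation $v$ with $A(v)<\infty$. Hence $A(v)/T_m(v)\ge A(v)/T(v)$, and taking the infimum over such $v$ gives $\a_m(L)\ge\a(L)$.

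For the upper bound, fix $\e>0$ and choose $v\in\Val_X^*$ with $A(v)<\infty$ such that $A(v)/T(v)<\a(L)+\e$. By the homogeneity of $A$ and $T$ in $v$, we may rescale so that $T(v)=1$; then $A(v)<\a(L)+\e$. Theorem~\ref{t:fujitaT} yields $T_m(v)\ge 1-CA(v)/m$, which is positive as soon as $m>CA(v)$. In that regime,
\[
\a_m(L)\;\le\;\frac{A(v)}{T_m(v)}\;\le\;\frac{A(v)}{1-CA(v)/m}\;<\;\frac{\a(L)+\e}{1-C(\a(L)+\e)/m}.
\]
Letting $\e\to 0$, we obtain $\a_m(L)\le \a(L)/(1-C\a(L)/m)$ for all $m>C\a(L)$. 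A standard estimate $1/(1-x)\le 1+2x$ for $0\le x\le 1/2$ gives $\a_m(L)-\a(L)\le 2C\a(L)^2/m$ whenever $m\ge 2C\a(L)$. Absorbing $\a(L)$ (which is a finite positive constant depending only on $X$ and $L$, by Theorem~A) into the constant, and enlarging the constant further to cover the finitely many small values of $m$ (on which $\a_m(L)-\a(L)$ is trivially bounded), we obtain $\a_m(L)-\a(L)\le C'/m$ for all $m\in\N^*$, with $C'=C'(X,L)$.

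The only subtle point is that the constant in the corollary is not literally the same $C$ as in Theorem~\ref{t:fujitaT} but a modified one depending on $\a(L)$; this is harmless since $\a(L)$ is fixed data. No further obstacles arise beyond bookkeeping of constants.
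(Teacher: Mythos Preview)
Your proof is correct and follows the natural approach; the paper gives no proof of this corollary, and your argument via a near-optimal valuation is the intended one. Your observation that the constant must be modified (absorbing a factor of $\a(L)^2$) is accurate and worth making explicit, since the paper's use of the same letter $C$ is slightly informal.

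One minor caveat: your treatment of the finitely many small $m$ assumes $\a_m(L)<\infty$, which requires $m\in M(L)$. Since $L$ is only assumed ample, $H^0(X,mL)$ may vanish for some small $m$, in which case $\a_m(L)=+\infty$ and the inequality is vacuously false. This is really an imprecision in the paper's statement rather than in your argument; the corollary should be read for $m\in M(L)$ (or for $m$ sufficiently large), and with that understanding your handling of small $m$ is fine.
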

We also have a version of Theorem~\ref{t:fujitaT} for the expected order of 
vanishing $S(v)$,  but this is in terms of a modification $\tilde{S}_m(v)$ of the 
invariant $S_m(v)$, which we first need to introduce.

Let $V_\bullet$ be a graded linear series of a line bundle $L$ on $X$. For $m\in \N^*$, we write $V_{m,\bullet}$ for the graded linear series of $mL$ defined by 
\begin{equation*}
  V_{m,\ell} := H^0(X, m \ell L \otimes  \overline{\fa^\ell}) \subset H^0(X, m \ell L),
\end{equation*}
where $\fa$ denotes the base ideal  $\bs{V_m}$ and $\overline{\fa^\ell }$ the integral closure 
of the ideal $\fa^\ell$.

 If $V_{m}= 0$, then it is clear that $V_{m,\ell}=0$ for all $\ell \in \N^*$ and $\vol(V_{m,\bullet})= 0$. 
When $V_{m}\neq 0$, we use the geometric characterization of the integral closure as in~\cite[Remark 9.6.4]{LazPAG} to express $V_{m,\ell}$ as follows. 
Let $\mu\colon Y_m \to X$ be a proper birational morphism such that 
$Y_m$ is normal and $\bs{V_m} \cdot \cO_Y = \cO_Y(-F_m)$ for some effective Cartier 
divisor $F_m$. Then 
\begin{equation*}
  V_{m, \ell} \simeq H^0(Y_m,\ell(m \mu^*(L)-F_m) ) 
\end{equation*}
for all $\ell\ge1$.
Since $m \mu^*(L)-F_m$ is base point free and therefore nef, 
\begin{equation*}
  \vol(V_{m,\bullet}) =((m\mu^*(L)-F_m)^n)
\end{equation*}
by~\cite[Corollary 1.4.41]{LazPAG}.

In the case when $V_\bullet$ contains an ample series, we have
\[
\vol(V_{\bullet}) = \lim_{m \to \infty} \frac{\vol(V_{m,\bullet})}{m^n};
\]
see~\cite[Proposition~17]{His13} and also~\cite[Appendix]{Sze15}.

Now consider a filtration $\cF$ of $R(X,L)$. As in~\S\ref{S104}, this gives rise
to a family $V_{m}^t=V_{m}^{\cF,t}$ of graded linear series of $L$, 
indexed by $t\in\R_+$, and defined by
\begin{equation*}
  V_{m}^t:=\cF^{mt}R_m.
\end{equation*}
Using the previously defined notion, we get an additional family of graded linear series 
$V_{m, \bullet}^t$ of $mL$ for each $m\in \N^*$. Specifically, 
\[
  V_{m,\ell}^t := H^0(X, m\ell L\otimes\overline{\bs{V_m^t}^\ell}).
\]
Clearly $\vol(V_{m,\bullet}^t)$ is a decreasing function of $t$ that vanishes 
for $t>T(\cF)$. When $\cF$ is linearly bounded, we write 
\begin{equation*}
  \tilde{S}_m(\cF) := \frac1{m^n\vol(L)}\int_0^{T(\cF)} \vol\left( V_{m, \bullet}^t\right)\, dt.
\end{equation*}
Note that by the dominated convergence theorem, 
\[
  S(\cF) = \lim_{m \to \infty} \tilde{S}_m(\cF).
\]
When $v$ is a valuation on $X$ with linear growth, we set $\tilde{S}_m(v) := \tilde{S}_m(\cF_v)$. 

\begin{thm}\label{t:fujitaS}
  Let $X$ be a normal projective klt variety and $L$ an ample line bundle on $X$.
  Then there exists a constant $C=C(X,L)$ such that 
  \[
    0\le S(v)-\tilde{S}_m(v) \le\frac{CA(v)}{m}
  \]
  for all $m\in \N^*$ and all $v\in\Val_X$ with $A_X(v)<\infty$.
\end{thm}
Theorems~\ref{t:fujitaT} and~\ref{t:fujitaS} may be viewed as global analogues 
of~\cite[Proposition 3.7]{Blu16b}.
Their proofs, which appear at the end of this section, use 
multiplier ideals and take inspiration from~\cite{DEL} and~\cite{ELS}.
%
%
%
%
 \subsection{Multiplier ideals}
For an excellent reference on multiplier ideals, see~\cite{LazPAG}.

 Let $\fa$ be a nonzero ideal on $X$. 
 Consider a log resolution $\mu\colon Y \to X$ of $\fa$, and write 
 $\fa\cdot \cO_Y = \cO_Y(-D)$. 
 For $c\in\Q_+^*$, the \emph{multiplier ideal} $\cJ(X, c \cdot \fa)$ is defined by 
\[
  \cJ(X, c\cdot \fa) :=  \mu_\ast \cO_Y\left(  \lceil K_{Y/X} -  cD \rceil \right) \subset \cO_X.
\]
It is a basic fact that the multiplier ideal is independent of the choice of $\mu$.

If $c\in\N^*$, then $\cJ(X,c\cdot \fa) = \cJ(X,\fa^c)$. 
We will use the convention that $\cJ(X, c \cdot (0)  ) := (0)$, where $(0) \subset \cO_X$ denotes the zero ideal.

Multiplier ideals satisfy the following containment relations. 
See~\cite[Proposition 9.2.32]{LazPAG} for the case when $X$ is smooth.
\begin{lem}\label{l:mult}
  Let $\fa,\fb$ be nonzero ideals on $X$.
  \begin{enumerate}
  \item We have $\fa\subset \cJ(X, \fa)$.
  \item If $\fa \subset \fb$ and $c>0$ a rational number, then
    $\cJ(X,c\cdot \fa) \subset \cJ(X,c\cdot \fb)$.
  \item If $c\ge d>0$ are rational numbers, then 
    $\cJ(X, c \cdot \fa) \subset \cJ(X, d\cdot \fa)$.
  \end{enumerate}
\end{lem}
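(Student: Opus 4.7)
The three containments are all classical consequences of the definition via log resolutions; the plan is to reduce each one to a comparison of integral divisors on a (common) log resolution and then push forward.

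For the first inclusion $\fa\subset\cJ(X,\fa)$, I would fix a log resolution $\mu\colon Y\to X$ of $\fa$ with $\fa\cdot\cO_Y=\cO_Y(-D)$. By the projection formula and the fact that $\mu_\ast\cO_Y=\cO_X$ (here $X$ is normal and $\mu$ is proper birational), one has $\fa=\mu_\ast(\fa\cdot\cO_Y)=\mu_\ast\cO_Y(-D)$. The key input is the klt hypothesis on $X$: writing $K_{Y/X}=\sum a_i E_i$ with $a_i>-1$ for every prime divisor $E_i$ in the support, we get $\lceil K_{Y/X}\rceil\ge 0$, hence
\[
\lceil K_{Y/X}-D\rceil \ge \lceil K_{Y/X}\rceil-D \ge -D.
\]
Pushing forward the induced inclusion $\cO_Y(-D)\hookrightarrow\cO_Y(\lceil K_{Y/X}-D\rceil)$ gives $\fa\subset\cJ(X,\fa)$.

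For (2) and (3) the strategy is the same: choose a log resolution that simultaneously resolves whichever ideals appear, translate the hypothesis into an inequality of integral divisors, and push forward. For (2), take $\mu\colon Y\to X$ resolving both $\fa$ and $\fb$, and write $\fa\cdot\cO_Y=\cO_Y(-D_\fa)$, $\fb\cdot\cO_Y=\cO_Y(-D_\fb)$. The inclusion $\fa\subset\fb$ pulls back to $\cO_Y(-D_\fa)\subset\cO_Y(-D_\fb)$, equivalently $D_\fa\ge D_\fb$, so $K_{Y/X}-cD_\fa\le K_{Y/X}-cD_\fb$, and applying $\mu_\ast\cO_Y\lceil\cdot\rceil$ gives the desired inclusion of multiplier ideals. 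For (3), fix a single log resolution with $\fa\cdot\cO_Y=\cO_Y(-D)$; since $D$ is effective and $c\ge d>0$, we have $-cD\le -dD$, hence $\lceil K_{Y/X}-cD\rceil\le\lceil K_{Y/X}-dD\rceil$ and we are done after pushing forward.

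There is essentially no obstacle of substance: the independence of the multiplier ideal from the choice of log resolution is invoked as a standard fact, and the only place where the klt assumption is used is in item (1) to guarantee $\lceil K_{Y/X}\rceil\ge 0$. One minor verification to keep in mind is that taking a common log resolution in (2) is legitimate, which follows from the independence-of-resolution fact and the existence of log resolutions dominating any two given ones.
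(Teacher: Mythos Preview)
Your argument is correct and is exactly the standard textbook proof; the paper itself does not give a proof at all, but simply states the lemma and refers to \cite[Proposition~9.2.32]{LazPAG} for the smooth case. So there is nothing to compare: you have supplied the details that the authors deliberately omitted.

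One small inaccuracy worth noting: in part~(1) you write $\fa=\mu_\ast(\fa\cdot\cO_Y)=\mu_\ast\cO_Y(-D)$, invoking the projection formula. In general $\mu_\ast\cO_Y(-D)$ is the integral closure $\bar\fa$, not $\fa$ itself, so equality need not hold. However this does not affect your proof: you only use the inclusion $\fa\subset\mu_\ast\cO_Y(-D)$, which is immediate (any local section of $\fa$ pulls back to a section of $\fa\cdot\cO_Y=\cO_Y(-D)$), and the rest of the argument then goes through unchanged.
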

The following \emph{subadditivity} theorem was proved by Demailly, Ein, and Lazarsfeld 
in the smooth case~\cite{DEL}. 
The case below was proved by Takagi~\cite[Theorem 2.3]{Tak06} and, later, by Eisenstein~\cite[Theorem 7.3.4]{Eis11}.

\begin{thm}\label{t:sub}
  If $\fa,\fb$ are nonzero ideals on $X$, and $c\in\Q_+^*$, then 
  \begin{equation*}
    \Jac_X \cdot 
    \cJ(X, c \cdot( \fa \cdot \fb))\subset \cJ(X, c \cdot \fa)\cdot \cJ(X, c\cdot \fb),
  \end{equation*}
  where $\Jac_X$ denotes the Jacobian ideal as defined in~\cite[p.~402]{Eisenbud}. 
\end{thm}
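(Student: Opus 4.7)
The plan is to follow the strategy of Demailly--Ein--Lazarsfeld from the smooth case, using the product variety and the diagonal embedding, while accounting for the singularities of $X$ via the Jacobian ideal. The key insight is that in the smooth setting, the ``diagonal trick'' works cleanly because the diagonal is a regular embedding in a smooth ambient variety; on a klt variety, the restriction of multiplier ideals to the diagonal picks up an error term that is precisely measured by $\Jac_X$.

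First, I would set up the product $Y := X \times X$ with projections $p_1, p_2 \colon Y \to X$ and the diagonal embedding $\Delta \colon X \hookrightarrow Y$. Given the ideals $\fa, \fb \subset \cO_X$, consider the ideal $\fc := p_1^{-1}\fa \cdot p_2^{-1}\fb \subset \cO_Y$, noting that $\fc \cdot \cO_X = \fa \cdot \fb$ under the identification $\Delta \colon X \simeq \Delta(X) \subset Y$. The first step is a K\"unneth-type identity for multiplier ideals on the product:
\begin{equation*}
  \cJ\bigl(Y, c \cdot \fc\bigr) = p_1^{-1}\cJ(X, c \cdot \fa) \cdot p_2^{-1}\cJ(X, c \cdot \fb).
\end{equation*}
This is proved by taking log resolutions $\mu_i \colon Y_i \to X$ of $\fa$ and $\fb$, forming the product $\mu_1 \times \mu_2 \colon Y_1 \times Y_2 \to Y$, and checking that the relative canonical divisor and the pullback ideal decompose as sums of pullbacks from each factor, so that the round-ups separate.

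The second step is to apply the restriction theorem for multiplier ideals to the diagonal $\Delta \colon X \hookrightarrow Y$. In the smooth case one has $\cJ(X, c \cdot (\fc \cdot \cO_X)) \subset \cJ(Y, c \cdot \fc) \cdot \cO_X$; for a klt variety embedded in a (possibly non-klt) ambient space, the correct version (due to Takagi and refined by Eisenstein) reads
\begin{equation*}
  \Jac_X \cdot \cJ\bigl(X, c \cdot (\fc \cdot \cO_X)\bigr) \subset \cJ\bigl(Y, c \cdot \fc\bigr) \cdot \cO_X.
\end{equation*}
Combining this with the K\"unneth identity and the identity $\fc \cdot \cO_X = \fa \cdot \fb$ yields
\begin{equation*}
  \Jac_X \cdot \cJ\bigl(X, c \cdot (\fa \cdot \fb)\bigr)
  \subset \bigl(p_1^{-1}\cJ(X, c \fa) \cdot p_2^{-1}\cJ(X, c \fb)\bigr) \cdot \cO_X
  = \cJ(X, c \fa) \cdot \cJ(X, c \fb),
\end{equation*}
which is the desired inclusion.

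The main obstacle is the second step: establishing (or citing cleanly) the Jacobian-corrected restriction theorem for multiplier ideals on klt subvarieties of a singular ambient space. This requires comparing the relative canonical divisors on resolutions of $X$ and of $Y$ that are compatible with $\Delta$, and controlling the discrepancy that arises because $Y = X \times X$ need not be klt. The appearance of $\Jac_X$ comes from the conductor-type contribution when restricting differential forms from $Y$ to $\Delta(X)$, which is exactly the content of Takagi's and Eisenstein's theorems cited in the statement.
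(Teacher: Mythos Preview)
The paper does not actually prove this theorem; it simply cites it as a result of Takagi~\cite[Theorem~2.3]{Tak06} and Eisenstein~\cite[Theorem~7.3.4]{Eis11}, with the smooth case attributed to Demailly--Ein--Lazarsfeld~\cite{DEL}. Your outline is precisely the strategy of those references, so in that sense your proposal agrees with what the paper invokes.

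One small correction to your narrative: you write that the restriction theorem is needed ``for a klt variety embedded in a (possibly non-klt) ambient space,'' but here $Y = X \times X$ is in fact klt (a product of klt varieties is klt). The obstruction is not a failure of $Y$ to be klt, but rather that the diagonal $\Delta(X)\subset Y$ need not be a regular (or even local complete intersection) embedding when $X$ is singular. The Jacobian ideal measures exactly this defect, and that is why it appears in Takagi's restriction theorem. Apart from this point of emphasis, your sketch is sound.
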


%
%
%
%
\subsection{Asymptotic multiplier ideals}
Let $\ab$ be a graded sequence of ideals on $X$ and $c>0$ a rational number. 
By Lemma~\ref{l:mult}, we have  
\[
\cJ \left( X, (c/p) \cdot \fa_{p} \right) \subset \cJ \left( X, c/(pq) \cdot \fa_{p q} \right) 
\]
for all positive integers $p,q$. This, together with the Noetherianity of $X$, implies that 
\[
\left\{\cJ \left(X, (c/p) \cdot \fa_{p} \right) \right\}_{p\in \N}
\]
has a unique maximal element that is called the \emph{$c$-th asymptotic multiplier ideal} 
and denoted by $\cJ(X,c\cdot \ab)$. Note that 
$\cJ(X,c\cdot \ab) = \cJ \left(X, (c/p) \cdot \fa_{p} \right)$ for all $p$ divisible enough. 

Asymptotic multiplier ideals also satisfy a subadditivity property.  
See~\cite[Theorem 11.2.3]{LazPAG} for the case when $X$ is smooth.
\begin{cor}\label{c:asub}
  Let $\ab$ be a graded sequence of ideals on $X$. 
  If $m\in\N^*$ and $c\in\Q_+^*$, then
  \[
  \left(\Jac_X\right)^{m-1} \cJ(X, cm\cdot \ab) \subset \cJ( X, c \cdot \ab)^m.
  \]
\end{cor}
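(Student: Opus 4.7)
The plan is to reduce the asymptotic statement to the finite-level subadditivity theorem (Theorem~\ref{t:sub}) by picking a single integer $p$ that simultaneously realizes both asymptotic multiplier ideals in the statement.

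First, I would iterate Theorem~\ref{t:sub} to obtain the ``finite'' version for powers of a single ideal: for any nonzero ideal $\fa$ on $X$, any rational $c>0$, and any $m\in\N^*$,
\[
  \Jac_X^{m-1}\cdot\cJ(X,c\cdot\fa^m)\subset\cJ(X,c\cdot\fa)^m.
\]
This is by induction on $m$: the base case $m=1$ is trivial, and for the step one writes $\fa^m=\fa^{m-1}\cdot\fa$ and applies Theorem~\ref{t:sub} to pull one factor of $\Jac_X$ across, producing $\Jac_X^{m-2}\cdot\cJ(X,c\cdot\fa^{m-1})\cdot\cJ(X,c\cdot\fa)$, to which the inductive hypothesis applies.

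Next, I would pick $p\in M(\ab)$ divisible enough so that
\[
  \cJ\bigl(X,c\cdot\ab\bigr)=\cJ\bigl(X,(c/p)\cdot\fa_p\bigr)
  \quad\text{and}\quad
  \cJ\bigl(X,cm\cdot\ab\bigr)=\cJ\bigl(X,(cm/p)\cdot\fa_p\bigr);
\]
such a $p$ exists by the definition of the asymptotic multiplier ideal as the maximal element of the relevant family. A standard log-resolution computation (using that a log resolution of $\fa_p$ is automatically a log resolution of $\fa_p^m$) gives the identity
\[
  \cJ\bigl(X,(c/p)\cdot\fa_p^m\bigr)=\cJ\bigl(X,(cm/p)\cdot\fa_p\bigr),
\]
since in both cases one pushes forward $\cO_Y(\lceil K_{Y/X}-(cm/p)D\rceil)$, where $\fa_p\cdot\cO_Y=\cO_Y(-D)$. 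Applying the finite version from the first paragraph to $\fa=\fa_p$ with coefficient $c/p$ yields
\[
  \Jac_X^{m-1}\cdot\cJ\bigl(X,(c/p)\cdot\fa_p^m\bigr)
  \subset\cJ\bigl(X,(c/p)\cdot\fa_p\bigr)^m,
\]
and substituting the two asymptotic identifications above gives exactly the desired inclusion.

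I expect the only mildly delicate point to be the choice of $p$: one needs $p$ simultaneously divisible enough for both asymptotic multiplier ideals to stabilize to their values on $\fa_p$, but this is immediate from the fact that ``divisible enough'' defines a cofinal subset of $M(\ab)$ under divisibility, so any common multiple of two sufficient witnesses works. No further subtlety appears because $X$ is klt and Theorem~\ref{t:sub} is stated in the required generality.
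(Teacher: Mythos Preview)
Your proof is correct and follows exactly the standard argument that the paper implicitly invokes by citing \cite[Theorem~11.2.3]{LazPAG} for the smooth case; the paper does not spell out its own proof here, and your iteration of Theorem~\ref{t:sub} together with the choice of a common computing index $p$ is precisely how the result is customarily deduced.
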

Next we give a containment relation for the multiplier ideal associated to the graded sequence 
of valuation ideals. The result appears in~\cite{ELS} in the case when $v$ is divisorial. 
\begin{prop}\label{p:multval}
  If $v\in\Val_X$ is a valuation with $A(v)<\infty$, and $c\in\Q_+^*$, then
  \[ 
  \cJ(X,  c \cdot  \ab(v) ) \subset \fa_{c-A(v)} (v).
  \]
\end{prop}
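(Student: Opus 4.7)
The plan is to reduce the statement to a computation on a log resolution of $\fa_p(v)$ for $p$ sufficiently divisible, extending the argument of~\cite{ELS} for divisorial valuations by carefully tracking the contribution of each relevant component to the log discrepancy of $v$.

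First I would choose $p \in \N^*$ divisible enough that $\cJ(X, c \cdot \ab(v)) = \cJ(X,(c/p)\fa_p(v))$ and take a log resolution $\mu\colon Y \to X$ of $\fa_p(v)$, writing $\fa_p(v)\cdot\cO_Y = \cO_Y(-D_p)$ with $D_p = \sum_i a_i E_i$ on an SNC divisor containing the support of $K_{Y/X}=\sum_i b_i E_i$. Set $\eta := c_Y(v)$, $J := \{i : \eta \in E_i\}$, and $t_i := v(E_i)$ for $i\in J$. For any $f \in \cJ(X,(c/p)\fa_p(v))$, the defining formula for the multiplier ideal, together with the local structure of $\cO_Y(\lceil K_{Y/X} - (c/p)D_p\rceil)$ at $\eta$, yields
\[
v(f) \;=\; v(\mu^* f) \;\ge\; -\sum_{i\in J}\bigl\lceil b_i - (c/p)a_i\bigr\rceil\, t_i.
\]

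Using $\lceil x\rceil \le x + 1$ and $t_i \ge 0$, the right-hand side is bounded below by
\[
\frac{c}{p}\sum_{i\in J} a_i t_i \;-\; \sum_{i\in J}(b_i+1)\,t_i.
\]
The sum $\sum_{i\in J} a_i t_i$ equals $v(\fa_p(v))$, which is $\ge p$ by the very definition of the valuation ideal, so $(c/p)\sum a_i t_i \ge c$. Recognizing $\sum_{i\in J}(b_i+1)\,t_i = \sum_{i\in J} A(\ord_{E_i})\, t_i$, the desired conclusion $v(f)\ge c-A(v)$ will follow once one establishes
\[
\sum_{i\in J} A(\ord_{E_i})\, t_i \;\le\; A(v).
\]

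The main (and I believe only nontrivial) step is this last inequality, which I would derive from the identity $A_X(v) = A_Y(v) + v(K_{Y/X}) = A_Y(v) + \sum_{i\in J} b_i t_i$ together with the bound $A_Y(v) \ge \sum_{i\in J} t_i$. The latter expresses the log canonicity of the reduced SNC pair $(Y, \sum_{i\in J} E_i)$ applied to $v$: on a smooth variety with a reduced SNC divisor, the log discrepancy of the pair is non-negative on every valuation, and here $v\bigl(\sum_{i\in J} E_i\bigr) = \sum_{i\in J} t_i$. Combining everything yields $v(f) \ge c - A(v)$, so $f \in \fa_{c - A(v)}(v)$, as required.
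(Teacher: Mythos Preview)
Your proof is correct, but it takes a more hands-on route than the paper. The paper's argument is a two-line application of the valuative criterion for membership in the multiplier ideal \cite[Theorem~1.2]{BdFFU}: that criterion immediately gives $\cJ(X,c\cdot\ab(v))\subset\fa_{c\,v(\ab(v))-A(v)}(v)$, and then one uses $v(\ab(v))=1$. You instead unpack the definition of the multiplier ideal on a log resolution of $\fa_p(v)$ and reduce to the inequality $A_Y(v)\ge\sum_{i\in J}t_i$, which is exactly the statement that the reduced SNC pair $(Y,\sum_{i\in J}E_i)$ is log canonical as seen by an arbitrary valuation with finite log discrepancy. In effect, you are reproving the special case of the valuative criterion that is needed here. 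Your approach has the advantage of being self-contained and of making transparent where the log discrepancy enters; the paper's approach is considerably shorter and isolates the general mechanism (the valuative criterion) as a citable black box.
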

\begin{proof}
  It is an immediate consequence of the valuative criterion for membership in the 
  multiplier ideal~\cite[Theorem1.2]{BdFFU} that 
  \[
  \cJ(X,  c \cdot  \ab(v) )
  \subset \fa_{c v(\ab(v)) - A(v)}(v).
  \]
  Since $v(\ab(v))=1$ (see~\cite[Lemma 3.5]{Blu16b}), the proof is complete. 
\end{proof}
%
%
%
%
\subsection{Multiplier ideals of linear series}
Given a linear series of $L$, we set
\begin{equation*}
  \cJ(X, c \cdot |V|):= \cJ(X, c \cdot \fb( |V|)),
\end{equation*}
where $\fb( |V|)$ is the base ideal of $V$. 
Similarly, if $V_\bullet$ is a graded linear series of $L$, we set
\begin{equation*}
  \cJ(X, c\cdot \lVert V_\bullet \|) := \cJ(X, c \cdot \fb_\bullet)
\end{equation*}
where $\fb_\bullet$ is the graded sequence of ideals defined by $\fb_m := \fb(|V_m|)$. 
We conclude
\begin{lem}\label{l:baseincl}
  Let $L$ be a line bundle on $X$. 
  \begin{enumerate}
  \item[(i)] 
    If $V$ is a linear series of $L$, then 
    $\fb(|V|) \subset \cJ( X, |V|)$.
  \item[(ii)] 
    If $V_\bullet$ is a graded linear series of $L$ and $m\in\N^*$, then 
    $\bs{V_m} \subset \cJ(X, m \cdot\|V_\bullet\| )$.
  \item[(iii)] 
    If $V_\bullet$ is a graded linear series of $L$ and $m\in\N^*$, $c\in\Q_+^*$, then 
    \begin{equation*}
      (\Jac_X)^{m-1}\otimes\cJ(X,cm\cdot\| V_\bullet\|)
      \subset \cJ(X, c\cdot\| V_\bullet\|)^m
    \end{equation*}
  \end{enumerate} 
\end{lem}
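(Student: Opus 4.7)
My plan is that all three parts are essentially formal consequences of the material already developed in this subsection, so the proof reduces to unwinding the definitions and citing the right earlier results.

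For part~(i), since $\cJ(X,|V|)$ is by definition $\cJ(X,\fb(|V|))$, the inclusion $\fb(|V|)\subset \cJ(X,|V|)$ is just Lemma~\ref{l:mult}(1) applied with $\fa=\fb(|V|)$ and $c=1$.

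For part~(ii), write $\fb_\bullet$ for the graded sequence with $\fb_p=\bs{V_p}$, so that by definition $\cJ(X,m\cdot\|V_\bullet\|)=\cJ(X,m\cdot \fb_\bullet)$. The asymptotic multiplier ideal $\cJ(X,m\cdot \fb_\bullet)$ is the maximal element of the family $\{\cJ(X,(m/p)\cdot \fb_p)\}_p$, and in particular it contains the term $p=m$, namely $\cJ(X,\fb_m)$. Combining this containment with part~(i) applied to $V=V_m$ gives
\begin{equation*}
\bs{V_m}=\fb_m\subset \cJ(X,\fb_m)\subset \cJ(X,m\cdot \|V_\bullet\|),
\end{equation*}
as required.

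For part~(iii), since $\cJ(X,c\cdot \|V_\bullet\|)=\cJ(X,c\cdot \fb_\bullet)$ and $\cJ(X,cm\cdot \|V_\bullet\|)=\cJ(X,cm\cdot \fb_\bullet)$ by definition, the claim is exactly the statement of Corollary~\ref{c:asub} applied to the graded sequence $\ab=\fb_\bullet$ and the given $c,m$. There is no real obstacle here; the only minor subtlety is to check that $\fb_\bullet$ is a bona fide graded sequence of ideals (\emph{i.e.}\ $\fb_p\cdot \fb_q\subset \fb_{p+q}$), which is immediate from property~(F4) of a filtration in~\S\ref{ss:filtrations} together with the obvious fact that multiplication of sections sends base ideals into base ideals.
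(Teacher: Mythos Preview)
Your proof is correct and matches the paper's intent; the paper itself does not give a proof, simply writing ``We conclude'' before stating the lemma, so you have correctly identified that each part unwinds to an earlier result (Lemma~\ref{l:mult}(1), the definition of the asymptotic multiplier ideal, and Corollary~\ref{c:asub}, respectively).

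One small inaccuracy: in part~(iii) you justify that $\fb_\bullet$ is a graded sequence by citing property~(F4) of filtrations. But the lemma is stated for an arbitrary graded linear series $V_\bullet$, not one arising from a filtration. The correct justification is that $V_\bullet$ is by definition a graded \emph{subalgebra} of $R(X,L)$, so $V_p\cdot V_q\subset V_{p+q}$, and hence $\fb(|V_p|)\cdot\fb(|V_q|)\subset\fb(|V_{p+q}|)$. In fact the paper already records this when it introduces $\cJ(X,c\cdot\|V_\bullet\|)$ as $\cJ(X,c\cdot\fb_\bullet)$ ``where $\fb_\bullet$ is the graded sequence of ideals defined by $\fb_m:=\fb(|V_m|)$,'' so no further argument is needed.
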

The following result is a consequence of Nadel Vanishing. 
\begin{thm}\label{t:nadel}
  Let $L$ be a big line bundle on $X$, and $V_\bullet$ a graded linear series of $L$. 
  \begin{itemize}
  \item[(i)] 
    Let $B$ be a line bundle on $X$ and $m \in \N^*$. 
    If  $B-K_X-mL$ is big and nef, then 
    \[
    H^i(X, B\otimes\cJ(X,m\cdot\| V_\bullet \| )) =0
    \]
    for all  $i \ge 1$.
  \item[(ii)]
    Let $B$ and $H$ be line bundles on $X$ and $m \in \N^*$. 
    If $H$ is ample and globally generated, and  $B-K_X- mL$ is big and nef, then 
    \[
      (B+jH)\otimes\cJ(X, m \cdot \| V_\bullet \| ) 
    \]
    is globally generated for every $j\ge n= \dim(X)$. 
  \end{itemize}
\end{thm}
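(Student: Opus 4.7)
The plan is to prove part~(i) by direct application of Nadel vanishing to a suitable approximation of the asymptotic multiplier ideal, and then deduce part~(ii) from~(i) via Castelnuovo--Mumford regularity.

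For part~(i), I would first rewrite the asymptotic multiplier ideal in finite form: by the definition recalled just before the theorem, for any $p$ sufficiently divisible we have
\[
\cJ(X, m \cdot \lVert V_\bullet\rVert) = \cJ\!\left(X, (m/p) \cdot \fb_p\right),
\]
where $\fb_p = \bs{V_p}$. Now fix a log resolution $\mu\colon Y \to X$ of $\fb_p$, and write $\fb_p \cdot \cO_Y = \cO_Y(-F_p)$. Since $\fb_p$ is the base ideal of the linear series $|V_p|$, the line bundle $\mu^\ast(pL) \otimes \cO_Y(-F_p)$ is globally generated on $Y$, hence in particular $\mu^\ast(pL) - F_p$ is nef. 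Therefore
\[
\mu^\ast(B - K_X) - (m/p)\, F_p \;=\; \mu^\ast(B - K_X - mL) \;+\; (m/p)\bigl(\mu^\ast(pL) - F_p\bigr)
\]
is the sum of a big-and-nef $\Q$-divisor (by hypothesis on $B - K_X - mL$) and a nef $\Q$-divisor, hence big and nef. Nadel vanishing (in the form valid on klt varieties, see \cite{LazPAG} or the references after Theorem~\ref{t:sub}) applied to the ideal $\fb_p$ with coefficient $m/p$ and line bundle $B$ then gives
\[
H^i\bigl(X, B \otimes \cJ(X, (m/p) \cdot \fb_p)\bigr) = 0 \quad \text{for all } i \ge 1,
\]
which is exactly what we want.

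For part~(ii), the idea is to apply Mumford's regularity theorem with respect to the ample globally generated line bundle $H$. Set $\cF := B \otimes \cJ(X, m \cdot \lVert V_\bullet\rVert)$. To show $\cF \otimes jH$ is globally generated for $j \ge n$, it suffices, by Castelnuovo--Mumford regularity, to verify that $\cF$ is $j$-regular with respect to $H$, i.e.\ that $H^i(X, \cF \otimes (j-i)H) = 0$ for every $i \ge 1$. For $i > n$ this is automatic for dimensional reasons. For $1 \le i \le n$, note that the divisor
\[
\bigl(B + (j-i)H\bigr) - K_X - mL \;=\; (B - K_X - mL) + (j-i)H
\]
is the sum of a big-and-nef divisor and a nef one (since $j - i \ge 0$), hence is itself big and nef. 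Part~(i) applied with $B$ replaced by $B + (j-i)H$ therefore yields
\[
H^i\bigl(X, (B + (j-i)H) \otimes \cJ(X, m \cdot \lVert V_\bullet\rVert)\bigr) = H^i(X, \cF \otimes (j-i)H) = 0,
\]
completing the verification of $j$-regularity and hence global generation of $\cF \otimes jH$.

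The only technical point that requires care is the form of Nadel vanishing used in part~(i), since $X$ is only assumed klt rather than smooth; this is, however, standard and follows by combining Kawamata--Viehweg vanishing on $Y$ with the definition of $\cJ(X, c \cdot \fa)$ as a pushforward of a rounded relative dualizing sheaf. With that granted, neither step presents a real obstacle: the substance is entirely in the bigness-plus-nefness decomposition exploiting global generation of $pL \otimes \fb_p$, and the rest is formal.
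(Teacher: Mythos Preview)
Your proposal is correct and follows exactly the standard route the paper points to: the paper's own proof consists only of citations (to~\cite[Theorem~11.2.12~(iii)]{LazPAG} and~\cite[Theorem~9.4.17~(ii)]{LazPAG} for~(i), and to Castelnuovo--Mumford regularity for~(ii)), and what you have written is precisely the argument behind those references---reduce to a fixed $p$, use nefness of $\mu^\ast(pL)-F_p$ to get the big-and-nef decomposition needed for Nadel/Kawamata--Viehweg, then bootstrap to global generation via regularity.
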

\begin{proof}
  Statement (i) is~\cite[Theorem 11.2.12~(iii)]{LazPAG} in the case when $X$ is smooth. 
  When $X$ is klt, the statement is a consequence of~\cite[Theorem 9.4.17~(ii)]{LazPAG}. 
  
  Statement (ii) is a well known consequence of~(i) and Castelnuovo--Mumford regularity. 
  For a similar argument, see~\cite[Proposition 9.4.26]{LazPAG}. 
\end{proof}
\begin{cor}\label{c:uniformglobal}
  Let $L$ be an ample line bundle on $X$. 
  There exists a positive integer $a=a(L)$ such that if $V_\bullet$ 
  is a graded linear series of $L$,  then 
  \[
  (a+m)L \otimes \cJ(X, m \cdot  \| V_\bullet \| )
  \]
  is globally generated for all  $m\in\N^*$. 
  (Note that $a$ does not depend on $m$ or $V_\bullet$.)  
  Furthermore, we may choose $a$ so that $H^0(X,aL\otimes \Jac_X)$ is nonzero.  
\end{cor}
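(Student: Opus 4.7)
\smallskip\noindent\textbf{Proof plan.} The plan is to realize the desired global generation statement as a direct application of Theorem~\ref{t:nadel}~(ii), with the line bundles $B$ and $H$ taken as (integer) multiples of $L$ depending only on $L$, not on $V_\bullet$ or $m$. The uniformity in the Corollary will then be automatic.

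First I would pick a positive integer $b = b(L)$ such that $H := bL$ is globally generated; such $b$ exists because $L$ is ample. Next, since $L$ is ample, I can choose a positive integer $a_1 = a_1(L)$ large enough that $(a_1 - nb)L - K_X$ is ample, and hence big and nef. (If desired, one can replace ``ample'' by ``big and nef'' at this step, but since $L$ is ample the stronger condition comes for free.)

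Now, given any graded linear series $V_\bullet$ of $L$ and any $m \in \N^*$, I would apply Theorem~\ref{t:nadel}~(ii) with the data
\begin{equation*}
  j := n, \qquad H := bL, \qquad B := (a_1 + m - nb)L.
\end{equation*}
The hypotheses are straightforward to verify: $H$ is ample and globally generated by construction, and
\begin{equation*}
  B - K_X - mL = (a_1 - nb)L - K_X
\end{equation*}
is big and nef by the choice of $a_1$. The conclusion of Theorem~\ref{t:nadel}~(ii) is that
\begin{equation*}
  (B + nH) \otimes \cJ\bigl(X, m \cdot \|V_\bullet\|\bigr)
  = (a_1 + m)L \otimes \cJ\bigl(X, m \cdot \|V_\bullet\|\bigr)
\end{equation*}
is globally generated, which is exactly the desired statement with $a = a_1$.

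For the additional requirement $H^0(X, aL \otimes \Jac_X) \neq 0$, I would enlarge $a_1$ to some $a \ge a_1$ if necessary. This is possible since $\Jac_X$ is a nonzero coherent ideal sheaf on $X$ and $L$ is ample: by Serre vanishing, $H^i(X, aL \otimes \Jac_X) = 0$ for $i \ge 1$ and $a \gg 0$, and asymptotic Riemann--Roch then shows $h^0(X, aL \otimes \Jac_X)$ grows as a polynomial of degree $n$ in $a$, so in particular becomes positive for all sufficiently large $a$. Taking the maximum of the two lower bounds on $a$ yields the required constant.

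I do not foresee any genuine obstacle: the whole argument is a bookkeeping exercise in producing a single exponent $a$ that simultaneously makes $(a - nb)L - K_X$ big and nef and makes $aL \otimes \Jac_X$ have a section. The only point worth checking carefully is that the Nadel hypothesis depends on $m$ only through the term $mL$ that gets absorbed into $B$; everything else is fixed in terms of $L$, $K_X$, $n$, and $b$, which is precisely why $a$ can be taken independent of $V_\bullet$ and $m$.
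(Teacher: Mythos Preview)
Your proof is correct and follows essentially the same approach as the paper: choose $b$ so that $bL$ is globally generated, choose a constant so that the appropriate multiple of $L$ minus $K_X$ is big and nef, and apply Theorem~\ref{t:nadel}~(ii). The paper writes this as $a = c + jb$ with $cL - K_X$ big and nef and uses the freedom in $j \ge n$ to accommodate the Jacobian condition, while you parametrize directly by $a_1$ and then enlarge; these are equivalent bookkeeping choices.
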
 
\begin{proof}
  Pick $b,c\in\N^*$ such that $bL$ is globally generated and $cL-K_X$ is big and nef.
  We apply Theorem~\ref{t:nadel}~(ii) with $B= (c+m)L$ and $H=bL$. Thus
  \[
  (c+m+jb)L\otimes\cJ(X, m\cdot\| V_\bullet\|)
  \]
  is globally generated for all $m\in\N^*$ and $j\ge n$. 
  We can now set $a:=c+jb$, where $j\ge n$ is large enough so that
  $H^0(X,(c+jb)L\otimes \Jac_X)\ne0$.
\end{proof}
%
%
%
%
\subsection{Applications to filtrations defined by valuations}
Now let $L$ be an \emph{ample} line bundle on $X$ and 
fix a constant $a:=a(L)$  that satisfies the conclusion of Corollary~\ref{c:uniformglobal}. 
For the remainder of this section,  $a$ will always refer to this constant. 

Consider a valuation $v\in\Val_X^*$ with $A(v)<\infty$. 
We proceed to study the graded linear series $V_\bullet^t=V_{\bullet}^{\cF_v,t}$
of $L$ for $t \in \R_{+}$.
 
\begin{prop}\label{p:multvalinclusion}
  If $m\in\N^*$ and $t\in\Q_+^*$ satisfies $mt\ge A(v)$, then 
  \[
  \cJ(X, m \cdot \| V_\bullet^t \|) \subset \fa_{mt-A(v)} (v).
  \] 
\end{prop}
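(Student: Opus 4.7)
The approach is to reduce to Proposition~\ref{p:multval} by rescaling the valuation. The plan is to set $w:=v/t\in\Val_X^*$, so that, by the homogeneity of the log discrepancy and of valuation ideals, $A(w)=A(v)/t$ and $\fa_\la(w)=\fa_{t\la}(v)$ for every $\la\ge 0$; in particular $\fa_p(w)=\fa_{pt}(v)$ for every $p\in\N^*$. With this renormalization in hand, the target inclusion in the proposition becomes
\[
\cJ(X,m\cdot\|V_\bullet^t\|)\subset\fa_{m-A(w)}(w),
\]
which is exactly what Proposition~\ref{p:multval} would give if $\|V_\bullet^t\|$ were replaced by $\ab(w)$.

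The first step I would carry out is to check that the base ideals $\fb_p:=\fb(|V_p^t|)$ satisfy $\fb_p\subset\fa_p(w)$ for every $p\in\N^*$. Any $s\in V_p^t=\cF_v^{pt}R_p$ satisfies $v(s)\ge pt$, so after trivializing $pL$ near the center $c_X(v)$ the section corresponds to a local function in $\fa_{pt}(v)=\fa_p(w)$. Since $\fb_p$ is locally generated by such sections, the inclusion holds near $c_X(v)$; outside the closure of $c_X(v)$ it is automatic since $\fa_p(w)$ is then the unit ideal.

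The second step is to pass to asymptotic multiplier ideals. Monotonicity (Lemma~\ref{l:mult}) applied to the inclusion above gives
\[
\cJ\bigl(X,(m/p)\cdot\fb_p\bigr)\subset\cJ\bigl(X,(m/p)\cdot\fa_p(w)\bigr)
\]
for every $p\in\N^*$. Choosing a single $p$ divisible enough that both sides stabilize to their maximal elements (as in the definition of the asymptotic multiplier ideal) then yields
\[
\cJ(X,m\cdot\|V_\bullet^t\|)\subset\cJ(X,m\cdot\ab(w)).
\]

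The final step is to apply Proposition~\ref{p:multval} to $w$ with $c=m\in\Q_+^*$, and then rescale back to $v$:
\[
\cJ(X,m\cdot\ab(w))\subset\fa_{m-A(w)}(w)=\fa_{t(m-A(v)/t)}(v)=\fa_{mt-A(v)}(v).
\]
The hypothesis $mt\ge A(v)$ enters only to guarantee $m-A(w)\ge 0$, so that the target is a genuine valuation ideal rather than the trivial convention for negative indices. I do not foresee a serious obstacle: the only point needing a little care is the local trivialization used in the first step, and Proposition~\ref{p:multval} itself carries the bulk of the content.
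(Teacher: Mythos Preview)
Your proof is correct and follows essentially the same route as the paper's: both establish $\fb(|V_p^t|)\subset\fa_{pt}(v)$, pass to asymptotic multiplier ideals via monotonicity, and then invoke Proposition~\ref{p:multval}. The only difference is bookkeeping: you rescale to $w=v/t$ so that the graded sequence $\ab(w)$ has integer indices, whereas the paper keeps $v$, chooses $p$ with $pt\in\N^*$ (using $t\in\Q_+^*$), and applies Proposition~\ref{p:multval} directly with $c=mt$.
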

\begin{proof}
  Pick $p\in\N^*$ such that $pt\in\N^*$ and
  $\cJ(X, m \cdot \| V_\bullet^t \|) = \cJ(X,\frac{m}{p}\cdot\bs{V_p^t})$. 
  Then
  \[
  \cJ(X,\tfrac{m}{p}\cdot\bs{V_p^t}) \subset \cJ(X,\tfrac{m}{p}\cdot\fa_{pt}(v))
  \subset \cJ(X, mt \cdot \ab(v))  \subset \fa_{mt-A(v)}(v),
  \]
  where the first inclusion follows from the inclusion $\bs{V_p^t} \subset \fa_{pt}(v)$, 
  the second from the definition of the asymptotic multiplier ideal, and the 
  third from Proposition~\ref{p:multval}.
\end{proof}
\begin{prop}\label{p:basemultincl}
  If $m\in \N^*$ and $t\in \Q_+^*$ satisfies $mt\ge A(v)$, then
  \[
  \cJ(X, m \cdot \| V_\bullet ^t \|) \subset \bs{V_{m+a}^{t'}}
  \]
  where $t'= (mt-A(v))/(m+a)$.
\end{prop}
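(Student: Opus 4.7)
The plan is to build the required inclusion by combining the uniform global generation from Corollary~\ref{c:uniformglobal} with the valuative inclusion from Proposition~\ref{p:multvalinclusion}. First, I would apply Corollary~\ref{c:uniformglobal} to the graded linear series $V_\bullet^t$: this yields that the sheaf $(m+a)L\otimes\cJ(X,m\cdot\|V_\bullet^t\|)$ is globally generated. Second, the hypothesis $mt\ge A(v)$ is exactly what is needed to invoke Proposition~\ref{p:multvalinclusion}, giving
\[
\cJ(X,m\cdot\|V_\bullet^t\|)\subset\fa_{mt-A(v)}(v)=\fa_{(m+a)t'}(v),
\]
where the displayed equality is just the defining identity $(m+a)t'=mt-A(v)$ for~$t'$.

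Next I would view a global section of $(m+a)L\otimes\cJ(X,m\cdot\|V_\bullet^t\|)$ as a section $s\in H^0(X,(m+a)L)$ whose local expressions lie in the multiplier ideal. The displayed inclusion then forces $v(s)\ge(m+a)t'$, so $s\in\cF_v^{(m+a)t'}R_{m+a}=V_{m+a}^{t'}$. Let $W\subset V_{m+a}^{t'}$ denote the subspace consisting of all such sections. Its base ideal $\bs{W}$ is, by definition, the image of the evaluation map $W\otimes((m+a)L)^{\vee}\to\cO_X$; by the global generation obtained in the first step, this image is precisely $\cJ(X,m\cdot\|V_\bullet^t\|)$. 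Since $W\subset V_{m+a}^{t'}$ immediately yields $\bs{W}\subset\bs{V_{m+a}^{t'}}$, the proposition follows.

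I do not expect a substantive obstacle: the argument is a direct combination of two previously prepared ingredients. The real work has been absorbed into the choice of $a=a(L)$ in Corollary~\ref{c:uniformglobal}, which is what decouples the global-generation twist from both $m$ and the particular filtration $\cF_v$. The only care required is bookkeeping, namely that the shift by $A(v)$ produced by Proposition~\ref{p:multvalinclusion} lines up with the renormalized threshold $(m+a)t'$ built into the definition of $V_{m+a}^{t'}$; the identity $(m+a)t'=mt-A(v)$ is precisely this matching, and is the reason the auxiliary threshold $t'$ was defined as it was.
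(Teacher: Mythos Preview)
Your proposal is correct and follows essentially the same approach as the paper's proof: both combine Proposition~\ref{p:multvalinclusion} (yielding $H^0(X,(m+a)L\otimes\cJ(X,m\cdot\|V_\bullet^t\|))\subset V_{m+a}^{t'}$) with the global generation from Corollary~\ref{c:uniformglobal}, and then pass to base ideals. The paper's write-up is simply more terse, omitting the explicit naming of the intermediate subspace $W$.
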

\begin{proof}
  By Proposition~\ref{p:multvalinclusion}, we have 
  \[
  H^0(X, (m+a)L\otimes\cJ(X,m\cdot\| V_\bullet ^t\|)) 
  \subset H^0(X, (m+a)L\otimes\fa_{mt-A(v)}(v)) 
  =V_{m+a}^{t'}.
  \]
  Since $(m+a)L\otimes \cJ(X, \| V_\bullet ^t \|)$ is globally generated
  by Corollary~\ref{c:uniformglobal}, the desired inclusion follows by taking
  base ideals.
\end{proof}
Using the previous proposition, we can now bound $\vol(V_{m,\bullet}^t)$ from below. 
\begin{prop}\label{p:multbase}
  If $m\in\N^*$ and $t\in\Q_+^*$ satisfies $mt\ge A(v)$, then 
  \[
  \vol(V_\bullet^t) \le m^{-n}\vol(V_{m+a,\bullet}^{t'}),
  \]
  where $t' = (mt-A(v))/(a+m)$.
  \end{prop}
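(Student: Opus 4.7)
The plan is to establish a dimension inequality $\dim V_{m\ell}^t \le \dim V_{m+a,\ell}^{t'}$ for every $\ell \in \N^*$, by chaining together the inclusion from Proposition~\ref{p:basemultincl}, the subadditivity statement of Lemma~\ref{l:baseincl}(iii), and a fixed global section witnessing the Jacobian; then I would pass to the limit in $\ell$ using the volume asymptotics from Theorem~\ref{T102}.

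First I would fix a nonzero section $s \in H^0(X, aL \otimes \Jac_X)$, which exists by our choice of $a$ in Corollary~\ref{c:uniformglobal}. For any $\ell \in \N^*$, combining Proposition~\ref{p:basemultincl} with the subadditivity of asymptotic multiplier ideals (Lemma~\ref{l:baseincl}(iii), used with exponent $\ell$) yields
\[
(\Jac_X)^{\ell-1} \cdot \cJ(X, m\ell \cdot \| V_\bullet^t \|)
\subset \cJ(X, m \cdot \| V_\bullet^t \|)^\ell
\subset \bs{V_{m+a}^{t'}}^\ell.
\]
Multiplying by $s^\ell \in H^0(X, a\ell L \otimes (\Jac_X)^\ell)$ then defines an injection
\[
H^0\bigl(X, m\ell L \otimes \cJ(X, m\ell \cdot \| V_\bullet^t \|)\bigr)
\hookrightarrow
H^0\bigl(X, (m+a)\ell L \otimes \bs{V_{m+a}^{t'}}^\ell\bigr)
\subset V_{m+a,\ell}^{t'},
\]
where the last inclusion uses $\bs{V_{m+a}^{t'}}^\ell \subset \overline{\bs{V_{m+a}^{t'}}^\ell}$.

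Next, every section in $V_{m\ell}^t$ vanishes on its base ideal $\bs{V_{m\ell}^t}$, and by Lemma~\ref{l:baseincl}(ii) we have $\bs{V_{m\ell}^t} \subset \cJ(X, m\ell \cdot \| V_\bullet^t \|)$. Hence
\[
V_{m\ell}^t \subset H^0\bigl(X, m\ell L \otimes \cJ(X, m\ell \cdot \| V_\bullet^t \|)\bigr),
\]
and the two inclusions combine to give $\dim V_{m\ell}^t \le \dim V_{m+a,\ell}^{t'}$ for every $\ell \in \N^*$.

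Finally, I would divide by $\ell^n/n!$ and take $\ell \to \infty$. The volumes $\vol(V_\bullet^t) = \lim_\ell n!\dim V_\ell^t / \ell^n$ and $\vol(V_{m+a,\bullet}^{t'}) = \lim_\ell n! \dim V_{m+a,\ell}^{t'} / \ell^n$ exist (Theorem~\ref{T102}, or its general Kaveh--Khovanskii form when there is no ample series), so the dimension inequality yields $m^n\,\vol(V_\bullet^t) \le \vol(V_{m+a,\bullet}^{t'})$, which is exactly the desired bound. The only delicate point is getting the exponents right in the multiplier-ideal chain: the key trick is using subadditivity to trade an asymptotic multiplier ideal at scale $m\ell$ for the $\ell$-th power of one at scale $m$, at the cost of $(\Jac_X)^{\ell-1}$, which is then absorbed by multiplying with $s^\ell$.
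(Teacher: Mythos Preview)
Your proof is correct and follows essentially the same approach as the paper: both establish the dimension inequality $\dim V_{m\ell}^t \le \dim V_{m+a,\ell}^{t'}$ by multiplying with $s^\ell$ and chaining the inclusions $\bs{V_{m\ell}^t}\subset\cJ(X,m\ell\cdot\|V_\bullet^t\|)$, subadditivity, and Proposition~\ref{p:basemultincl}, then pass to the limit in $\ell$. The only difference is cosmetic ordering---the paper multiplies by $s^\ell$ first and then applies the ideal inclusions at the level of global sections, whereas you first record the ideal chain and then multiply---but the content is identical.
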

\begin{proof}
  It suffices to show that $\dim V_{m\ell}^t \le \dim V^{t'}_{m+a,\ell}$ for all positive integers 
  $m$ and $\ell$. 
  Indeed, diving both sides by $(m\ell)^n/n!$ and letting $\ell\to\infty$ then 
  gives the desired inequality. 
  
  We now prove $\dim V_{m\ell}^t \le \dim V^{t'}_{m+a,\ell}$. First, by our assumption on $a$, 
  we may choose a nonzero section $ s \in H^0(X,aL\otimes \Jac_X)$. 
  Multiplication by $s^\ell$ gives an injective map 
  \[
  V_{\ell m}^t  \longrightarrow H^0(X, (a+m)\ell L\otimes (\Jac_X)^{\ell-1} \otimes \bs{V_{m\ell}^t}).
  \]
  Now, we have
  \begin{multline*}
    H^0(X, (a+m)\ell L\otimes (\Jac_X)^{\ell-1} \otimes \bs{V_{m\ell}^t})\\
    \subset H^0(X,(a+m)\ell L\otimes(\Jac_X)^{\ell-1}\otimes\cJ(X,m\ell\cdot\| V_\bullet^t\|) )\\
    \subset
    H^0(X,(a+m)\ell L\otimes \cJ(X, m \cdot   \| V_\bullet^t \|)^\ell)\\
    \subset 
    H^0(X,(a+m)\ell L\otimes(\fb(|V_{m+a}^{t'}|)^\ell)
    \subset  V^{t'}_{m+a,\ell},
  \end{multline*}
  where the first inclusion follows from Lemma~\ref{l:baseincl}, 
  the second from Corollary~\ref{c:asub}~(iii), 
  the third from Proposition~\ref{p:basemultincl},
  and the last one from the definition of $V^{t'}_{m+a,\bullet}$.
\end{proof}
As an application of the previous proposition, we give bounds 
on $T_m(v)$ and $\tilde{S}_m(v)$.
\begin{prop}\label{p:fujitaT}
  If $m\in\N^*$, then
  \begin{equation*}
    T(v) -\frac{aT(v) + A(v)}{m} \le T_m(v) \le T(v).
\end{equation*}
\end{prop}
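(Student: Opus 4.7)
The upper bound $T_m(v)\le T(v)$ is immediate, since $T(v)=\sup_m T_m(v)$ as recalled in~\S\ref{S104}. So the content of the proposition is the lower bound, and my plan is to deduce it directly from Proposition~\ref{p:multbase}, which is the key Fujita-type input already established in this section.

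First I would dispose of the trivial range. When $m\le a$, or more generally when $(m-a)T(v)\le A(v)$, the claimed lower bound $T(v)-(aT(v)+A(v))/m$ is non-positive, and the statement holds trivially since $T_m(v)\ge 0$. So I may assume $m>a$ and $(m-a)T(v)>A(v)$.

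For the substantive case, fix a rational $t$ with $A(v)/(m-a)<t<T(v)$; such $t$ exists by the assumption just made. Since $t<T(v)$, the characterization~\eqref{e120} together with monotonicity of $t\mapsto\vol(V_\bullet^t)$ gives $\vol(V_\bullet^t)>0$. Now I apply Proposition~\ref{p:multbase} with its integer parameter taken to be $m-a$ (this is legal because $(m-a)t>A(v)$ by the choice of $t$): this yields
\begin{equation*}
  0<\vol(V_\bullet^t)\le (m-a)^{-n}\vol(V_{m,\bullet}^{t'}),\qquad t'=\frac{(m-a)t-A(v)}{m}.
\end{equation*}
In particular the graded linear series $V_{m,\bullet}^{t'}$ is nontrivial, so by the very definition of $V_{m,\bullet}^{t'}$ the ideal $\bs{V_m^{t'}}$ must be nonzero, hence $V_m^{t'}=\cF_v^{mt'}H^0(X,mL)\ne 0$. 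By the definition of $T_m(v)$ this means $T_m(v)\ge t'$.

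Finally, letting $t$ tend to $T(v)$ through rationals, $t'$ tends to $\bigl((m-a)T(v)-A(v)\bigr)/m=T(v)-(aT(v)+A(v))/m$, and the desired lower bound follows. There is no genuine obstacle here beyond the index bookkeeping: the whole content of the proposition is packaged in Proposition~\ref{p:multbase}, and the proof amounts to translating the volume estimate there into a statement about the threshold $T_m$ by shifting the index $m\mapsto m-a$ and taking $t\to T(v)^-$.
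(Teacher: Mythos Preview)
Your proof is correct and follows essentially the same approach as the paper. The only difference is cosmetic: you invoke Proposition~\ref{p:multbase} (the volume comparison) and deduce $V_m^{t'}\ne 0$ from the positivity of $\vol(V_{m,\bullet}^{t'})$, whereas the paper invokes Proposition~\ref{p:basemultincl} directly to see that $\bs{V_m^{t'}}$ contains the nonzero multiplier ideal $\cJ(X,(m-a)\cdot\|V_\bullet^t\|)$; since Proposition~\ref{p:multbase} is itself proved via Proposition~\ref{p:basemultincl}, the two arguments are equivalent.
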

\begin{proof}
  The second inequality is trivial, since $T(v) = \sup T_m(v)$. 
  To prove the first inequality, we may assume $m>a+\frac{A(v)}{T(v)}$.
  Pick $t\in\Q_+^*$ with $t<T(v)$ and $m>a+\frac{A(v)}{t}$.
  Since $V_\bullet^t$ is nontrivial (in fact, it contains an ample series),
  $\cJ(X,m\|V_\bullet^t\|)$ is nontrivial as well. 
  Apply Proposition~\ref{p:basemultincl}, with $m$ instead of $m-a$,
  so that $t'=t-m^{-1}(at+A)$. We get 
  \begin{equation*}
    \fb(|V_m^{t'}|)\supset\cJ(X,(m-a)\|V_\bullet^t\|)\ne0.
  \end{equation*}
  In particular, $V_m^{t'}\ne\emptyset$, which implies $t'\le T_m(v)$.
  Letting $t\to T(v)$ completes the proof.
\end{proof}
\begin{prop}\label{p:fujitaS}
  If $m\in\N^*$ and $m>a$, then
  \begin{equation}\label{e124}
    \left(\frac{m-a}{m}\right)^{n+1} \left( S(v)  - \frac{A(v)}{m-a}  \right) \le \tilde{S}_m(v)\le S(v).
  \end{equation}
\end{prop}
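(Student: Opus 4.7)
The upper and lower inequalities have quite different flavors and I will handle them separately. The upper bound $\tilde S_m(v)\le S(v)$ is a pointwise volume comparison between the graded linear series $V_{m,\bullet}^t$ and the filtration $\cF_v$. The lower bound will follow from Proposition~\ref{p:multbase} together with an affine change of variables in the integral defining $\tilde S_m(v)$.

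For the upper bound, I will show the pointwise estimate $\vol(V_{m,\bullet}^t)\le m^n\vol(L; v\ge t)$ for every $t\in[0,T(v)]$; integrating against $t$ and dividing by $m^n\vol(L)$ then gives the result. Since any $s\in V_m^t=\cF_v^{mt}R_m$ satisfies $v(s)\ge mt$, we have $\bs{V_m^t}\subset\fa_{mt}(v)$. Valuation ideals are integrally closed and satisfy $\fa_\lambda(v)\cdot\fa_\mu(v)\subset\fa_{\lambda+\mu}(v)$, so $\overline{\bs{V_m^t}^\ell}\subset\fa_{m\ell t}(v)$. Consequently $V_{m,\ell}^t$ embeds into $H^0(X,m\ell L\otimes\fa_{m\ell t}(v))=\cF_v^{m\ell t}R_{m\ell}$; dividing by $\ell^n/n!$ and letting $\ell\to\infty$ yields the desired pointwise inequality.

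For the lower bound, I will apply Proposition~\ref{p:multbase} with $m$ there replaced by $m-a$ (this is the point of requiring $m>a$): for $t\ge A(v)/(m-a)$, this gives $\vol(V_\bullet^t)\le(m-a)^{-n}\vol(V_{m,\bullet}^{t'})$ with $t'=((m-a)t-A(v))/m$. Substituting $s=t'$ as integration variable in $\tilde S_m(v)$ (so $dt=\tfrac{m}{m-a}\,ds$) and keeping only the nonnegative contribution over the transformed interval $[0,((m-a)T(v)-A(v))/m]\subset[0,T(v)]$ produces
\[
\tilde S_m(v)\ \ge\ \frac{(m-a)^{n+1}}{m^{n+1}\vol(L)}\int_{A(v)/(m-a)}^{T(v)}\vol(V_\bullet^t)\,dt.
\]
Using $\vol(V_\bullet^t)\le\vol(L)$ on $[0,A(v)/(m-a)]$ to control the missing initial segment of the integral relative to $\vol(L)\cdot S(v)=\int_0^{T(v)}\vol(V_\bullet^t)\,dt$ produces exactly the subtracted term $A(v)/(m-a)$ appearing in~\eqref{e124}.

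The hard analytic work has already been absorbed into Proposition~\ref{p:multbase}, so what remains is mostly bookkeeping and I do not expect a serious obstacle. The two points to be careful about are (i) passing from $\bs{V_m^t}\subset\fa_{mt}(v)$ to the same statement for $\ell$-th powers and their integral closures, which relies on the submultiplicativity and integral closedness of valuation ideals; and (ii) correctly tracking the affine substitution $t\mapsto((m-a)t-A(v))/m$, in particular that its preimage of $s=0$ is $t=A(v)/(m-a)$, which is precisely the source of the subtracted $A(v)/(m-a)$ on the left of~\eqref{e124}.
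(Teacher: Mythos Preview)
Your proposal is correct and follows essentially the same route as the paper: the upper bound via the inclusion $\overline{\bs{V_m^t}^\ell}\subset\fa_{m\ell t}(v)$ and the lower bound via Proposition~\ref{p:multbase} (with $m$ replaced by $m-a$) followed by the affine change of variables $t'=((m-a)t-A(v))/m$ and the trivial estimate $\vol(V_\bullet^t)\le\vol(L)$ on $[0,A(v)/(m-a)]$. The only detail the paper adds that you omit is that Proposition~\ref{p:multbase} is stated for rational $t$, and a continuity argument (Proposition~\ref{p:Brunn}) is invoked to extend the pointwise volume inequality to all $t$ before integrating.
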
. 
\begin{proof} 
  To prove the second inequality, note that for $t\in\R_+$ and $l\in\N^*$ we have 
  \begin{equation*}
    V_{m,\ell}^t 
    =H^0(X, m\ell L\otimes\overline{\bs{\cF_v^{mt}H^0(X,mL)}^\ell})
    \subset \cF_v^{m\ell t}H^0(X,m\ell L)
    =V_{m\ell}^t.
  \end{equation*}
  Thus $\vol(V^t_{m,\bullet})\le m^n\vol(V^t_\bullet)$ for $t\in\R_+$, 
  and integration yields $\tilde{S}_m(v)\le S(v)$.

  We now prove the first inequality.
  To this end, we use Proposition~\ref{p:multbase} 
  with $m$ replaced by $m-a$ to see that
  \begin{equation}\label{e123}
    \left(\frac{m-a}{m}\right)^n \vol(V_\bullet^t) 
    \le\frac{1}{m^n}\vol(V_{m,\bullet}^{t'})
  \end{equation}
  for all $t\in\Q_+^*$ with $(m-a)t\ge A(v)$, where $t' =t-m^{-1}(at+A(v))$. 
  By the continuity statement in Proposition~\ref{p:Brunn}, 
  the inequality in~\eqref{e123} must hold for all $t\in[m^{-1}A(v),T(v)]$,
  with at most two exceptions.
  We can therefore integrate with respect to $t$ from $t=A(v)/(m-a)$ to
  $t=(mT(v)+A(v))/(m-a)$, \ie~from $t'=0$ to $t'=T(v)$. This yields
  \begin{multline*}
    \tilde{S}_m(v)
    =\int_0^{T(v)}\frac{\vol(V_{m, \bullet}^{t'})}{m^n\vol(L)}\, dt'
    \ge\left(\frac{m-a}{m}\right)^{n+1}
    \int_{A(v)/(m-a)}^{(mT(v)+A(v))/(m-a)}\frac{\vol(V_\bullet^t)}{\vol(L)}\,dt\\
    =\left(\frac{m-a}{m}\right)^{n+1}
    \int_{A(v)/(m-a)}^{T(v)}\frac{\vol(V_\bullet^t)}{\vol(L)}\,dt\\
    =\left(\frac{m-a}{m}\right)^{n+1}
    \left(S(v)-\int_0^{A(v)/(m-a)}\frac{\vol(V_\bullet^t)}{\vol(L)}\, dt\right)\\
    \ge\left(\frac{m-a}{m}\right)^{n+1}
    \left(S(v)-\frac{A(v)}{m-a}\right),
  \end{multline*}
  where the second equality follows from a simple substitution
  and the last inequality follows since $\vol(V_\bullet^t) \le\vol(L)$ for all $t$.
  This completes the proof.
\end{proof}
\begin{proof}[Proof of Theorem~\ref{t:fujitaT}]
  Consider any $v\in\Val_X^*$ with $A(v)<\infty$.
  By Corollary~\ref{C105}, we have $T(v)\le A(v)/\a(L)$.
  Proposition~\ref{p:fujitaT} now yields 
  \begin{equation*}
    T(v)-T_m(v)\le\left(\frac{a}{\a(L)}+1\right)\frac{A(v)}{m}
  \end{equation*}
  for any $m\in\N^*$,  so the theorem holds with $C=1+a(L)/\a(L)$.
\end{proof}

\begin{proof}[Proof of Theorem~\ref{t:fujitaS}]
  Consider any $v\in\Val_X^*$ with $A(v)<\infty$.
  Proposition~\ref{p:fujitaS} gives
  \begin{multline*}
  0\leq  S(v)-\tilde{S}_m(v)
    \le S(v)-\left(\frac{m-a}{m}\right)^{n+1}\left(S(v)-\frac{A(v)}{m-a}\right)\\
    =\left(1-\left(\frac{m-a}{m}\right)^{n+1}\right)S(v)
    +\left(\frac{m-a}{m}\right)^n\frac{A(v)}{m}
    \le\frac{a(n+1)}{m}S(v)+\frac{A(v)}{m}
  \end{multline*}
  for $m>a$, where the last inequality uses that $1-t^{n+1}\le (n+1)(1-t)$ for $t\in[0,1]$.
  Since $S(v) \le A(v)/\d(L)$ by Theorem~\ref{t:delta}, we can take $C=1+(n+1)a(L)/\alpha(L)$.
\end{proof}
%
%
%
%
%
%
\section{Valuations computing the thresholds}\label{s:theoremE}
In this section we prove Theorem~E, on the existence of valuations computing the 
log canonical and stability thresholds.
We assume that $X$ is a normal projective klt variety and that $L$ is ample.
%
%
%
%
\subsection{Linear series in families}
We consider the following setup, which will arise in~\S\ref{ss:limitpoints}. 
Fix $m\in\N^*$ and a family of subspaces of $H^0(X,mL)$ parameterized by a variety $Z$. Said family is given by a submodule
\[
\cW \subset \cV:= H^0(X,mL) \otimes_\C \cO_Z.\]
For $z \in Z$ closed, we write $W_z$ for the linear series of $mL$ defined by
\[
W_z := \im\left( \cW\vert_{k(z)}  \to \cV\vert_{k(z)} \simeq  H^0(X,mL) \right).\]
Note that $\cW$ gives rise to an ideal $\cB \subset \cO_{X\times Z}$ such that \[\cB \cdot \cO_{X\times \{z\}} =
\bs{W_z}.\]
Indeed, $\cB$ is the image of the map 
\[
  p_2^\ast \cW \otimes  p_1^\ast (-mL) \to \cO_{X\times Z},
\]
where $p_1$ and $p_2$ denote the projection maps associated to $X\times Z$. 

We need a few results on the behavior of invariants of linear series in families. 
\begin{prop}\label{p:constantlct}
  There exists a nonempty open set $U\subset Z$ such that $\lct( \bs{W_z})$ is constant 
  for all closed points $z\in U$.
\end{prop}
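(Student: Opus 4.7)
The plan is to invoke the fact that the log canonical threshold of an ideal can be read off from any log resolution, and that a single log resolution of the total ideal $\cB\subset\cO_{X\times Z}$ restricts to a log resolution on each fiber over a suitable open subset $U\subset Z$.

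First I would take a log resolution $\mu\colon Y\to X\times Z$ of $\cB$, simultaneously resolving the pullback of $X^{\sing}\times Z$. Thus $Y$ is smooth, $\cB\cdot\cO_Y=\cO_Y(-F)$ with $F=\sum_i a_i E_i$ an effective Cartier divisor, and $\Supp(F)\cup\Exc(\mu)\cup\mu^{-1}(X^{\sing}\times Z)$ has simple normal crossings. Using that $K_X$ is $\Q$-Cartier (since $X$ is klt), write $K_{Y/(X\times Z)}=\sum_i b_i E_i$. The rational numbers $a_i,b_i$ are attached to the total family and are independent of $z$.

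Next I would apply generic flatness and generic smoothness to $Y\to Z$ and to each $E_i\to Z$ and their pairwise intersections. This produces a nonempty open $U\subset Z$ such that, for every closed $z\in U$, the fiber $Y_z=\mu^{-1}(X\times\{z\})$ is smooth, $\mu_z\colon Y_z\to X$ is proper birational, each nonempty $E_i\cap Y_z$ is a prime divisor $E_i^z$ with $\sum_i E_i^z$ SNC on $Y_z$, and by base change $\bs{W_z}\cdot\cO_{Y_z}=\cO_{Y_z}(-\sum_i a_iE_i^z)$ while $K_{Y_z/X}=\sum_i b_iE_i^z$. Hence $\mu_z$ is a log resolution of $\bs{W_z}$, and the standard log-resolution formula gives
\[
\lct\bigl(\bs{W_z}\bigr)=\min_{\{i\,:\,a_i>0\}}\frac{b_i+1}{a_i},
\]
which is manifestly independent of $z\in U$.

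The main obstacle is the simultaneous verification that the SNC condition, the relative canonical coefficients, and the pulled-back base ideal all descend unchanged to each generic fiber. The first point is standard generic smoothness for SNC divisors in families; the second is delicate because $X$ is only klt, so one must argue that $K_{Y/(X\times Z)}|_{Y_z}=K_{Y_z/X}$ up to a contribution from $Z$ that vanishes after shrinking $U$ to be smooth with trivial canonical restriction on fibers; the third reduces to the identity $\cB\cdot\cO_{X\times\{z\}}=\bs{W_z}$ built into the definition of $\cB$ from $\cW$, together with compatibility of $\cO_Y(-F)$ with base change to the fiber. These are all routine generic-smoothness arguments but must be combined carefully to pin down the same numerical data $(a_i,b_i)$ at every $z\in U$.
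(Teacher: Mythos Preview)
Your approach is correct and is essentially a self-contained proof of the result the paper simply cites. The paper's proof is one line: it notes that $\lct(\bs{W_z})=\lct(\cB\cdot\cO_{X\times\{z\}})$ and then invokes the well-known fact that the log canonical threshold of a family of ideals is generically constant, referring to \cite[Proposition~A.2]{Blu16b}. What you have written is precisely the standard argument behind that cited proposition: take a log resolution of the total ideal on $X\times Z$, use generic smoothness on the strata of the SNC divisor to guarantee the resolution restricts to a log resolution on each fiber over an open $U\subset Z$, and observe that the numerical data $(a_i,b_i)$ entering the formula $\lct=\min_i(b_i+1)/a_i$ are read off from the total space and hence constant in $z$.

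Your discussion of the obstacles is accurate. The compatibility $K_{Y/(X\times Z)}|_{Y_z}=K_{Y_z/X}$ follows cleanly once you shrink $Z$ to be smooth and $Y\to Z$ to be smooth over $U$: then $K_{Y/(X\times Z)}=K_{Y/Z}-\mu^*p_1^*K_X$, and both pieces restrict as required. The SNC-in-families step is the standard stratification argument (apply generic smoothness to each $E_{i_1}\cap\cdots\cap E_{i_k}\to Z$). One small point: you need not insist that each $E_i\cap Y_z$ remain irreducible, only that the restricted divisor be reduced SNC; any splitting into components preserves the coefficients $(a_i,b_i)$ and hence the minimum.

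So your route and the paper's are the same in spirit; you have unpacked what the paper black-boxes.
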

\begin{proof}
  Since $\lct(\bs{W_z})= \lct(\cB \cdot \cO_{X\times \{z\}})$, 
  the proposition follows from the well known fact that the log canonical threshold of a family 
  of ideals is constant on a nonempty open set; see~\eg~\cite[Proposition~A.2]{Blu16b}.
\end{proof}
\begin{prop}\label{p:lsclct}
  If $Z$ is a smooth curve and $z_0 \in Z$ a closed point, then there exists an 
  open neighborhood $U$ of $z_0$ in $Z$ such that 
  $\lct( \bs{W_{z_0}}) \le \lct(\bs{W_z})$ for all $z\in U$. 
\end{prop}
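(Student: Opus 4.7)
The proposition is a lower semicontinuity statement for $z \mapsto \lct(\bs{W_z})$ at $z_0$, which I propose to establish via a simultaneous log resolution of the total-space ideal $\cB \subset \cO_{X\times Z}$.

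Shrink $Z$ to an affine neighborhood of $z_0$ and pick a birational morphism $\mu\colon Y \to X\times Z$ with $Y$ smooth such that $\mu^{-1}\cB\cdot\cO_Y = \cO_Y(-D)$ is invertible, and $D + \Exc(\mu) + \mu^{-1}(X\times\{z_0\})_{\mathrm{red}}$ is simple normal crossings on $Y$. Write $D = \sum_i d_i E_i$ and set $a_i := A_{X\times Z}(\ord_{E_i})$. Classify each component $E_i$ as \emph{horizontal} if $\mu(E_i)$ dominates $Z$, and otherwise \emph{vertical}; the vertical components are supported over a finite set $T \subset Z$ of closed points.

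For any $z \in Z\setminus T$, the Cartier fiber $Y_z := \mu^\ast(X\times\{z\})$ is smooth and meets each horizontal $E_i$ transversally, and $\mu|_{Y_z}\colon Y_z \to X\times\{z\}$ is a log resolution of $\bs{W_z}$. The adjunction identity $K_{Y_z/X\times\{z\}} = K_{Y/X\times Z}|_{Y_z}$, valid since $Y_z$ is a Cartier pullback, gives $A_{X\times\{z\}}(\ord_{E_i\cap Y_z}) = a_i$ for each horizontal $E_i$, so $\lct(\bs{W_z}) = \min_{i\text{ horizontal}} a_i/d_i =: c'$ is constant on $Z\setminus T$ (in agreement with Proposition~\ref{p:constantlct}). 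At the special point $z_0$, the strict transform $G_0$ of $X\times\{z_0\}$ is a smooth prime component of $Y_{z_0}$ that, because $\mu$ is an isomorphism over the generic point of $X\times\{z_0\}$, appears with multiplicity one in the scheme-theoretic fiber. Each horizontal $E_i$ meets $G_0$ transversally in a prime divisor $F_i$, and an analogous adjunction computation on $G_0$ yields $A_{X\times\{z_0\}}(\ord_{F_i}) = a_i$ and $\ord_{F_i}(\bs{W_{z_0}}) = d_i$. Hence $\lct(\bs{W_{z_0}}) \le a_i/d_i$ for each horizontal $i$, so $\lct(\bs{W_{z_0}}) \le c'$. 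Taking $U := Z\setminus(T\setminus\{z_0\})$, the complement of a finite set and hence an open neighborhood of $z_0$ in $Z$, we conclude $\lct(\bs{W_{z_0}}) \le \lct(\bs{W_z})$ for every $z\in U$.

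The main obstacle will be the simultaneous log-resolution setup and, more delicately, the adjunction bookkeeping at the special fiber, where $Y_{z_0}$ may have additional non-reduced or reducible structure coming from vertical components. The key identities $A_{X\times\{z_0\}}(\ord_{F_i}) = a_i$ and $\ord_{F_i}(\bs{W_{z_0}}) = d_i$ rely on $G_0$ appearing with multiplicity one in $Y_{z_0}$ together with the SNC configuration; the generic-fiber calculation is routine, but isolating the contribution of $G_0$ from the other components of $Y_{z_0}$ in the adjunction formula requires care.
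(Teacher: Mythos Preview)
Your overall strategy---simultaneous log resolution of $\cB$ on $X\times Z$, then restriction to fibers---is the right framework, and your computation at the generic fiber correctly recovers the constant value $c'$. But the argument at the special fiber has a genuine gap: you assert that ``each horizontal $E_i$ meets $G_0$ transversally in a prime divisor $F_i$,'' and this need not hold. A horizontal $E_i$ certainly meets the full scheme-theoretic fiber $Y_{z_0}$, but that intersection may lie entirely inside the vertical components, missing $G_0$ altogether. When that happens for the $E_i$ achieving $c'=a_i/d_i$, your method yields no upper bound on $\lct(\bs{W_{z_0}})$.

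Here is a concrete failure. Take $X=\A^1_x$, $Z=\A^1_z$, and $\cB=(x(x-z))$, so that $\bs{W_z}=(x(x-z))\subset\cO_X$ has $\lct=1$ for $z\ne0$ and $\lct=1/2$ at $z_0=0$. Blowing up the origin of $\A^2$ gives a log resolution with exceptional curve $E$ (vertical) and strict transforms $L_1=\widetilde{\{x=0\}}$, $L_2=\widetilde{\{x=z\}}$ (both horizontal, each with $a_i=d_i=1$, so $c'=1$). The strict transform $G_0=\widetilde{\{z=0\}}$ meets $E$ but is disjoint from both $L_1$ and $L_2$, since the three lines $\{x=0\}$, $\{x=z\}$, $\{z=0\}$ have distinct tangent directions at the origin and are separated by a single blowup. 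Worse, $\mu_0\colon G_0\to X\times\{0\}$ is an \emph{isomorphism}, so restriction to $G_0$ produces no divisors over $X$ at all and gives no bound on $\lct(\bs{W_0})$.

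The paper sidesteps this by simply invoking the known lower semicontinuity of log canonical thresholds in families (\cite[Proposition~A.3]{Blu16b}), which reduces to $\lct(\bs{W_z})=\lct(\cB\cdot\cO_{X\times\{z\}})$. The proofs of that result do use a simultaneous resolution, but the comparison at $z_0$ goes through inversion of adjunction (or, equivalently, through the pair $(X\times Z,\,c\cdot\cB+X\times\{z_0\})$), not through restricting individual horizontal divisors to $G_0$. If you want a self-contained argument, you should replace the $G_0$-restriction step with that inversion-of-adjunction input; your closing remark that ``isolating the contribution of $G_0$\dots requires care'' is pointing at exactly the place where the argument as written breaks.
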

\begin{proof}
  As in the proof of the previous proposition, we note that 
  $\lct( \bs{W_z})= \lct(\cB \cdot \cO_{X\times \{z\}})$ for $z\in Z$ closed. 
  Thus, the proposition is a consequence of the lower semicontinuity of the 
  log canonical threshold. See~\cite[Proposition~A.3]{Blu16b}.
\end{proof}
Denote by $W_{z,\bullet}$ the graded linear series of $mL$ defined by 
\[
  W_{z,\ell} := H^0(X, m \ell L \otimes \overline{\bs{W_z}^\ell}).
\]
\begin{prop}\label{p:constantv}
  There exists a nonempty open set $U\subset Z$
  such that $\vol({W_{z,\bullet}})$ is constant for all closed points $z\in U$. 
\end{prop}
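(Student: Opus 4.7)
The plan is to form a simultaneous normalized blow-up of the family of base ideals $\bs{W_z}$, reducing $\vol(W_{z,\bullet})$ to the intersection number of a nef line bundle on the fibers of a flat projective family; such intersection numbers are automatically constant on a nonempty open subset of the base.

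Concretely, let $\pi\colon\cY\to X\times Z$ be the normalization of the blow-up of $X\times Z$ along $\cB$, so that $\cB\cdot\cO_\cY=\cO_\cY(-\cF)$ for an effective Cartier divisor $\cF$, and set $\cD:=\pi^*p_1^*(mL)-\cF$ where $p_1\colon X\times Z\to X$. Combining generic flatness with the openness of the loci of geometrically integral and geometrically normal fibers for morphisms of finite presentation (EGA~IV), we may shrink $Z$ to a nonempty open $U$ over which $\cW$ has constant rank, the composition $p\colon\cY\to Z$ is flat, and the fibers $\cY_z$ are integral and normal. Flat base change then identifies $\pi_z\colon\cY_z\to X$ with the normalization of the blow-up of $X$ along $\bs{W_z}$ and identifies $\cF|_{\cY_z}$ with the exceptional Cartier divisor $\cF_z$. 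Normality of $\cY_z$ gives $\pi_{z,*}\cO_{\cY_z}(-\ell\cF_z)=\overline{\bs{W_z}^\ell}$, so the projection formula yields $W_{z,\ell}\simeq H^0(\cY_z,\ell\cD_z)$ and hence $\vol(W_{z,\bullet})=\vol_{\cY_z}(\cD_z)$.

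The crucial observation, which is the main step in the proof, is that $\cD_z$ is globally generated on $\cY_z$, and therefore nef. Indeed, any $s\in W_z$ vanishes along $\bs{W_z}$, so $\pi_z^*s$ vanishes along $\cF_z$ and defines a section $\sigma_s\in H^0(\cY_z,\cD_z)$; the local description of the blow-up shows that $\cO_{\cY_z}(-\cF_z)$ is locally generated by ratios of elements of $W_z$, so the collection $\{\sigma_s\}_{s\in W_z}$ generates $\cD_z$ at every point of $\cY_z$. Thus $\vol_{\cY_z}(\cD_z)=(\cD_z^n)$. Since $p\colon\cY_U\to U$ is flat and projective of relative dimension $n$, the Euler characteristic $\chi(\cY_z,\cO_{\cY_z}(\ell\cD_z))$ is a polynomial in $\ell$ of degree $n$ with leading coefficient $(\cD_z^n)/n!$ whose coefficients are independent of $z\in U$; hence $\vol(W_{z,\bullet})$ is constant on $U$. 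The key point to verify carefully is the global generation of $\cD_z$, which is what reduces the a priori transcendental volume invariant to a deformation-invariant intersection number; the generic-flatness package and the fiberwise identification of the blow-up are otherwise standard.
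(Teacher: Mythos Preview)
Your proof is correct and takes essentially the same approach as the paper: both realize $\vol(W_{z,\bullet})$ as the top self-intersection $(\cD_z^n)$ on a proper birational model of $X$ resolving $\bs{W_z}$, and then use local constancy of intersection numbers in a flat projective family over $Z$. The only differences are that the paper uses a smooth resolution of $\cB$ (invoking generic smoothness for the fibers) rather than the normalized blow-up, and simply cites the formula $\vol(V_{m,\bullet})=((mL-F_m)^n)$ established earlier in \S\ref{s:fujitaapprox} instead of re-deriving it via global generation of $\cD_z$; one small caution is that your phrase ``flat base change'' is loose (the inclusion $\{z\}\hookrightarrow Z$ is not flat, and normalization need not commute with base change), but what you actually use---that $\cY_z$ is a normal proper birational model of $X$ on which $\bs{W_z}$ becomes invertible---does hold after your shrinking of $Z$.
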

\begin{proof}
  The idea is to express $\vol(W_{z,\bullet})$ as an intersection number. 
  Fix a proper birational morphism $\pi\colon Y \to X\times Z$ 
  such that $Y$ is smooth and $\cB \cdot \cO_Y=  \cO_Y(-F)$ for some effective Cartier divisor 
  on $Y$. 
  For each $z\in Z$, we restrict $\pi$ to get a map $\pi_z\colon Y_z \to X\times \{z\} \simeq X$. 
  By generic smoothness, there exists a nonempty open set $U\subset Z$ such that 
  $Y_z$ is smooth for all $z \in U$. 
  For $z\in U$, we then have 
  \[
    \vol({W_{z,\bullet}})=
    ((p_1^\ast mL-F )\vert_{Y_z}^n).
  \]
  After shrinking $U$, we may assume $p_1^\ast mL-F$ is flat over $U$.
  Then $((p_1^\ast mL-F)\vert_{Y_z})^n)$ is constant on $U$, which concludes the proof.
\end{proof}
\begin{prop}\label{p:inclusion}
  Let $\cW$ and $\cG$ be two submodule of $\cV$ and for $z\in Z$, let $W_z$ and $G_z$ 
  denote the corresponding subspaces of $V$. If the function $z \mapsto \dim W_z$ 
  is locally constant on $Z$,  then the set $\{z \in Z\,\vert\, G_z \subset W_z\}$ is closed. 
\end{prop}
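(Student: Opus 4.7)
The plan is to reduce the inclusion condition to the vanishing locus of a morphism into a locally free sheaf.

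First I would form the coherent quotient $\cQ := \cV/\cW$ on $Z$. Since tensor product is right exact, for every closed point $z \in Z$ one has $\cQ \otimes_{\cO_Z} k(z) = V/W_z$, so the local constancy of $z \mapsto \dim W_z$ is equivalent to the local constancy of $z \mapsto \dim \cQ \otimes k(z)$. Because $Z$ is a variety (in particular reduced), the standard criterion that a coherent sheaf on a reduced scheme with locally constant fiber dimension is locally free then yields that $\cQ$ is locally free on $Z$.

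Next I would consider the composition
\[
\psi\colon \cG \hookrightarrow \cV \twoheadrightarrow \cQ.
\]
For a closed point $z \in Z$, the induced map $\psi|_{k(z)}\colon G_z \to V/W_z$ is precisely the composition of the inclusion $G_z \hookrightarrow V$ with the quotient $V \twoheadrightarrow V/W_z$, so $G_z \subset W_z$ if and only if $\psi|_{k(z)} = 0$. Thus the set in question coincides with $\{z \in Z \mid \psi|_{k(z)} = 0\}$.

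Finally, I would show that this vanishing locus is closed by a local matrix argument. Working in a neighborhood $U \subset Z$ on which $\cG$ is generated by finitely many sections $g_1,\dots,g_s$ and on which $\cQ$ is free with basis $e_1,\dots,e_r$, write $\psi(g_i) = \sum_j a_{ij} e_j$ with $a_{ij} \in \Gamma(U,\cO_Z)$. Then $\psi|_{k(z)} = 0$ is equivalent to the simultaneous vanishing $a_{ij}(z) = 0$, cutting out a closed subset of $U$; gluing these loci across a cover yields a closed subset of $Z$. The only nontrivial ingredient here is the local freeness of $\cQ$, which is the reason the hypothesis on $\dim W_z$ is needed; without it, one would only have upper semicontinuity of $\dim(\cV/\cW) \otimes k(z)$ and the conclusion would generally fail.
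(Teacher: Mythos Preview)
Your argument is correct. The only minor imprecision is that $\psi\otimes k(z)$ is a priori a map $\cG\otimes k(z)\to \cQ\otimes k(z)$, and $\cG\otimes k(z)$ need not equal $G_z$; but since $\psi$ factors through $\cV$, the map $\psi\otimes k(z)$ factors through the image $G_z\subset V$, so $\psi\otimes k(z)=0$ is indeed equivalent to $G_z\subset W_z$. The rest is clean.

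Your route is genuinely different from the paper's. The paper works affine-locally, picks generators for $\cW(Z)$ and $\cG(Z)$, stacks them as rows of a matrix over $\cO(Z)$ relative to a basis of $\cV(Z)$, and observes that the hypothesis forces this matrix to have rank $\ge r$ everywhere; since $G_z\subset W_z$ iff $\dim(G_z+W_z)=r$, the inclusion locus is exactly the rank-$r$ locus, i.e.\ the vanishing of the $(r{+}1)$-minors. You instead pass to the quotient $\cQ=\cV/\cW$, use the hypothesis to make $\cQ$ locally free, and read off the inclusion locus as the vanishing locus of $\cG\to\cQ$. Both arguments use the local constancy in the same essential way (the paper to bound the rank from below, you to get local freeness of the target), but yours is a bit more intrinsic and avoids the matrix bookkeeping, while the paper's is slightly more elementary in that it never invokes the ``constant fiber dimension on a reduced base implies locally free'' criterion.
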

\begin{proof}
  We may assume $Z$ is affine and $\dim(W_z)=:r$ is constant on $Z$.
  Choose a basis for the free $\cO(Z)$-module $\cV(Z)$ as well as  
  generators for $\cW(Z)$ and $\cG(Z)$.
  Consider the matrix with entries in $\cO(Z)$, whose rows are given by the 
  generators of $\cW(Z)$, followed by the generators of $\cG(Z)$,
  all expressed in the chosen basis of $\cO(Z)$.
  By our assumption on $\cW$, the rank of this matrix is at least $r$ for all $z\in Z$.
  Further, since $G_z \subset W_z$ if and only if $\dim (G_z + W_z )= \dim(W_z)$, 
  the set $\{z \in Z\,\vert\, G_z \subset W_z \}$ is precisely the locus 
  where this matrix has rank equal to $r$, and is hence closed.
\end{proof}
%
%
%
%
\subsection{Parameterizing filtrations}\label{ss:parameterspace}
We now  construct a space that parameterizes filtrations of 
$R(X,L)$.
\footnote{See~\cite{Cod18} for a related, but different, construction that parameterizes limits of test configurations.}
To have a manageable parameter space, 
we restrict ourselves to $\N$-filtrations $\cF$ of $R$ satisfying $T(\cF)\le1$. 
Such a filtration $\cF$  is given by the choice of a flag
\begin{equation}\label{e121}
  \cF^m R_m  \subset \cF^{m-1} R_m 
  \subset \cdots \subset \cF^1 R_m \subset \cF^0 R_m = R_m
\end{equation}
for each $m\in\N^*$ such that 
\begin{equation}\label{e:multiplicative}
  \cF^{p_1} R_{m_1} \cdot  \cF^{p_2} R_{m_2} \subset \cF^{p_1+p_2}R_{m_1+m_2}
\end{equation}
for all integers $0 \le p_1 \le m_1$ and $0 \le p_2 \le m_2$. 

Let $Fl_m$ denote the flag variety parameterizing flags of $R_m$ 
of the form~\eqref{e121}.
In general, $Fl_m$ may have several connected components. 
On each component, the signature of the flag 
(that is, the sequence of dimensions of the elements of the flag) is constant. 

For each natural number $d$, we set 
\[ H_d := Fl_0 \times Fl_1 \times \cdots \times Fl_d\]
and, for $c \ge d$, let $\pi_{c,d}:H_{c} \to H_{d}$ 
denote the natural projection map. Note that a closed point $z\in H_d$ gives a 
collection of subspaces 
\[
  \left(\cF_z^m R_m \subset \cF_z^{m-1} R_m \subset \cdots 
    \subset \cF_z^1 R_m \subset \cF_z^0 R_m = R_m \right)_{0 \le m \le d }.
\]
Furthermore, this correspondence is given by a universal flag on $H_d$. 
This means that for each $m \le d$ on $H_d$ there is a flag
\[
  \cF^m \cR_m \subset \cF^{m-1} \cR_{m} \subset \cdots 
  \subset \cF^{1} \cR_m \subset \cF^0 \cR_m= \cR_m,
\]
where $\cR_m := H^0(X,mL)\otimes_\C \cO_{H_d}$. 
For $z\in H_d$, we have
\[
  \cF_z^p R_m := \im \left( {\cF^p \cR_m}\vert_{k(z)} 
    \longrightarrow {\cR_m}\vert_{k(z)} \simeq R_m \right) 
\]
for $0 \le p\le m$, where $k(z)$ denotes the residue field at $z$.

Since we are interested in filtrations of $R(X,L)$, consider the subset 
\[
  J_d := \{z \in H_d \mid \cF_z\ 
  \text{satisfies~\eqref{e:multiplicative} for all $0 \le p_i \le m_i \le d$}\}.
\] 

\begin{lem}
  The subset $J_d \subset H_d$ is closed. 
\end{lem}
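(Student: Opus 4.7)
The plan is to realize $J_d$ as a finite intersection of closed subsets of $H_d$, one closed condition per quadruple of indices $(m_1,p_1,m_2,p_2)$ with $0\le p_i\le m_i$ and $m_1+m_2\le d$. Since there are only finitely many such quadruples, this will suffice to prove $J_d$ is closed.

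First, I fix such a quadruple. The $\C$-linear multiplication map $R_{m_1}\otimes_\C R_{m_2}\to R_{m_1+m_2}$ of the section ring of $L$ base-changes to an $\cO_{H_d}$-linear sheaf map
\[
  \mu\colon \cF^{p_1}\cR_{m_1}\otimes_{\cO_{H_d}}\cF^{p_2}\cR_{m_2}\longrightarrow\cR_{m_1+m_2}.
\]
Setting $\cG:=\im(\mu)$ and $\cW:=\cF^{p_1+p_2}\cR_{m_1+m_2}$, both viewed as subsheaves of $\cR_{m_1+m_2}$, the fiber at a closed point $z\in H_d$ satisfies $G_z=\cF_z^{p_1}R_{m_1}\cdot\cF_z^{p_2}R_{m_2}$ (since taking fibers is right-exact, the fiber of the image sheaf equals the image of the fiber), while $W_z=\cF_z^{p_1+p_2}R_{m_1+m_2}$. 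Hence the $(m_1,p_1,m_2,p_2)$-instance of~\eqref{e:multiplicative} at $z$ is exactly the inclusion $G_z\subset W_z$.

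I would then apply Proposition~\ref{p:inclusion} with ambient free sheaf $\cV:=\cR_{m_1+m_2}=H^0(X,(m_1+m_2)L)\otimes_\C\cO_{H_d}$. Its hypothesis requires $z\mapsto\dim W_z$ to be locally constant; this holds because $\cW$ is pulled back from the universal flag subbundle on the factor $Fl_{m_1+m_2}$ of $H_d$, and flag subbundles have constant rank on each connected component of a flag variety, where the signature of the flag is fixed. Proposition~\ref{p:inclusion} then gives that this single condition cuts out a closed subset of $H_d$, and intersecting the finitely many resulting closed sets yields $J_d$.

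The only obstacle I anticipate is a small bookkeeping point: one must identify the fiber of the image sheaf $\im(\mu)$ with the product $\cF_z^{p_1}R_{m_1}\cdot\cF_z^{p_2}R_{m_2}$, which follows from the right-exactness of tensor products. Beyond that the argument is formal, as $H_d$ was constructed precisely so that the dimensions of the flag pieces are locally constant, making Proposition~\ref{p:inclusion} directly applicable.
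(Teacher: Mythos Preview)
Your proposal is correct and follows essentially the same approach as the paper: construct the universal product subsheaf $\cG=\im(\cF^{p_1}\cR_{m_1}\otimes\cF^{p_2}\cR_{m_2}\to\cR_{m_1+m_2})$, identify its fiberwise image with $\cF_z^{p_1}R_{m_1}\cdot\cF_z^{p_2}R_{m_2}$, and then invoke Proposition~\ref{p:inclusion}. Your write-up is in fact slightly more explicit than the paper's, in that you spell out the finite-intersection framing and verify the locally-constant-dimension hypothesis on $\cW$ via the flag-variety signature.
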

\begin{proof}
  We consider $\cF_z^{p_1} R_{m_1} \cdot \cF_{z}^{p_2} R_{m_2}$, 
  where $z\in H_d$, $m_1+m_2 \le d$, and $0\le p_i \le m_i$ for $i=1,2$. 
  We will realize this subspace as coming from a submodule of $\cR_{m_1+m_2}$. 
  Note that the natural map
  \[
    H^0(X, m_1L)\otimes_{k} H^0(X,m_2 L) \longrightarrow H^0(X, (m_1+m_2)L) 
  \]
  induces a map $\cR_{m_1} \otimes \cR_{m_2}  \to \cR_{m_1+m_2}$.
  We define 
  \[
    \cF^{p_1} \cR_{m_1} \cdot \cF^{p_2} \cR_{m_2} := 
    \im\left(
      \cF^{p_1} \cR_{m_1} \otimes \cF^{p_2} \cR_{m_2}
      \to \cR_{m_1+m_2} \right).
  \]
Since
  \[
    \cF_z^{p_1} R_{m_1} \cdot \cF_z^{p_2} R_{m_2}
    =\im\left( 
      (\cF^{p_1} \cR_{m_1}\otimes \cF^{p_2} \cR_{m_2})\vert_{k(z)} 
      \longrightarrow \cR_{m_1+m_2} \vert_{k(z)} \simeq R_{m_1+m_2} 
    \right),
  \] 
  the desired statement is a consequence of Proposition~\ref{p:inclusion}.
\end{proof}

Let $J_d(\C)$ denote the set of closed points of $J_d$, and set
$J:= \varprojlim J_d(\C)$, with respect to the inverse system
induced by the maps $\pi_{c,d}$. Write $\pi_{d}$ for the natural map $J  \to J_{d}(\C)$
By the previous discussion, there is a bijection between the elements of $J$ 
and $\N$-filtrations $\cF$ of $R(X,L)$ satisfying $T(\cF)\le 1$.

\medskip
The following technical lemma will be useful for us in the next section. 
Its proof relies on the fact that every descending sequence of nonempty constructible 
subsets of a variety over an uncountable field has nonempty intersection. 
\begin{lem}\label{l:nonempty}
  For each $d\in\N$, let $W_d\subset J_d$ be a nonempty constructible subset,
  and assume $W_{d+1} \subset \pi_{d+1,d}^{-1}(W_d)$ for all $d$.
  Then there exists $z\in J$ such that $\pi_d(z)\in W_d(\C)$ for all~$d$.
\end{lem}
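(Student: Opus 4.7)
The plan is to construct a compatible sequence $(z_d)_{d\ge 0}$ with $z_d\in W_d(\C)$ and $\pi_{d+1,d}(z_{d+1})=z_d$ for all $d$; such a sequence is precisely an element $z\in J$ with $\pi_d(z)\in W_d(\C)$. The main tool will be the principle, valid over any uncountable algebraically closed field such as $\C$, that any descending chain of nonempty constructible subsets of a scheme of finite type has nonempty intersection on $\C$-points. Throughout I will apply Chevalley's theorem to ensure that images of constructible sets under the projection morphisms $\pi_{d',d}$ remain constructible.

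First, I would handle the base case. For $d'\ge d$, the subset $\pi_{d',d}(W_{d'})\subset J_d$ is constructible by Chevalley, and the hypothesis $W_{d'+1}\subset\pi_{d'+1,d'}^{-1}(W_{d'})$ is equivalent to $\pi_{d'+1,d'}(W_{d'+1})\subset W_{d'}$; composing with $\pi_{d',d}$ yields $\pi_{d'+1,d}(W_{d'+1})\subset \pi_{d',d}(W_{d'})$. Thus $(\pi_{d',d}(W_{d'}))_{d'\ge d}$ is a descending chain of nonempty constructible subsets of the finite-type $\C$-scheme $J_d$. Applied at $d=0$, the uncountable-field principle yields a point $z_0\in\bigcap_{d'\ge 0}\pi_{d',0}(W_{d'})(\C)$, and taking $d'=0$ gives in particular $z_0\in W_0(\C)$.

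Next, I would carry out the induction step by passing to fibers. Suppose $z_0,\dots,z_d$ have been chosen satisfying the strengthened inductive hypothesis that $z_i\in\bigcap_{d'\ge i}\pi_{d',i}(W_{d'})(\C)$ together with $\pi_{i+1,i}(z_{i+1})=z_i$. Consider the closed fiber $F:=\pi_{d+1,d}^{-1}(z_d)\cap J_{d+1}$, a scheme of finite type over $\C$, together with the subsets
\begin{equation*}
  A_{d'}:=\pi_{d+1,d}^{-1}(z_d)\cap\pi_{d',d+1}(W_{d'})\subset F
\end{equation*}
for $d'\ge d+1$. Each $A_{d'}$ is constructible (Chevalley), nonempty because the inductive hypothesis gives $z_d\in\pi_{d',d}(W_{d'})$, and the descending property telescopes through $\pi_{d'+1,d'}(W_{d'+1})\subset W_{d'}$ to yield $A_{d'+1}\subset A_{d'}$. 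The principle applied inside $F$ then produces a $\C$-point $z_{d+1}\in\bigcap_{d'\ge d+1}A_{d'}(\C)$; by construction $\pi_{d+1,d}(z_{d+1})=z_d$, taking $d'=d+1$ gives $z_{d+1}\in W_{d+1}(\C)$, and $z_{d+1}$ inherits the strengthened inductive hypothesis at level $d+1$. The resulting sequence $(z_d)_d$ defines the desired element of $J$.

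The only real subtlety is the need to strengthen the inductive hypothesis to the full intersection condition $z_i\in\bigcap_{d'\ge i}\pi_{d',i}(W_{d'})(\C)$: a bare $z_d\in W_d(\C)$ would not guarantee nonemptiness of the $A_{d'}$ inside the fiber, so the induction must carry along the additional information that $z_d$ meets every $\pi_{d',d}(W_{d'})$. The role of $\C$ being uncountable is essential; the argument breaks down over countable algebraically closed fields such as $\overline{\Q}$.
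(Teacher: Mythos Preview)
Your proof is correct and follows essentially the same approach as the paper's: both construct a compatible sequence $(z_d)_d$ by repeatedly applying the principle that a countable descending chain of nonempty constructible subsets of a finite-type $\C$-scheme has nonempty intersection, with Chevalley's theorem ensuring constructibility of the images $\pi_{d',d}(W_{d'})$. Your explicit formulation of the strengthened inductive hypothesis $z_d\in\bigcap_{d'\ge d}\pi_{d',d}(W_{d'})(\C)$ makes the paper's argument cleaner, but the substance is identical.
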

\begin{proof}
  Finding such a point $z$ is equivalent to finding a point $z_d\in W_d(\C)$
  for each $d$, such that $\pi_{d+1,d}(z_{d+1})=z_d$ for all $d$.
  We proceed to construct such a sequence $(z_d)_d$ inductively. 

  We first look to find a good candidate for $z_1$. By assumption, 
  \[
    W_1 \supset \pi_{2,1}(W_2) \supset \pi_{3,1}(W_3) \supset \cdots 
  \]
  is a descending sequence of nonempty sets. Note that $W_1$ is constructible, 
  and so are $\pi_{d,1}(W_d)$ for all $d$ by Chevalley's Theorem. Thus, 
  \[
    W_1 \cap  \pi_{2,1}(W_2) \cap \pi_{3,1}(W_3) \cap \cdots 
  \]
  is nonempty, and we may choose a closed point $z_1$ in this set. 
  
  Next, we look at
  \[
    W_2 \cap \pi_{2,1}^{-1}(z_1) \supset \pi_{3,2}(W_3) \cap \pi_{2,1}^{-1}(z_1) 
    \supset \pi_{4,2}(W_4)\cap \pi_{2,1}^{-1}(z_1) \supset \cdots
  \]
  and note that for $d\ge 2$ the set $\pi_{d,2}(W_d)\cap \pi_{2,1}^{-1}(z_1)$ 
  is nonempty by our choice of $z_1$. Thus
  \[
    \pi_{2,1}^{-1}(z_1) \cap W_2 \cap \pi_{3,2}(W_3) \cap  \pi_{4,2}(W_4)\cap  \cdots
  \]
  is nonempty, and we may choose a closed point $z_2$ lying in the set. 
  Continuing in this manner, we construct a desired sequence. 
\end{proof}
%
%
%
%
\subsection{Finding limit filtrations}\label{ss:limitpoints}
The following proposition, crucial to Theorem~E,
is a global analogue of~\cite[Proposition 5.2]{Blu16b}. 
The proofs of both results use extensions of the  ``generic limit'' 
construction developed in~\cite{Kol08,dFM09,dFEM10,dFEM11}.  
\begin{prop}\label{p:genlimit}
  Let  $(\cF_{i})_{i \in \N}$ be a sequence of $\N$-filtrations of $R(X,L)$  
  with $T(\cF_i) \le 1$ for all $i$.
  Furthermore, fix $A,S,T \in \R_{+}$ such that
  \begin{enumerate}
  \item 
    $A\ge \limsup\limits_{i \to \infty}  \lct \left( \fb_\bullet(\cF_i) \right)$, 
  \item 
    $S \le \liminf\limits_{m \to \infty}\liminf\limits_{i \to \infty} \tilde{S}_m(\cF_i)$, and
  \item 
    $T\le  \liminf \limits_{m \to \infty} \liminf\limits_{i \to \infty}  T_m(\cF_i)$. 
  \end{enumerate}
  Then there exists a filtration $\cF$ of $R(X,L)$ such that 
  \[
    \lct\left( \fb_{\bullet}(\cF) \right) \le A,
    \quad
    S(\cF) \ge S,
    \quad
    \text{and }
    \quad
    T \le T(\cF) \le 1.
  \]
\end{prop}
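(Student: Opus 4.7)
The strategy is a generic limit construction in the parameter space $J=\varprojlim_d J_d(\C)$ of \S\ref{ss:parameterspace}, which parameterizes $\N$-filtrations of $R(X,L)$ satisfying $T(\cF)\le 1$. Each filtration $\cF_i$ corresponds to a point $z_i\in J$ with truncations $z_{i,d}:=\pi_d(z_i)\in J_d(\C)$. By a Cantor-style diagonal subsequence argument, I would pass to a subsequence of $(z_i)$ so that, for each $d$, the Zariski closure $Z_d:=\overline{\{z_{i,d}\}_i}\subset J_d$ is irreducible and the points $z_{i,d}$ are Zariski dense in $Z_d$; the transition maps $\pi_{d+1,d}\colon Z_{d+1}\to Z_d$ are then automatically dominant.

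Next, for each $d$ I would apply Propositions~\ref{p:constantlct} and~\ref{p:constantv} to the universal subsheaves over $Z_d$ to produce a nonempty open subset $U_d\subset Z_d$ on which, for every $m,m'\le d$, every $p\in\{0,\ldots,m\}$, and every $t$ in the finite set $\{0,1/m,2/m,\ldots,1\}$, the dimensions $\dim\cF_z^p H^0(X,mL)$, the log canonical thresholds $\lct(\fb_{p,m'}(\cF_z))$, and the volumes $\vol(V_{m,\bullet}^t(\cF_z))$ are all constant. This uses that since $\cF_z$ is an $\N$-filtration, $V_m^t=\cF_z^{\lceil mt\rceil}H^0(X,mL)$ only jumps at $t\in m^{-1}\Z$, so only finitely many values of $t$ are needed at each level. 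By Chevalley's theorem and Noetherianity, after further refinement I obtain nonempty constructible subsets $W_d\subset U_d$ satisfying $\pi_{d+1,d}(W_{d+1})\subset W_d$, and Lemma~\ref{l:nonempty} then yields $z^*\in J$ with $\pi_d(z^*)\in W_d$ for every $d$. Let $\cF^*$ be the corresponding $\N$-filtration of $R(X,L)$; by construction $T(\cF^*)\le 1$.

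For the verification, Zariski density of $\{z_{i,d}\}_i$ in $Z_d$ forces infinitely many of the $z_{i,d}$ to lie in $U_d$, and for each such $i$, every invariant above of $\cF_i$ coincides with the corresponding invariant of $\cF^*$. This gives $T_m(\cF^*)\ge\liminf_i T_m(\cF_i)$ and, using that for $\N$-filtrations with $T\le 1$ the quantity $\tilde{S}_m(\cF)$ is determined by the finitely many values $\vol(V_{m,\bullet}^{p/m}(\cF))$ for $p=1,\ldots,m$, also $\tilde{S}_m(\cF^*)\ge\liminf_i\tilde{S}_m(\cF_i)$. Taking $m\to\infty$ yields $T(\cF^*)\ge T$ and $S(\cF^*)\ge S$. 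For the log canonical threshold, the stabilization $\fb_p(\cF^*)=\fb_{p,m'}(\cF^*)$ for $m'\gg 0$ from \S\ref{S114} gives $\lct(\fb_p(\cF^*))=\sup_{m'}\lct(\fb_{p,m'}(\cF^*))$; each term on the right agrees with $\lct(\fb_{p,m'}(\cF_i))\le\lct(\fb_\bullet(\cF_i))/p$ for infinitely many $i$, and combined with $\limsup_i\lct(\fb_\bullet(\cF_i))\le A$ this yields $p\cdot\lct(\fb_p(\cF^*))\le A$ for every $p$, i.e.\ $\lct(\fb_\bullet(\cF^*))\le A$.

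The principal obstacle is orchestrating the diagonal extraction and the refinements of the $U_d$ so that the countably many constancy conditions needed over all levels $d$ and all triples $(p,m,m')$ can be arranged simultaneously in a compatible way; since only finitely many arise at each fixed level and the $Z_d$ are Noetherian, this can be managed with careful bookkeeping. A subtler point is that the stabilization index $m'$ for which $\fb_p(\cF)=\fb_{p,m'}(\cF)$ depends on $\cF$, so the bound on $\lct(\fb_p(\cF^*))$ cannot be extracted at any finite level of the inverse system and must be assembled from all $m'$ at once, which is precisely why working in the full inverse limit $J$ via Lemma~\ref{l:nonempty} is essential.
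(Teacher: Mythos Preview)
Your proposal is correct and follows essentially the same generic-limit strategy as the paper's proof: construct a nested family of irreducible closed subsets $Z_d\subset J_d$ via a Cantor-type extraction, cut out nonempty opens $U_d\subset Z_d$ on which the relevant finite collections of invariants are constant (using Propositions~\ref{p:constantlct} and~\ref{p:constantv}), and invoke Lemma~\ref{l:nonempty} to produce a compatible limit point $z^*\in J$. The only cosmetic differences are that the paper uses nested infinite index sets $I_0\supset I_1\supset\cdots$ rather than a single diagonal subsequence, and at level $m$ controls only $\lct(\fb_{p,m})$ for $1\le p\le m$ rather than all $\lct(\fb_{p,m'})$ with $m'\le d$ as you do; both variants lead to the same verification.
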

\begin{proof}
  We use the parameter space $J$ from~\S\ref{ss:parameterspace},
  parametrizing $\N$-filtrations of $R(X,L)$ with  $T\le 1$. 
  Each filtration $\cF_i$ corresponds to an element $z_i\in J$,
  and $\pi_m(z_i)$ correspond to the filtration $\cF_i$ restricted to $\oplus_{d=0}^m R_d$. 

  \medskip
  \noindent {\bf Claim 1:} We may choose infinite subsets 
  \[
    \N\supset I_0 \supset I_2 \supset I_3 \supset \cdots 
  \]
  such that for each $m$, the closed set   
  \[
    Z_m:= \overline{\{\pi_m(z_i) \, \vert \, i \in I_m\}} \subset J_m
  \]
  satisfies the property
  \begin{center}
    $(\dagger)$ If $Y\subsetneq Z_m$ is a closed set, 
    there are only finitely many $i \in I_m$ such that $\pi_m(z_i) \in Y$.
  \end{center} 
  Note that, in particular, each $Z_m$ is irreducible.

  \smallskip
  Indeed, we can construct the sequence $(I_m)_0^\infty$ inductively. Set $I_0 = \N$. 
  Since $J_0 = Fl_0 \simeq\Spec(\C)$, $(\dagger)$ is trivially satisfied for $m=0$. 
  Having chosen $I_m$, pick $I_{m+1}\subset I_m$ such that $(\dagger)$ is satisfied 
  for $Z_{m+1}$; this is possible since $J_m$ is Noetherian.

  \medskip
  \noindent {\bf Claim 2:} 
  For each $m\in \N$, there exist a nonempty open set $U_m\subset Z_m$ and constants 
  $a_{p,m}$, $1\le p\le m$, $s_m$, and $t_m$ such that if $z\in U_m$, the filtration $\cF_z$  satisfies
  \begin{enumerate}
  \item[(1)] 
    $p\cdot\lct\left( \fb_{p,m}(\cF_z) \right) =a_{p,m}$ for $1\le p\le m$;
  \item[(2)]
    $\tilde{S}_{m}(\cF_z) = s_m$;
  \item[(3)]
    $T_m(\cF_z) =t_m$.
  \end{enumerate}  
  Furthermore, $a_{p,m}\le A$ for all $1 \leq p \leq m$, 
  $\liminf \limits_{m \to \infty} s_m\ge S$, 
  and $\liminf \limits_{m \to \infty} t_m\ge T$.
  
  \smallskip
  To see this, note that there is a nonempty open set $U_m \subset Z_m$ on 
  which the left-hand sides of~(1)--(3) are constant. 
  For (1) and (2), this is a consequence of Propositions~\ref{p:constantlct} 
  and~\ref{p:constantv}. For (3), it follows from 
  $\dim\cF_z^pR_m$ being constant on the connected components of $J_m$. 

  Now, we let 
  \[
    I_m^\circ := \{i \in I_m \, \vert\,  \pi_m(z_i) \in U_m \}.
  \]
  By~($\dagger$), the set $I_m \setminus I_m^\circ$ is finite; hence, $I_m^\circ$ is infinite. 
  Since
  \[
    a_{p,m} = p \cdot  \lct(\fb_{p,m}(\cF_i)), \quad
    s_m = \tilde{S}_m(\cF_i), \quad\text{and} \quad
    t_m= T_m(\cF_i)
  \]
  for all $i\in I_m^\circ$ and $1\le p\le m$, we see that
  \begin{enumerate}
  \item 
    $a_{p,m} \le  \limsup \limits_{i \to \infty}  p \cdot \lct( \fb_{p,m}(\cF_i) ) \le   \limsup \limits_{i \to \infty}  p \cdot \lct(\fb_{p}(\cF_i))$,
  \item 
    $s_m \ge \liminf \limits_{i\to \infty} \tilde{S}_m(\cF_i) $, and
  \item  $t_m \ge \liminf \limits_{i \to \infty} T_m(\cF_i)$.
  \end{enumerate}
  The remainder of Claim 2 follows from these three inequalities.

  \medskip
  \noindent {\bf Claim 3:}
  There exists a point $z\in J$ such that $\pi_m(z)\in U_m$ for all  $m \in \N$.

  \smallskip
  Granted this claim, the filtration $\cF=\cF_z$ associated to $z\in J$
  satisfies the conclusion of our proposition. 
  Indeed, this is a consequence of Claim 2 and the fact that for any 
  linearly bounded filtration $\cF$, we have 
  \begin{enumerate}
  \item 
    $\lct(\fb_{\bullet}(\cF)) = \lim_{p\to\infty}\sup_{m\ge p} p\cdot \lct( \fb_{p,m} (\cF) )$;
  \item 
    $S(\cF) = \lim_{m \to \infty} \tilde{S}_m(\cF)$;
  \item  
    $T(\cF)=\lim_{m\to\infty} T_m(\cF)$. 
  \end{enumerate}
  
  \smallskip
  We are left to prove Claim 3. To this end we apply Lemma~\ref{l:nonempty}. 
  For $d\in\N$, set
  \[
    W_d := U_d \cap \pi_{d,d-1}^{-1} U_{d-1} \cap  \pi_{d,d-2}^{-1}(U_{d-2}) 
    \cap \cdots \cap \pi_{d,0}^{-1} (U_0).
  \]
  Clearly $W_d\subset J_d$ is constructible and 
  $W_{d+1} \subset \pi_{d+1,d}^{-1}(W_d)$. 
  We are left to check that each $W_d$ is nonempty. But 
  \[
    \pi_{d}(z_i) \in W_d\ \text{for all } i \in I_d^\circ \cap I_{d-1}^\circ \cdots \cap I_0^\circ,
  \]
  and the latter index set is nonempty, since it can be written as 
  $I_d\setminus\bigcup_{j=0}^d(I_j\setminus I_j^\circ)$,
  where $I_d$ is infinite and each $I_j \setminus I_j^\circ$ is finite.

  Applying Lemma~\ref{l:nonempty} to the $W_d$ yields a point $z\in J$ 
  such that $\pi_d(z)\in W_d\subset U_d$ for all $d\in\N$. 
  This completes the proof of the claim, as well as the proof of the proposition. 
\end{proof}
%
%
%
%
\subsection{Proof of Theorem E} 
 We begin by proving the following proposition. 
\begin{prop}\label{p:limitval}
  Let $(v_i)_{i\in\N}$ be a sequence of valuations in $\Val_X^*$ such that $T(v_i)=1$ 
  and the limits 
  $A:=\lim_{i \to \infty} A(v_i)$ and $S:=\lim_{i \to \infty} S(v_i)$ both exist and are finite.
  Then there exists a valuation $v^\ast$ on $X$ such that 
  \[ 
    A(v^\ast) \le A,\quad 
    S(v^\ast) \ge S 
    \quad\text{and}\quad
    T(v^\ast) \ge 1.
  \]
\end{prop}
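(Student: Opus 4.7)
The plan is to apply the generic-limit construction of Proposition~\ref{p:genlimit} to the filtrations induced by the $v_i$, obtain a limit filtration $\cF$ on $R(X,L)$, and then extract $v^*$ from $\cF$ via a valuation computing the log canonical threshold of the graded sequence $\fb_\bullet(\cF)$.

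First I would pass to $\N$-filtrations by setting $\cF_i := \cF_{v_i,\N}$. By Proposition~\ref{p:truncations}, these are $\N$-filtrations of $R(X,L)$ with $T(\cF_i) = T(v_i) = 1$ and $S(\cF_i) = S(v_i)$; and because $T_m(\cF_v) \le T(v)$ always holds, $T_m(\cF_i) \le 1$ uniformly in $i$ and $m$. The goal is now to verify the three hypotheses of Proposition~\ref{p:genlimit} with parameters $A$, $S$, and $T = 1$. For the first, combine Lemma~\ref{l:basevalideals} (so that $\fb_p(\cF_{v_i}) = \fa_p(v_i)$, and hence, since $\cF_{v_i,\N}$ and $\cF_{v_i}$ agree at integer levels, $\fb_p(\cF_i) = \fa_p(v_i)$) with Lemma~\ref{l:lctval} to get $\lct(\fb_\bullet(\cF_i)) = \lct(\ab(v_i)) \le A(v_i)$, so that $\limsup_i \lct(\fb_\bullet(\cF_i)) \le A$. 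For the remaining two, I would invoke the uniform Fujita approximation Theorems~\ref{t:fujitaT} and~\ref{t:fujitaS}, which yield
\[
T_m(v_i) \ge T(v_i) - \tfrac{C A(v_i)}{m}, \qquad \tilde S_m(v_i) \ge S(v_i) - \tfrac{C A(v_i)}{m},
\]
with $C$ independent of $i$. A short comparison (the two filtrations coincide at integer levels, so the integrals and suprema defining $\tilde S_m$ and $T_m$ differ by $O(1/m)$) transfers these bounds to $\tilde S_m(\cF_i)$ and $T_m(\cF_i)$. Since $A(v_i) \to A < \infty$, taking $\liminf_i$ and then $\liminf_m$ gives the required lower bounds $S$ and $1$.

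Proposition~\ref{p:genlimit} then delivers a filtration $\cF$ on $R(X,L)$ with $\lct(\fb_\bullet(\cF)) \le A$, $S(\cF) \ge S$, and $T(\cF) \ge 1$. To extract the desired valuation I would invoke~\cite[Theorem~B.1]{Blu16b}, which in the klt case provides $w \in \Val_X^*$ with $\lct(\fb_\bullet(\cF)) = A(w)/w(\fb_\bullet(\cF))$. Since $\lct(\fb_\bullet(\cF)) \le A < \infty$, we have $w(\fb_\bullet(\cF)) > 0$, so we may rescale and set $v^* := w/w(\fb_\bullet(\cF))$; then $v^*(\fb_\bullet(\cF)) = 1$. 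By the homogeneity of $A$, one has $A(v^*) = \lct(\fb_\bullet(\cF)) \le A$, and Corollary~\ref{c:vFinequality} applied to the filtration $\cF$ and the valuation $v^*$ gives
\[
S(v^*) \ge v^*(\fb_\bullet(\cF))\, S(\cF) = S(\cF) \ge S, \qquad T(v^*) \ge v^*(\fb_\bullet(\cF))\, T(\cF) \ge 1,
\]
which is the required conclusion.

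The substantive difficulty, namely the construction of the limit filtration, is already packaged into Proposition~\ref{p:genlimit}. The work remaining for the present statement is therefore conceptually small but technically delicate: ensuring that the asymptotic invariants $S$ and $T$ of the sequence $(v_i)$ survive the passage to the limit filtration. This is precisely the role of the uniform Fujita approximation of~\S\ref{s:fujitaapprox}; without uniformity in $v$ of the error term $C A(v)/m$, one could not interchange $\liminf$ in $m$ with $\liminf$ in $i$, and would lose the necessary control on $\tilde S_m(\cF_i)$ and $T_m(\cF_i)$ when feeding the $\cF_i$ into Proposition~\ref{p:genlimit}.
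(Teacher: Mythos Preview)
Your proposal is correct and follows essentially the same route as the paper: the paper likewise passes to the $\N$-filtrations $\cF_i=\cF_{v_i,\N}$, verifies the three hypotheses of Proposition~\ref{p:genlimit} (packaging this verification into a separate Lemma~\ref{l:approximations}, with the $O(1/m)$ comparison of $\tilde S_m$ and $T_m$ made explicit via a Riemann-sum argument and Proposition~\ref{p:truncations}), and then extracts $v^*$ as a valuation computing $\lct(\fb_\bullet(\cF))$ and applies Corollary~\ref{c:vFinequality}. The only cosmetic difference is that the paper cites~\cite{jonmus} rather than~\cite[Theorem~B.1]{Blu16b} for the existence of a computing valuation, but these are equivalent in the klt setting per the discussion in~\S\ref{S112}.
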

This will follow from Proposition~\ref{p:genlimit} and the following lemma. 
\begin{lem}\label{l:approximations}
  Keeping the notation and hypotheses of Proposition~\ref{p:limitval}, 
  let $\cF_i:=\cF_{v_i, \N}$ denote the $\N$-filtration induced by $\cF_{v_i}$ 
  as in~\S\ref{ss:Nfiltrations}. Then we have
  \begin{enumerate}
  \item $\limsup\limits_{i \to \infty}  \lct \left( \fb_\bullet(\cF_i) \right)\le A$, 
  \item $\lim\limits_{m \to \infty}\liminf\limits_{i \to \infty}\tilde{S}_m(\cF_i)
    =\lim\limits_{m \to \infty}\limsup\limits_{i \to \infty}\tilde{S}_m(\cF_i)=S$, and
  \item $\lim\limits_{m \to \infty}\liminf\limits_{i \to \infty}T_m(\cF_i)
    =\lim\limits_{m \to \infty}\limsup\limits_{i \to \infty}T_m(\cF_i)=1$. 
  \end{enumerate}
\end{lem}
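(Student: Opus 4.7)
My strategy is to exploit that $\cF_i := \cF_{v_i,\N}$ agrees with $\cF_{v_i}$ at integer jumping numbers, so by Proposition~\ref{p:truncations} we have $S(\cF_i)=S(v_i)$ and $T(\cF_i)=T(v_i)=1$, and to combine this with the uniform Fujita approximation estimates of Section~\ref{s:fujitaapprox}, which bound $T(v)-T_m(v)$ and $S(v)-\tilde{S}_m(v)$ linearly in $A(v)/m$ uniformly in $v$. For~(1), since $\cF_i^pR_m=\cF_{v_i}^pR_m$ for $p\in\N$, Lemma~\ref{l:basevalideals} gives $\fb_p(\cF_i)=\fa_p(v_i)$, so $\fb_\bullet(\cF_i)=\ab(v_i)$; Lemma~\ref{l:lctval} then yields $\lct(\fb_\bullet(\cF_i))\le A(v_i)$, and taking $\limsup_i$ proves~(1).

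The upper bounds $\limsup_i\tilde{S}_m(\cF_i)\le S$ and $\limsup_iT_m(\cF_i)\le 1$ in~(2) and~(3) are immediate from $\tilde{S}_m(\cF)\le S(\cF)$, $T_m(\cF)\le T(\cF)$, and $S(\cF_i)=S(v_i)\to S$. For the lower bound in~(3), Proposition~\ref{p:truncations} gives $T_m(\cF_i)=\lceil mT_m(v_i)\rceil/m\ge T_m(v_i)$, and Theorem~\ref{t:fujitaT} yields $T_m(v_i)\ge 1-CA(v_i)/m$; hence $\liminf_iT_m(\cF_i)\ge 1-CA/m$, and letting $m\to\infty$ completes~(3).

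The main step is the lower bound in~(2), which requires comparing $\tilde{S}_m(\cF_i)$ with $\tilde{S}_m(v_i)=\tilde{S}_m(\cF_{v_i})$. Using $\lceil mt\rceil\le mt+1$, the inclusion $V_m^{t+1/m}(\cF_{v_i})\subset V_m^t(\cF_i)$ of subspaces of $R_m$ holds for every $t\ge 0$, and this passes to base ideals, their integer powers, integral closures, and the associated graded linear series, yielding
\[
  \vol\bigl(V_{m,\bullet}^{t+1/m}(\cF_{v_i})\bigr)\le\vol\bigl(V_{m,\bullet}^t(\cF_i)\bigr).
\]
Integrating over $t\in[0,1]=[0,T(\cF_i)]$, substituting $s=t+1/m$, using that $\vol(V_{m,\bullet}^s(v_i))=0$ for $s>T_m(v_i)$, and bounding the leftover piece by the trivial estimate $\int_0^{1/m}\vol(V_{m,\bullet}^s(v_i))\,ds\le m^{n-1}\vol(L)$, one obtains $\tilde{S}_m(\cF_i)\ge\tilde{S}_m(v_i)-1/m$. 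Combining this with $\tilde{S}_m(v_i)\ge\bigl(\tfrac{m-a}{m}\bigr)^{n+1}\bigl(S(v_i)-\tfrac{A(v_i)}{m-a}\bigr)$ from Theorem~\ref{t:fujitaS}, and then letting first $i\to\infty$ and then $m\to\infty$, gives $\liminf_m\liminf_i\tilde{S}_m(\cF_i)\ge S$, which together with the upper bound establishes~(2). The principal obstacle is precisely this comparison $\tilde{S}_m(\cF_i)\ge\tilde{S}_m(v_i)-O(1/m)$: since $\tilde{S}_m$ is defined through an integral involving integrally-closed powers of base ideals of $m$-level filtered pieces, one must carefully track how the $\N$-truncation and the shift $t\mapsto t+1/m$ propagate through integral closure, so that the resulting $O(1/m)$ error remains dominated by the Fujita-approximation term in the double limit.
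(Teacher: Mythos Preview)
Your argument is correct and follows essentially the same route as the paper. The paper obtains both sides of the key estimate $|\tilde S_m(\cF_i)-\tilde S_m(v_i)|\le 1/m$ in one stroke by observing that $\tilde S_m(\cF_i)$ is the right Riemann sum of the decreasing function $t\mapsto \vol(V_{m,\bullet}^{\cF_{v_i},t})/(m^n\vol(L))$ on $[0,1]$ (since $V_m^t(\cF_i)=\cF_{v_i}^{\lceil mt\rceil}R_m$), whereas $\tilde S_m(v_i)$ is the integral itself; your shift inclusion $V_m^{t+1/m}(\cF_{v_i})\subset V_m^t(\cF_i)$ is exactly one half of this Riemann-sum bound rephrased. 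One small point: the inequality ``$\tilde S_m(\cF)\le S(\cF)$'' you invoke for the upper bound is only established in the paper for \emph{valuation} filtrations (Proposition~\ref{p:fujitaS}), not for arbitrary filtrations such as the $\N$-truncation $\cF_i$. The cleanest fix is to use the reverse inclusion $V_m^t(\cF_i)=\cF_{v_i}^{\lceil mt\rceil}R_m\subset\cF_{v_i}^{mt}R_m=V_m^t(\cF_{v_i})$ (the other half of the Riemann-sum estimate) to get $\tilde S_m(\cF_i)\le\tilde S_m(v_i)$, and then apply $\tilde S_m(v_i)\le S(v_i)$ from Proposition~\ref{p:fujitaS}.
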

\begin{proof}
  We first show that (1) holds. Note that $\fb_{p}(\cF_i)= \fb_{p}(\cF_{v_i})$ for all $p \in \N$. 
  Indeed, this follows from the fact that $\cF_i^p R_m = \cF_{v_i}^p R_m$ for all $m, p \in \N$. 
  Thus, 
  \[
    \lct( \fb_{\bullet}(\cF_i)) = \lct( \fb_{\bullet}(\cF_{v_i})) = \lct( \ab(v_i))\le A(v_i),
  \] 
  where the second equality follows from Lemma~\ref{l:basevalideals} and the last inequality 
  is Lemma~\ref{l:lctval}. 
  
  We now show (2) and (3) hold. To this end, we first claim that
  \begin{equation}\label{e:approximations}
    0\le T_m(v_i)- T_m(\cF_i)\le\frac1m
    \quad\text{and}\quad
    0\le \tilde{S}_m(v_i)- \tilde{S}_m(\cF_i)\le \frac{1}{m}.
  \end{equation}
  Indeed, the estimates for $T_m$ follow from Proposition~\ref{p:truncations}.
  As for the estimates for $\tilde{S}_m$, note that 
  $\tilde{S}_m(v_i)=\int_0^1f_{i,m}(t)\,dt$, where $f_{i,m}(t)=\vol(V_{m, \bullet}^{\cF_{v_i} ,t})$,
  whereas $\tilde{S}_m(\cF_i)$ is a right Riemann sum approximation of this integral,
  obtained by subdividing $[0,1]$ into $m$ subintervals of equal length.
  Thus the estimate for $\tilde{S}_m$ in~\eqref{e:approximations}
  follows, since the functions $f_{i,m}(t)$ 
  are decreasing, with $f_{i,m}(0)=1$ and $f_{i,m}(1)\ge0$.
  
  By the uniform Fujita approximation results in Theorems~\ref{t:fujitaT} and~\ref{t:fujitaS}, 
  we have 
  \begin{equation*}
    \lim_{m\to\infty}\sup_i|T_m(v_i)-T(v_i)|
    =\lim_{m\to\infty}\sup_i|\tilde{S}_m(v_i)-\tilde{S}(v_i)|
    =0.
  \end{equation*}
  Together with~\eqref{e:approximations}, this yields~(2) and (3), and hence 
  completes the proof.
\end{proof}

\begin{proof}[Proof of Proposition~\ref{p:limitval}]
  For $i\ge 1$, consider the $\N$-filtrations $\cF_i:=\cF_{v_i, \N}$ 
  associated to $v_i$.
  By Lemma~\ref{l:approximations}, the assumptions of Proposition~\ref{p:genlimit}
  are satisfied with $T=1$. Hence we may find a filtration $\cF$ such that 
  \[
    \lct( \fb_\bullet(\cF)) \le A,
    \quad
    S(\cF) \ge S
    \quad\text{and}\quad
    T(\cF) = 1.
  \]
  Using~\cite{jonmus}, we may choose a valuation $v^\ast\in\Val_X^*$ 
  computing $\lct(\fb_\bullet(\cF))$.
  After rescaling, we may assume ${v^\ast(\fb_\bullet(\cF)) =1}$. 
 Therefore, 
 \[
   A(v^\ast)= 
   \frac{A(v^\ast)}{v^\ast( \fb_\bullet(\cF))} = \lct(\fb_\bullet(\cF) \le A.
 \]
 By Corollary~\ref{c:vFinequality}, $S(v^\ast) \ge S(\cF)\ge S$ 
 and $T(v^\ast) \ge T(\cF) = 1$. This completes the proof.
\end{proof}
\begin{proof}[Proof of Theorem E]
  We first find a valuation computing $\a(L)$. 
  Choose a sequence  $(v_i)_i$ in $\Val_X^*$ such that 
  \[
    \lim_{i \to \infty} \frac{A(v_i)}{T(v_i)} 
    =
    \inf_{v} \frac{A(v)}{T(v)} = \a(L). \]
  After rescaling, we may assume $T(v_i)=1$ for all $i$. 
  Hence, the limit $A:= \lim_{i \to \infty} A(v_i)$ exists and equals $\a(L)$. 
  Further, by~\eqref{e122},
  the sequence $(S(v_i))_i$ is bounded from above and below away 
  from zero, so after passing to a subsequence we may assume the limit 
  $S:= \lim_{i \to \infty} S(v_i)$ exists, and is finite and positive. 

  By Proposition~\ref{p:limitval}, there exists $v^\ast\in\Val_X^*$ with $A(v^\ast) \le A$
  and $T(v^\ast)\ge 1$. Therefore, 
  \[
    \frac{A(v^\ast)}{T(v^\ast)} \le A = \a(L). 
  \]
  Since $\a(L)= \inf_{v} A(v)/T(v)$, $v^*$ computes $\a(L)$.

  \smallskip
  The argument for $\d(L)$ is almost identical.
  Pick a sequence $(v_i)_i$ in $\Val_X^*$ such that 
  \[
    \lim_{i \to \infty} \frac{A(v_i)}{S(v_i)} 
    =
    \inf_{v} \frac{A(v)}{S(v)} = \d(L).
  \]
  Again, we rescale our valuations so that $T(v_i) =1$ for all $i \in \N$. 
  As above, we may assume that the limit 
  $S:= \lim_{i \to \infty} S(v_i)$ exists, and is finite and positive. 
  Therefore, $A:= \lim_{i \to \infty} A(v_i)$ also exists and $A/S = \d(L)$. 

  We apply Proposition~\ref{p:limitval} to find a valuation $v^\ast$ such that $A(v^\ast) \le A$ and 
  $S(v^\ast) \le S$. As argued for $\a(L)$, we see that $v^*$ computes $\d(L)$.
\end{proof}
%
%
%
%
%
%
\section{The toric case}\label{S113}
In this section we will freely use notation and results found in~\cite{FultonToric}.
Fix a toric variety $X=X(\Delta)$ given by a  fan $\Delta$ in a lattice $N\simeq \Z^n$.
We assume that $X$ is proper and $K_X$ is $\Q$-Cartier. 
Set $N_\R := N \otimes_{\Z} \R$.

We write $M = \Hom(N, \Z)$, $M_\Q=M\otimes_\Z\Q$, and $M_\R = M \otimes_\Z \R$ for the corresponding dual lattice and vector spaces. 
The open torus of $X$ is denoted by $T\subset X$.  
Let $v_1, \ldots, v_d$ denote the primitive generators of the one-dimensional 
cones in $\Delta$ and let $D_1, \ldots, D_d$ be the corresponding torus invariant 
divisors on $X$. 
 
We fix an ample line bundle of the form $L = \cO_X(D)$, 
where $D=b_1 D_1+ \cdots + b_d D_d$ is
a Cartier divisor on $X$. Associated to $D$ is the convex polytope 
\[
  P=P_D= \{u \in M_\R \, \vert \, \langle u, v_i \rangle \ge -b_i \text{ for all } 1 \le i \le d \}.
\]
We write $\ver P$ for the set of vertices in $P$.

Recall that there is a correspondence between points in $P \cap M_\Q$ and 
effective torus invariant $\Q$-divisors $\Q$-linearly equivalent to $D$, under which 
$u \in P \cap M_\Q$ corresponds to 
\[
  D_u 
  := D+ \sum_{i=1}^{d} \langle u , v_i \rangle D_i
  := \sum_{i=1}^{d} (\langle u , v_i \rangle+b_i)D_i.
\]
Note that if $m\in\N^*$ is chosen so that $mu\in N$, then $D_u=D+m^{-1}\div(\chi^{mu})$. 

Let $\p=\p_D\colon N_\R \to \R$ be the concave function
that is linear on the cones of $\Delta$ and satisfies $\psi(v_i)=-b_i$ for $1 \le i \le d$. 
On a given cone $\sigma \in \Delta$, the linear function is
given by $\psi(v) = -\langle b(\sigma), v \rangle$, where $b(\sigma)\in M$
is such that $\chi^{b(\sigma)}$ is a local equation for $D$ on $U_\sigma \subset X$.  
We have $\psi(v)=\inf_{u\in P}\langle u,v\rangle=\min_{u\in\ver P}\langle u,v\rangle$
for all $v\in N_\R$.
%
%
%
%
\subsection{Toric valuations}
Given $v\in N_\R$, let $\sigma$ be the unique cone in $\Delta$ containing $v$ in its interior.
The map 
\[
  \C[\sigma^\vee \cap M ] = \bigoplus_{u \in \sigma^\vee \cap M}\C \cdot \chi^u \to \R_+
\]
defined by 
\begin{equation}\label{e126}
  \sum_{u \in \sigma^\vee \cap M} c_u \chi^u  \mapsto 
  \min \{ \langle u, v \rangle \, \vert \, c_u \neq 0 \}
\end{equation}
gives rise to a valuation on $X$ that we slightly abusively also denote by $v$. 
Its center on $X$ is the generic point of $V(\sigma)$. This induces in embedding
$N_\R\hookrightarrow\Val_X$, and we shall simply view $N_\R$ as a subset of $\Val_X$.
The valuations in $N_\R$ are called \emph{toric valuations}. 
The valuation associated to the point $v_i\in N_\R$ is $\ord_{D_i}$ for $1\le i\le d$,
and the valuation associated to $0\in N_\R$ is the trivial valuation on $X$.
\begin{lem}\label{l:toriceval}
  If $u \in P \cap M_\Q$ and $v \in N_\R$, then $v(D_u) = \langle u, v \rangle - \p(v)$. 
\end{lem}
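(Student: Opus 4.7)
The plan is to unwind both sides using the definition of the toric valuation and the definition of $v(D_u)$ for an effective $\Q$-Cartier divisor given in Section~\ref{S107}, and reduce to computing the value of the valuation on a single monomial.

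First I would fix $m \in \N^*$ such that $mu \in M$; then $mD_u = mD + \div(\chi^{mu})$ is an integral Cartier divisor. Let $\sigma \in \Delta$ be the unique cone containing $v$ in its interior, so that the center $c_X(v)$ is the generic point of the torus-invariant subvariety $V(\sigma)$; in particular $c_X(v) \in U_\sigma$. By the definition of $b(\sigma)$, the monomial $\chi^{b(\sigma)}$ is a local equation for $D$ on $U_\sigma$, and hence $\chi^{m(b(\sigma)+u)}$ is a local equation for $mD_u$ on $U_\sigma$.

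Next, I would verify that $\chi^{m(b(\sigma)+u)}$ lies in $\C[\sigma^\vee\cap M]$, which is needed to evaluate the valuation via~\eqref{e126}. For any ray generator $v_i$ of $\sigma$ we have $\langle b(\sigma), v_i\rangle = b_i$, while $u \in P$ yields $\langle u, v_i\rangle \ge -b_i$; thus $\langle b(\sigma)+u, v_i\rangle \ge 0$ for all rays of $\sigma$, so $b(\sigma)+u \in \sigma^\vee$. Since $m(b(\sigma)+u) \in M$ as well, the monomial is a regular function on $U_\sigma$.

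Finally, applying~\eqref{e126} to this single monomial gives
\[
  v\bigl(\chi^{m(b(\sigma)+u)}\bigr) = \langle m(b(\sigma)+u), v\rangle = m\bigl(\langle b(\sigma),v\rangle + \langle u,v\rangle\bigr) = m\bigl(-\psi(v) + \langle u,v\rangle\bigr),
\]
where the last equality uses that $\psi(v) = -\langle b(\sigma), v\rangle$ on $\sigma$. Dividing by $m$ in accordance with the definition of $v(D_u)$ from Section~\ref{S107} yields $v(D_u) = \langle u,v\rangle - \psi(v)$, as desired. There is no real obstacle here; the only mildly delicate point is confirming that $b(\sigma)+u \in \sigma^\vee$ so that one may legitimately apply the monomial formula for the toric valuation.
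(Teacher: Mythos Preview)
Your proof is correct and follows essentially the same approach as the paper's: both pick $m$ with $mu\in M$, pass to the cone $\sigma$ containing $v$, and use that $\chi^{b(\sigma)}$ is a local equation for $D$ together with $\psi(v)=-\langle b(\sigma),v\rangle$. The only cosmetic difference is that the paper separates the computation as $v(D_u)=v(D)+m^{-1}v(\chi^{mu})$ and evaluates each piece, whereas you combine them into the single monomial $\chi^{m(b(\sigma)+u)}$; your extra verification that $b(\sigma)+u\in\sigma^\vee$ is a nice bit of care (justifying direct use of~\eqref{e126}) that the paper omits, since the monomial formula $v(\chi^w)=\langle w,v\rangle$ extends to all $w\in M$ anyway by multiplicativity of the valuation on the fraction field.
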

\begin{proof}
  Pick $m\in\N^*$ such that $mu\in M$.  
  Since $D_u=D+m^{-1}\div (\chi^{mu})$, we have
  \[
    v(D_u)
    = v(D)+m^{-1}v(\chi^{mu}) 
    =v(D)+\langle u, v \rangle,
  \]
  and we are left to show $v(D) =-\p(v)$. 
  Let $\sigma \in \Delta$ be the unique cone containing $v$ in its interior.
  Since $\chi^{b(\sigma)}$ is a local equation for $D$ on $U_\sigma$, we see 
  \[
    v(D) = v( \chi^{b(\sigma)}) = \langle b(\sigma), v \rangle 
    = -\p(v),
  \]
  which completes the proof. 
\end{proof}
%
%
%
%
\subsection{Log canonical thresholds}
The following result is probably well known, but we include a proof 
for lack of a suitable reference.
\begin{prop}\label{p:Atoric}
  The restriction of the log discrepancy function $A=A_X$ to $N_\R\subset\Val_X$ is
  the unique function that is linear on the cones in $\Delta$ and satisfies $A(v_i)=1$ 
  for $1 \le i \le d$.
\end{prop}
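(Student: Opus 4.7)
Since $K_X$ is $\Q$-Cartier, there is a unique piecewise linear function $\varphi\colon N_\R\to\R$ that is linear on each cone of $\Delta$ and satisfies $\varphi(v_i)=1$ for every primitive ray generator $v_i$; in the language of the paragraph preceding Lemma~\ref{l:toriceval}, $\varphi$ is the PL function associated to the divisor $-K_X=\sum D_i$. Any function enjoying the two properties in the statement must then coincide with $\varphi$, so the content of the proposition reduces to proving the identity $A|_{N_\R}=\varphi$. The plan is to verify this equality first on the primitive ray generators of a smooth toric refinement of $\Delta$, and then to extend it to all of $N_\R$ by invoking the known linearity of the log discrepancy on quasi-monomial valuations along an snc divisor.

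First I would fix a smooth toric refinement $\Delta'$ of $\Delta$ and let $\pi\colon Y=X(\Delta')\to X$ be the corresponding proper birational morphism. The standard relation $K_Y=-\sum_{\tau\in\Delta'(1)}E_\tau$, together with the observation that $\pi^{\ast}K_X$ corresponds under the toric dictionary to the same PL function $\varphi$ now viewed on the refined fan $\Delta'$ (which is meaningful because $\Delta'$ refines $\Delta$ and $\varphi$ is linear on cones of $\Delta$), yields
\[
K_{Y/X}=\sum_{\tau\in\Delta'(1)}\bigl(\varphi(u_\tau)-1\bigr)E_\tau,
\]
where $u_\tau$ denotes the primitive generator of $\tau$. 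Since $u_\tau=\ord_{E_\tau}$ as a valuation on $X$, the definition of log discrepancy gives $A(u_\tau)=1+\ord_{E_\tau}(K_{Y/X})=\varphi(u_\tau)$; specializing to $\tau\in\Delta(1)$ recovers $A(v_i)=1$.

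To extend the equality to arbitrary $v\in N_\R$ I would exploit the smoothness of $Y$. Each cone $\sigma'=\mathrm{cone}(u_1,\dots,u_k)\in\Delta'$ is regular, so $u_1,\dots,u_k$ extend to a $\Z$-basis of $N$ and the toric divisors $E_{u_1},\dots,E_{u_k}$ on $Y$ are snc at the generic point of the orbit $V(\sigma')$. Any toric valuation $v=\sum_j a_j u_j$ lying in $\sigma'$ is then precisely the quasi-monomial valuation along this snc configuration with weights $(a_j)$; invoking the linearity of $A$ on such qm valuations (see~\cite{jonmus,BdFFU}), we obtain
\[
A(v)=\sum_j a_j A(u_j)=\sum_j a_j\varphi(u_j)=\varphi(v),
\]
the last equality by linearity of $\varphi$ on $\sigma'$. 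Since the cones of $\Delta'$ cover $N_\R$, this gives $A=\varphi$ globally on $N_\R$, as required. The one substantive external ingredient is the linearity of $A$ on qm valuations attached to an snc divisor; the remainder is routine bookkeeping with the $\Q$-Cartier/PL-function dictionary, and no step of the argument presents real difficulty once the smooth refinement has been chosen.
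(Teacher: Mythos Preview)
Your proof is correct and follows essentially the same strategy as the paper: pass to a smooth toric refinement $Y\to X$, compute $K_{Y/X}$ in terms of the piecewise linear function, and read off $A(u_\tau)=\varphi(u_\tau)$ for the refined ray generators. The only packaging difference is in the extension to all of $N_\R$: the paper lets the refinement vary so that the $u_\tau$ range over all primitive lattice points and concludes by density, whereas you fix a single refinement and invoke the linearity of $A$ on quasi-monomial valuations from~\cite{jonmus,BdFFU}; both are valid and your route is arguably cleaner.
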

\begin{proof}
  Consider any cone $\sigma\in\D$. Let $v_i\in N$, $1\le i\le r$, be the generators
  of the 1-dimensional cones contained in $\sigma$, and $D_i$, $1\le i\le r$
  the associated divisors on $X$. 
  Since $K_X$ is $\Q$-Cartier, there exists $b(\sigma)\in M_\Q$ such that 
  $\langle b(\sigma),v_i\rangle=-1$ for $1\le i\le r$. 
  Thus $K_X=-\sum_{i=1}^rD_i=\div_X(\chi^{b(\sigma)})$ on $U(\sigma)$.

  Pick any refinement $\D'$ of $\D$ such that $X':= X(\D')$ is smooth.
  Consider a cone $\sigma'\in\D'$ with $\sigma'\subset\sigma$.
  Let $v'_j\in N$ and $D'_j$, $1\le j\le s$, be the analogues of $v_i$ and $D_i$.
  Now 
  \begin{equation*}
    K_{X'/X}
    =K_{X'}-\div_{X'}(\chi^{b(\sigma)})
    =-\sum_{j=1}^sD'_j-\div_{X'}(\chi^{b(\sigma)})
  \end{equation*}
  on $U(\sigma')$.
  By the definition of the log discrepancy, this implies
  \begin{equation*}
    A_X(v'_j)
    =1+v'_j(K_{X'/X})
    =1-1-\langle b(\sigma),v'_j\rangle
    =-\langle b(\sigma),v'_j\rangle.
  \end{equation*}
  Since $\D'$ was an arbitrary regular refinement of $\D$, this implies that 
  the restriction of $A_X$ to $\sigma\subset N_\R\subset\Val_X$ is given
  by the linear function $b(\sigma)\in M_\Q$. This concludes the proof.
\end{proof}
The next proposition follows from~\cite[Proposition 8.1]{jonmus}. 
We say that ideal $\fa $ on $X$ is \emph{$T$-invariant} if it is invariant with 
respect to the torus action on $X$. Equivalently, for each $\sigma \in \Delta$, 
the ideal $\fa(U_\sigma) \subset k[\sigma^\vee \cap M]$ is generated by monomials.
\begin{prop}\label{p:Tvalcompute}
  If $\ab$ is a nontrivial graded sequence of $T$-invariant ideals on $X$, 
  then there exists a nontrivial toric valuation computing $\lct(\ab)$.
  Further, any valuation that computes $\lct(\ab)$ is toric.
\end{prop}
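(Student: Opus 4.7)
The plan is to reduce the problem to toric valuations by a ``torification'' procedure, then use compactness/convexity in $N_\R$ to produce a minimizer, and finally use a rigidity argument to identify all minimizers as toric.

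First, I would construct a map $w \mapsto w^\flat$ from $\{w \in \Val_X^* \mid A(w) < \infty\}$ to $N_\R$. Given such a $w$, let $\sigma \in \Delta$ be the cone with $c_X(w) \in O(\sigma)$. The function $u \mapsto w(\chi^u)$ on $\sigma^\vee \cap M$ is additive and nonnegative, so it extends to a linear form on $\sigma^\vee$ and hence corresponds by duality to a point $w^\flat \in \sigma \subset N_\R$. The toric valuation $w^\flat$ acts on monomials exactly as $w$ does by~\eqref{e126}. Because any $T$-invariant ideal $\fa$ is locally monomial on each $U_\sigma$, the identity $w(\fa)=\min_i w(\chi^{u_i}) = w^\flat(\fa)$ follows, where $\chi^{u_1},\dots,\chi^{u_k}$ generate $\fa|_{U_\sigma}$. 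Applying this to each $\fa_p$ and passing to the limit yields $w(\ab)=w^\flat(\ab)$.

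Next I would establish the key inequality $A(w^\flat) \le A(w)$. Since each $\fa_p$ is $T$-invariant, it admits a \emph{toric} log resolution $\pi \colon Y \to X$, and $K_{Y/X}$ is then a $T$-invariant divisor. Using $A_X(v) = A_Y(v) + v(K_{Y/X})$ together with the fact that $w$ and $w^\flat$ agree on $T$-invariant divisors, the comparison reduces to the case of a smooth toric $Y$, on which $A$ is, by Proposition~\ref{p:Atoric}, the piecewise linear upper envelope associated to the fan structure on $N_\R$. Concretely, this is a statement that the toric ``linearization'' of $w$ lies below $w$ on the $T$-invariant generators of $K_{Y/X}$ on each cone. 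Combining this with the previous paragraph gives
\[
  \lct(\ab)
  = \inf_{v \in \Val_X^*,\, v(\ab)>0} \frac{A(v)}{v(\ab)}
  = \inf_{v \in N_\R,\, v(\ab)>0} \frac{A(v)}{v(\ab)}.
\]
To show this infimum is attained by some $v^* \in N_\R$, I would rescale to the closed convex subset $K := \{v \in N_\R \mid v(\ab) \ge 1\}$, which is nonempty since $\ab$ is nontrivial. The function $A$ restricted to $K$ is continuous, positively homogeneous, and proper (by the Izumi-type containment $\fa_p \subset \fm_\xi^{\lceil cp\rceil}$, the set $\{v \in K \mid A(v)\le M\}$ is bounded). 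A minimizer $v^* \in K$ then computes $\lct(\ab)$, and $v^*$ is nontrivial since $A(v^*)>0$.

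For the second assertion, suppose $w \in \Val_X^*$ computes $\lct(\ab)$. Combining $A(w^\flat)\le A(w)$ with $w^\flat(\ab) = w(\ab)$ and the equality $A(w)/w(\ab)=\lct(\ab)\le A(w^\flat)/w^\flat(\ab)$ forces $A(w^\flat) = A(w)$. The main obstacle is the resulting rigidity statement: I expect to need that $A(w^\flat) < A(w)$ whenever $w \ne w^\flat$, so that $w$ must actually coincide with $w^\flat \in N_\R$. This strict monotonicity is the hardest ingredient; I would argue it using the retraction of $\Val_X^*$ onto the dual complex of a toric log resolution, together with the characterization of $A$ as the supremum over such retractions, exactly as in~\cite[\S8]{jonmus}. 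The strict inequality then expresses the fact that the log discrepancy is strictly subadditive under the tropical projection unless the valuation already lies in $N_\R$.
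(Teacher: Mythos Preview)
Your proposal is correct and follows essentially the same route as the paper: both arguments hinge on the retraction $w\mapsto w^\flat$ to $N_\R$ (your ``torification'' is exactly the retraction $r_{X',D'}$ of~\cite[\S8]{jonmus} onto the quasimonomial locus of a smooth toric model), the equality $w(\ab)=w^\flat(\ab)$ for $T$-invariant ideals, and the inequality $A(w^\flat)\le A(w)$ with strictness unless $w=w^\flat$. The only organizational difference is in the existence step: the paper first invokes the general existence theorem of~\cite{jonmus} to produce some $w$ computing $\lct(\ab)$ and then retracts it, whereas you first reduce the infimum to $N_\R$ and then argue directly by compactness there; your route is slightly more self-contained on this point, while the paper's is shorter since existence and the rigidity statement are handled by a single chain of inequalities.
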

\begin{proof}
  Pick a refinement $\Delta'$ of $\Delta$ such that $X':= X(\Delta')$ is smooth. 
  This induces a proper birational morphism $X'\to X$. 
  Let $D'$ be the sum of the torus invariant divisors on $X'$. 
  
  By~\cite{jonmus}, there exists a valuation $w\in \Val_X$ computing $\lct(\ab)$. 
  We now follow~\cite[\S8]{jonmus}. Let $r_{X',D'}:\Val_X \to \QM(X', D')=N_\R$ 
  denote the retraction map defined in~\textit{loc.\,cit}, and set $v:=r_{X',D'}(w)\in N_\R$.
  Then $v(\ab)=w(\ab)>0$. In particular, $v$ is nontrivial.
  Further, $A_{X'}(v) \le A_{X'}(w)$, with equality iff $w=v\in N_\R$.
  Now recall that $A_X(v)=A_{X'}(v)+v(K_{X'/X})$ and $A_X(w)=A_{X'}(w)+w(K_{X'/X})$.
  Since $K_{X'/X}$ is $T$-invariant, we have $v(K_{X'/X})=w(K_{X'/X})$.
  This implies $A_X(v) \le A_X(w)$, with equality iff $w=v$.
  Thus $\lct(\ab)\le A_X(v)/v(\ab) \le A_X(w)/w(\ab)=\lct(\ab)$, completing the proof.
\end{proof}
\begin{cor}\label{c:lcttoric}
  For any $u \in P \cap M_\Q$, we have 
  \begin{equation*}
    \lct( D_u) 
    = \inf_{v\in N_\R \setminus \{0 \}} \frac{A(v)}{v(D_u)} 
    = \min_{i=1, \ldots, d} \frac{1}{\langle u, v_i \rangle +b_i}. 
  \end{equation*}
\end{cor}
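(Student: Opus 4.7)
The strategy is to reduce the computation of $\lct(D_u)$ to an optimization over toric valuations via Proposition~\ref{p:Tvalcompute}, and then exploit the piecewise linearity of the relevant functions on the cones of $\Delta$ to localize the infimum at a ray.

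First I would show that the infimum $\inf_{v \in N_\R \setminus \{0\}} A(v)/v(D_u)$ attains $\lct(D_u)$. Assuming $D_u \neq 0$ (the degenerate case being consistent with both sides being $+\infty$), pick $m \in \N^*$ with $mD_u$ Cartier, set $\fa := \cO_X(-mD_u)$, and form the graded sequence $\ab$ with $\fa_p := \fa^p$. This is a nontrivial graded sequence of $T$-invariant ideals, and a direct computation gives $w(\ab) = w(\fa) = m\cdot w(D_u)$ and $\lct(\ab) = \lct(\fa) = m\cdot \lct(D_u)$ for every nontrivial valuation $w$. Proposition~\ref{p:Tvalcompute} then produces a nontrivial toric valuation $v^\ast \in N_\R$ computing $\lct(\ab)$; rearranging gives $\lct(D_u) = A(v^\ast)/v^\ast(D_u)$. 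Since $\lct(D_u) \leq A(v)/v(D_u)$ for every nontrivial toric $v$ by definition, this proves the first equality.

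For the second equality, fix a cone $\sigma \in \Delta$ whose one-dimensional faces are generated by some subset $v_{i_1},\dots,v_{i_k}$ of the $v_i$. For $v = \sum_j c_j v_{i_j}$ with $c_j \geq 0$ and $v \neq 0$, Proposition~\ref{p:Atoric} yields $A(v) = \sum_j c_j$, while Lemma~\ref{l:toriceval} together with the linearity of $\psi$ on $\sigma$ gives $v(D_u) = \sum_j c_j\, v_{i_j}(D_u)$. Discarding rays with $v_{i_j}(D_u) = 0$ (which contribute $+\infty$ to the ratio), the weighted-harmonic-mean inequality
\[
\frac{A(v)}{v(D_u)} = \frac{\sum_j c_j}{\sum_j c_j\, v_{i_j}(D_u)} \;\geq\; \min_j \frac{1}{v_{i_j}(D_u)}
\]
shows the ratio is minimized along one of the generating rays, with equality there. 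Taking the infimum over all cones reduces it to $\min_i 1/v_i(D_u)$, and Lemma~\ref{l:toriceval} with $\psi(v_i) = -b_i$ identifies $v_i(D_u) = \langle u, v_i\rangle + b_i$, finishing the proof. The argument is essentially a packaging of the earlier toric results; there is no genuine obstacle, and the only delicate accounting is juggling the $m$-dependencies when passing between $D_u$ and the ideal $\fa = \cO_X(-mD_u)$.
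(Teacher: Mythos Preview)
Your approach is essentially the same as the paper's: both apply Proposition~\ref{p:Tvalcompute} to the $T$-invariant graded sequence cut out by $D_u$ for the first equality, and both use that $A(v)$ and $v(D_u)$ are linear on each cone of $\Delta$ to reduce the infimum to the rays $v_1,\dots,v_d$ for the second. Your cone-by-cone argument via the inequality $\frac{\sum_j c_j}{\sum_j c_j\,v_{i_j}(D_u)}\ge\min_j\frac{1}{v_{i_j}(D_u)}$ just makes explicit what the paper states in one sentence.

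One slip to fix: with $\fa=\cO_X(-mD_u)$ you have $w(\fa)=m\,w(D_u)$ (correct) but $\lct(\fa)=\lct(mD_u)=m^{-1}\lct(D_u)$, not $m\cdot\lct(D_u)$ as you wrote. Taken literally, your two displayed identities would combine to give $m^2\lct(D_u)=A(v^\ast)/v^\ast(D_u)$ rather than the correct $\lct(D_u)=A(v^\ast)/v^\ast(D_u)$; the factors of $m$ only cancel once the direction of the $\lct$ scaling is corrected. This is exactly the ``$m$-dependency juggling'' you flagged as delicate, so just fix that one formula and the proof goes through.
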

\begin{proof}
  The first equality follows from Proposition~\ref{p:Tvalcompute}, applied to the 
  the toric graded sequence of ideals defined by $D_u$.
  The functions $v\to A(v)$ and $v\to v(D_u)$ on $N_\R$ are
  both linear on the cones of $\D$, so the function $v\to A(v)/v(D_u)$
  on $N_\R$ attains its infimum at some $v_i$, $1\le i\le d$.
  Since $A(v_i)=1$ and $v_i(D_i)=\langle u, v_i\rangle-\p(v_i)=\langle u, v_i\rangle+b_i$, 
  we are done.
\end{proof}
%
%
%
%
\subsection{Filtrations by toric valuations}
Given $v\in N_\R$, we will describe the filtration $\cF_v$ of $R(X,L)$ and compute 
both $S(v)$ and $T(v)$. 
Recall that for each $m \in\N^*$,
\[
  H^0(X, mL ) =  \bigoplus_{u\in mP \cap M}\C\cdot \chi^u,
\]
where the rational function $\chi^u$ is viewed as a section of $\cO_X(mD)$. 
\begin{prop}\label{p:toricfilt}
  For $\la \in \R_+$ and $m\in \N^*$ we have 
  \[
    \cF_v^\la H^0(X, mL)
    = \bigoplus_{\substack{u\in mP\cap M \\ \langle u, v \rangle - m \cdot \p(v) \ge \la }}
    \C\cdot \chi^u.
  \]
  As a consequence, the set of jumping numbers of $\cF_v$ along $H^0(X,mL)$ 
  is equal to the set $\{ \langle u, v \rangle - m \cdot \p(v)\mid u \in mP \cap M \}$. 
\end{prop}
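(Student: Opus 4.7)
The plan is to reduce the proposition to two facts: (a) for each $u\in mP\cap M$, the section $\chi^u$ of $\cO_X(mD)=mL$ satisfies $v(\chi^u) = \langle u,v\rangle - m\p(v)$; and (b) for every $s = \sum_{u\in mP\cap M} c_u\chi^u \in H^0(X,mL)$, one has $v(s) = \min\{v(\chi^u)\mid c_u\ne 0\}$. Granted these, the direct-sum description of $\cF_v^\la H^0(X,mL)$ is immediate from the definition $\cF_v^\la H^0(X,mL) = \{s\mid v(s)\ge\la\}$, and the set of jumping numbers along $H^0(X,mL)$ is exactly the image of $mP\cap M$ under the map $u\mapsto\langle u,v\rangle - m\p(v)$.

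For~(a), I would apply Lemma~\ref{l:toriceval} to the rational point $u/m\in P\cap M_\Q$. The zero divisor of $\chi^u$, viewed as a section of $\cO_X(mD)$, is $mD+\div(\chi^u) = mD_{u/m}$, so by homogeneity $v(\chi^u) = m\cdot v(D_{u/m}) = m\bigl(\langle u/m,v\rangle - \p(v)\bigr) = \langle u,v\rangle - m\p(v)$.

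For~(b), I would work on the affine open $U_\sigma$, where $\sigma\in\Delta$ is the unique cone containing $v$ in its interior; then $c_X(v)$ is the generic point of $V(\sigma)\subset U_\sigma$, and on $\C[\sigma^\vee\cap M]$ the valuation $v$ acts by the minimum formula~\eqref{e126}. The monomial $\chi^{mb(\sigma)}$ is a local equation for $mD$ on $U_\sigma$, so trivializing $\cO_X(mD)|_{U_\sigma}$ sends $\chi^u$ to $\chi^{u+mb(\sigma)}$; the latter lies in $\C[\sigma^\vee\cap M]$ because $\langle u+mb(\sigma),v_i\rangle = \langle u,v_i\rangle + mb_i \ge 0$ for every ray $v_i$ of $\sigma$, using $u\in mP$ together with $\langle b(\sigma),v_i\rangle = b_i$ on such rays. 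Formula~\eqref{e126} then gives $v(s) = \min\{\langle u+mb(\sigma),v\rangle\mid c_u\ne 0\}$, which coincides with $\min\{v(\chi^u)\mid c_u\ne 0\}$ upon using $\p(v) = -\langle b(\sigma),v\rangle$ on $\sigma$. The argument is essentially formal; the only point worth checking is this consistency of sign between the local trivialization and Lemma~\ref{l:toriceval}, after which no substantive obstacle remains.
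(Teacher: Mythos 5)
Your proposal is correct and follows essentially the same route as the paper: the paper's proof is exactly your step~(b) — trivialize $\cO_X(mD)$ on $U_\sigma$ by $\chi^{-mb(\sigma)}$ and apply~\eqref{e126} to get $v(s)=\min\{\langle u,v\rangle-m\p(v)\mid c_u\ne0\}$ — from which everything follows. Your step~(a) via Lemma~\ref{l:toriceval} is a harmless redundancy, since~(b) already computes $v$ on monomials.
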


\begin{proof}
  It suffices to prove that $s=\sum_{u \in mP \cap M} c_u \chi^u\in H^0(X,mL)$,
  then 
  \[
    v(s)
    =\min \{ \langle u, v \rangle - m\cdot \p(v)\mid c_u \neq 0 \}. 
  \]
  To this end, pick $\sigma\in \Delta$ such that $v \in \Int(\sigma)$.
  Note that $\chi^{-m b(\sigma)}$ is a local generator for $\cO_X(mD)$ on $U_\sigma$.  
  By the definition of $v(s)$, and by~\eqref{e126}, we therefore have
  \begin{equation*}
    v(s)
    =v(\sum c_u \chi^{u+mb(\sigma)})
    =\min \{ \langle u, v \rangle + m\langle b(\sigma),v\rangle \mid c_u \neq 0 \},
  \end{equation*}
  which completes the proof, since $\p(v)=-\langle b(\sigma),v\rangle$.
\end{proof} 

\begin{prop}\label{p:SmTm}
  For $m \in \N^*$, we have 
  \[
    S_m(v) = \langle \overline{u}_m,v\rangle - \p(v) 
    \quad\text{and}\quad
    T_m(v) = \max_{u \in P \cap m^{-1} M}\langle u, v \rangle- \p(v),
  \]
  where $\overline{u}_m:= (\sum_{u \in P \cap m^{-1} M} u) / \#(P\cap m^{-1} M)$ 
  is the barycenter of the set $P \cap {m^{-1} M}$.
\end{prop}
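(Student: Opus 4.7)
The plan is to derive both formulas directly from Proposition~\ref{p:toricfilt}, which already identifies the monomials $\chi^u$ with $u\in mP\cap M$ as a basis of $H^0(X,mL)$ diagonalizing the filtration $\cF_v$, with $v(\chi^u)=\langle u,v\rangle - m\cdot\p(v)$.

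For $T_m(v)$, I would invoke formula~\eqref{e118}, namely $T_m(v)=m^{-1}\max\{v(s)\mid s\in H^0(X,mL)\setminus\{0\}\}$. For any nonzero $s=\sum c_u\chi^u$, the valuative property gives $v(s)\ge\min\{v(\chi^u)\mid c_u\neq 0\}$, but more importantly for the maximum we just need that some monomial $\chi^u$ with $u\in mP\cap M$ achieves the largest value among the $v(\chi^u)$; this maximum is $\max_{u\in mP\cap M}(\langle u,v\rangle - m\cdot\p(v))$. Dividing by $m$ and reindexing $u\to u/m$ yields the claimed formula for $T_m(v)$.

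For $S_m(v)$, I would use Lemma~\ref{L104}, which characterizes $S_m(v)$ as the maximum of $(mN_m)^{-1}\sum_j v(s_j)$ over bases of $H^0(X,mL)$. By Proposition~\ref{p:toricfilt}, the jumping numbers of $\cF_v$ on $H^0(X,mL)$ are, with multiplicity, exactly the numbers $\{\langle u,v\rangle - m\cdot\p(v)\mid u\in mP\cap M\}$; indeed the proposition describes $\cF_v^\la H^0(X,mL)$ as the span of those monomials whose value $\langle u,v\rangle-m\p(v)$ is $\ge\la$, so the codimension jumps precisely at those values. Consequently the monomial basis $(\chi^u)_{u\in mP\cap M}$ realises the maximum in Lemma~\ref{L104}, giving
\begin{equation*}
  S_m(v)
  =\frac{1}{mN_m}\sum_{u\in mP\cap M}\bigl(\langle u,v\rangle - m\cdot\p(v)\bigr).
\end{equation*}
Since $N_m=\#(mP\cap M)=\#(P\cap m^{-1}M)$ and the substitution $u\mapsto u/m$ converts the sum $\sum_{u\in mP\cap M}\langle u,v\rangle$ into $mN_m\langle\overline{u}_m,v\rangle$ by the very definition of $\overline{u}_m$, this simplifies to $\langle\overline{u}_m,v\rangle - \p(v)$, as required.

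There is no real obstacle here: once Proposition~\ref{p:toricfilt} is in hand, everything is bookkeeping. The only point worth stating carefully is why the monomial basis is optimal in Lemma~\ref{L104}, and this is simply because $\cF_v$ is diagonalized by the monomials, so the multiset of values $\{v(\chi^u)\}_{u\in mP\cap M}$ coincides with the multiset of jumping numbers $\{a_{m,j}\}$; hence the bound in the proof of Lemma~\ref{L104} is attained.
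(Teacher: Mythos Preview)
Your proof is correct and follows essentially the same route as the paper: both rely on Proposition~\ref{p:toricfilt} to identify the jumping numbers of $\cF_v$ on $H^0(X,mL)$ with the multiset $\{\langle u,v\rangle - m\cdot\p(v)\mid u\in mP\cap M\}$, then average (for $S_m$) or take the maximum (for $T_m$) and reindex via $u\mapsto u/m$. The only cosmetic difference is that you pass through Lemma~\ref{L104} and~\eqref{e118}, whereas the paper uses directly that $S_m$ and $T_m$ are by definition the (rescaled) average and maximum of the jumping numbers; your detour is harmless but unnecessary once the jumping numbers are known.
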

\begin{proof}
  From the description of the jumping numbers of $\cF_{v_u}$ in Proposition~\ref{p:toricfilt}, 
  we see 
  \[
    S_m(v)=\frac{\sum_{u \in mP\cap M}\langle u,v\rangle-m\cdot\p(v)}{m\# (mP\cap M)}
    =\left\langle\frac{\sum_{u\in mP\cap M}u}{m\#(mP\cap M)}, v \right\rangle - \p(v),
  \]
  and
  \[
    T_m(v)=\frac{\max_{u\in mP\cap M}\langle u,v \rangle}{m}-\p(v).
  \] 
  Now, multiplication by $m^{-1}$ gives an isomorphism $mP \cap M \to P \cap m^{-1}M$. 
  Applying said isomorphism yields the desired equalities.  
\end{proof}
\begin{cor}\label{c:toricST} 
  We have 
  \begin{equation*}
    S(v) 
    =\langle \overline{u},v\rangle-\p(v) 
    \quad\text{and}\quad
    T(v)
    =\max_{u\in P}\langle u,v\rangle-\p(v) 
    =\max_{u\in\ver(P)}\langle u,v\rangle-\p(v),
  \end{equation*}
  where $\overline{u}$ denotes the barycenter of $P$ and $\ver(P)$ 
  denotes the set of vertices of $P$.  
\end{cor}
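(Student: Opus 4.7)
The plan is to pass to the limit in the formulas of Proposition~\ref{p:SmTm}. By Corollary~\ref{C102} we have $S(v) = \lim_{m\to\infty} S_m(v)$, and by construction $T(v) = \lim_{m\to\infty} T_m(v)$. Substituting the formulas
\begin{equation*}
    S_m(v) = \langle \overline{u}_m, v\rangle - \p(v),
    \qquad
    T_m(v) = \max_{u\in P \cap m^{-1}M}\langle u, v\rangle - \p(v),
\end{equation*}
the claims reduce to two limit statements: (a) $\overline{u}_m \to \overline{u}$, and (b) $\max_{u\in P \cap m^{-1}M}\langle u, v\rangle \to \max_{u\in P}\langle u, v\rangle$ as $M(L) \ni m\to\infty$. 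The final equality of Corollary~\ref{c:toricST} is elementary convex geometry: a linear functional on the compact convex polytope $P$ attains its maximum at a vertex.

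For (a), I would interpret $\overline{u}_m$ as a ratio of Riemann sums. Writing
\begin{equation*}
    \overline{u}_m
    = \frac{m^{-n}\sum_{u\in P \cap m^{-1}M} u}{m^{-n}\,\#(P \cap m^{-1}M)},
\end{equation*}
the numerator converges to $\int_P u\,du$ and the denominator to the Euclidean volume $\vol(P)$, by standard Riemann-sum convergence for continuous functions on a Jordan-measurable set (applied coordinate by coordinate to the identity and to the constant function $1$). Hence $\overline{u}_m \to \vol(P)^{-1}\int_P u\,du = \overline{u}$.

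For (b), note that $P$ is a rational polytope: it is defined by the inequalities $\langle u, v_i\rangle \ge -b_i$ with $v_i\in N$ and $b_i\in\Z$, so every vertex of $P$ lies in $M_\Q$. Let $r\in\N^*$ be a common denominator, so that $\ver(P)\subset (r^{-1}M)$. For every $m$ divisible by $r$, the linear functional $\langle\cdot, v\rangle$ attains its maximum on $P$ at some vertex $u^\ast$ which lies in $P \cap m^{-1}M$, giving $\max_{u\in P \cap m^{-1}M}\langle u, v\rangle = \max_{u\in P}\langle u, v\rangle$. Since $L$ is ample, $M(L)$ contains all sufficiently large integers, so the sequence reaches such $m$, and the desired limit follows (indeed, along this subsequence the quantities stabilize).

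There is no genuine obstacle here; the only minor subtlety is that one must restrict to $m \in M(L)$, which is harmless since $L$ is ample and $\max_{u\in P \cap m^{-1}M}\langle u, v\rangle$ is monotone under passing from $m$ to a multiple of $m$. Combining (a) and (b) with Proposition~\ref{p:SmTm} and the observation that $\max_{u\in P} = \max_{u\in \ver(P)}$ yields the corollary.
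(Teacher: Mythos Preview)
Your proposal is correct and follows essentially the same approach as the paper: both take limits in Proposition~\ref{p:SmTm}, invoking $\overline{u}_m\to\overline{u}$ and $\max_{u\in P\cap m^{-1}M}\langle u,v\rangle\to\max_{u\in P}\langle u,v\rangle$, together with the fact that a linear functional on a polytope attains its maximum at a vertex. You supply more detail than the paper (the Riemann-sum argument for~(a) and the rationality argument for~(b)); in particular, your observation that $T_m(v)=T(v)$ once $m$ is divisible by a common denominator of the vertices is exactly the content of the paper's remark immediately following the proof.
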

\begin{rmk}
  One can thus think of 
  $T(v)=\max_{u\in P}\langle u,v\rangle-\min_{u\in P}\langle u,v\rangle$
  as the width of $P$ in the direction $v$, see also~\cite[\S3.2]{Amb16}.
\end{rmk}
\begin{proof}[Proof of Corollary~\ref{c:toricST}]
  The formula for $S(v)$ is immediate from Proposition~\ref{p:SmTm} since
  $S(v)=\lim_{m\to\infty} S_m(v)$ and $\overline{u}= \lim_{m\to\infty}\overline{u}_m$.
  Similarly, $T(v)=\lim_{m\to\infty}T_m(v)$, and 
  \begin{equation*}
    \lim_{m\to\infty}\max_{u \in P \cap m^{-1} M}\langle u, v \rangle
    =\max_{u \in P}\langle u, v \rangle
    =\max_{u \in\ver P}\langle u, v \rangle,
  \end{equation*}
  where the last equality holds by linearity of $u\mapsto \langle u,v\rangle$.
  This completes the proof.
\end{proof}
\begin{rmk}
  The proof shows that $T_m(v)=T(v)$ for $m$ sufficiently divisible.
\end{rmk}
%
%
%
%
\subsection{Deformation to the initial filtration}
Given a filtration $\cF$ of $R(X,L)$, we will construct a degeneration of $\cF$ to a filtration whose base ideals are $T$-invariant. We will use this construction to show  $\a(L)$ and $\d(L)$ may be computed using only toric valuations. Our argument is a global analogue of~\cite[\S7]{Blu16b}, which in turns draws on~\cite{Musgraded}. 

First write $R(X,L)$ as the coordinate ring of an affine toric variety.
Set $M' := M \times \Z$, $N' := \Hom(M', \Z)$,  $M'_\R :=  M \otimes_\Z \R$, 
and $N'_\R := N \otimes_\Z \R$. 
Let  $\sigma_0$ denote the cone over $P \times \{1 \} \subset M_\R \times \R$. 
Then there is a canonical isomorphism $\C[\sigma_0 \cap M']\simeq R(X,L)$.

We put a $\Z_+^{n+1}$ order on the monomials of $k[\sigma_0 \cap M']$ using 
an argument in [KK14,~\S7].  Choose $y_1, \ldots, y_{n+1} \in \sigma_0^\vee \cap N'$  
that are linearly independent in  $N'_\R$.  Let $\rho\colon M' \to \Z^{n+1}$ denote the map defined by  
\[
  \rho(u)=  \left(  \langle u , y_1 \rangle, \ldots, \langle u, y_{n+1} \rangle \right).
\]
Then $\rho$ is injective and has image contained in $\Z_+^{n+1}$. 

Endowing $\Z_+^{n+1}$ with the lexicographic order gives an order $>$ on the monomials in 
$\C[\sigma_0 \cap M']$. Given an element $s\in\C[\sigma_0 \cap M']$ the \emph{initial term} of $s$, written $\init_{>}(s)$, is the greatest monomial in $s$ with respect to the order $>$. 
Given a subspace  $W$ of $H^0(X,mL)$, we set 
\[
  \init_{>}(W)
  =
  \Span \{ \init_{>}(s)\, \vert \, s  \in W \},
\]
where $W$ is viewed as a vector subspace of $\C[ \sigma_0\cap M']$.
Clearly, $\init_{>}(W)$ is generated by monomials in $\C[\sigma_0 \cap M']$. Therefore, $\bs{ \init_{>}(W)}$ is a $T$-invariant ideal on $X$. 
\begin{prop}\label{p:diminitial}
  If $W$ is a subspace of $H^0(X,mL)$, then $\dim W = \dim \init_{>}(W)$. 
\end{prop}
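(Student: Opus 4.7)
The plan is to compute both $\dim W$ and $\dim \init_>(W)$ by slicing $W$ along the total order $>$. Specifically, for each monomial $\chi^u$ appearing in $H^0(X, mL)$, I would set
\begin{equation*}
W^{\ge u} := W \cap \Span\{\chi^{u'} \mid u' \ge u\}
\quad\text{and}\quad
W^{> u} := W \cap \Span\{\chi^{u'} \mid u' > u\},
\end{equation*}
viewing $W \subset H^0(X, mL) \subset \C[\sigma_0 \cap M']$. Since the monomials occurring in $H^0(X, mL)$ correspond to the finite set $mP \cap M$, the restriction of $>$ to this set is a finite total order; this sidesteps the fact that lex order on $\Z_+^{n+1}$ fails to be a well-order in general.

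The quotient $W^{\ge u}/W^{> u}$ embeds into $\Span\{\chi^{u'} \mid u' \ge u\}/\Span\{\chi^{u'} \mid u' > u\} \cong \C \cdot \chi^u$, so it is either zero or one-dimensional. Telescoping over the finitely many relevant $u$ gives
\begin{equation*}
\dim W = \# S_W, \quad \text{where } S_W := \bigl\{u \mid \dim(W^{\ge u}/W^{> u}) = 1\bigr\}.
\end{equation*}

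Next I would identify $S_W$ with the set of exponents of initial monomials of elements of $W$. For $u \in S_W$, any $s \in W^{\ge u} \setminus W^{> u}$ satisfies $\init_>(s) = \chi^u$; conversely, every nonzero $s \in W$ has $\init_>(s) = \chi^u$ for the unique $u$ with $s \in W^{\ge u} \setminus W^{> u}$, and for this $u$ the quotient $W^{\ge u}/W^{> u}$ is nonzero, so $u \in S_W$. Thus $\init_>(W)$ is spanned by $\{\chi^u \mid u \in S_W\}$, and since distinct monomials in $\C[\sigma_0 \cap M']$ are linearly independent, we conclude $\dim \init_>(W) = \# S_W = \dim W$.

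There is no serious obstacle here; this is the standard initial-ideal dimension identity familiar from the theory of Gröbner bases. The only technical nuance worth flagging is the non-well-orderedness of lex order on $\Z_+^{n+1}$, which is immaterial because only the finitely many monomials supported in $mP \cap M$ enter into the argument.
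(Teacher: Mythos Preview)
Your argument is correct and is essentially the same as the paper's, just packaged via the associated-graded filtration rather than by explicitly lifting a monomial basis of $\init_>(W)$ to a basis of $W$ and checking independence and spanning by hand. One small remark: the paper's presentation has the side benefit of exhibiting a basis $s_1,\dots,s_r$ of $W$ with $\init_>(s_i)=\chi^{u_i}$, which is used immediately afterward in constructing the one-parameter degeneration; your argument yields the same basis (pick any $s_u\in W^{\ge u}\setminus W^{>u}$ for each $u\in S_W$), so nothing is lost.
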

\begin{proof}
  By construction, there exists a basis of $\init_{>}(W)$ consisting of monomials
  $\chi^{u_1}, \ldots, \chi^{u_r}$, where $u_i\in \sigma_0\cap M'$, 
  and we may assume $\chi^{u_1}>\dots>\chi^{u_r}$.
  For each $1 \le i \le r$, fix $s_i \in W$ such that $\init_{>}(s_i) = \chi^{u_i}$. 
  We claim that $s_1, \ldots, s_r$ forms a basis for $W$. 

  To show that $s_1, \ldots, s_r$ are linearly independent, we argue by contradiction,
  so suppose $0 = \sum_{i=1}^r c_i s_i$, with $c\in\C^r\setminus\{0\}$, and pick $i_0$
  minimal with $c_{i_0}\ne0$. 
  Then $0=\init_{>0}(\sum c_i s_i)=c_{i_0}\chi^{u_{i_0}}$, a contradiction.

  Similarly, if $s_1,\ldots, s_r$ did not span $W$, then there would exist an 
  element $s\in W\setminus \Span \{ s_1,\ldots, s_r\}$ with minimal initial term. 
  Note that $\init_{>}(s) = c \chi^{u_i}$ for some $c \in\C^*$ and $i\in \{1 , \ldots, r\}$. 
  Now,  {$s - c s_i\in W \setminus \Span \{ s_1, \ldots,s_r \}$}, but has 
  initial term strictly smaller than $\init(s)$. This contradicts the minimality 
  assumption on $\init_{>}(s)$, and the proof is complete.
\end{proof}

To understand $\lct( \bs{ \init_> W})$, we construct a 1-parameter degeneration of 
$W$ to $\init_{>}(W)$ essentially following~\cite[\S15.8]{Eisenbud}. 
Choose elements $s_1,\ldots, s_r \in W$ such that 
\[
  W = \Span\{s_1,\ldots, s_r\} 
  \quad\text{and}\quad 
  \init_{>}(W) = \Span \{ \init_{>}(s_1), \ldots, \init_{>}(s_r)\}.
\]
Next, we may fix an integral weight $\mu\colon\sigma_0\cap M \to \Z_+$ such that 
$\init_{>_\mu}(s_i) =\init_{>}(s_i)$
for $1\le i \le r$~\cite[Exercise 15.12]{Eisenbud}. Here $>_\mu$ denotes the 
weight order on $\Z^{n+1}$ induced by $\mu$.  

We write $\C[\sigma_0 \cap M'][t]$ for the polynomial ring in one variable over $\C[\sigma_0\cap M']$. For $s = \sum \beta_u \chi^u \in\C[\sigma_0 \cap M']$, 
we write $d = \max \{ \mu(u) \, \vert \, \beta_m \neq 0 \}$ and set 
\[
  \tilde{s} := t^d \sum \beta_u t^{-\mu(u)} \chi^u.
\]
Next, let $\tilde{W}\subset\C[\sigma_0 \cap M'][t]$ denote the $\C[t]$-submodule of $\C[\sigma_0 \cap M'][t]$ 
generated by $\tilde{s}_1, \ldots, \tilde{s}_r$. 
Then $\tilde{W}$ gives a family of subspaces of $H^0(X,mL)$ over $\A^1$. 
For $c\in \A^1(\C)$, write $W_c$ for the corresponding subspace of $H^0(X,mL)$. 
Clearly $W_1 = W$ and $W_0 = \init_{>}(W)$.
\begin{lem}\label{l:deform}
  For $c\in\C^\ast$, $\lct(\bs{W_c}) = \lct( \bs{W})$. 
\end{lem}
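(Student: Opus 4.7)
The key point is that for $c\in\C^*$, the subspace $W_c\subset H^0(X,mL)$ is obtained from $W$ by the action of a one-parameter subgroup of the torus $T$, so the ideals $\bs{W_c}$ and $\bs{W}$ differ by an automorphism of $X$ and hence have the same log canonical threshold.

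First I would interpret the weight $\mu\colon\sigma_0\cap M'\to\Z_+$ as arising from a one-parameter subgroup $\lambda\colon\mathbb{G}_m\to T$ (together with a $\mathbb{G}_m$-linearization of $L$), so that for $c\in\C^*$ the element $\lambda(c)$ acts on each monomial $\chi^u\in H^0(X,mL)$ by $\lambda(c)\cdot\chi^u=c^{\mu(u)}\chi^u$. This is possible because any integer-valued weight on $\sigma_0\cap M'$ extends to a linear functional on $M'=M\times\Z$, i.e.\ to an element of $N'$, and the $\Z$-factor corresponds to choosing a linearization of $L$.

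Next I would unwind the construction of $\tilde{s}_i$. For each generator $s_i=\sum_u\beta_{i,u}\chi^u$ of $W$, evaluating $\tilde{s}_i$ at $t=c$ gives
\[
  \tilde{s}_i\big|_{t=c}
  =c^{d_i}\sum_u\beta_{i,u}c^{-\mu(u)}\chi^u
  =c^{d_i}\bigl(\lambda(c^{-1})\cdot s_i\bigr).
\]
Since $c^{d_i}\in\C^*$, taking spans yields $W_c=\Span\{\lambda(c^{-1})\cdot s_i\}=\lambda(c^{-1})\cdot W$ as subspaces of $H^0(X,mL)$; the possible common scalar coming from the linearization of $L$ does not affect the subspace, hence does not affect the base ideal.

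Finally, since $\lambda(c^{-1})$ is an automorphism of $X$, pullback commutes with formation of base ideals, giving $\bs{W_c}=\lambda(c^{-1})^*\bs{W}$. The log canonical threshold is invariant under pullback by an automorphism, so $\lct(\bs{W_c})=\lct(\bs{W})$.

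The main subtlety, though purely bookkeeping, is that $M'=M\times\Z$ encodes the grading by $m$, and one must check that the $\mathbb{G}_m$-action induced by $\mu$ descends to an automorphism of $X$ (which it does, via the corresponding one-parameter subgroup of $T$) rather than merely acting on the affine cone $\Spec R(X,L)$. Once this is set up, the argument is just tracking how the explicit formulas for $\tilde{s}_i$ encode the torus action.
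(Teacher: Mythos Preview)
Your approach is correct and essentially the same as the paper's. The paper constructs the automorphism of $R(X,L)[t^{\pm1}]$ given by $\chi^u\mapsto t^{\mu(u)}\chi^u$, observes that via $X\simeq\Proj(R(X,L))$ this yields an automorphism $\phi_c$ of $X$ for each $c\in\C^*$ with $\phi_c^*W_c=W$, and concludes; your description in terms of a one-parameter subgroup $\lambda\colon\mathbb{G}_m\to T$ together with a linearization of $L$ is exactly the geometric content of that graded ring automorphism.

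One small point of care: your sentence ``any integer-valued weight on $\sigma_0\cap M'$ extends to a linear functional on $M'$'' is not true for an arbitrary function, but in Eisenbud's setup the weight $\mu$ is given by a weight vector, i.e.\ is already linear; this linearity is precisely what is needed for $\chi^u\mapsto t^{\mu(u)}\chi^u$ to be a ring automorphism (equivalently, for your $\lambda$ to be a homomorphism). You implicitly use this, so it would be cleaner to say so.
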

\begin{proof}
  Consider the automorphism of $R(X,L)[t^{\pm 1}]$ defined by 
  $\chi^u \mapsto t^{\mu(u)}\chi^u$ and $t\mapsto t$. 
  Since $X\simeq \Proj(R(X,L))$, this automorphism of $R(X,L)[t^{\pm 1}]$  
  gives an automorphism $X\times (\A^{1}\setminus \{0 \})$ over $\A^1 \setminus \{0\}$. 
  For $c \in\C^*$, we write $\phi_c$ for the corresponding automorphism of $X$. 
  Since $\phi_c^*$ sends $W_c$ to $W$, we see $\lct( \bs{W_c})= \lct(\bs{W})$. 
\end{proof}
\begin{prop}\label{p:lctinitial}
  If $W$ is a subspace of  $H^0(X,mL)$, then $
  \lct( \bs{\init_{>}(W) }) \le \lct(\bs{W})$.
\end{prop}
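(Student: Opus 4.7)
The plan is to deduce this from the lower semicontinuity of the log canonical threshold in families, Proposition~\ref{p:lsclct}, applied to the one-parameter degeneration $\{W_c\}_{c\in\A^1}$ constructed just before the statement, combined with the invariance result Lemma~\ref{l:deform}.

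First, I would verify that the $\C[t]$-submodule $\tilde{W} \subset \C[\sigma_0 \cap M'][t]$ actually defines a subsheaf
\[
  \cW \subset H^0(X,mL) \otimes_\C \cO_{\A^1}
\]
fitting the framework of~\S\ref{ss:limitpoints}. This is routine bookkeeping: each generator $\tilde{s}_i = \sum_u \beta_u t^{d_i - \mu(u)} \chi^u$ has only non-negative exponents in $t$ (by the choice of $d_i$) and coefficients in $H^0(X,mL)$, so $\tilde{W} \subset H^0(X,mL) \otimes_\C \C[t]$. By construction, the fiber at a closed point $c \in \A^1$ is the image of $\tilde{W}\otimes_{\C[t]}\C \to H^0(X,mL)$, which is precisely $W_c$. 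Since $\tilde{s}_i(1) = s_i$ and $\tilde{s}_i(0) = \init_{>_\mu}(s_i) = \init_>(s_i)$ by the choice of the weight $\mu$, we have $W_1 = W$ and $W_0 = \init_>(W)$.

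Second, I would apply Proposition~\ref{p:lsclct} with $Z = \A^1$ and $z_0 = 0$, which yields an open neighborhood $U$ of $0$ such that $\lct(\bs{W_0}) \le \lct(\bs{W_c})$ for every closed point $c \in U$. Since $U$ is a nonempty open subset of $\A^1$, we may pick some $c \in U \setminus \{0\}$. By Lemma~\ref{l:deform}, $\lct(\bs{W_c}) = \lct(\bs{W})$ for such $c$, and chaining the two inequalities gives
\[
  \lct(\bs{\init_>(W)}) = \lct(\bs{W_0}) \le \lct(\bs{W_c}) = \lct(\bs{W}),
\]
as desired.

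I do not anticipate a genuine obstacle here; the only thing to be careful about is matching the explicit $\C[t]$-module $\tilde{W}$ with the subsheaf formalism of~\S\ref{ss:limitpoints}, but this follows immediately from the shape of $\tilde{s}_i$ together with the choice of weight ensuring $\init_{>_\mu} = \init_>$ on the fixed generators $s_1,\ldots,s_r$.
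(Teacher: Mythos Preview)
Your proposal is correct and follows exactly the paper's approach: the paper's proof simply says ``Combining Proposition~\ref{p:lsclct} with Lemma~\ref{l:deform}, we see $\lct(\bs{W_0})\le\lct(\bs{W})$; since $\init_>(W)=W_0$, the proof is complete.'' You have merely filled in the routine verification that the $\C[t]$-module $\tilde{W}$ fits the subsheaf framework of~\S6.1, which the paper leaves implicit.
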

\begin{proof}
  Combining Proposition~\ref{p:lsclct} with Lemma~\ref{l:deform}, we see 
  $\lct( \bs{ W_0} ) \le \lct (\bs{W})$. Since $\init_>(W) = W_0$, the proof is complete. 
\end{proof}
Let $\cF$ be a filtration of $R(X,L)$. We write $\cF_{\init}$ for the filtration defined by 
\[
\cF_{\init}^\la H^0(X, mL) := \init_{>}\left( \cF^\la H^0(X,mL) \right)
\]
for all $\la \in \R_+$ and $m \in \N$.
To see that $\cF_{\init}$ is indeed a filtration, first note that conditions (F1)--(F3) 
of~\S\ref{ss:filtrations} are trivially satisfied. Condition~(F4) 
follows from the equality $\init_{>}(s_1 s_2) = \init_{>}(s_1) \init_{>}(s_2)$ for $s_1,s_2\in R(X,L)$.
\begin{prop}\label{p:filtinitial}
  With the above setup, we have
  \[
    S(\cF_{in}) = S(\cF), 
    \quad
    T(\cF_{in}) = T(\cF), 
    \quad\text{and}\quad
    \lct(\fb_\bullet(\cF_{\init})) \le \lct(\fb_\bullet(\cF)).\]
\end{prop}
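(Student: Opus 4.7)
The plan is to handle the three claims separately, with the two equalities $S(\cF_{\init})=S(\cF)$ and $T(\cF_{\init})=T(\cF)$ being essentially formal consequences of Proposition~\ref{p:diminitial}, and the inequality $\lct(\fb_\bullet(\cF_{\init}))\le\lct(\fb_\bullet(\cF))$ following from Proposition~\ref{p:lctinitial} plus the definition of $\lct$ for graded sequences.

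First I would observe that since $\cF_{\init}^\la R_m=\init_{>}(\cF^\la R_m)$, Proposition~\ref{p:diminitial} gives $\dim\cF_{\init}^\la R_m=\dim\cF^\la R_m$ for every $\la\in\R_+$ and every $m\in M(L)$. Consequently the jumping numbers $a_{m,j}(\cF_{\init})$ and $a_{m,j}(\cF)$ coincide for all $m$ and $j$, which by formulas~\eqref{e118} and~\eqref{e113} yields $T_m(\cF_{\init})=T_m(\cF)$ and $S_m(\cF_{\init})=S_m(\cF)$ for every $m\in M(L)$. Passing to the limit $m\to\infty$ (for $S$ using Corollary~\ref{C101}, for $T$ using $T=\sup_mT_m$) then gives $S(\cF_{\init})=S(\cF)$ and $T(\cF_{\init})=T(\cF)$.

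For the third assertion, fix $p\in\N^*$. By definition $\fb_{p,m}(\cF_{\init})=\bs{\init_{>}(\cF^pR_m)}$, so Proposition~\ref{p:lctinitial} applied to $W=\cF^pR_m$ yields
\[
\lct\bigl(\fb_{p,m}(\cF_{\init})\bigr)\le\lct\bigl(\fb_{p,m}(\cF)\bigr)
\]
for every $m\in M(L)$. Choosing $m$ large enough that both sequences $(\fb_{p,m})_m$ have stabilized (which is possible by the lemma preceding the definition of $\fb_p(\cF)$ in~\S\ref{S114}), we obtain $\lct(\fb_p(\cF_{\init}))\le\lct(\fb_p(\cF))$. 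Multiplying by $p$ and taking the supremum over $p\in\N^*$ gives
\[
\lct(\fb_\bullet(\cF_{\init}))
=\sup_{p\ge1}p\cdot\lct(\fb_p(\cF_{\init}))
\le\sup_{p\ge1}p\cdot\lct(\fb_p(\cF))
=\lct(\fb_\bullet(\cF)),
\]
as desired.

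No single step looks like a serious obstacle: the equalities for $S$ and $T$ are immediate from the dimension-preservation property recorded in Proposition~\ref{p:diminitial}, and the lct inequality is bookkeeping around the already-established deformation argument (Proposition~\ref{p:lctinitial}, which itself rests on the semicontinuity statement Proposition~\ref{p:lsclct} and the automorphism argument of Lemma~\ref{l:deform}). The only minor care required is to make sure that one works with $m$ large enough so that the base ideals $\fb_{p,m}$ coincide with their limiting $\fb_p$, and to apply the correct convergence for $S_m\to S$ (Corollary~\ref{C101}).
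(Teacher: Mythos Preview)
Your proposal is correct and follows essentially the same route as the paper's own proof: equality of jumping numbers via Proposition~\ref{p:diminitial} gives $S_m(\cF_{\init})=S_m(\cF)$ and $T_m(\cF_{\init})=T_m(\cF)$, and the lct inequality is obtained by applying Proposition~\ref{p:lctinitial} at level $(p,m)$, stabilizing in $m$, and then passing to the graded limit in $p$.

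One small correction on references: equations~\eqref{e118} and~\eqref{e113} describe $T_m(v)$ and $S_m(v)$ for filtrations coming from a \emph{valuation}, not for an arbitrary filtration $\cF$; what you actually want here are the definitions $T_m(\cF)=m^{-1}a_{m,N_m}(\cF)$ and $S_m(\cF)=(mN_m)^{-1}\sum_ja_{m,j}(\cF)$ from~\S\ref{S103}, which immediately give the equalities once the jumping numbers agree. Likewise, Lemma~\ref{L105} already contains the statement $\lim_mS_m(\cF)=S(\cF)$ for linearly bounded $\cF$, so the sharper Corollary~\ref{C101} is not needed.
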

\begin{proof}
  By Proposition~\ref{p:diminitial}, $\cF$ and $\cF_{\init}$ have identical jumping numbers. 
  Thus, $S(\cF)=S(\cF_{\init})$ and $T(\cF) =T(\cF_{\init})$. By Proposition~\ref{p:lctinitial},
  $\lct(\fb_{p,m}(\cF_{\init})) \le \lct(\fb_{p,m})(\cF)$ for $p\in\N$ and $m\in\N$. 
  Letting $m\to\infty$, we get $\lct(\fb_p(\cF_{\init}))\le \lct_p(\fb_\bullet(\cF))$ for all $p\in\N$,
  and hence $\lct(\fb_\bullet(\cF_{\init}))\le \lct(\fb_\bullet(\cF))$. 
\end{proof}
\begin{prop}\label{p:toricval}
  If $w$ is a nontrivial valuation on $X$ with $A(w)<\infty$, 
  then there exists $v \in N_{\R}\setminus\{0\}$ such that 
  \[
    A(v) \le A(w), 
    \quad
    T(v)\ge T(w) , 
    \quad\text{and}\quad 
    S(v) \ge S(w).
  \]
\end{prop}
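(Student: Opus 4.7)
The plan is to apply the initial-term degeneration to the filtration associated with $w$, then exploit torus-invariance plus the valuation computing a log canonical threshold.

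First I would consider the filtration $\cF_w$ on $R(X,L)$ and pass to its initial filtration $\cF_{w,\init}$ as defined just before Proposition~\ref{p:filtinitial}. By that proposition, we have $S(\cF_{w,\init})=S(\cF_w)=S(w)$, $T(\cF_{w,\init})=T(\cF_w)=T(w)$, and
\[
  \lct\bigl(\fb_\bullet(\cF_{w,\init})\bigr)\le\lct\bigl(\fb_\bullet(\cF_w)\bigr).
\]
On the other hand, Lemma~\ref{l:basevalideals} gives $\fb_\bullet(\cF_w)=\ab(w)$, and then Lemma~\ref{l:lctval} yields $\lct(\fb_\bullet(\cF_w))\le A(w)$. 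Hence
\[
  \lct\bigl(\fb_\bullet(\cF_{w,\init})\bigr)\le A(w).
\]

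Next I would observe that for every $\la\in\R_+$ and every $m\in M(L)$, the subspace $\cF_{w,\init}^\la H^0(X,mL)$ is spanned by monomials $\chi^u$ (by construction of $\init_>$), so its base ideal is $T$-invariant. Consequently $\fb_\bullet(\cF_{w,\init})$ is a nontrivial graded sequence of $T$-invariant ideals on $X$. Proposition~\ref{p:Tvalcompute} then furnishes a nontrivial toric valuation $v_0\in N_\R\setminus\{0\}$ computing $\lct(\fb_\bullet(\cF_{w,\init}))$. After rescaling, set $v:=v_0/v_0(\fb_\bullet(\cF_{w,\init}))\in N_\R\setminus\{0\}$, so that $v(\fb_\bullet(\cF_{w,\init}))=1$ and
\[
  A(v)=\lct\bigl(\fb_\bullet(\cF_{w,\init})\bigr)\le A(w).
\]

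Finally, since $L$ is ample, Corollary~\ref{c:vFinequality} applied to the linearly bounded filtration $\cF_{w,\init}$ and the valuation $v$ gives
\[
  S(v)\ge v\bigl(\fb_\bullet(\cF_{w,\init})\bigr)\,S(\cF_{w,\init})=S(w),
  \qquad
  T(v)\ge v\bigl(\fb_\bullet(\cF_{w,\init})\bigr)\,T(\cF_{w,\init})=T(w),
\]
which is exactly the desired conclusion. The only substantive work is Proposition~\ref{p:filtinitial}, which has already been established; no genuinely new obstacle arises here. The main conceptual point is simply that passing to initial ideals preserves $S$ and $T$, can only decrease $\lct$, and produces torus-invariant base ideals to which Proposition~\ref{p:Tvalcompute} applies.
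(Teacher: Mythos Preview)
Your proof is correct and follows essentially the same approach as the paper: pass to the initial filtration $\cF_{w,\init}$, use Proposition~\ref{p:filtinitial} together with Lemmas~\ref{l:basevalideals} and~\ref{l:lctval} to bound $\lct(\fb_\bullet(\cF_{w,\init}))$ by $A(w)$, invoke Proposition~\ref{p:Tvalcompute} on the $T$-invariant graded sequence to obtain a toric $v$, rescale so that $v(\fb_\bullet(\cF_{w,\init}))=1$, and conclude via Corollary~\ref{c:vFinequality}. The only nuance is that the nontriviality of $\fb_\bullet(\cF_{w,\init})$ needed for Proposition~\ref{p:Tvalcompute} follows from the finiteness of its log canonical threshold (which you have already established), not merely from the $T$-invariance.
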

\begin{proof}
  Let $\cF_{w,\init}$ denote the initial filtration of $\cF_w$. 
  Then $\fb_\bullet(\cF_{w,\init})$ is a graded sequence of $T$-invariant ideals on $X$.
  Further, Proposition~\ref{p:filtinitial} shows that 
  \begin{equation*}
    \lct(\fb_\bullet(\cF_{w,\init})) 
    \le\lct(\fb_\bullet(\cF_w))
    =\lct(\fa_\bullet(w))
    \le A(w)
    <\infty,
  \end{equation*}
  where the first equality Lemma~\ref{l:basevalideals}, and the second inequality is 
  Lemma~\ref{l:lctval}. 

  Therefore, $\fb_\bullet(\cF_{w,\init})$ is a nontrivial graded sequence.
  Proposition~\ref{p:Tvalcompute} yields a nontrivial toric valuation 
  $v \in N_\R$ that computes $\lct(\fb_\bullet(\cF_{w,\init}))$. After rescaling $v$, 
  we may assume $v(\fb_\bullet(\cF_{w,\init}))=1$, and, thus, $A(v)=\lct(\fb_\bullet(\cF_{w,\init}))$. 
  We then have 
  \[
    A(v)
    =\lct(\fb_\bullet(\cF_{w,\init}))
    \le \lct(\fb_\bullet(\cF_w))
    =\lct(\ab(w)) 
    \le A(w),
  \]
  Next, 
  \[
    S(v) \ge S(\cF_{w,\init})=S(\cF_w)=S(w),
  \]
  where the inequality is Corollary~\ref{c:vFinequality} 
  and the following equality is Proposition~\ref{p:filtinitial}. 
  A similar argument gives 
  $T(v)\ge T(w)$ and completes the proof.
\end{proof}
\begin{cor}\label{c:toricthresh}
We have the following equalities
\[
  \a(L) = \inf_{v\in N_\R \setminus \{0 \}} \frac{A(v)}{T(v)}
  \quad\text{and}\quad
  \d(L) = \inf_{v \in N_\R \setminus \{0 \}} \frac{A(v)}{S(v)}
\]
\end{cor}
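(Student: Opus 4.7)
The plan is to deduce this corollary directly from Theorem~C (Corollary~\ref{C105} and Theorem~\ref{t:delta}) combined with the initial-degeneration result Proposition~\ref{p:toricval}. By Theorem~C we have the identities
\begin{equation*}
  \a(L) = \inf_{w} \frac{A(w)}{T(w)}
  \quad\text{and}\quad
  \d(L) = \inf_{w} \frac{A(w)}{S(w)},
\end{equation*}
where $w$ ranges over all nontrivial valuations in $\Val_X^*$ with $A(w)<\infty$. Since $N_\R\setminus\{0\}$ is a subset of the valuations with finite log discrepancy (indeed, by Proposition~\ref{p:Atoric}, $A$ is finite and linear on the cones of $\D$), each $v\in N_\R\setminus\{0\}$ is admissible for these infima, which immediately yields the inequalities $\a(L)\le\inf_{v\in N_\R\setminus\{0\}}A(v)/T(v)$ and $\d(L)\le\inf_{v\in N_\R\setminus\{0\}}A(v)/S(v)$.

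For the reverse inequalities, I would argue as follows. Fix an arbitrary $w\in\Val_X^*$ with $A(w)<\infty$; we must show there exists $v\in N_\R\setminus\{0\}$ with $A(v)/T(v)\le A(w)/T(w)$ and similarly for $S$. This is precisely where Proposition~\ref{p:toricval} does all the work: it produces a toric valuation $v\in N_\R\setminus\{0\}$ with $A(v)\le A(w)$, $T(v)\ge T(w)$, and $S(v)\ge S(w)$. Since $A(w),T(w),S(w)>0$ (the positivity of $T$ and $S$ follows from $w$ being nontrivial of linear growth and from~\eqref{e122}), these three inequalities combine to give
\begin{equation*}
  \frac{A(v)}{T(v)}\le\frac{A(w)}{T(w)}
  \quad\text{and}\quad
  \frac{A(v)}{S(v)}\le\frac{A(w)}{S(w)}.
\end{equation*}
Taking the infimum over $w$ on the right and noting that $v$ appearing on the left lies in $N_\R\setminus\{0\}$ yields the two reverse inequalities, completing the proof.

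The entire argument is essentially bookkeeping once Proposition~\ref{p:toricval} is in hand, so there is no real obstacle left at this stage; the deep content, namely the existence of a toric valuation dominating an arbitrary $w$ in the relevant sense, has already been established via the initial-degeneration construction. The only minor subtlety is to make sure that $T(w)>0$ and $S(w)>0$ so that division is legitimate, which follows from $w\in\Val_X^*$ together with the Izumi-type linear growth established earlier in~\S\ref{S101}.
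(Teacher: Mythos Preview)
Your proof is correct and follows exactly the same approach as the paper, which simply says ``This is clear from Theorem~C and Proposition~\ref{p:toricval}.'' You have spelled out the routine bookkeeping that the paper leaves implicit, including the check that $T(w),S(w)>0$ so the ratios make sense.
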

\begin{proof}
  This is clear from Theorem~C and Proposition~\ref{p:toricval}.
\end{proof}
%
%
%
%
\subsection{Proof of Theorem~F}
We now consider the log canonical and stability thresholds of~$L$. 
The following result is slightly more precise than Theorem~F in the introduction.
\begin{cor}
  We have 
  \begin{equation}\label{e:alphaT}
    \alpha(L) 
    = \min_{u \in \ver(P)} \lct(D_u) 
    =\min_{u\in \ver(P)} \min_{i=1,\ldots, d}\frac{1}{\langle u, v_i\rangle +b_i}
  \end{equation}
  and 
  \begin{equation}\label{e:alphaS}
    \d(L) 
    =\lct(D_{\overline{u}})
    =\min_{i=1,\ldots, d} \frac{1}{\langle \overline{u}, v_i \rangle +b_i},
  \end{equation}
  where $\overline{u}$ denotes the barycenter of $P$ and $\ver(P)$ the set of 
  vertices of $P$. 
  Furthermore, $\a(L)$ (resp. $\d(L)$) is computed by one of the valuations $v_1, \dots, v_d$. 
\end{cor}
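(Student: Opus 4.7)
The proof amounts to assembling the formulas established earlier in this section, with essentially no new input required.

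The plan is to combine Corollary~\ref{c:toricthresh} (which restricts the infima defining $\alpha(L)$ and $\delta(L)$ to the toric cone $N_\R\setminus\{0\}$) with the closed-form expressions for $S(v)$ and $T(v)$ in Corollary~\ref{c:toricST}, and then re-interpret them as $v$-values of torus-invariant divisors via Lemma~\ref{l:toriceval}. Since $D$ has integral coefficients, the polytope $P$ has rational vertices, so $\overline{u}\in M_\Q$ and $\ver(P)\subset M_\Q$, and the divisors $D_{\overline{u}}$ and $D_u$ (for $u\in\ver(P)$) are well defined effective torus-invariant $\Q$-divisors.

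For the stability threshold, Corollary~\ref{c:toricST} combined with Lemma~\ref{l:toriceval} gives $S(v)=\langle\overline{u},v\rangle-\psi(v)=v(D_{\overline{u}})$ for every $v\in N_\R$. Plugging this into Corollary~\ref{c:toricthresh} yields
\[
  \delta(L)
  =\inf_{v\in N_\R\setminus\{0\}}\frac{A(v)}{v(D_{\overline{u}})}
  =\lct(D_{\overline{u}}),
\]
where the second equality is Corollary~\ref{c:lcttoric}. That same corollary also supplies the explicit formula $\lct(D_{\overline{u}})=\min_i 1/(\langle\overline{u},v_i\rangle+b_i)$, which gives~\eqref{e:alphaS} and shows that the minimum is attained at some $v_i$.

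For the log canonical threshold, Corollary~\ref{c:toricST} together with Lemma~\ref{l:toriceval} gives $T(v)=\max_{u\in\ver(P)}(\langle u,v\rangle-\psi(v))=\max_{u\in\ver(P)}v(D_u)$, so
\[
  \frac{A(v)}{T(v)}
  =\min_{u\in\ver(P)}\frac{A(v)}{v(D_u)}.
\]
Since $\ver(P)$ is finite, the infimum over $v$ and the minimum over $u$ commute, so Corollary~\ref{c:toricthresh} followed by Corollary~\ref{c:lcttoric} gives
\[
  \alpha(L)
  =\min_{u\in\ver(P)}\inf_{v\in N_\R\setminus\{0\}}\frac{A(v)}{v(D_u)}
  =\min_{u\in\ver(P)}\lct(D_u),
\]
and the double minimum formula in~\eqref{e:alphaT} follows again from Corollary~\ref{c:lcttoric}. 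Because each inner infimum is attained at one of the $v_i$, so is the outer one. There is no real obstacle here; the only point to check is that the swap of infimum and minimum is legitimate (immediate for a finite index set) and that the barycenter and vertices lie in $M_\Q$ so that Corollary~\ref{c:lcttoric} applies.
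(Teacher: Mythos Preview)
Your proof is correct and follows essentially the same route as the paper's own argument: combine Corollary~\ref{c:toricthresh} with Corollary~\ref{c:toricST} and Lemma~\ref{l:toriceval} to rewrite $A(v)/S(v)$ and $A(v)/T(v)$ in terms of $v(D_{\overline{u}})$ and $v(D_u)$, swap the finite minimum over $u$ with the infimum over $v$, and finish with Corollary~\ref{c:lcttoric}. The paper only writes out the $\alpha(L)$ half and leaves $\delta(L)$ implicit, whereas you treat both explicitly and also note the rationality of $\overline{u}$ and the vertices; otherwise the arguments are the same.
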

\begin{proof}
  Again, we will only prove the half of the corollary that concerns $\alpha(L)$.
  First, we combine Lemma~\ref{l:toriceval}, Corollary~\ref{c:toricST} and 
  Corollary~\ref{c:toricthresh} to see 
  \[
    \alpha(L) 
    =\inf_{v \in N_\R\setminus\{0\}} \min_{u\in \ver(P)}  \frac{A(v)}{v(D_u)} 
    =\min_{u\in\ver(P)}  \inf_{v\in N_\R \setminus \{0 \}} \frac{A(v)}{v(D_u)}. 
  \]
  Applying Corollary~\ref{c:lcttoric} to the previous expression yields~\eqref{e:alphaT}.
  
  Next, pick $u\in\ver(P)$ and $i\in\{1,\ldots, d\}$ such that 
  $\alpha(L)= 1/( \langle u, v_i\rangle+b_i)$. 
  Then we have $A(v_i)/T(v_i)=1/(\langle u, v_i\rangle+b_i)$, so $v_i$ computes $\alpha(L)$.
\end{proof}
%
%
%
%
\subsection{The Fano case}
Finally we consider the case when $X$ is a toric $\Q$-Fano variety, that is, $-K_X$
is an ample $\Q$-Cartier divisor.
\begin{cor}\label{c:kpolybary}
  A toric $\Q$-Fano variety is K-semistable iff the barycenter of the polytope associated
  to $-K_X$ is equal to the origin. 
\end{cor}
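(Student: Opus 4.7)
The plan is to combine Theorem B with the toric formula for $\delta(-K_X)$ derived just above, then observe that the resulting inequality forces the barycenter to vanish. I would proceed as follows.

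First, I would specialize the setup: for $L = -K_X$ we have $D = \sum_{i=1}^d D_i$, so $b_i = 1$ for all $i$, and the associated polytope is
\[
P = \{u \in M_\R \mid \langle u, v_i\rangle \ge -1 \text{ for all } 1 \le i \le d\}.
\]
By Theorem~B~(i), $X$ is K-semistable if and only if $\delta(-K_X) \ge 1$. By the formula~\eqref{e:alphaS} applied with $b_i = 1$, we have
\[
\delta(-K_X) = \min_{i=1,\ldots,d} \frac{1}{\langle \bar{u}, v_i\rangle + 1},
\]
where $\bar{u}$ is the barycenter of $P$. Note that $\bar{u} \in P$, so each denominator $\langle \bar{u},v_i\rangle+1$ is positive.

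Next, I would translate the inequality $\delta(-K_X) \ge 1$ into a condition on $\bar{u}$: it holds if and only if $\langle \bar{u}, v_i \rangle \le 0$ for every $i = 1, \ldots, d$. Since $X$ is proper, the fan $\Delta$ is complete, so the primitive generators $v_1, \ldots, v_d$ positively span $N_\R$; that is, every $v \in N_\R$ can be written as $v = \sum_i a_i v_i$ with $a_i \ge 0$. The condition $\langle \bar{u}, v_i\rangle \le 0$ for all $i$ therefore implies $\langle \bar{u}, v\rangle \le 0$ for all $v \in N_\R$, and applying the same inequality to $-v$ yields $\langle \bar{u}, v\rangle = 0$ for all $v \in N_\R$. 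This forces $\bar{u} = 0$. Conversely, if $\bar{u} = 0$, then each denominator equals $1$ and $\delta(-K_X) = 1 \ge 1$.

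The only step requiring attention is the translation of the K-semistability inequality into the barycentric condition; the rest is a direct application of Theorems~B and~F together with a completeness argument for the fan. There is no substantive obstacle beyond checking that $\bar{u}$ lies in $P$ (so the minimum is over positive quantities) and that the $v_i$ positively span $N_\R$.
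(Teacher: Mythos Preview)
Your proof is correct and follows essentially the same approach as the paper: both combine Theorem~B with the toric formula~\eqref{e:alphaS} specialized to $b_i=1$, then use completeness of the fan to conclude. You phrase the completeness step as ``the $v_i$ positively span $N_\R$'', while the paper phrases it contrapositively as ``the $v_i$ cannot all lie in a half-space''; these are equivalent, and your formulation is arguably cleaner.
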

This result was proved by analytic methods in~\cite{BB13,Berm16}, even with 
K-semistable replaced by K-polystable, and follows  from~\cite{WZ04} when $X$ is smooth. It can also be deduced 
from~\cite[Theorem 1.4]{LX16}, which is proven algebraically.
\begin{proof}
  We apply~\eqref{e:alphaS} with $b_i=1$ for all $i$. If $\overline{u}=0$, then 
  $\d(-K_X)=1$, which by Theorem~B implies that $X$ is K-semistable. 
  Now suppose $\overline{u}\ne0$. 
  Then $\langle \overline{u},v_i\rangle<0$ for some $i$, or else all the $v_i$
  would lie in a half-space, which is impossible since $\D$ is complete.
  It then follows from~\eqref{e:alphaS} that $\d(-K_X)<1$, so by Theorem~B,
  $X$ is not K-semistable.
\end{proof}
\begin{rmk}
  The proof shows that if $X$ is K-semistable, any toric valuation
  computes $\d(-K_X)=1$.
\end{rmk}

We now give a simple formula for $\d(-K_X)$ in the $\Q$-Fano case. 
When $X$ is smooth, the formula for agrees with the formula in~\cite{Li11} for
the greatest lower bound on the Ricci curvature of $X$, 
as defined and studied in~\cite{Tian92,Sze11}.
\begin{cor}
  Let $X$ be a toric $\Q$-Fano variety and $\overline{u}$ denote the barycenter of
  the polytope
  $P_{-K_X}:=\{u \in M_\R \mid \langle u, v_i \rangle \geq -1\ \text{for all $1\le i \le d$} \}$. 
\begin{enumerate}
\item[(i)] If $X$ is $K$-semistable, then 
$\d(-K_X)=1$. 
\item[(ii)] If $X$ is \emph{not} K-semistable, then 
\[
\delta(-K_X) = \frac{c}{1+c}\]
where $c>0$ is the greatest real number  such that $-c\overline{u}$ lies in $P_{-K_X}$. 
\end{enumerate}
\end{cor}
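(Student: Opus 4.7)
The plan is to apply formula~\eqref{e:alphaS} to $L = -K_X$, for which $b_i = 1$ for all $i$, and then rewrite the resulting expression in the claimed form. Specifically, \eqref{e:alphaS} yields
\[
  \delta(-K_X) = \min_{1 \le i \le d} \frac{1}{\langle \overline{u}, v_i\rangle + 1} = \frac{1}{1+M},
\]
where I set $M := \max_{1 \le i \le d} \langle \overline{u}, v_i\rangle$. Since $\overline{u}$ lies in the interior of $P_{-K_X}$, each denominator $\langle \overline{u}, v_i\rangle + 1$ is strictly positive, so this is well-defined.

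Part~(i) then follows immediately: if $X$ is K-semistable, Corollary~\ref{c:kpolybary} gives $\overline{u} = 0$, so $M = 0$ and $\delta(-K_X) = 1$.

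For part~(ii), assume $X$ is not K-semistable, so that $\overline{u} \ne 0$. First I would verify that $M > 0$. Since $\Delta$ is complete, the primitive generators $v_1, \dots, v_d$ positively span $N_\R$; hence if $\langle \overline{u}, v_i\rangle \le 0$ for every $i$, then $\langle \overline{u}, \cdot\rangle \le 0$ on all of $N_\R$, forcing $\overline{u} = 0$, a contradiction. Next I would unwind the definition of $c$: the condition $-c\overline{u} \in P_{-K_X}$ translates to $c \langle \overline{u}, v_i\rangle \le 1$ for all $i$, which for $c > 0$ is equivalent to $c \le 1/\langle \overline{u}, v_i\rangle$ for every $i$ with $\langle \overline{u}, v_i\rangle > 0$. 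The binding constraint thus gives $c = 1/M$, and a direct manipulation yields
\[
  \frac{c}{1+c} = \frac{1/M}{1+1/M} = \frac{1}{M+1} = \delta(-K_X),
\]
which is the desired formula.

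The argument is essentially bookkeeping on top of formula~\eqref{e:alphaS}; no serious obstacle arises. The one point that deserves care is the appeal to completeness of the fan to guarantee $M > 0$ in part~(ii), which is what prevents the claimed expression for $c$ from degenerating.
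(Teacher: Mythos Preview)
Your proof is correct and follows essentially the same approach as the paper's own proof: both apply~\eqref{e:alphaS} with $b_i=1$, reduce part~(i) to Corollary~\ref{c:kpolybary}, and for part~(ii) identify $\max_i\langle\overline{u},v_i\rangle$ with $1/c$ via the defining inequalities of $P_{-K_X}$. Your explicit verification that $M>0$ (equivalently, that $c$ is finite) using completeness of the fan is slightly more careful than the paper, which leaves this implicit.
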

\begin{proof}
Statement~(i) follows from~\eqref{e:alphaS} and Corollary~\ref{c:kpolybary}. 
For~(ii), we claim that
 \[ 
 0 < \langle \overline{u},v_i \rangle +1 \leq  1/c +1
 \]
 for all $i=1,\ldots,d$ and equality holds in the last inequality for some $i$.
 Statement~(ii) follows from the claim and~\eqref{e:alphaS}. 
 
 We now prove the claim.
 Since $\overline{u}$ lies in the interior of $P_{-K_X}$,  $\langle \overline{u},v_i \rangle > -1$ for all $i$. 
 Since $-c\overline{u}$ lies on the boundary of $P_{-K_X}$, 
 \[
 -c \langle \overline{u},v_i \rangle = \langle -c\overline{u}, v_i \rangle \geq -1
 \]
 for all $i$ and equality holds in the last inequality for some $i$.
 This completes the proof. 
\end{proof}
%
%
%
%
%
%

\end{document}